\theoremstyle{plain}
\newtheorem{theorem}{Theorem}[section]
\newtheorem{lemma}[theorem]{Lemma}
\newtheorem{corollary}[theorem]{Corollary}
\newtheorem{prop}[theorem]{Proposition}
\newtheorem{defn}[theorem]{Definition}
\newtheorem{example}[theorem]{Example}
\newtheorem{remark}[theorem]{Remark}
\numberwithin{equation}{section}
\newcommand{\Image}{\operatorname{Im}}
\newcommand{\twobar}{/\kern-0.5em/}
\newcommand{\threebar}{/\kern-0.5em/\kern-0.5em/}
\newcommand{\N}{\mathbb{N}}
\newcommand{\Z}{\mathbb{Z}}
\newcommand{\Q}{\mathbb{Q}}
\newcommand{\R}{\mathbb{R}}
\newcommand{\C}{\mathbb{C}}
\newcommand{\bS}{\mathbb{S}}
\newcommand{\bP}{\mathbb{P}}
\newcommand{\bb}{\boldsymbol{b}}
\newcommand{\bL}{\mathbf{L}}
\newcommand{\cS}{\mathcal{S}}
\newcommand{\bT}{\mathbf{T}}
\newcommand{\cI}{\mathcal{I}}
\newcommand{\bbL}{\mathbb{L}}
\newcommand{\wT}{\triangledown}
\newcommand{\blT}{\blacktriangledown}
\newcommand{\gT}{\textcolor{gray}{\blacktriangledown}}
\newcommand{\wunit}{\bm{1}^{\triangledown}}
\newcommand{\bunit}{\bm{1}^{\blacktriangledown}}
\newcommand{\gunit}{\bm{1}^{\textcolor{gray}{\blacktriangledown}}}
\newcommand{\wlambda}{\bm{\lambda}^{\triangledown}}
\newcommand{\blambda}{\bm{\lambda}^{\blacktriangledown}}
\newcommand{\glambda}{\bm{\lambda}^{\textcolor{gray}{\blacktriangledown}}}
\newcommand{\into}{\hookrightarrow}
\newcommand{\conn}{\nabla}
\newcommand{\tr}{\mathrm{tr} \,}
\newcommand{\unu}{\underline{\nu}}
\newcommand{\one}{\mathbf{1}}
\newcommand{\ev}{\mathrm{ev}}
\newcommand{\cM}{\mathcal{M}}
\newcommand{\cL}{\mathcal{L}}
\newcommand{\pt}{\mathrm{pt}}
\newcommand{\fm}{\mathfrak{m}}
\newcommand{\fs}{\mathfrak{s}}
\newcommand{\ff}{\mathfrak{f}}
\newcommand{\re}{\operatorname{Re}}
\newcommand{\Crit}{\mathrm{Crit}}
\newcommand{\isoto}{\xrightarrow{\sim}}
\def\subsubsection{\@startsection{subsubsection}{3}%
	\z@{.5\linespacing\@plus.7\linespacing}{-.5em}%
	{\normalfont\bfseries}}
\title{$T$-equivariant disc potentials for toric Calabi-Yau manifolds}
\author{Hansol Hong}
\address{Department of Mathematics, Yonsei University, 50 Yonsei-Ro, Seodaemun-Gu, Seoul 03722, Korea} 
\email{hansolhong@yonsei.ac.kr}
\author{Yoosik Kim}
\address{ Department of Mathematics, Brandeis University, 415 South Street
Waltham, MA 02453, USA \& 
Center of Mathematical Sciences and Applications, Harvard University, 20 Garden Street, Cambridge, MA 02138, USA} 
\email{yoosik@brandeis.edu, yoosik@cmsa.fas.harvard.edu}
\author{Siu-Cheong Lau}
\address{Department of Mathematics and Statistics, Boston University, 111 Cummington Mall, Boston MA 02215, USA}
\email{lau@math.bu.edu}
\author{Xiao Zheng}
\address{Department of Mathematics and Statistics, Boston University, 111 Cummington Mall, Boston MA 02215, USA}
\email{xiaoz259@bu.edu}
\begin{document}
	
\begin{abstract}
We study the equivariant disc potentials for immersed SYZ fibers in toric Calabi-Yau manifolds.  The immersed Lagrangians play a crucial role in the partial compactification of the SYZ mirrors.  Morever, their equivariant disc potentials have a close relation with that of Aganagic-Vafa branes.  We show that the potentials can be computed by using an equivariant version of isomorphisms in the Fukaya category.
\end{abstract}

\maketitle


\section{Introduction}
Toric Calabi-Yau varieties form an important class of local models of Calabi-Yau varieties.  They have a very rich Gromov-Witten theory and Donaldson-Thomas theory \cite{MOOP}.  Moreover, they provide interesting classes of local singularities, including the conifold which play an important role in geometric transitions and string theory.  Furthermore, its derived category is deeply related with quiver theory \cite{Bocklandt} and noncommutative resolutions \cite{VdB,S-VdB}.

The Lagrangian branes found by Aganagic-Vafa \cite{AV} provide an important class of objects in the Fukaya category of toric Calabi-Yau threefolds.
Their open Gromov-Witten invariants were predicted by \cite{AV,AV-knots,AKV,BKMP} using physical methods such as large N-duality.  The pioneering works of Katz-Liu \cite{Katz-Liu}, Graber-Zaslow \cite{GZ}, Li-Liu-Liu-Zhou \cite{LLLZ} used $\bS^1$-equivariant localization to formulate and compute these invariants. There have been several recent developments \cite{Fang-Liu,fang-liu-tseng,FLZ} in formulating and proving the physicists' predictions using the localization technique.  There are also vast conjectural generalizations of these invariants in relation with knot theory, see for instance \cite{AENV,TZ}.

\begin{figure}[htb!]
	\includegraphics[scale=0.5]{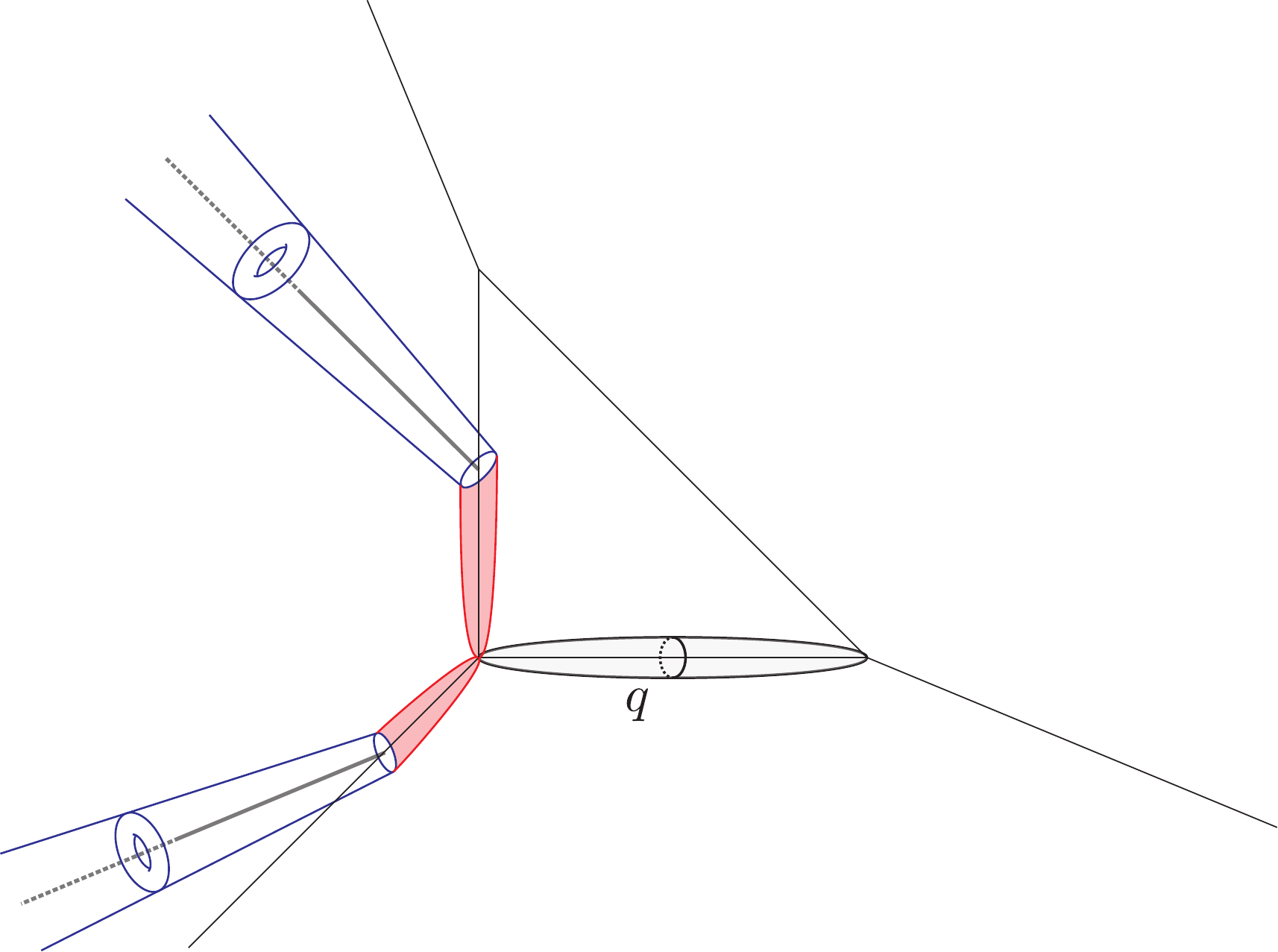}
	\caption{\small{The Aganagic-Vafa branes in $K_{\bP^{2}}$.}}
	\label{fig:KP2rere}
\end{figure}

On the other hand, the third-named author together with his collaborators \cite{CLL,CLT11,CCLT13} proved that the generating functions of open Gromov-Witten invariants for a Lagrangian toric fiber of a toric Calabi-Yau manifold can be computed from the inverse mirror map.  This realizes the $T$-duality approach to mirror symmetry by Strominger-Yau-Zaslow \cite{SYZ}.  The wall-crossing technique \cite{KS-tor,KS-affine,GS1,GS2,GS07, Auroux07} was crucial in the construction.
%
%

In this paper, we aim to relate these two different approaches and their corresponding invariants. It is illustrative to first examine the typical example of a toric Calabi-Yau manifold, that is, the total space of the canonical line bundle of the projective space $\bP^{n-1}$ denoted by $K_{\bP^{n-1}}$.  

There is a $T^{n-1}$-action on $K_{\bP^{n-1}}$ whose symplectic quotients are identified with the complex plane.  The Aganagic-Vafa Lagrangian brane $L^{AV}$ for $n=3$ can be realized as a ray in the moment polytope (see Figure \ref{fig:KP2rere}).
In this dimension, the genus-zero open Gromov-Witten potential of $L^{AV}$ is equal to the integral
\begin{equation}
\int \log (-z_1(z_2,q)) dz_2,
\label{eq:int}
\end{equation}
where $z_1(z_2,q)$ is obtained by solving the mirror curve equation
$$ z_1 + z_2 + \frac{q}{z_1 z_2} + \exp (\phi_3(q)/3) =0. $$
In the expression, $\phi_3(q)$ is the inverse mirror map on the K\"ahler parameter $q$, whose explicit expression can be obtained by solving the associated Picard-Fuchs equation.  The open Gromov-Witten potential was mathematically formulated and derived by \cite{LLLZ,Fang-Liu} via localization.

The SYZ approach uses the Lagrangian torus fibration on a toric Calabi-Yau manifold constructed by \cite{Gross-eg,Goldstein}.  In \cite{CLL}, the mirror dual to this Lagrangian fibration was constructed via wall-crossing, and \cite{CCLT13} computed the generating function of open Gromov-Witten invariants for a Lagrangian toric fiber, which turns out to coincide with the inverse mirror map. Restricting to this example $K_{\bP^{n-1}}$, it states as follows.

\begin{theorem}[\cite{CLL,LLW10,CCLT13,L15}] \label{thm:SYZ}
	The SYZ mirror of $K_{\bP^{n-1}}$ equals to
	\begin{equation}
	uv = z_1 + \ldots + z_{n-1} + \frac{q}{z_1\ldots z_{n-1}} + (1+\delta(q))
	\label{eq:mirKP}
	\end{equation}
	where $(1+\delta(q))$ is the generating function of one-pointed open Gromov-Witten invariants of a moment-map fiber.  Moreover, $(1+\delta(q))$ equals to $\exp (\phi_n(q)/n)$ for the inverse mirror map $\phi_n(q)$.  The right hand side of the above mirror equation equals to the Gross-Siebert's normalized slab function.
\end{theorem}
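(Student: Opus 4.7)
The proof proposal assembles results from the four cited papers, so I outline how I would structure the argument from scratch.

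First, I would set up the SYZ fibration. The toric Calabi-Yau $K_{\bP^{n-1}}$ carries a Gross-Goldstein Lagrangian torus fibration whose base is $\R^n$ and whose discriminant locus is a real codimension-two affine subspace, over which the fiber degenerates to a nodal torus. Away from this locus, the generic Lagrangian fiber is a Lagrangian torus $T$, and there is a single wall (a real codimension-one hyperplane through the discriminant) separating two chambers: fibers on one side bound a holomorphic disc of Maslov index zero while those on the other side do not. The first step is to write the SYZ mirror as a chamber-wise gluing of dual torus charts decorated with the Maslov-index-two disc potential $W$, following \cite{Auroux07,CLL}. In each chamber, the disc counting reduces to toric disc counting, and Cho-Oh's classification yields the standard toric monomials $z_1,\ldots,z_{n-1},q/(z_1\cdots z_{n-1})$ corresponding to the $n$ facets of the moment polytope.

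Next, I would implement the wall-crossing correction. Crossing the wall, the coordinate $z_0$ transverse to the toric direction transforms by the factor $1 + \delta(q)$, where $\delta(q)$ encodes the generating function of one-pointed open Gromov-Witten invariants counting Maslov-index-two discs through a marked point that pass through the wall. Combining the two chambers via this transformation and partially compactifying along the $T^{n-1}$-invariant directions produces a conifold-type equation of the form $uv = W_0(z)$ where $W_0(z) = z_1 + \cdots + z_{n-1} + q/(z_1 \cdots z_{n-1}) + (1+\delta(q))$. This gives the first assertion.

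For the identification $1 + \delta(q) = \exp(\phi_n(q)/n)$, which is the heart of the theorem and the hardest step, I would invoke \cite{CCLT13}: one computes the open Gromov-Witten generating series by matching it, via the open mirror theorem, against a solution of the Picard-Fuchs system for the B-model LG model. Concretely, the ordinary hypergeometric $I$-function for $K_{\bP^{n-1}}$ has a unique logarithmic solution $\phi_n(q)$, and a careful analysis of disc moduli with one interior marked point, combined with the virtual localization of \cite{LLLZ} and the toric mirror theorem, identifies the disc generating function with the exponential of $\phi_n(q)/n$. The main obstacle here is the actual enumerative identification, since it requires handling the obstructed moduli of discs bounded by the singular fiber; I would follow \cite{CCLT13} by degenerating to a simpler geometry and using an open analogue of the Givental-style reconstruction.

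Finally, the comparison with the Gross-Siebert normalized slab function \cite{GS1,GS2} is structural: the slab function is characterized up to normalization by the consistency (scattering) of the canonical wall structure on the base, and the normalization condition precisely states that evaluating the slab function along the discriminant gives a distinguished series in $q$. I would verify that $W_0(z)$ satisfies the scattering consistency on the single wall, and then match the normalization constant with $(1+\delta(q))$ via the tropical/log interpretation of the disc counts, following \cite{L15}. Once this match is made, the chain of identifications closes and all three equalities in the theorem statement hold simultaneously.
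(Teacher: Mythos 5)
This theorem is quoted from the literature rather than proved in the paper: the only ``proof'' the paper supplies is the citation to \cite{CLL,LLW10,CCLT13,L15}, together with the general statements reproduced as the two theorems of Section \ref{sec:toric_CY}, which specialize to $K_{\bP^{n-1}}$ (only the ray of the compact divisor contributes a nontrivial $\delta_i$, giving \eqref{eq:mirKP}). Your sketch faithfully reconstructs the strategy of those cited works, so it is consistent with the paper's route; the only inaccuracies are minor and sketch-level --- the wall-crossing/gluing factor for the transverse coordinate is the full slab function $f(z)$ (cf.\ Theorem \ref{thm:iso12}), not just the constant term $1+\delta(q)$, and \cite{CCLT13} obtains the open mirror theorem via an open--closed comparison with a toric compactification rather than the localization of \cite{LLLZ} --- neither of which affects the overall argument.
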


In this paper, we use the gluing method developed in \cite{CHL-glue, HKL} and the Morse model of equivariant Lagrangian Floer theory \cite{SS10,HLS16a,HLS16b,BH18,DF17} formulated in the recent work \cite{KLZ19} to find the relation between the SYZ mirror \eqref{eq:mirKP} and the potential \eqref{eq:int}.

More specifically, we formulate and compute the disc potentials of certain immersed SYZ fibers in toric Calabi-Yau manifolds.  We are motivated by the important observation that these immersed fibers and the Aganagic-Vafa branes bound the same collection of holomorphic discs.  In addition to these discs, the immersed fibers bound holomorphic polygons which have corners at the immersed sectors.  

The immersed fibers are crucial for the compactification of the SYZ mirrors \cite{HKL}.  Furthermore, weakly unobstructedness is the main technical reason that we focus on the immersed fibers instead of Aganagic-Vafa branes themselves.

The main task is to compute the $\bS^1$-equivariant disc potential for the immersed SYZ fiber.  It plays the role of the genus-zero open Gromov-Witten invariants of Aganagic-Vafa branes.  Note that the stable polygons that we consider have an output marked point, which is important for defining the Lagrangian Floer theory.  The output marked point is acted freely by the $\bS^1$-action.  It obscures the application of localization to compute these invariants.

The following is the main theorem of the paper.
Our approach works in all dimensions and computes the contribution of holomorphic polygons (bounded by the immersed SYZ fiber). In dimension three, the disc part of the $\bS^1$-equivariant potential we obtain agrees with the derivative of the generating function of genus-zero open Gromov-Witten invariants for Aganagic-Vafa branes. 

\begin{theorem}[Theorem \ref{thm:equiv}]\label{thm:ourmain1}
	Let $X$ be a toric Calabi-Yau $n$-fold and $L_0 \cong \cS^2 \times T^{n-2}$ an immersed SYZ fiber intersecting a codimension-two toric stratum, where $\cS^2$ denotes the immersed sphere with a single nodal point.  Recall that the SYZ mirror (for a given choice of a chamber and a basis of $\Z^n$) takes the form of
	\[
	\left\{(u,v,z_1,\ldots,z_{n-1})\in \C^2 \times (\C^\times)^{n-1} \mid uv =f(z_1,\ldots,z_{n-1})\right\}
	\] 
	where $f$ is a Laurent polynomial in the variables $z_1,\ldots,z_{n-1}$ (and also a series in K\"ahler parameters).
	
	Then the $\bS^1$-equivariant disc potential (with respect to the same choice of chamber and basis) takes the form
	\[
	 \lambda \cdot \log g(uv,z_2,\ldots,z_{n-1})
	\]
	where $\lambda$ is the $\bS^1$-equivariant parameter, and $-z_1 = g(uv,z_2,\ldots,z_{n-1})$ is a solution to the defining equation $uv =f(z_1,\ldots,z_{n-1})$ of the mirror.
\end{theorem}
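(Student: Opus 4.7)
The plan is to compute the $\bS^1$-equivariant disc potential of the immersed fiber $L_0$ by transporting a computation on a nearby moment-map torus fiber $T$ through an equivariant Fukaya isomorphism. Non-equivariantly, such an isomorphism is exactly what underlies Theorem~\ref{thm:SYZ}: the gluing method of \cite{CHL-glue, HKL} identifies $L_0$ with $T$ in the Fukaya category precisely along the mirror locus $\{uv = f(z_1, \ldots, z_{n-1})\}$, which is rewritten as $-z_1 = g(uv, z_2, \ldots, z_{n-1})$. This gives a change of coordinates between the $(u,v,z_2,\ldots,z_{n-1})$-chart on the immersed side and the $(z_1,\ldots,z_{n-1})$-chart on the torus side. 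The strategy therefore reduces to (a) computing the equivariant potential on the $T$-side, and (b) transporting it through the equivariant version of the isomorphism.

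For step (a), the starting observation is that, on a toric Calabi-Yau $n$-fold, the non-equivariant disc potential of a moment-map torus fiber vanishes. In the Morse model of equivariant Lagrangian Floer theory from \cite{KLZ19}, the $\bS^1$-action in the Calabi-Yau direction contributes an equivariant correction whose only non-trivial term is expected to be $\lambda \log(-z_1)$, where $\log(-z_1)$ is the coordinate dual to the direction of the $\bS^1$-action and $\lambda$ is the equivariant parameter. For step (b), once the non-equivariant isomorphism $L_0 \simeq T$ is upgraded to an equivariant one, the substitution $-z_1 \mapsto g(uv, z_2, \ldots, z_{n-1})$ transports $\lambda \log(-z_1)$ to $\lambda \log g(uv, z_2, \ldots, z_{n-1})$, producing the claimed formula.

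The main obstacle is the equivariant upgrade of the Fukaya isomorphism in the Morse model of \cite{KLZ19}. Concretely, one must produce equivariant Morse cochains that realize $L_0 \simeq T$ and verify they satisfy the equivariant \AI-relations, so that the potential transports by the above substitution without acquiring extra $\lambda$-corrections. A subtle but essential point is that $\bS^1$ acts freely on the output marked points of polygons bounded by $L_0$, which obstructs any direct localization computation and is the reason we avoid computing on the $L_0$ side in the first place. The entire strategy is designed to bypass this obstruction by extracting the potential from the torus-side computation together with the equivariant isomorphism; verifying that no hidden $\lambda$-dependent corrections appear in the transport, and that the $\lambda \log(-z_1)$ form on the $T$-side is the exact output of the Morse model, is the core technical content.
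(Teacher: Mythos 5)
Your proposal follows the paper's own route: one first computes the equivariant potential $\lambda_1 x_1=\lambda_1\log(-z_1)$ for a torus $L_1$ that bounds \emph{no} nonconstant discs in the divisor complement $X^{\circ}$ (Proposition \ref{prop:W_T}, via \cite{KLZ19}), then upgrades the gluing isomorphism $e^{x_1}=g(uv,z_2,\ldots,z_{n-1})$ to the equivariant Morse model (Proposition \ref{prop:gluing-equiv}, where no hidden $\lambda$-corrections arise because the degree-$2$ equivariant parameter cannot appear in the degree-$1$ isomorphism equations), and substitutes. The one imprecision is your assertion that the non-equivariant potential of a moment-map fiber vanishes — in $X$ it is $f(z)\neq 0$; the vanishing holds for the tori $L_1,L_2$ regarded in $X^{\circ}=X\setminus\{w=\epsilon\}$, with all the disc counts instead repackaged into the strip-counting gluing formula $uv=f(z_1,\ldots,z_{n-1})$ of Theorems \ref{thm:iso12} and \ref{thm:iso-01}.
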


In the above theorem, $f(z_1,\ldots,z_{n-1})$ is the generating function of open Gromov-Witten invariants bounded by a Lagrangian torus fiber.  In \cite{CCLT13}, it was proved that $f(z_1,\ldots,z_{n-1})$ (which is a priori a formal power series in the K\"ahler parameters) is actually convergent over $\C$ and hence serves as a holomorphic function.  

Using the theorem, the equivariant counting of stable polygons can be computed explicitly.  See the tables in Section \ref{sec:eg} for the examples of $K_{\bP^2}$, $K_{\bP^3}$ and local K3 surfaces.  These stable polygons play a crucial role as quantum corrections to the equivariant SYZ mirrors of toric Calabi-Yau manifolds.

There is a delicate dependence of the above equivariant disc potential on the choice of `a chamber and basis'.  First, the immersed Lagrangian is located at a component of the codimension-two toric strata, which are represented by edges in Figure \ref{fig:KP2}.  Different components give different disc potentials.  Second, the potential depends on the direction of the $\bS^1$-action, which is given by a vector parallel to the codimension-two strata.  Third, we also need to fix a Morse function on the immersed Lagrangian to define the disc potential.  The Morse function is fixed by trivializing the SYZ fibration over a chart.  This involves a choice of a basis of $\pi_1(T)$ of a torus $T$, and one of the two adjacent chambers of the codimension-two strata.

The method we use to derive the formula in Theorem \ref{thm:ourmain1} provides an alternative to the localization method for computing open Gromov-Witten invariants.  First, we compute the equivariant disc potential of a Lagrangian torus which is isotopic to a smooth SYZ fiber.   This uses the machinery developed in \cite{KLZ19}.  Second, we derive the gluing formula for the isomorphisms between the formal deformations of the immersed Lagrangian and the torus as objects in the Fukaya category.  The gluing formula is closely related to the expression of the SYZ mirror in Theorem \ref{thm:SYZ}.  Wall-crossing plays a crucial role in deducing the formula.   Finally, applying the gluing formula (which can be understood as an analytic continuation) to the disc potential of the torus gives the above expression for the disc potential of the immersed Lagrangian.


Strictly speaking, Lagrangian Floer theory, and in particular the disc potential above, should be defined over the Novikov ring $\Lambda_0$, where
\[
\Lambda_0 := \left\{\sum_{i=0}^\infty a_i \bT^{A_i} \mid A_i\ge 0 \textrm{ and increases to } +\infty \right\}.
\]
Comparing with the notation in \eqref{eq:int}, the 
K\"ahler parameter $q^C$ of a curve class $C$ is substituted by $\bT^{\omega(C)}$, and $z_i$ for $i=2,\ldots,n$ are replaced by $\bT^{A_i} z_i$ respectively, where $A_i>0$ are symplectic areas of certain primitive discs depending on the position of $L_0$ in the codimension-2 toric strata.  
The leading order term of $g$ with respect to the $T$-adic valuation is $1$, and hence $\log g$ makes sense as a series (where $\log 1 =0$).  

We will also use the maximal ideal
\[
\Lambda_+ := \left\{\sum_{i=0}^\infty a_i \bT^{A_i} \mid A_i>0 \textrm{ and increases to } +\infty \right\}
\]
and the group of invertible elements
\[
\Lambda_\mathrm{U}:=\{a_0 + \lambda: a_0 \in \C^\times \textrm{ and } \lambda \in \Lambda_+ \}.
\]

The organization of the paper is as follows. We review SYZ mirror symmetry for toric Calabi-Yau manifolds and the Morse model of equivariant Lagrangian Floer theory in Section \ref{sec:review}. In Section \ref{sec:immersed_SYZ}, we study the gluing formulas between the immersed SYZ fiber and other Lagrangian tori, and, as a result, obtain the equivariant disc potential of the immersed fiber. Finally, in Section \ref{sec:immtoriequiv}, we compute the potential of a certain immersed Lagrangian torus which plays an important role in the mirror construction of toric Calabi-Yau manifolds.

\addtocontents{toc}{\protect\setcounter{tocdepth}{1}}

\subsection*{Acknowledgment}
The authors express their gratitude to Cheol-Hyun Cho for useful discussions on Floer theory and the gluing scheme.  The third
named author is grateful to Naichung Conan Leung and Eric Zaslow for introducing this subject to him in the early days of his career.  The work of the first named author is supported by the Yonsei University Research Fund of 2019 (2019-22-0008) and  the National Research Foundation of Korea (NRF) grant
funded by the Korea government (MSIT) (No. 2020R1C1C1A01008261). The second named author was supported by the Simons Collaboration Grant on
Homological Mirror Symmetry and Applications.  The third named author was partially supported by the Simons Collaboration Grant.

\section{A review on toric Calabi-Yau manifolds and Lagrangian Floer theory} \label{sec:review}

\subsection{Toric Calabi-Yau manifolds}\label{sec:toric_CY}
 
Let $\textbf{N}\cong \Z^n$ be a lattice of rank $n$ and $\textbf{M}=\textbf{N}^\vee$ the dual lattice. We fix a primitive vector $\unu \in \textbf{M}$ and a closed lattice polytope $\Delta$ of dimension $n-1$ contained in the affine hyperplane $\{v \in \textbf{N}_\R \mid \unu(v) = 1\}$.  By choosing a lattice point $v\in \Delta$, we have a lattice polytope $\Delta - v$ in the hyperplane $\unu^\perp_\R \subset \textbf{N}_\R$.  We triangulate $\Delta$ such that each maximal cell is a standard simplex.  By taking a cone over this triangulation, we obtain a fan $\Sigma$ supported in $\textbf{N}_\R$. Then, $\Sigma$ defines a toric Calabi-Yau manifold $X=X_\Sigma$. 
We denote by $w$ the toric holomorphic function corresponding to $\unu \in \textbf{M}$.

Let $v_1,\ldots,v_m$ be the lattice points in $\Delta$ corresponding to primitive generators of the one-dimensional cones of $\Sigma$. By relabeling if necessary, we may assume that $\{v_1,\ldots,v_n\}$ is a basis of \textbf{N} and generates a maximal cone $\sigma$ of $\Sigma$. 
For each $i \in \{1,\ldots,m\}$, $v_i$ can be uniquely written as $\sum_{\ell=1}^n a_{i,\ell} v_\ell$ for some $a_{i,\ell} \in \Z$. In particular, $a_{i,\ell} = \delta_{i\ell}$ for $i \in \{1,\ldots,n\}$.

We denote by $D_i$ the toric prime divisor corresponding to $v_i$. 
For each toric prime divisor $D_i$ and a Lagrangian toric fiber $L\cong (\bS^1)^n$ in $X$, one can associate a \emph{basic disc class} $\beta_i^L \in \pi_2(X,L)$ represented by a holomorphic disc emanated from $D_i$ and bounded by $L$ (see \cite{CO}). 
It is well-known that $\pi_2(X,L)$ is generated by the basic disc classes, that is, $\pi_2(X,L) \simeq \Z \, \langle \beta^L_1,\ldots,\beta^L_m\rangle$, and there is an exact sequence
\begin{equation}
\label{eq:toric_ses}
0 \to H_2(X;\Z) \to H_2(X,L; \Z) (\cong \pi_2(X,L)) \to H_1(L;\Z) (\cong \textbf{N}) \to 0.
\end{equation}
For a disc class $\beta \in \pi_2(X,L)$, its Maslov index $\mu_L(\beta)$ is equal to $2\sum_{i=1}^m D_i \cdot \beta$ (see \cite{CO,Auroux07}).  In particular, the basic disc classes are of Maslov index two.

For $i=n+1,\ldots,m$, consider the curve class $C_i$ given by
\begin{equation} \label{eq:C}
C_i := \beta_{i}^L - \sum_{\ell=1}^n a_{i,\ell} \beta^L_\ell.
\end{equation}
Then, $\{C_{n+1},\ldots,C_m\}$ forms basis of $H_2(X;\Z)$ and generates the monoid of effective curve classes $H_2^{\mathrm{eff}}(X)\subset H_2(X;\Z)$. The corresponding K\"ahler parameters will be denoted by $q^{C_{n+1}},\ldots,q^{C_m}$ respectively.
Since $X$ is Calabi-Yau, we have $c_1(\alpha) := -K_X \cdot \alpha = \sum_{i=1}^m D_i \cdot \alpha = 0$ for all $\alpha \in H_2(X;\Z)$.  

Let $T^n$ be the $n$-dimensional torus $(\bS^1)^n$ acting on $X$. We have an $(n-1)$-dimensional subtorus $\unu^\perp_\R/\unu^\perp \cong T^{n-1}$ which acts trivially on $-K_X$. The moment map $\rho$ of the $T^{n-1}$-action on $X$ is simply given by the composition of the moment map of the $T^n$-action with the projection to $\textbf{M}_\R/\R\cdot\unu \cong \R^{n-1}$. 
Since the holomorphic function $w$ is invariant under the $T^{n-1}$-action, it descends to a holomorphic function on the symplectic quotients $X\sslash_{a_1} T^{n-1}$ for $a_1 \in \textbf{M}_\R/\R\cdot\unu$, which is in fact an isomorphism $w:X\sslash_{a_1} T^{n-1}\isoto \C$. Then,  the preimage of each embedded loop in $X\sslash_{a_1} T^{n-1}\cong \C$ is a Lagrangian submanifold of $X$ (contained in the level set $\rho^{-1}(a_1)$). This method of constructing Lagrangian submanifolds using symplectic reduction was introduced in \cite{Gross-eg, Goldstein}.

For a circle centered at $w=0$, the corresponding Lagrangian is a regular toric fiber $L$, which bounds holomorphic discs of Maslov index two emanated from the toric prime divisors.  These holomorphic discs exhibit interesting sphere bubbling phenomenon and their classes in $H_2(X,L;\Z)$ are of the form $\beta^L_i + \alpha$, where $\beta^L_i$ is the basic disc class corresponding to primitive generator $v_i$ and $\alpha\in H_2^{\mathrm{eff}}(X)$ is an effective curve class. 

Let $n_1(\beta^L_i+\alpha)$ be the one point open Gromov-Witten invariant associated to the disc class $\beta^L_i+\alpha$. We note that $n_1(\beta^L_i)=1$. In \cite{CCLT13}, the generating functions $1+\delta_i(\bT)$ (known as slab functions for wall-crossing in the Gross-Siebert program \cite{GS07}) defined by
\begin{equation}
1+\delta_i(\bT) = \sum_{\alpha \in H_2^{\mathrm{eff}}(X)} n_1(\beta^L_i+\alpha) \bT^{\omega(\alpha)},
\end{equation}
were proved to be coefficients of the inverse mirror maps which have explicit expressions. 

The following results will play an important role in this paper.

\begin{theorem}[{\cite[Theorem 4.37]{CLL}}]
The SYZ mirror of the toric Calabi-Yau manifold $X$ is given by
\[
X^{\vee}=\{(u,v,z_1,\ldots,z_{n-1})\in \C^2\times (\C^{\times})^{n-1} \mid uv=f(z_1,\ldots,z_{n-1})\},
\]
where $f$ is the generation function of open Gromov-Witten invariants
\begin{equation}\label{eqn:fslabftn}
f(z_1,\ldots,z_{n-1}) = \sum_{i=1}^m \bm{T}^{\omega(\beta^L_i-\beta^L_1)} (1+\delta_i(\bT)) \vec{z}^{v_i'}, \quad v_i'=v_i - v_1.
\end{equation}
\end{theorem}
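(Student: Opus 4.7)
The plan is to follow the SYZ wall-crossing recipe on the Gross--Goldstein Lagrangian fibration, using the Maslov-index-zero disc analysis of \cite{Auroux07} together with the slab-function identification of \cite{CCLT13}. First I would describe the Lagrangian fibration concretely: fix a level $a_1 \in \textbf{M}_\R/\R\cdot \unu$ and identify the symplectic quotient $X \sslash_{a_1} T^{n-1}\isoto \C$ via the descended holomorphic function $w$. For a regular value $a_1$ and any embedded loop $\gamma$ in $\C$, the preimage $\rho^{-1}(a_1) \cap w^{-1}(\gamma)$ is a Lagrangian $n$-torus in $X$. The base of this fibration is separated into two chambers by the \emph{wall}, consisting of those Lagrangians whose loop $\gamma$ passes through the critical value $w=0$ of $w$ on the quotient; these are precisely the fibers that bound nonconstant Maslov-$0$ holomorphic discs, by the same argument as in Auroux's analysis.

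Second, I would perform the semi-flat construction chamber by chamber. On the ``inner'' chamber, a fiber $L$ is Hamiltonian isotopic to an honest toric moment-map fiber, so $\pi_2(X,L)$ is generated by the basic classes $\beta^L_i$ and the Maslov-$2$ discs are exactly the $\beta^L_i + \alpha$ with $\alpha \in H_2^{\mathrm{eff}}(X)$; the disc potential is therefore
\[
W_{\mathrm{in}} = \sum_{i=1}^m \bm{T}^{\omega(\beta^L_i)}\,(1+\delta_i(\bT))\,\vec{z}^{v_i},
\]
with $\vec z$ the dual coordinate on $\mathrm{Hom}(\pi_1(L), \Lambda_\mathrm{U})$. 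On the ``outer'' chamber, the fiber splits as a product of a circle around the critical value in $\C$ with the torus in the $(\C^\times)^{n-1}$-factor, so only two Maslov-$2$ discs exist — those filling the $\C$-factor — contributing a potential of the form $u + v$ after choosing coordinates dual to a basis of $H_1$. Each chamber thus produces a mirror chart $(\Lambda^\times)^n$ with its own disc potential.

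Third, I would glue the two charts across the wall via the Auroux wall-crossing transformation. In the direction transverse to the wall, the coordinate jumps by the generating series of Maslov-$0$ discs bounded by a wall fiber; these discs are classified by following a section over a short path in $\C$ from the loop to the critical value $0$ and capping off with a basic disc attached to the prime divisor $D_i$, together with all sphere-bubble corrections. The resulting wall-crossing factor in the direction of each $v_i$ is precisely the slab function $(1+\delta_i(\bT))\,\bm{T}^{\omega(\beta^L_i - \beta^L_1)}$. Substituting this change of variables, identifying the two new coordinates with $u,v$ and re-organising by monomials $\vec{z}^{v_i'}$ with $v_i' = v_i - v_1$, the two potentials glue to the single defining equation
\[
uv = \sum_{i=1}^m \bm{T}^{\omega(\beta^L_i - \beta^L_1)}\,(1+\delta_i(\bT))\,\vec{z}^{v_i'} = f(z_1,\ldots,z_{n-1}),
\]
so both charts sit inside the hypersurface $\{uv = f\}\subset \C^2 \times (\C^\times)^{n-1}$.

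The principal difficulty is the wall-crossing step: one has to classify all Maslov-$0$ holomorphic disc classes through a wall fiber and prove that their one-pointed counts reproduce exactly the slab functions $1 + \delta_i(\bT)$. This requires the moduli-theoretic input from \cite{CO,Auroux07} (to identify the disc classes via sphere bubbling off basic discs over $w=0$) and the open mirror theorem of \cite{CCLT13} to identify the corrections $\delta_i(\bT)$ with components of the inverse mirror map, ensuring in particular convergence of $f$ over $\C$. Everything else — the semi-flat gluing away from the wall and the change-of-variables bookkeeping — is formal once the wall-crossing identity is in hand.
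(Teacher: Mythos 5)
First, a point of calibration: the paper does not prove this statement — it is quoted verbatim as \cite[Theorem 4.37]{CLL} — so the comparison here is against the argument of [CLL], which your outline reconstructs. Your overall strategy (Gross–Goldstein fibration via reduction by $T^{n-1}$, wall located where the base loop passes through the critical value $w=0$, chamber-by-chamber semi-flat charts, gluing by a wall-crossing transformation whose factor is the slab function $(1+\delta_i)\bT^{\omega(\beta^L_i-\beta^L_1)}$, with \cite{CCLT13} supplying the identification with the inverse mirror map) is exactly the route taken in [CLL], and your identification of the wall-crossing step as the hard part is accurate.

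Two corrections of substance. First, your description of the outer (Chekanov) chamber is wrong: a fiber whose base circle does \emph{not} enclose $w=0$ bounds exactly \emph{one} Maslov-two disc class in $X^\circ$ (the one projecting to the small region containing $\epsilon$), not two; sphere bubbles live in $w^{-1}(0)$ and cannot attach to it. Its potential is a single monomial $\bT^{\omega(\beta_0)}u$. The second coordinate $v$ is not a disc count for that fiber — it is introduced by the gluing (equivalently, $v=f(z)/u$ on the overlap), and the hypersurface $\{uv=f\}$ is the resulting partial compactification. Second, the wall fiber meets the toric divisors, so its Maslov-zero moduli are obstructed; [CLL] do not literally count them. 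Instead the slab function is extracted from the Maslov-two counts $n_1(\beta^L_i+\alpha)$ on the Clifford side (which is where \eqref{eqn:fslabftn} comes from), and the wall-crossing factor is \emph{defined} so that the two semi-flat charts glue. The present paper gives an independent Floer-theoretic derivation of that same gluing: the cocycle condition for $a_{12}\otimes\one_{T^{n-1}}$ in Theorem \ref{thm:iso12}, combined with Proposition \ref{prop:strip=disc} identifying Maslov-index-one strip counts with the invariants $n_1(\beta^L_i+\alpha)$, forces $z_n'=z_n\, f(z_1,\ldots,z_{n-1})$, and Theorem \ref{thm:iso-01} then yields $uv=f$. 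If you want a complete proof rather than a sketch, replacing your "count Maslov-zero discs of the wall fiber" step by one of these two mechanisms is the missing ingredient.
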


\begin{theorem}{\cite[Theorem 1.4 restricted to the manifold case]{CCLT13}}
	Given a toric Calabi-Yau manifold $X$ as above, for each $i=1,\ldots,m$, let
	\begin{equation}\label{eqn:funcn_g}
	g_i(\check{q}):=\sum_{\alpha}\frac{(-1)^{(D_i \cdot \alpha)}(-(D_i \cdot \alpha)-1)!}{\prod_{j\neq i} (D_j\cdot \alpha)!}\check{q}^\alpha,
	\end{equation}
	in which the summation is over all effective curve classes $\alpha\in H_2^{\mathrm{eff}}(X)$ satisfying
	\[
	-K_X\cdot \alpha=0, D_i\cdot \alpha <0 \text{ and } D_j\cdot \alpha \geq 0 \text{ for all } j\neq i.
	\]
	Then
	\begin{equation}
	1+\delta_i(q) = \exp g_i (\check{q}(q))
	\end{equation}
	where $\check{q}(q)$ is the inverse of the mirror map
	\[
	q^{C_k}(\check{q}) := \check{q}^{C_k} \cdot \exp \left(-\sum_i (C_k,D_i) g_i(\check{q})\right), \quad k=n+1,\ldots,m.
	\]
	Here $C_{n+1},\ldots,C_m$ are the curve classes given by equation \eqref{eq:C}.
\end{theorem}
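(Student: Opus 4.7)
The plan is to compute $1+\delta_i(q)$ by compactifying $X$ to a semi-Fano toric manifold and importing the toric open mirror theorem there. Since $X$ is non-compact, moduli of open discs could in principle be ill-behaved, but the disc classes $\beta_i^L + \alpha$ that contribute to $n_1$ have Maslov index two, and the semi-positivity estimate on sphere bubbles combined with the Cho--Oh classification already pins down the moduli via holomorphic toric discs plus bubble trees of rational curves in toric divisors. The first step is therefore to set up a compactification $X \subset \overline{X}$, obtained by adding one or more rays to the fan $\Sigma$, so that $\overline{X}$ is smooth projective toric and semi-Fano (that is, $-K_{\overline{X}}\cdot C \ge 0$ for every effective curve class), and so that every Lagrangian toric fiber $L\subset X$ remains a Lagrangian toric fiber of $\overline{X}$.

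Next, I would argue that under this compactification the one-pointed open invariants are preserved: a stable disc in $X$ representing $\beta_i^L+\alpha$ with $-K_X\cdot\alpha=0$ cannot escape into the added divisors (they lie in the wrong direction relative to the Calabi-Yau slab), and conversely the new discs bounded by $L$ in $\overline{X}$ carry positive intersection with the compactification divisors, hence do not mix into these classes. This identifies $1+\delta_i(\bT)$ computed in $X$ with the analogous slab function in the semi-Fano toric manifold $\overline{X}$.

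The third step is to invoke the open mirror theorem for semi-Fano toric manifolds, which expresses the generating function of Maslov-two disc invariants of a toric fiber through the toric $I$-function and the mirror map: schematically, $1+\delta_i$ is the coefficient of the basic disc class $\beta_i^L$ in the $J$-function after composing with $\check q(q)$. The exponential shape $\exp g_i(\check q(q))$ comes out naturally from the divisor equation applied to the hyperplane class dual to $D_i$. The combinatorial formula \eqref{eqn:funcn_g} for $g_i$ is then read off from the hypergeometric coefficients of the toric $I$-function: the numerator $(-1)^{D_i\cdot\alpha}(-(D_i\cdot\alpha)-1)!$ encodes the pole along $D_i$ (only $\alpha$ with $D_i\cdot\alpha<0$ contribute a nontrivial $\Gamma$-factor), the denominator $\prod_{j\ne i}(D_j\cdot\alpha)!$ reflects the usual $1/\prod \Gamma(D_j\cdot\alpha+1)$ structure, and the constraints $D_j\cdot\alpha \ge 0$ for $j\ne i$ and $-K_X\cdot\alpha=0$ are exactly the support conditions for the surviving terms in the Calabi-Yau setting. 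Inverting the mirror map $q^{C_k}(\check q)$ and substituting then gives the stated identity.

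The main obstacle is the second step: one has to guarantee that the moduli spaces on $X$ and on $\overline{X}$ carry matching Kuranishi structures in the relevant classes, and that no spurious sphere-bubble contributions appear when one adds the new divisors. Because $c_1(X)=0$, the toric Calabi-Yau geometry is swamped by Maslov-zero rational curves, and checking that these lie in $X\subset \overline{X}$ and contribute identically on both sides is the delicate point; once this is verified, the semi-Fano open mirror theorem and the divisor/hypergeometric manipulation close the argument.
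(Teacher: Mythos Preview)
This theorem is not proved in the present paper: it appears in the review Section~\ref{sec:toric_CY} as a direct citation of \cite[Theorem~1.4]{CCLT13}, with no accompanying argument. So there is no ``paper's own proof'' to compare against here.

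That said, your outline is a faithful sketch of the strategy actually used in \cite{CCLT13} (and the companion paper on the semi-Fano open mirror theorem). The three steps you name --- semi-Fano compactification $X\subset\overline{X}$, identification of the one-pointed open invariants across the compactification, and extraction of $\exp g_i$ from the toric $I$-function via the mirror map --- are exactly the architecture of that proof, and you have correctly located the hard point in step two. One refinement: the comparison of Kuranishi structures in \cite{CCLT13} is not argued by a soft ``discs cannot escape'' claim, but by showing that for the relevant classes $\beta_i^L+\alpha$ the virtual fundamental chains on the two moduli spaces literally agree, because the extra toric divisors in $\overline{X}\setminus X$ have strictly positive intersection with any sphere bubble that touches them (this uses the semi-Fano condition together with the specific choice of compactifying rays), so no stable map in these classes meets them. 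Your phrase ``lie in the wrong direction relative to the Calabi-Yau slab'' gestures at this but would need to be made into that intersection-number argument.
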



\subsection{Immersed Lagrangian Floer theory}\label{sec:Morse model}

Let $(X,\omega)$ be a $2n$-dimensional tame (compact, convex at infinity or geometrically bounded) symplectic manifold.
Let $L$ be a closed, connected, relatively spin, and immersed Lagrangian submanifold of $X$ with clean self-intersections. We denote by $\iota:\tilde{L}\to X$ an immersion with image $L$ and by $\cI\subset L$ the self-intersection.
As in Akaho-Joyce \cite{AJ}, the inverse image of $\cI$ under the immersion $\iota$ is assumed to be the disjoint union $\cI^-\coprod \cI^+\subset \tilde{L}$ of \emph{two branches}, each of which is diffeomorphic to $\cI$. Then the following fiber product
\[
\tilde{L} \times_L \tilde{L}=\coprod_{i=-1,0,1} R_i,
\]
consists of the diagonal component $R_0$, and the two \emph{immersed sectors} 
\begin{equation}\label{eqn:r1r-1}
\begin{array}{l}
R_{1}=\{(p_{-}, p_+)\in \tilde{L} \times \tilde{L} \mid p_-\in\cI^-, p_+\in\cI^+, \iota(p_-)=\iota(p_+)  \}, \\
R_{-1}=\{(p_{+}, p_{-})\in \tilde{L} \times \tilde{L} \mid p_+\in\cI^+, p_-\in\cI^-, \iota(p_+)=\iota(p_-) \}.
\end{array}
\end{equation}
We have canonical isomorphisms $R_0\cong\tilde{L}$ and $R_{-1}\cong R_{1}\cong \cI$. 
Also, we have the involution $\sigma:R_{-1}\coprod R_{1} \to R_{-1}\coprod R_{1}$ swapping the two immersed sectors, i.e, $\sigma(p_{-},p_{+})=(p_{+},p_{-})$. 
 
In \cite{AJ}, Akaho and Joyce developed immersed Lagrangian Floer theory on the singular chain model. For an immersed Lagrangian $L$ with transverse self-intersection (which is a nodal point), they produced an $A_{\infty}$-algebra structure on a countably generated subcomplex $C^{\bullet}(L;\Lambda_0)$ of the smooth singular cochain complex $S^{\bullet}(\tilde{L} \times_L \tilde{L};\Lambda_0)$, generalizing the work of Fukaya-Oh-Ohta-Ono \cite{FOOO} in the embedded case. In the following, we describe a further generalization of their construction to the case where $L$ is an immersed Lagrangian with clean self-intersection as described above. (See \cite{Sch-clean, CW15, Fuk_gauge} for the development of Floer theory of clean intersections in different settings.) 

Let us choose a compatible almost complex structure $J$ on $(X,\omega)$. For $\alpha:\{0,\ldots, k\}\to \{-1,0,1\}$, we consider quintuples $(\Sigma,\vec{z},u,\tilde{u},l)$ where
\begin{itemize}
\item $\Sigma$ is a prestable genus $0$ bordered Riemann surface,
\item $\vec{z}=(z_0,\ldots,z_k)$ are distinct counter-clockwise ordered smooth points on $\partial\Sigma$, 
\item $u:(\Sigma,\partial \Sigma)\to (X,L)$ is a $J$-holomorphic map with $(\Sigma,\vec{z},u)$ stable,
\item $\tilde{u}:\bS^1 \setminus \{\zeta_i:=l^{-1}(z_i):\alpha(i)\ne 0\} \to \tilde{L}$ is a local lift of $u|_{\partial\Sigma}$, i.e., 
\[
\iota\circ \tilde{u}=u\circ l \mbox{ and } \left(\lim_{\theta\to 0^-}\tilde{u}(e^{\mathbf{i}\theta} \zeta_i),\lim_{\theta \to 0^+}\tilde{u}(e^{\mathbf{i}\theta} \zeta_i)\right)\in R_{\alpha(i)},
\]
where $\alpha(i)\ne 0$ and $\mathbf{i}:=\sqrt{-1}$. 
\item $l:\bS^1 \to \partial \Sigma$ is an orientation preserving continuous map (unique up to a reparametrization) characterized by that the inverse image of a smooth point is a point and the inverse image of a singular point consists of two points. 
\end{itemize}
Let $[\Sigma,\vec{z},u,\tilde{u},l]$ be the equivalence class of $(\Sigma,\vec{z},u,\tilde{u},l)$ up to automorphisms. 
For $\beta\in H_2(X,L;\Z)$ and $\alpha$ a map as described above, we denote by $\cM_{k+1}(\alpha,\beta)$ the moduli space of such quintuples $[\Sigma,\vec{z},u,\tilde{u},l]$ satisfying $u_*([\Sigma])=\beta$. The moduli spaces come with the evaluation maps $\ev_i:\cM_{k+1}(\alpha,\beta) \to \tilde{L} \times_L \tilde{L}$ defined by
\[
\ev_i([\Sigma,\vec{z},u,\tilde{u},l])=\begin{cases} \tilde{u}(z_i)\in R_0 & \alpha(i)=0 \\
\left(\lim_{\theta\to 0^-}\tilde{u}(e^{\mathbf{i}\theta} \zeta_i),\lim_{\theta \to 0^+}\tilde{u}(e^{\mathbf{i}\theta} \zeta_i)\right)\in R_{\alpha(i)} & \alpha(i)\ne 0,
\end{cases}
\]
at the input marked points $i=1,\ldots, k$, and 
\[
\ev_0([\Sigma,\vec{z},u,\tilde{u},l])=\begin{cases} \tilde{u}(z_0)\in R_0 & \alpha(0)=0 \\
\sigma\left(\lim_{\theta\to 0^-}\tilde{u}(e^{\mathbf{i}\theta} \zeta_0),\lim_{\theta \to 0^+}\tilde{u}(e^{\mathbf{i}\theta} \zeta_0)\right)\in R_{-\alpha(0)} & \alpha(0)\ne 0,
\end{cases}
\]
at the output marked point. 

For the convenience of writing, we will call an element of $\cM_{k+1}(\alpha,\beta)$ a \emph{stable polygon} if $\alpha(i)\ne 0$ for some $i\in\{0,\ldots,k\}$. In this case, the \emph{corners} of a polygon are the boundary marked points $z_i$ with $\alpha(i)\ne 0$. If $\alpha(i)=0$ for all $i$, we will simply refer to an element of $\cM_{k+1}(\alpha,\beta)$ as a \emph{stable disc}.

The Kuranishi structures on $\cM_{k+1}(\alpha,\beta)$ are taken to be \textit{weakly submersive}, which means that the evaluation map $\ev=(\ev_1,\ldots,\ev_k)$ from the Kuranishi neighborhood of each $[\Sigma,\vec{z},u,\tilde{u},l]\in \cM_{k+1}(\alpha,\beta)$ to $\tilde{L} \times_L \tilde{L}$ is a submersion. 

The Floer complex $C^{\bullet}(L;\Lambda_0)$ is a quasi-isomorphic subcomplex of  the singular cochain complex $S^{\bullet}(\tilde{L} \times_L \tilde{L};\Lambda_0)$ inductively constructed as in \cite{FOOO,AJ}. For a $k$-tuple $\vec{P}=(P_1,\ldots, P_k)$  of singular chains $P_1,\ldots,P_k\in C^{\bullet}(L;\Q)$, 
we denote by $\cM_{k+1}(\alpha,\beta,\vec{P})$ the fiber product 
\[
\cM_{k+1}(\alpha,\beta,\vec{P})=\cM_{k+1}(\alpha,\beta) \times_{(\tilde{L} \times_L \tilde{L})^k} \vec{P}
\]
in the sense of Kuranishi structures. We write $\mathcal{M}_{k+1}(\alpha,\beta;\vec{P})^{\fs}=\fs^{-1}(0)$, where $\fs$ is a multi-valued section of the obstruction bundle $E$ transversal to the zero section, chosen in the inductive construction of $C^{\bullet}(L;\Lambda_0)$. 

The $A_\infty$-structure maps $\{\tilde{\fm}_{k}\}$ for $k\ge 0$ are defined as follows. 
For the constant disc class $\beta_0$ and $k = 0, 1$, we set
\[
\begin{cases}
\tilde{\fm}_{0,\beta_0}(1)= 0,  \\
\tilde{\fm}_{1,\beta_0}(P)=(-1)^n \partial P,
\end{cases}
\]
where $\partial$ is the coboundary operator on $C^{\bullet}(L;\Lambda_0)$. For $(k,\beta)\ne (1,\beta_0), (0, \beta_0)$, we set 
\begin{equation*}
\tilde{\fm}_{k,(\alpha,\beta)}(P_1,\ldots,P_k)=(\ev_0)_*\left(\mathcal{M}_{k+1}(\alpha,\beta;\vec{P})^{\fs}\right),
\end{equation*}
and
\begin{equation*}
\tilde{\fm}_{k,\beta}(P_1,\ldots,P_k)=\sum_{\alpha } \tilde{\fm}_{k,(\alpha,\beta)}(P_1,\ldots,P_k).
\end{equation*}
Notice that $\fm_{k,(\alpha,\beta)}(P_1,\ldots,P_k)= 0$ unless $P_i$ is a singular chain on $R_{\alpha(i)}$ for all $i$. 
The map $\tilde{\fm}_k:C^{\bullet}(L;\Lambda_0)^{\otimes k}\to C^{\bullet}(L;\Lambda_0)$ is defined by
\begin{equation*}
\tilde{\fm}_k(P_1,\ldots,P_k)=\sum_{\beta \in H_2^{\mathrm{eff}}(X,L)} \bT^{\omega(\beta)}\tilde{\fm}_{k,\beta}(P_1,\ldots,P_k).
\end{equation*}
Here $H^{\mathrm{eff}}_2(X,L)$ denotes the monoid of effective disc classes in $H_2(X,L;\Z)$. 

\subsubsection{Anti-symplectic involutions}
Let $\tau:X\to X$ be an anti-symplectic involution, i.e.,  $\tau^*\omega=-\omega$. We consider a $\tau$-invariant immersed Lagrangian $L$ such that the immersed locus $\cI$ is also $\tau$-invariant. Then $\tau|_L$ lifts to a diffeomorphism $\tilde{\tau}:\tilde{L}\to \tilde{L}$ satisfying $\tilde{\tau}^2=id$, $\tilde{\tau}(\cI^-)=\cI^{+}$, and $\tilde{\tau}(\cI^+)=\cI^{-}$. Then $\tau$ induces the involution $\sigma:R_{-1}\coprod R_{1}\to R_{-1}\coprod R_{1}$ swapping the immersed sectors. 
Suppose the compatible almost complex structure $J$ is $\tau$-anti-invariant, i.e., $\tau^*J=-J$. Then $\tau$ induces an involution on the moduli spaces.

Let us fix a non-negative integer $k$, $\alpha:\{0,\ldots, k\}\to \{-1,0,1\}$, and $\beta \in H_2^{\mathrm{eff}}(X,L)$. Let $[\Sigma,\vec{z},u,\tilde{u},l]\in \cM_{k+1}(\alpha,\beta)$. We define $\hat{u}:(\Sigma,\partial \Sigma)\to (X,L)$
by
\[
\hat{u}(z):=\tau\circ u(\bar{z}). 
\]

Let $\hat{\alpha}:\{0,\ldots,k\}\to \{-1,0,1\}$ be given by $\hat{\alpha}(0)=\alpha(0)$ and $\hat{\alpha}(i)=\alpha(k+1-i)$ for $i=1,\ldots,k$.  We put $\hat{\vec{z}}=(\hat{z}_0,\ldots,\hat{z}_k) := (\bar{z}_0,\bar{z}_k,\bar{z}_{k-1},\ldots,\bar{z}_1)$ and define $\hat{\tilde{u}} : \bS^1 \setminus \{ \bar{\zeta_i}:=l^{-1}(\bar{z}_i) \mid \hat{\alpha}(i)\ne 0\} \to \tilde{L}$ by
\[
\hat{\tilde{u}}(z) := \tilde{\tau}\circ \tilde{u}(\bar{z}). 
\]
Then $\hat{\tilde{u}}$ satisfies 
\[
\iota\circ \hat{\tilde{u}}=\hat{u}\circ l
\] 
and
\[
\left(\lim_{\theta\to 0^-}\hat{\tilde{u}}(e^{\mathbf{i}\theta} \bar{\zeta}_i),\lim_{\theta \to 0^+}\hat{\tilde{u}}(e^{\mathbf{i}\theta} \bar{\zeta}_i)\right)\in R_{\hat{\alpha}(i)}, \quad \hat{\alpha}(i)\ne 0.
\]
Note that both the complex conjugation on the domain and the involution on $X$ swap the immersed sectors and $\hat{\alpha}$ is obtained from $\alpha$ simply by relabeling the boundary marked points.

For $\beta=[u]$, setting $\hat{\beta}=[\hat{u}]$, the map $\tau^{main}_*:\cM_{k+1}(\alpha,\beta)\to \cM_{k+1}(\hat{\alpha},\hat{\beta})$ defined by
\[
[\Sigma,\vec{z},u,\tilde{u},l]\mapsto [\Sigma,\hat{\vec{z}},\hat{u},\hat{\tilde{u}},l]
\]
is a homeomorphism (of topological spaces) satisfying $\tau^{main}\circ \tau^{main}=id$. We can then choose Kuranishi structures respecting the involution $\tau$ as follows.

\begin{theorem}[{\cite[Theorem 4.11]{FOOO-anti}}] 
\label{thm:tau_*}
The map $\tau^{main}_*$ is induced by an isomorphism of Kuranishi structures.
\end{theorem}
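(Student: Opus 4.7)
The plan is to show that the set-theoretic homeomorphism $\tau^{main}_*$ lifts to a compatible system of isomorphisms between Kuranishi charts on both sides. First I would verify at the underlying level that the assignment $u \mapsto \hat u = \tau \circ u \circ c$ (where $c$ is complex conjugation on the domain) produces a bona fide $J$-holomorphic map: this is where $\tau^*J = -J$ is used, since the minus sign cancels against the orientation reversal of $c$ so that $\bar\partial_J \hat u = 0$. The fact that $\tilde\tau(\cI^\pm) = \cI^\mp$ ensures that the rearranged local lift $\hat{\tilde u}$ lands in $R_{\hat\alpha(i)}$, and the marked-point permutation $i \mapsto k+1-i$ is forced by the orientation reversal of $\partial \Sigma$. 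Together with $\tau^{main} \circ \tau^{main} = id$, this establishes the continuous $\Z/2$-equivariance on the topological moduli space.

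The key step is then to choose obstruction bundles $\tau$-equivariantly. At a point $p = [\Sigma,\vec z, u, \tilde u, l]$, the standard construction fixes a finite-dimensional space $E_p \subset \Omega^{0,1}(\Sigma, u^*TX)$ (with appropriate Lagrangian boundary conditions on $u|_{\partial \Sigma}$) stabilizing the linearized Cauchy-Riemann operator. Because $\tau^*J = -J$, the pushforward by $d\tau$ combined with pullback via $c$ defines a natural isomorphism $\tau_*: \Omega^{0,1}(\Sigma, u^*TX) \isoto \Omega^{0,1}(\Sigma, \hat u^*TX)$ that intertwines the two linearized operators. I would then set $E_{\tau^{main}_*(p)} := \tau_* E_p$. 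When $(\alpha, \beta) \neq (\hat\alpha,\hat\beta)$, the two moduli spaces are disjoint and one can simply transport; when $(\alpha,\beta) = (\hat\alpha,\hat\beta)$, one symmetrizes by replacing $E_p$ with $E_p + \tau_* E_{\tau^{main}_*(p)}$ so that the assignment is $\Z/2$-invariant.

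Next, I would check that the whole Kuranishi datum $(V_p, E_p, \Gamma_p, \psi_p, s_p)$ is intertwined by $\tau_*$: the isotropy group $\Gamma_p$ matches $\Gamma_{\tau^{main}_*(p)}$ via conjugation by $c$, the Kuranishi map $s_p = \dbar u \ \mathrm{mod}\ E_p$ transforms to $s_{\tau^{main}_*(p)}$, and the embedding $\psi_p$ of $s_p^{-1}(0)/\Gamma_p$ into $\cM_{k+1}(\alpha,\beta)$ is sent by $\tau^{main}_*$ to its counterpart. The inductive compatibility of coordinate changes between charts reduces to showing that the pregluing and gluing constructions at interior nodes, boundary nodes and bubble trees can be performed $\Z/2$-equivariantly; this follows from naturality once the cutoff functions and trivializations used in gluing are themselves chosen $\tau$-equivariantly, which can always be arranged by averaging.

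The main obstacle is the fixed-locus case when $(\alpha,\beta) = (\hat\alpha,\hat\beta)$ and $\tau^{main}_*$ has fixed points on the moduli space: here one must simultaneously achieve $\Z/2$-equivariance and transversality of the multisection $\fs$, which is impossible for single-valued sections in general. The resolution, following \cite{FOOO-anti}, is to work with $\Z/2$-equivariant multisections and to enlarge the obstruction spaces as needed so that an equivariant perturbation exists. Once this is in place, the identity $\tau^{main} \circ \tau^{main} = id$ upgrades to an involutive isomorphism of Kuranishi structures, and the induced identifications $\ev_0 \circ \tau^{main}_* = \sigma \circ \ev_0$ and $\ev_i \circ \tau^{main}_* = \ev_{k+1-i}$ (for $i \geq 1$) can be recorded for later use in comparing $A_\infty$-structure maps.
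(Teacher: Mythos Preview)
The paper does not provide its own proof of this statement: it is quoted verbatim as \cite[Theorem 4.11]{FOOO-anti} and used as a black box. There is therefore nothing in the paper to compare your argument against.

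That said, your sketch is a faithful outline of the standard argument in \cite{FOOO-anti}. The essential points---that $\tau^*J=-J$ combined with domain conjugation preserves $J$-holomorphicity, that one transports obstruction spaces via $\tau_*$ and symmetrizes on the diagonal $(\alpha,\beta)=(\hat\alpha,\hat\beta)$, and that equivariant multisections resolve the fixed-locus transversality issue---are exactly the ingredients used there. One small correction: your claim that transversality of a $\Z/2$-equivariant single-valued section is ``impossible in general'' is slightly overstated; the point is rather that it cannot be \emph{guaranteed} by a generic perturbation argument, which is why one passes to multisections. Otherwise the outline is sound, and the evaluation-map compatibilities you record at the end are indeed what the paper uses downstream (e.g.\ in Theorem~\ref{thm:inv}).
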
	

The choice of a relative spin structure on $L$ together with the choice of a path in the Lagrangian Grassmannian of $T_{p}X$ (for a base point $p$ in each connect component of $\cI$) connecting $d\iota (T_{p_-}L)$ and $d\iota (T_{p_+}L)$, $\iota(p_-)=\iota(p_+)=p$, determine orientations on the moduli spaces $\cM_{k+1}(\alpha,\beta)$ \cite[Section 5]{AJ} (see also \cite[Chapter 8.8]{FOOO} for the Bott-Morse version). We will fix a choice of connecting paths for the following discussion.

Let $\sigma\in H_2(X,L;\Z/2\Z)$ be a (stable conjugacy class of) relative spin structure on $L$.  We write $\cM_{k+1}(\alpha,\beta)^{\sigma}$ to emphasize the Kuranishi structure $\cM_{k+1}(\alpha,\beta)$ is equipped with the orientation determined by $\sigma$. Let $\cM_{k+1}^{\mathrm{clock}}(\alpha,\beta)$ denote the moduli space with the boundary marked points respecting the \textit{clockwise} order and put $\vec{\bar{z}}=(\bar{z}_0,\ldots,\bar{z}_k)$. By {\cite[Theorem 4.10]{FOOO-anti}, the map
\begin{equation}\label{eq:orientation}
\begin{aligned}
\tau_*:\cM_{k+1}(\alpha,\beta)^{\tau^*\sigma} &\to \cM_{k+1}^{\mathrm{clock}}(\alpha,\hat{\beta})^{\sigma},\\
[\Sigma,\vec{z},u,\tilde{u},l] &\mapsto [\Sigma, \vec{\bar{z}},\hat{u},\hat{\tilde{u}},l],
\end{aligned}
\end{equation}
is an orientation preserving isomorphism of Kuranishi structures if and only if $\mu_L(\beta)/2+k+1$ is even.

Let $P_1,\ldots,P_k\in C^{\bullet}(R_{-1}\coprod R_1)\subset C^{\bullet}(L;\Lambda_0)$ be singular chains on the immersed sectors. We have an isomorphism
\begin{equation}
\label{eq:inv}
\tau^{main}_*: \cM_{k+1}(\alpha,\beta;P_1,\ldots,P_k)^{\tau^*\sigma}\to  \cM_{k+1}(\hat{\alpha},\hat{\beta};P_k,\ldots,P_1)^{\sigma}
\end{equation}
of Kuranishi structures, satisfying $\tau^{main}\circ \tau^{main}=id$.  

Let $\cM_{k+1}^{\mathrm{unordered}}(\alpha,\beta;P_1,\ldots,P_k)$ be the moduli space with unordered boundary marked points. 
Note that the unordered moduli space 
contains both $\cM_{k+1}(\alpha,\beta;P_1,\ldots,P_k)$ and $\cM_{k+1}^{\mathrm{clock}}(\alpha,\beta;P_1,\ldots,P_k)$ as connected components. Let $\{i,i+1\}\subset \{1,\ldots,k\}$ and let $\alpha^{i\leftrightarrow i+1}:\{0,\ldots,k\}\to \{-1,0,1\}$ be the map obtained from $\alpha$ by swapping $\alpha(i)$ and $\alpha(i+1)$. By \cite[Lemma 3.17]{FOOO-anti}, the action of changing the order of marked points induces an orientation preserving isomorphism
\begin{equation} \label{eq:reordering}
\begin{array}{l}
\cM_{k+1}^{\mathrm{unordered}}(\alpha,\beta;P_1,\ldots,P_i,P_{i+1},\ldots,P_k) \\\xrightarrow{\sim} (-1)^{(\deg P_i+1) (\deg P_{i+1}+1)} \cM_{k+1}^{\mathrm{unordered}}(\alpha^{i\leftrightarrow i+1},\beta;P_1,\ldots,P_{i+1},P_i,\ldots,P_k).
\end{array}
\end{equation}
Combining \eqref{eq:orientation} and \eqref{eq:reordering}, we derive the following theorem. 
\begin{theorem}
\label{thm:inv}
The map \eqref{eq:inv} is orientation preserving (resp. reversing) if $\epsilon$ is even (resp. odd) where
\begin{equation}
\label{eq:epsilon}
\epsilon=\cfrac{\mu_L(\beta)}{2}+k+1+\sum_{1\le i<j\le k} (\deg P_i+1) (\deg P_j+1).
\end{equation}
\end{theorem}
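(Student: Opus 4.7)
The plan is to factor $\tau^{main}_*$ as the composition of the involution from \eqref{eq:orientation} followed by a reordering of the input boundary marked points, and then to combine the two sign contributions. First I would apply the map of \eqref{eq:orientation}, which lands in the clockwise-ordered moduli space $\cM_{k+1}^{\mathrm{clock}}(\alpha,\hat{\beta})$ with marked points $(\bar{z}_0,\bar{z}_1,\ldots,\bar{z}_k)$ and constraints still labeled $(P_1,\ldots,P_k)$; by that theorem this step is an orientation preserving isomorphism of Kuranishi structures exactly when $\mu_L(\beta)/2+k+1$ is even, so it contributes the sign $(-1)^{\mu_L(\beta)/2+k+1}$.

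Next I would pass from this clockwise-ordered moduli space to the counter-clockwise one $\cM_{k+1}(\hat\alpha,\hat\beta;P_k,\ldots,P_1)$ that appears in the codomain of \eqref{eq:inv}. Tracing the identification $\hat z_i=\bar z_{k+1-i}$ for $i=1,\ldots,k$ shows that this amounts to reversing the input sequence from $(P_1,\ldots,P_k)$ to $(P_k,\ldots,P_1)$, correspondingly taking $\alpha$ to $\hat\alpha$, while the underlying geometric data is unchanged. I would realize this reversal as a product of adjacent transpositions and apply \eqref{eq:reordering} repeatedly: each neighboring swap contributes the sign $(-1)^{(\deg P_i+1)(\deg P_{i+1}+1)}$, and in the reduced word for the longest element of $S_k$ each unordered pair $\{i,j\}$ gets swapped exactly once. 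The Koszul rule applied to the shifted degrees $|P_i|:=\deg P_i+1$ thus gives the cumulative reordering sign $(-1)^{\sum_{1\le i<j\le k}(\deg P_i+1)(\deg P_j+1)}$.

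Multiplying the two signs yields $(-1)^\epsilon$ with $\epsilon$ as in \eqref{eq:epsilon}, which is the claim. The main obstacle I anticipate is the bookkeeping of orientation conventions rather than any new geometry: one has to verify on the nose that the geometric map $\tau^{main}_*$ of \eqref{eq:inv} really factors as the two steps above, and that no extra tacit re-ordering slips in when comparing clockwise and counter-clockwise moduli spaces. A short induction on $k$ settles the sign of the reversal, with the base case $k=2$ being a single swap and the inductive step moving $P_k$ past $P_1,\ldots,P_{k-1}$ before invoking the hypothesis on $(P_1,\ldots,P_{k-1})$; this, together with Theorem \ref{thm:tau_*} ensuring that $\tau^{main}_*$ is induced by an isomorphism of Kuranishi structures to begin with, completes the proof.
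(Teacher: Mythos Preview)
Your proposal is correct and follows exactly the approach the paper takes: the paper's proof consists of the single sentence ``Combining \eqref{eq:orientation} and \eqref{eq:reordering}, we derive the following theorem,'' and you have spelled out precisely this combination, factoring $\tau^{main}_*$ as $\tau_*$ followed by the reversal of input labels and tracking the two sign contributions. Your bookkeeping of the reordering sign via adjacent transpositions and the shifted-degree Koszul rule is the standard way to make that step explicit.
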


\subsubsection{Pearl complexes} \label{sec:pearl_complex}
Let us fix a Morse function $f:\tilde{L}\times_L \tilde{L} \to \R$. 
For simplicity, we will assume $f$ has a unique maximum point $\bunit$ on $\tilde{L}$. 
Let $C^{\bullet}(f;\Lambda_0)$ be the cochain complex generated by the critical points of $f$. Namely, 
\begin{equation}\label{equ_morsecri}
C^{\bullet}(f;\Lambda_0)=\bigoplus_{p\in \Crit(f)} \Lambda_0 \cdot p.
\end{equation}
We begin by setting up some notations. We denote
\begin{itemize}
\item by $\mathscr{V}$ a (negative) pseudo gradient vector field of $f$ satisfying the Smale condition,
\item by $\Phi_t$ the flow of $\mathscr{V}$
\item by $W^s(p)$ (resp. $W^u(p)$) the stable (resp. unstable) submanifold of $p \in \Crit(f)$, 
\item by $\overline{W^s(p)}$ (resp. $\overline{W^u(p)}$) the natural compactification of $W^s(p)$ (resp. $W^u(p)$) to a smooth manifold with corners.
\end{itemize}

The $A_{\infty}$-structure maps on the complex $C^{\bullet}(f;\Lambda_0)$ can be defined by counting configurations called \emph{pearly trees} (see Figure~\ref{fig:pearl} for an illustration). This was systematically developed by Biran and Cornea \cite{BC-pearl, BC-survey} under the monotonicity assumption on Lagrangians (a similar complex previously appeared in \cite{Oh96R}).
Since we will not restrict ourselves to the setting of monotone Lagrangians, we shall derive the Morse model from the singular chain model using the homological method as in Fukaya-Oh-Ohta-Ono \cite{FOOO-can}.

We identify $C^{\bullet}(f;\Lambda_0)$ with a subcomplex $C^{\bullet}_{(-1)}(L;\Lambda_0)$ of $C^{\bullet}(L;\Lambda_0)$ is generated by certain singular chains $\Delta_p$ representing $\overline{W^u(p)}$ for $p\in\Crit(f)$.
Here, the chain $\Delta_p$ is chosen so that the assignment $p\to \Delta_p$ is a chain map inducing an isomorphism on cohomology (see \cite[Theorem 2.3]{KLZ19}). We will refer to $\Delta_p$ as the the \textit{unstable chain} of $p$, and implicitly identify  $C^{\bullet}(f;\Lambda_0)$ with $C^{\bullet}_{(-1)}(L;\Lambda_0)$ from now on.

The $A_{\infty}$-maps $\fm^{\blT}:=\{\fm^{\blT}_k\}_{k\ge 0}$ on $C^{\bullet}(f;\Lambda_0)$  are defined in terms of \textit{decorated planar rooted trees}, for which we recall the definition below.

\begin{defn}[\cite{FOOO-can}]
		A \emph{decorated planar rooted tree} is a quintet $\Gamma=(T,\iota,v_0,V_{tad},\eta)$ consisting of 
	\begin{itemize}
		\item $T$ is a tree;
		\item $\iota:T\to D^2$ is an embedding into the unit disc; 
		\item $v_0$ is the root vertex and $\iota(v_0)\in\partial D^2$;
		\item $V_{tad}$ is the set of interior vertices with valency $1$;
		\item $\eta=(\eta_1,\eta_2):V(\Gamma)_{int}\to \Z_{\ge 0}\oplus \Z_{\ge 0}$,
	\end{itemize}
	where $V(\Gamma)$ is the set of vertices, $V(\Gamma)_{ext}=\iota^{-1}(\partial D^2)$ is the set of exterior vertices and $V(\Gamma)_{int}=V(\Gamma)\setminus V(\Gamma)_{ext}$ is the set of interior vertices. For $k \ge 0$, denote by $\bm{\Gamma}_{k+1}$ the set of isotopy classes represented by $\Gamma=(T,\iota,v_0,\eta)$ with $|V(\Gamma)_{ext}|=k+1$ and $\eta(v)>0$ if the valency $\ell(v)$ of $v$ is $1$ or $2$. In other words, the elements of $\bm{\Gamma}_{k+1}$ are stable. We will refer to the elements of $\bm{\Gamma}_{k+1}$ as stable trees.
\end{defn}

For each $\Gamma\in \bm{\Gamma}_{k+1}$, we label the exterior vertices by $v_0,\ldots,v_k$ respecting the counter-clockwise orientation, and orient the 
edges along the direction from the $k$ input vertices $v_1,\ldots,v_k$ towards the root vertex $v_0$. 

We define the map $\Pi:C^{\bullet}(L;\Lambda_0)\to C^{\bullet}(f;\Lambda_0)$ by
\[
\Pi(P)=
\begin{cases}
\displaystyle \sum_{\substack{p\in \Crit(f),\\ \deg p=n-\deg P}} \sharp (P\cap W^s(p))\cdot \Delta_p &\mbox{if the intersection $P\cap W^s(p)$ is transverse}, \\
\quad \quad \quad \quad \quad 0 &\mbox{if the intersection $P\cap W^s(p)$ is \emph{not} transverse}.
\end{cases}
\]
For the map $G:C^{\bullet}(L;\Lambda_0)\to C^{\bullet-1}(L;\Lambda_0)$, $G(P)$ is defined by a singular chain representing the \textit{forward orbit} of $P$ satisfying
\begin{equation}
\label{eq:hp}
\Pi(P)-P=\partial G(P)+G(\partial P).
\end{equation}
See \cite[Theorem 2.3]{KLZ19} for the construction of $G(P)$.

Denote by $\Gamma^0\in \bm{\Gamma}_2$ the unique tree with no interior vertices.
For this tree $\Gamma^0$, we define
\[
\begin{split}
\fm_{\Gamma_0}: C^{\bullet}(f;\Lambda_0)\to C^{\bullet}(f;\Lambda_0) &\mbox{ by $\fm_{\Gamma_0}:= \tilde{\fm}_{1,\beta_0}$}, \\
\ff_{\Gamma_0}:C^{\bullet}(f;\Lambda_0)\hookrightarrow C^{\bullet}(L;\Lambda_0) &\mbox{ by the inclusion}. 
\end{split} 
\]
For each $k\ge 0$, $\bm{\Gamma}_{k+1}$ contains a unique element that has a single interior vertex $v$, which we denote by $\Gamma_{k+1}$. Let $\bm{\alpha}_{k+1}$ denote the set of maps $\alpha:\{0,\ldots, k\}\to \{-1,0,1\}$. We fix a labeling $\{\beta_0,\beta_1,\ldots\}$ of elements of $H_2^{\mathrm{eff}}(X,L)$ with $\beta_0$ the constant disc class and labeling $\{\alpha_{k+1,0},\alpha_{k+1,1},\ldots\}$ of elements of $\bm{\alpha}_{k+1}$ with $\alpha_{k+1,0}$ the map $\alpha_{k+1,0}(i)=0$ for $i=1,\ldots,k$. We define
\[
\fm_{\Gamma_{k+1}} := \Pi\circ \tilde{\fm}_{k,(\alpha_{\ell(v),\eta_1(v)},\beta_{\eta_2(v)})} \mbox{ and }
\ff_{\Gamma_{k+1}} := G\circ \tilde{\fm}_{k,(\alpha_{\ell(v),\eta_1(v)},\beta_{\eta_2(v)})}.
\]

For a general rooted tree $\Gamma\in \bm{\Gamma}_{k+1}$, cut it at the vertex $v$ closest to the root vertex $v_0$ so that $\Gamma$ is decomposed into $\Gamma^{(1)},\ldots,\Gamma^{(\ell)}$ and an interval adjacent to $v_0$ in the counter-clockwise order. The maps $\fm_{\Gamma}:C^{\bullet}(f;\Lambda_0)^{\otimes k}\to C^{\bullet}(f;\Lambda_0)$ and $\ff_{\Gamma}:C^{\bullet}(f;\Lambda_0)^{\otimes k}\to C^{\bullet}(L;\Lambda_0)$ are inductively defined by
\begin{equation}\label{mgammaeq}
\fm_{\Gamma} := \Pi\circ \tilde{\fm}_{\ell,(\alpha_{\ell(v),\eta_1(v)},\beta_{\eta_2(v)})}\circ (\ff_{\Gamma^{(1)}} \otimes\ldots\otimes\ff_{\Gamma^{(\ell)}})
\end{equation}
and 
\begin{equation}\label{fgammaeq}
\ff_{\Gamma} := G\circ\tilde{\fm}_{\ell,(\alpha_{\ell(v),\eta_1(v)},\beta_{\eta_2(v)})}\circ (\ff_{\Gamma^{(1)}} \otimes\ldots\otimes\ff_{\Gamma^{(\ell)}}).
\end{equation}
At last, we define the $A_{\infty}$-maps $\fm^{\blT}_k:C^{\bullet}(f;\Lambda_0)^{\otimes k}\to C^{\bullet}(f;\Lambda_0)$ by 

\begin{equation}
\label{eq:m_k}
\fm^{\blT}_k := \sum_{\Gamma\in\bm{\Gamma}_{k+1}} \bT^{\omega(\Gamma)} \fm_{\Gamma},
\end{equation}
where $\omega(\Gamma)=\sum_{v} \omega(\beta_{\eta_2(v)})$.

The $A_\infty$-algebra $(C^{\bullet}(f;\Lambda_0),\fm^{\blT})$ does \emph{not} have a strict unit in general. In \cite{KLZ19}, a unital $A_\infty$-algebra $(CF^{\bullet}(L;\Lambda_0),\fm)$ was constructed from  $(C^{\bullet}(f;\Lambda_0),\fm^{\blT})$ by applying the homotopy unit construction \cite[Chapter 7]{FOOO}. 
We recall the key properties of $(CF^{\bullet}(L;\Lambda_0),\fm)$ below.

\begin{itemize}
\item We have
\begin{equation}\label{equ_cfllambda}
CF^{\bullet}(L;\Lambda_0)=C^{\bullet}(f;\Lambda_0)\oplus \Lambda_0 \cdot \wunit\oplus \Lambda_0 \cdot \gunit
\end{equation}
as graded modules. $\wunit$ and $\gunit$ are generators in degree $0$ and $-1$, respectively. 
\item The restriction of $\fm$ to $C^{\bullet}(f;\Lambda_0)$ agrees $\fm^{\blT}$.
\item $\wunit$ is the strict unit, i.e.,
\[
\fm_2(\wunit,x)=(-1)^{\deg x} \fm_2(x,\wunit)=x, 
\] 
for $x\in CF^{\bullet}(L;\Lambda_0)$, and 
\[
\fm_k(\ldots,\wunit,\ldots)=0
\]
for $k\ge 2$.
\item Assuming the minimal Maslov index of $L$ is nonnegative, we have
\[
\fm_1(\gunit)=\wunit-(1-h)\bunit, \quad h\in \Lambda_+.
\]
\end{itemize}
The maximum point $\bunit$ is a homotopy unit in the sense of \cite[Definition 3.3.2]{FOOO}.\\

For a pair $(L_1,L_2)$ of closed, connected, relatively spin, and embedded Lagrangian submanifolds intersecting cleanly, the union $L=L_1\cup L_2$ is an immersed Lagrangian with clean self-intersections and with normalization $\tilde{L}=L_1\coprod L_2$. We choose the splitting $\iota^{-1}(\cI)=\cI^-\coprod \cI^+$ so that $\cI^-\subset L_1$ and $\cI^+\subset L_2$. In this case, $R_1$ indicates a branch jump from $L_1$ to $L_2$ and $R_{-1}$ indicates a branch jump from $L_2$ to $L_1$, see \eqref{eqn:r1r-1}. 

We can define a pearl complex $(CF^{\bullet}(L_1,L_2;\Lambda_0),\fm_1)$ for the Lagrangian intersection Floer theory of $(L_1,L_2)$ as follows: 
\[
CF^{\bullet}(L_1,L_2;\Lambda_0)=\bigoplus_{p\in \Crit(f|_{R_{1}}) } \Lambda_0 \cdot p
\]
is the subcomplex of $CF^{\bullet}(L;\Lambda_0)$ generated by critical points of $f$ in $R_1$, and the differential $\fm_1$ counts stable pearly trees in $\bm{\Gamma}_2$ with both input and output vertices in $R_1$.

\begin{figure}[h]
	\begin{center}
		\includegraphics[scale=0.5]{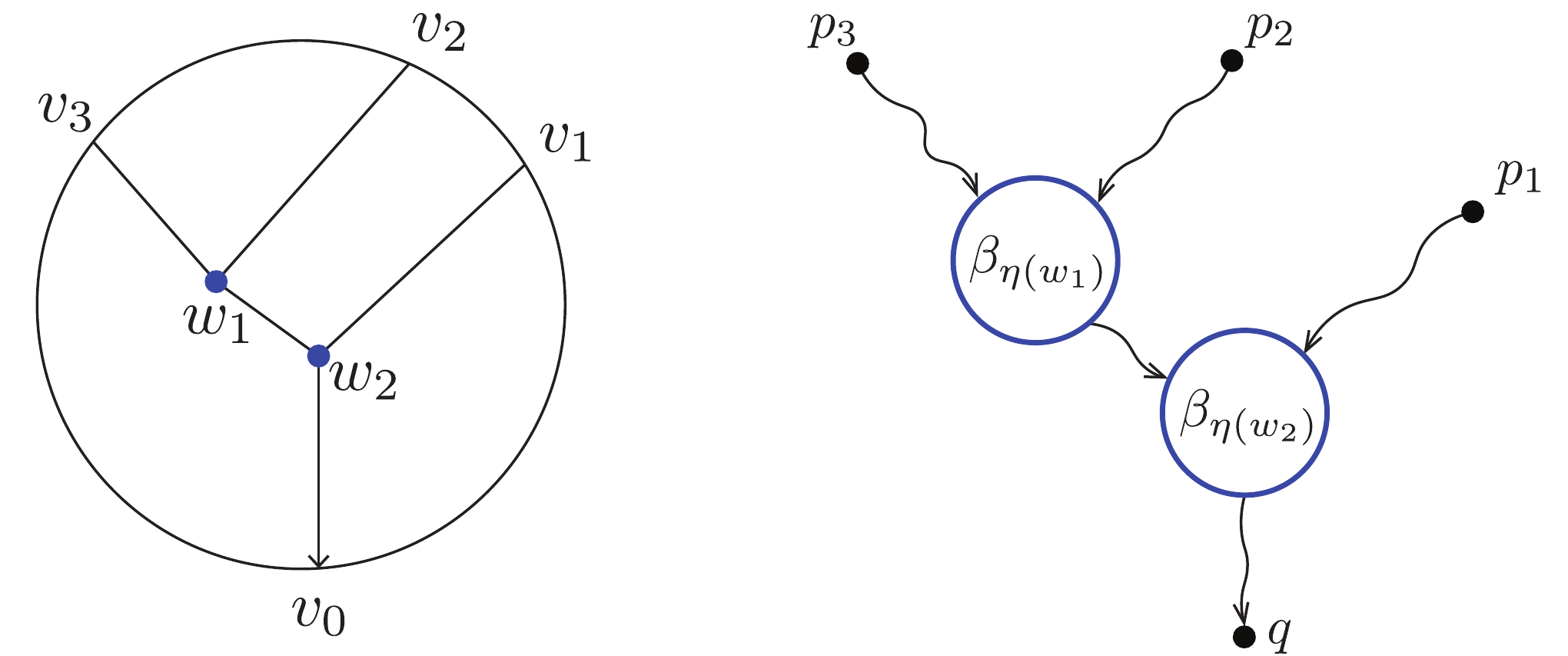}
		\caption{A pearly tree}\label{fig:pearl}
	\end{center}
\end{figure}


\subsubsection{Maurer-Cartan space}
Let $(A,\fm)$ be an $A_{\infty}$-algebra over $\Lambda_0$ with the strict unit $e_A$,
and set 
\[
A_+=\{x\in A \mid x\equiv 0 \mod \Lambda_+\cdot A \}.
\]
The \emph{weak Maurer-Cartan equation} for an element $b\in A_+$ is given by
\begin{equation*}
\label{eq:MC}
\fm^b_0(1)=\fm_0(1)+\fm_1(b)+\fm_{2}(b,b)+\ldots \in \Lambda_0 \cdot e_A.
\end{equation*}
The condition that $b\in A_+$ ensures the convergence of $\fm^b_0(1)$. A solution $b$ of \eqref{eq:MC} is called a \emph{weak bounding cochain}. We denote by 
\begin{equation*}
\mathcal{MC}(A)= \left\{b\in A^{odd}_+ \mid \fm_0^b(1)\in \Lambda_0 \cdot e_A \right\},
\end{equation*}
the space of weak Maurer-Cartan elements. 
We say an $A_{\infty}$-algebra $A$ is \emph{weakly unobstructed} if $MC(A)$ is nonempty, in which case we have $(\fm^b_1)^2=0$ for any $b\in \mathcal{MC}(A)$, thus defining a cohomology theory $H^{\bullet}(A,\fm^b_1)$. 

Let us put 
\begin{equation*}
e^b:=1+b+b\otimes b+\ldots.
\end{equation*}
For an element $b\in \mathcal{MC}(A)$, we can define a deformation $\fm^b$ of the $A_{\infty}$-structure $\fm$ by
\begin{equation} \label{eq:deformation}
\fm^b_k(x_1,\ldots,x_k)=\fm(e^b,x_1,e^b,x_2,e^b,\ldots,e^b,x_k,e^b), \quad x_1,\ldots,x_k\in A.
\end{equation}

Now, for $(A,\fm)=(CF^{\bullet}(L;\Lambda_0),\fm)$, we denote by $\mathcal{MC}(L)$ the space of odd degree 
weak bounding cochains. 

We say that the deformation of $L$ by $b$ (or simply $(L,b)$) is \emph{unobstructed} (resp. \emph{weakly unobstructed}), if $\mathfrak{m}^{b}(1)=0$ (resp. $b\in \mathcal{MC}(L)$). In particular, if $b=0$, we will simply say $L$ is unobstructed.

The following lemma concerns the weakly unobstructedness of $(CF^{\bullet}(L;\Lambda_0),\fm)$. This technique of finding weak bounding cochains in the presence of a homotopy unit was introduced in \cite[Chapter 7]{FOOO} and \cite[Lemma 2.44]{CW15}.

\begin{lemma}[{\cite[Lemma 2.7]{KLZ19}}]
	\label{lem:unobstr}
Let $b\in CF^{1}(L;\Lambda_+)$. Suppose $\fm_0^b=W(b)\bunit$ and the minimal Maslov index of $L$ is nonnegative. Then there exists $b^+\in CF^{odd}(L;\Lambda_{+})$ such that $\fm_0^{b^+}(1)=W^{\wT}(b)\wunit$, i.e., $(CF^{\bullet}(L;\Lambda_0),\fm)$ is weakly unobstructed. In particular, if the minimal Maslov index of $L$ is at least two, then, $W^{\wT}(b)=W(b)$.
\end{lemma}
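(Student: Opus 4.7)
The plan is to construct $b^+$ as the $\bT$-adic limit of successive corrections to $b$, using the homotopy-unit identity $\fm_1(\gunit)=\wunit-(1-h)\bunit$ with $h\in\Lambda_+$ to transfer the $\bunit$-component of $\fm_0^{\bullet}(1)$ into a $\wunit$-component at each step while pushing the residue to strictly higher Novikov order. First, $W(b)\in\Lambda_+$, because $b\in\Lambda_+\cdot CF^{1}$ and only non-constant disc classes contribute to $\fm_0$. Setting $\phi:=(1-h)^{-1}\in\Lambda_{\mathrm{U}}$, the homotopy-unit identity rearranges to
$$\bunit=\phi\,\wunit-\fm_1(\phi\,\gunit),$$
exhibiting any multiple of $\bunit$ as $\fm_1$-cohomologous to a multiple of $\wunit$.

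Next I would set up the induction. Put $b^+_0:=b$, so $\fm_0^{b^+_0}(1)=W(b)\bunit$, and assume $b^+_n\in CF^{odd}(L;\Lambda_+)$ has been chosen with $\fm_0^{b^+_n}(1)=W_n\wunit+\rho_n$, where $W_n\in\Lambda_0$ and $\rho_n$ has Novikov valuation at least $nE$ for a fixed energy gap $E>0$ coming from the minimal symplectic area of non-constant discs. Writing the lowest-order part of $\rho_n$ as $s_n\bunit+\rho_n'$ with $\rho_n'$ transverse to $\Lambda_0\wunit+\Lambda_0\bunit$, I would set $b^+_{n+1}:=b^+_n+s_n\phi\,\gunit+\delta_n$, where $\delta_n\in C^{\bullet}(f;\Lambda_+)$ is chosen to kill $\rho_n'$ via $\fm_1^{b^+_n}$ up to higher Novikov order. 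A direct expansion using the deformed operations $\fm^{b^+_n}$ shows that $\fm_1^{b^+_n}(s_n\phi\,\gunit)$ equals $s_n\phi\,\wunit-s_n\bunit$ modulo $\bT^{(n+1)E}$ (the undeformed relation plus corrections each carrying at least one factor from $b^+_n\in\Lambda_+$), so the $s_n\bunit$ term is cancelled and $s_n\phi$ is fed into the $\wunit$-coefficient. All quadratic-and-higher contributions in $s_n\phi\,\gunit+\delta_n$ and every mixed $\fm_{k\ge 2}^{b^+_n}$-contribution have Novikov valuation at least $(n+1)E$, advancing the induction.

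The main obstacle, and where the argument is most delicate, is producing the correction $\delta_n$ absorbing the off-unit residue $\rho_n'$. This is the content of the homotopy-unit technique of \cite[Chapter 7]{FOOO} and \cite[Lemma 2.44]{CW15}: one shows that, modulo $\bT^{(n+1)E}$, the cohomology class $[\fm_0^{b^+_n}(1)]\in H^{\bullet}(\fm_1^{b^+_n})$ is represented by a scalar multiple of $\wunit$ (via the cohomological identity above and the inductive form of $\fm_0^{b^+_n}(1)$), so $\rho_n'$ is $\fm_1^{b^+_n}$-exact at the cochain level and can be gauged away by a $\bT^{nE}$-small correction to $b^+_n$. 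The sequence $\{b^+_n\}$ is $\bT$-Cauchy by construction, and its limit $b^+\in CF^{odd}(L;\Lambda_+)$ together with $W^{\wT}(b):=\lim_{n\to\infty}W_n\in\Lambda_0$ satisfies $\fm_0^{b^+}(1)=W^{\wT}(b)\wunit$, proving weak unobstructedness.

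For the last assertion, when the minimal Maslov index of $L$ is at least two, a degree-counting argument rules out Maslov-zero disc contributions to the $\bunit$-coefficient of $\fm_1(\gunit)$, so $h=0$ and $\phi=1$. The first iteration gives $b^+_1=b+W(b)\gunit$ with $\fm_0^{b^+_1}(1)\equiv W(b)\wunit$ modulo strictly higher Novikov order, and the subsequent corrections can only modify the $\wunit$-coefficient at orders where no non-trivial contribution exists under the Maslov hypothesis. Comparing the resulting expansions order-by-order in $\bT$ yields $W^{\wT}(b)=W(b)$.
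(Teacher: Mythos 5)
The paper does not reprove this lemma (it is quoted from [KLZ19]), so I am comparing your argument with the standard homotopy-unit argument of [FOOO, Ch.~7] and [CW15, Lemma~2.44] that the citation points to. Your overall strategy --- correct $b$ by multiples of $\gunit$, using $\fm_1(\gunit)=\wunit-(1-h)\bunit$ to trade the $\bunit$-component of the obstruction for a $\wunit$-component order by order in energy --- is the right one, and your bookkeeping of the single-$\gunit$ insertions modulo $\bT^{(n+1)E}$ is fine.

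The gap is your treatment of the ``off-unit residue'' $\rho_n'$. You assert that $\rho_n'$ is $\fm_1^{b_n^+}$-exact because the class $[\fm_0^{b_n^+}(1)]$ is represented by a multiple of $\wunit$, but nothing in the hypotheses gives this: before weak unobstructedness is established one has $(\fm_1^{b_n^+})^2\ne 0$ in general, so the cohomology you invoke is not defined (working modulo $\bT^{(n+1)E}$ only yields $(\fm_1^{b_n^+})^2=O(\bT^{nE})$), and even granting a well-defined cohomology, closedness of $\fm_0^{b_n^+}(1)$ does not make its off-unit part exact --- a non-exact off-unit component is precisely what obstructedness means, so this step is circular. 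The correct route is to show that $\rho_n'$ never appears, and this is where the Maslov hypothesis is actually used: with $\deg b=1$, $\deg\gunit=-1$ and $\mu(\beta)\ge 0$, any term of $\fm_0^{b+\lambda\gunit}(1)$ containing $j\ge 1$ insertions of $\gunit$ has output degree $2-2j-\mu(\beta)\le 0$; since the degree-$0$ part of $CF^{\bullet}(L;\Lambda_0)$ is spanned by $\bunit$ and $\wunit$ (unique maximum) and nothing lives below degree $-1$, the terms with $j\ge 2$ vanish and those with $j=1$ land in $\Lambda_0\bunit\oplus\Lambda_0\wunit$. Hence $\fm_0^{b+\lambda\gunit}(1)=\lambda\,\wunit+\bigl(W(b)-\lambda(1-h)+\lambda\,c(b)\bigr)\bunit$ for some $c(b)\in\Lambda_+$, and one solves the single scalar equation $\lambda=W(b)/(1-h-c(b))\in\Lambda_+$; no auxiliary correction $\delta_n$ and no exactness claim are needed. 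The same degree count, with $\mu(\beta)\ge 2$ now forcing output degree $\le -2$, is what kills $h$ and $c(b)$ and yields $W^{\wT}(b)=W(b)$ in the last assertion, rather than the order-by-order comparison you sketch.
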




\subsection{Equivariant Lagrangian Floer theory}\label{sec:G-Morse}
Equivariant Lagrangian Floer theory has been substantially developed in the recent years \cite{SS10,HLS16a,HLS16b,BH18,DF17}. In \cite{KLZ19}, a Morse model of equivariant Lagrangian Floer theory was constructed by counting pearly trees in the Borel construction. It was inspired by the family Morse theory of Hutchings \cite{hutchings08} and the pearl complex of Biran-Cornea \cite{BC-pearl,BC-survey}. It also incorporated the works of Fukaya-Oh-Ohta-Ono \cite{FOOO,FOOO-can}, allowing the theory to work in a very general setting. We give a brief overview of this equivariant Morse model in this section for the purpose of applications in this paper.

\subsubsection{Equivariant Floer complexes}\label{subsubsec:equifcpx1}
Let $(X,\omega)$ be a tame symplectic manifold equipped with a symplectic action of a compact Lie group $G$. Let $L\subset X$ be closed, connected, relatively spin, and $G$-invariant immersed Lagrangian submanifold with $G$-invariant clean self-intersection. We begin by choosing smooth finite dimensional approximations for the universal bundle $EG\to BG$ over classifying space. Namely, we have a commutative diagram
\begin{equation}
\label{eq:BG(N)}
\begin{tikzcd}
EG(0) = G \arrow[hookrightarrow]{r} \arrow{d}   & EG(1) \arrow[hookrightarrow]{r} \arrow{d} & EG(2) \arrow[hookrightarrow]{r} \arrow{d} & \cdots \\
BG(0) = \pt \arrow[hookrightarrow]{r}    & BG(1) \arrow[hookrightarrow]{r}  & BG(2) \arrow[hookrightarrow]{r}  & \cdots 
\end{tikzcd}
\end{equation}
where $EG(N)$ and $BG(N)$ are compact smooth manifolds, and the horizontal arrows are smooth embeddings.

Let $\mu_{N}:T^*EG(N)\to\mathfrak{g}^*$ be the moment map for the Hamiltonian $G$-action on $T^*EG(N)$ lifted from that on $EG(N)$. Since $G$ acts on 
$T^*EG(N)$ freely, we have
$$
T^*EG(N)\twobar G:=\mu_N^{-1}(0)/G\cong T^*BG(N),
$$
canonically, as symplectic manifolds. Let us set $L(N)=L\times_G EG(N)$, $\tilde{L}(N)=\tilde{L}\times_G EG(N)$ and  $X(N)=X\times_G \mu_N^{-1}(0)$. Then $\eqref{eq:BG(N)}$ induces a commutative diagram:

\begin{equation}
\label{eq:inclusions}
\begin{tikzcd}[row sep=huge]
\tilde{L}(0)=\tilde{L} \arrow[hookrightarrow]{r} \arrow{d}   & \tilde{L}(1) \arrow[hookrightarrow]{r} \arrow{d} & \tilde{L}(2) \arrow[hookrightarrow]{r} \arrow{d} & \cdots \\
X(0)=X \arrow[hookrightarrow]{r}    & X(1) \arrow[hookrightarrow]{r}  & X(2) \arrow[hookrightarrow]{r}  & \cdots 
\end{tikzcd}
\end{equation}
where the vertical arrows are Lagrangian immersions with $\iota(\tilde{L}(N))=L(N)$. 
We note that $X(N)$ and $\tilde{L}(N)$ are fiber bundles over $T^*BG(N)$ and $BG(N)$ with fibers $X$ and $\tilde{L}$, respectively. Since the zero section $BG(N)$ is an exact Lagrangian submanifold of $T^*BG(N)$, we have an identification of the effective disc classes 
\[
H_2^{\mathrm{eff}}(X,L)= H_2^{\mathrm{eff}}(X(N),L(N)).
\]

For simplicity of notations, we will denote $\bL(N)=\tilde{L}(N)\times_{L(N)} \tilde{L}(N)$ and $\bL=\bL(0)$. There is also a sequence of fiber products:
\[
\bL\into \bL(1)\into \bL(2)\into \cdots.
\]

In order to construct an equivariant Morse model, we choose Morse-Smale pairs $(f_{N},\mathscr{V}_N)$ on $\bL(N)$ for $N\ge 0$ satisfying following compatibility conditions:

\begin{defn} [{\cite[Definition~3.4]{KLZ19}}]	
\label{def:morse-smale}
We call a sequence of Morse-Smale pairs $\{(f_N,\mathscr{V}_N)\}_{N\in\N}$ \emph{admissible} if it satisfies the following:	\begin{enumerate}[label=\textnormal{(\arabic*)}]
		\item  	\label{assum:1}
		For each $N\in \N$, there is an inclusion of critical point sets $\Crit(f_{N})\subset \Crit(f_{N+1})$. Under this identification, we have 
		\begin{enumerate}[label=(\roman*)]
			\item For $p\in \Crit(f_{N})$,  $W^u(f_{N+1};p)\times_{\bL(N+1)}\bL(N)=W^u(f_{N};p)$. 
			\item For $p\in \Crit(f_{N})$, the image of $W^s(f_{N};p)$ in $\bL(N)$  coincides with $W^s(f_{N+1};p)$ in $\bL(N+1)$.
			\item For $q\in \bL(N+1)\setminus \bL(N)$, we have $\Phi_t(q)\notin \bL(N)$ for all $t\ge 0$. This implies 
			\[
			W^u(f_{N+1};p)\times_{\bL(N+1)}\bL(N)=\emptyset
			\]
			for $p\in\Crit(f_{N+1})\setminus\Crit(f_{N})$.
		\end{enumerate}	
		
		\item \label{assum:2} 
		For each $\ell\ge 0$, there exists an integer $N(\ell)>0$ such that $|p|>\ell$ for all $N\ge N(\ell)$ and $p\in\Crit(f_N)\setminus\Crit(f_{N-1})$.	
		
		\item 
		The restriction of $f_N$ to the diagonal component $\tilde{L}(N)$ has a unique maixmum point $\bunit_{\tilde{L}(N)}$ and the inclusion $\Crit(f_{N})\subset \Crit(f_{N+1})$ identifies $\bunit_{\tilde{L}(N)}$ with $\bunit_{\tilde{L}(N+1)}$. This allows us to identify $CF^{\bullet}(L(N);\Lambda_0)$ with a subcomplex of $CF^{\bullet}(L(N+1);\Lambda_0)$. We will denote $\bunit_{\tilde{L}(N)}$, $\wunit_{\tilde{L}(N)}$ and $\gunit_{\tilde{L}(N)}$ simply by $\bunit$, $\wunit$, and $\gunit$. 			
	\end{enumerate}
\end{defn}
(See \cite[Proposition~3.5]{KLZ19} for a construction of an admissible collection.)

Now, suppose we are given an admissible collection $\{(f_{N},\mathscr{V}_N)\}_{N\in\N}$, our equivariant Morse model $(CF^{\bullet}_G(L;\Lambda_0),\{\fm^G_k\}_{k\ge 0})$ can be defined as follows. 
Let $(CF^{\bullet}(L(N);\Lambda_0),\fm^N)$ be the unital Morse model associated to $(f_{N},\mathscr{V}_N)$ (as in Section \ref{sec:G-Morse}), and let $\fm^N_{\Gamma}$ denote the operations associated to stable trees in the definition of $\fm^N$. We define the \textit{equivariant Floer complex} $CF^{\bullet}_G(L;\Lambda_0)$ by 
\[
CF^{\bullet}_G(L;\Lambda_0)=\lim_{\to} CF^{\bullet}(L(N);\Lambda_0),
\]
where the arrows are inclusions of graded submodules.

The perturbations for the moduli spaces in the construction of $(CF^{\bullet}(L(N);\Lambda_0),\fm^N)$ can be chosen such that for fixed inputs $p_1,\ldots,p_k \in CF^{\bullet}_G(L;\Lambda_0)$ and a stable tree $\Gamma\in\bm{\Gamma}_{k+1}$, $\fm^N_{\Gamma}(p_1,\ldots,p_k)$ is independent of $N$ for sufficiently large $N$, and $N$ depends on the degrees of the inputs and the Maslov indices of the decorations (see \cite[Proposition 3.6]{KLZ19}). 
We can therefore define an $A_{\infty}$-algebra structure $\fm^G$ on $CF^{\bullet}_G(L;\Lambda_0)$ by
\begin{equation}
\fm^G_{k}=\sum_{\Gamma\in \bm{\Gamma}_{k+1}} \bm{T}^{\omega(\Gamma)} \fm^G_{\Gamma},
\end{equation}
where
\begin{equation}
\fm^G_{\Gamma}(p_1,\ldots,p_k)=\fm^N_{\Gamma}(p_1,\ldots,p_k),
\end{equation}
for sufficiently large $N$ (depending on the degrees of the inputs and the Maslov indices).

We call the resulting $A_{\infty}$-algebra $(CF^{\bullet}_G(L;\Lambda_0), \fm^G)$ the Morse Model for $G$-equivaraiant Lagrangian Floer theory ($G$-equivariant Morse model) of $L$.\\

Let $(L_1,L_2)$ be a pair of closed, connected, relatively spin, and embedded $G$-invariant Lagrangian submanifolds intersecting cleanly such that their intersection is also $G$-invariant. Then, $L=L_1\cup L_2$ is a immersed Lagrangian with $G$-invariant clean self-intersection and $\tilde{L}=L_1\coprod L_2$. We choose the splitting $\iota^{-1}(\cI)=\cI^-\coprod \cI^+$ so that $\cI^-\subset L_1$ and $\cI^+\subset L_2$.

Similar to the non-equivariant setting, we can define an equivariant pearl complex $(CF^{\bullet}_G(L_1,L_2;\Lambda_0),\fm_1^{G})$ as follows: 
\[
CF^{\bullet}_G(L_1,L_2;\Lambda_0)\subset CF^{\bullet}_G(L;\Lambda_0)
\]
is the subcomplex generated by critical points on $(R_1)_G= R_1 \times_G EG$ 
The differential $\fm_1^{G}$ counts stable pearly trees in $\bm{\Gamma}_2$ with both input and output vertices in $(R_1)_G$.

\subsubsection{Equivariant parameters as partial units} \label{sec:partial_units} 
The infinite complex Grassmannian $B(U(k))=Gr(k,\C^{\infty})$ can be embedded into skew-Hermitian matrices on $\C^{\infty}$ by identifying $V\in Gr(k,\C^{\infty})$ with the orthogonal projection $P_V$ onto $V$. Let $A$ be the diagonal matrix with entries $\{1,2,3,\ldots\}$. The map 
\begin{equation*} \label{eq:perfect_Gr}
f_{Gr(k,\C^{\infty})}:Gr(k,\C^{\infty})\to \R, \quad f_{Gr(k,\C^{\infty})}(V)= -\re (\tr(AP_V)),
\end{equation*}
is a perfect Morse function whose restriction to each finite dimensional stratum $Gr(k,\C^{k+N})$ is again a perfect Morse function.

When $G$ is a product of unitary groups, this allows us to choose an admissible  Morse-Smale collection $\{(f_N,\mathscr{V}_N)\}_{N\in\N}$ with the Morse functions $f_N$ of the form 
\[
f_N=\pi_N^*\varphi_N+\phi_N,
\] 
where $\pi_N:\bL(N)\to BG(N)$ is the projection map, $\varphi_N$ is a perfect Morse function on $BG(N)$, and $\phi_N$ is a (generically) fiberwise Morse function over $BG(N)$ such that $\phi_N$ restricted to the fiber $L=\pi_N^{-1}(p)$ over each critical point $ p\in\Crit(\varphi_N)$ is a Morse function $f=\phi_0$ on $\bL$. 

For such a choice of an admissible collection, an $A_{\infty}$-algebra $(CF^{\bullet}_G(L;\Lambda_0)^{\dagger},\fm^{G,\dagger})$ homotopy equivalent to $(CF^{\bullet}_G(L;\Lambda_0), \fm^G)$ was constructed in \cite[Section~3.2]{KLZ19}, with the following properties. We have
\[
CF^{\bullet}_G(L;\Lambda_0)^{\dagger}=CF^{\bullet}(L;\Lambda_0)\otimes_{\Lambda_0} H^{\bullet}_G(\pt;\Lambda_0)
\]
where 
$CF^{\bullet}(L;\Lambda_0)=C^{\bullet}(f;\Lambda_0)\oplus \Lambda_0 \cdot \wunit_L\oplus \Lambda_0 \cdot \gunit_L$,
and $H^{\bullet}_G(\pt;\Lambda_0)=H^{\bullet}(BG;\Lambda_0)$ is a polynomial ring generated by even degree elements. 

For any $\lambda\in H^{\bullet}_G(\pt;\Lambda_0)$, we put 
\[
\blambda=\bunit_L\otimes \lambda, \quad \wlambda=\wunit_L\otimes \lambda, \quad \glambda=\gunit_L\otimes \lambda.
\]
In particular, for $\lambda=1$, we denote
\[
\bunit=\bunit_L\otimes 1, \quad \wunit=\wunit_L\otimes 1, \quad 
\gunit=\gunit_L\otimes 1.
\]

The $A_{\infty}$-structure $\fm^{G,\dagger}$ has the following properties:
\begin{itemize}
\item The restriction of $\fm^{G,\dagger}$ to $CF^{\bullet}_G(L;\Lambda_0)$ agrees with $\fm^G$. 

\item $\wunit$ is the strict unit.	
	
\item The elements $\wlambda$ are \textit{partial units}, namely, they satisfy
\[
\fm^{G,\dagger}_2(\wlambda,x\otimes y)=x\otimes \lambda\cup y =(-1)^{\deg x\otimes y} x\otimes y\cup \lambda = (-1)^{\deg x\otimes y} \fm^{G,\dagger}_2(x\otimes y,\wlambda),
\]
for $x\otimes y\in CF^{\bullet}_G(L;\Lambda_0)^{\dagger}$, where $\cup$ denotes the cup product on $H^{\bullet}_G(\pt)$, and 
\[
\fm^{G,\dagger}_k(\ldots,\wlambda,\ldots)=0,
\]
for $k\ne 2$. 

\item Assuming the minimal Maslov index of $L$ is nonnegative, then  
\[
\fm_1^{G,\dagger}(\glambda)=\wlambda-(1-h)\blambda, \quad h\in \Lambda_+.
\]	
In particular, 
\[
\fm_1^{G,\dagger}(\gunit)=\wunit-(1-h)\bunit.
\]
\end{itemize}	

Let us denote $\fm^{G,\dagger}_2(\wlambda,x\otimes y)$ by $\lambda\cdot x\otimes y$. It follows from the $A_{\infty}$-relations that 

\begin{theorem} [{\cite[Theorem 3.7]{KLZ19}}] \label{thm:pull-out-lambda}
Assume that $L$ has non-negative minimal Maslov index. Let $X_1,\ldots,X_{k}\in CF^{\bullet}_G(L;\Lambda_0)^{\dagger}$. We write $X_{\ell}=\sum a_{ij} x_i\otimes \lambda_j$ for $\ell\in\{1,\ldots,k\}$. Then, we have
\begin{equation*}
\mathfrak{m}^{G,\dagger}_k(X_1,\ldots,X_k)=(-1)^{\ell}\sum 
a_{ij} \lambda_j\cdot \mathfrak{m}^{G,\dagger}_k(X_1,\ldots,X_{\ell-1},x_i\otimes 1,X_{\ell+1},\ldots,X_k).
\end{equation*}	
\end{theorem}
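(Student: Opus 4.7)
The plan is to exploit the partial unit property of $\wlambda$ to rewrite $x_i\otimes\lambda_j$ as $\fm^{G,\dagger}_2(\wlambda_j, x_i\otimes 1)$, insert this into an $A_\infty$-relation applied to a $(k{+}1)$-tuple, and use the vanishing $\fm^{G,\dagger}_n(\ldots, \wlambda, \ldots)=0$ for $n\neq 2$ to collapse the relation into the desired identity.

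By multilinearity in the $\ell$-th slot, I would first reduce to the case where $X_\ell$ is a single simple tensor $X_\ell=x\otimes\lambda$, so that it suffices to prove
\[
\fm^{G,\dagger}_k(X_1,\ldots,X_{\ell-1}, x\otimes\lambda, X_{\ell+1},\ldots,X_k) \;=\; (-1)^{\ell}\,\lambda\cdot \fm^{G,\dagger}_k(X_1,\ldots,X_{\ell-1}, x\otimes 1, X_{\ell+1},\ldots,X_k).
\]
Using $x\otimes\lambda=\fm^{G,\dagger}_2(\wlambda, x\otimes 1)$, I would then apply the $A_\infty$-relation to the $(k{+}1)$-tuple
\[
(X_1,\ldots,X_{\ell-1}, \wlambda, x\otimes 1, X_{\ell+1},\ldots,X_k),
\]
with $\wlambda$ sitting at position $\ell$. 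Since $\wlambda$ is a partial unit, only summands in which $\wlambda$ is a direct argument of some $\fm^{G,\dagger}_2$ survive. Exactly three such configurations can occur: \textit{(i)} inner operation $\fm^{G,\dagger}_2(\wlambda, x\otimes 1)=x\otimes\lambda$, which reproduces the LHS; \textit{(ii)} inner operation $\fm^{G,\dagger}_2(X_{\ell-1},\wlambda)=(-1)^{\deg X_{\ell-1}}\,X_{\ell-1}\cdot\lambda$, possible only when $\ell\geq 2$; and \textit{(iii)} outer operation $\fm^{G,\dagger}_2(\wlambda, \fm^{G,\dagger}_k(x\otimes 1, X_2,\ldots,X_k))=\lambda\cdot \fm^{G,\dagger}_k(x\otimes 1, X_2,\ldots,X_k)$, which is exactly the RHS but possible only when $\ell=1$.

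The identity then follows by induction on $\ell$. For the base case $\ell=1$, only \textit{(i)} and \textit{(iii)} occur, and the $A_\infty$-relation directly gives $\mathrm{LHS}=\pm\mathrm{RHS}$ with the sign $-1=(-1)^1$ matching the $A_\infty$-sign conventions. For the inductive step, \textit{(i)} and \textit{(ii)} together yield
\[
\mathrm{LHS} \;=\; \pm\, \fm^{G,\dagger}_k(X_1,\ldots,X_{\ell-2}, X_{\ell-1}\cdot\lambda, x\otimes 1, X_{\ell+1},\ldots,X_k).
\]
Expanding $X_{\ell-1}=\sum a'_{i'j'}\,x'_{i'}\otimes\lambda'_{j'}$ so that $X_{\ell-1}\cdot\lambda=\sum a'_{i'j'}\,x'_{i'}\otimes(\lambda\cup\lambda'_{j'})$, the inductive hypothesis applied term-by-term at position $\ell-1$ pulls each factor $\lambda\cup\lambda'_{j'}$ out of $\fm^{G,\dagger}_k$; factoring $\lambda$ out of every summand and re-absorbing the $\lambda'_{j'}$'s back into $X_{\ell-1}$ via a second (reverse) application of the inductive hypothesis produces $\pm\lambda\cdot \fm^{G,\dagger}_k(X_1,\ldots,X_{\ell-1}, x\otimes 1, X_{\ell+1},\ldots,X_k)$, as required.

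The main technical obstacle will be Koszul-type sign bookkeeping: the $A_\infty$-relation, the partial unit property, and the commutation of $\lambda$ past graded arguments each contribute signs depending on the degrees of the $X_i$'s and of $\lambda$. These must be tracked carefully to verify that the cumulative sign is precisely $(-1)^\ell$. Apart from this, the argument is a purely formal consequence of the $A_\infty$-axioms together with the unit and partial unit properties of $\wunit$ and $\wlambda$ listed just before the theorem; no further geometric input about moduli of discs is needed.
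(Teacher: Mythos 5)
Your proposal is correct and is precisely the argument the paper intends: the paper offers no proof beyond "it follows from the $A_\infty$-relations" (citing [KLZ19, Theorem 3.7]), and your elaboration — inserting $\wlambda$ at position $\ell$, noting that only the three configurations with $\wlambda$ as a direct $\fm^{G,\dagger}_2$-argument survive, and inducting on $\ell$ — is exactly how that remark is meant to be unpacked. The only caveat is the deferred sign bookkeeping; note that the uniform sign $(-1)^{\ell}$ as printed is in tension with the case $\lambda_j=1$ (where $\wlambda=\wunit$ is the strict unit), so the precise sign should be pinned down against the $A_\infty$-sign convention of [KLZ19] rather than taken at face value.
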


This shows that $(CF^{\bullet}_G(L;\Lambda_0)^{\dagger}, \mathfrak{m}^{G,\dagger})$ can be defined over the graded coefficient ring $H^{\bullet}_G(\mathrm{pt};\Lambda_0)$, namely
\begin{equation}
(CF^{\bullet}_G(L;\Lambda_0)^{\dagger}, \mathfrak{m}^{G,\dagger})=(CF^{\bullet}(L;H^{\bullet}_G(\mathrm{pt};\Lambda_0)),\fm^{G,\dagger}).
\end{equation}

\subsubsection{Equivariant Maurer-Cartan space}
For an element $b\in CF^{\bullet}_G(L;\Lambda_+)^{\dagger}$, we consider the \emph{equivariant weak Maurer-Cartan equation} defined by
\begin{equation}
\label{eq:equiv_MC}
\mathfrak{m}^{G,\dagger,b}_0(1)=\mathfrak{m}^{G,\dagger}_0(1)+\mathfrak{m}^{G,\dagger}_1(b)+\mathfrak{m}^{G,\dagger}_2(b,b)+\ldots  \in H^{\bullet}_G(\mathrm{pt};\Lambda_0) \cdot \wunit.
\end{equation}
We denote by $\mathcal{MC}_G(L)$ the space of odd degree solutions of \eqref{eq:equiv_MC}. An element $b\in \mathcal{MC}_G(L)$ is called a \emph{weak Maurer-Cartan element over $H^{\bullet}_G(\mathrm{pt};\Lambda_0)$}. We say that $(CF^{\bullet}_G(L;\Lambda_0)^{\dagger}, \mathfrak{m}^{G,\dagger})$ is \emph{weakly unobstructed} if $\mathcal{MC}_G(L)$ is nonempty.



\begin{defn} \label{def:G_unobstr}
	The deformation of $(L,G)$ by $b$ (or simply $(L,G,b)$) is called \emph{unobstructed} (resp. \emph{weakly unobstructed}) if $\mathfrak{m}^{G,\dagger,b}(1)=0$ (resp. $b\in \mathcal{MC}_G(L)$). In particular, if $b=0$, we simply call $(L,G)$ \emph{unobstructed}.   
\end{defn}

Similar to Lemma \ref{lem:unobstr}, we have

\begin{lemma} [{\cite[Lemma~3.9]{KLZ19}}]
	\label{lem:G_unobstr}
	
	For $b\in CF^{1}_G(L;\Lambda_{+})$, suppose that
	\[
	\mathfrak{m}^{G,\dagger,b}_0(1)=W(b) \bunit+\sum_{\deg \lambda =2} \phi_{\lambda}(b)\blambda
	\]
	and the minimal Maslov index of L is nonnegative. 
	Then there exists $b^{\dagger}\in CF^{odd}_G(L;\Lambda_{+})^{\dagger}$ such that 
	\[
	\mathfrak{m}^{G,\dagger,b^{\dagger}}_0(1)=W^{\wT}(b)\wunit+\sum_{\deg \lambda =2} \phi_{\lambda}^{\wT}(b) \wlambda.
	\] 
	In particular, $(CF^{\bullet}_G(L;\Lambda_0)^{\dagger}, \mathfrak{m}^{G,\dagger})$ is weakly unobstructed. 
	
	If the minimal Maslov index of $L$ is assumed to be at least two in addition, then $W^{\wT}=W(b)$ and $\phi_{\lambda}^{\wT}(b)=\phi_{\lambda}(b)$.
\end{lemma}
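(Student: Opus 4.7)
I would follow the strategy of Lemma~\ref{lem:unobstr} (the non-equivariant case), promoting it to the equivariant setting by using the homotopy elements $\glambda$ in place of just $\gunit$. The key input is the partial-unit relation
\[
\fm_1^{G,\dagger}(\glambda) = \wlambda - (1-h)\blambda, \qquad h \in \Lambda_+,
\]
(taking $\lambda = 1$ recovers the identity for $\gunit$). Because $1 - h \in \Lambda_{\mathrm{U}}$, division by $1-h$ makes sense in $\Lambda_0$. The plan is to construct $b^\dagger$ by successive approximation, trading $\blambda$-contributions in $\fm_0^{G,\dagger}$ for $\wlambda$-contributions one $T$-adic order at a time.

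Concretely, set $b_0 = b$. Inductively, if
\[
\fm_0^{G,\dagger,b_n}(1) = \sum_\lambda \psi_\lambda^{(n)} \wlambda + \sum_\lambda \chi_\lambda^{(n)} \blambda
\]
(with the $\lambda = 1$ case contributing the $\wunit, \bunit$ terms), I would define
\[
b_{n+1} = b_n - \sum_\lambda \frac{\chi_\lambda^{(n)}}{1-h}\,\glambda.
\]
By the key identity, the $\fm_1^{G,\dagger}$-contribution of the correction cancels $\chi_\lambda^{(n)}\blambda$ and produces $\frac{\chi_\lambda^{(n)}}{1-h}\wlambda$; every remaining contribution comes from $\fm_k^{G,\dagger}$ with $k \geq 2$ applied to mixed inputs involving at least one factor from $\Lambda_+$, so gappedness of the $A_\infty$-algebra forces the new coefficients $\chi_\lambda^{(n+1)}$ to have strictly larger $T$-valuation than the previous $\chi_\lambda^{(n)}$. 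The corrections are odd-degree elements of $\Lambda_+ \cdot CF^\bullet_G(L;\Lambda_0)^\dagger$ (note $\deg \gunit = -1$ and $\deg \glambda = +1$ when $\deg \lambda = 2$ are both odd), hence $b_n \in CF^{odd}_G(L; \Lambda_+)^\dagger$ and the limit $b^\dagger := \lim_n b_n$ exists by $T$-adic completeness of $\Lambda_0$. By construction, $\fm_0^{G,\dagger,b^\dagger}(1)$ has no $\bunit$ or $\blambda$ components, which gives the first claim with $W^{\wT}(b) := \lim_n \psi_1^{(n)}$ and $\phi_\lambda^{\wT}(b) := \lim_n \psi_\lambda^{(n)}$.

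For the Maslov-index-$\geq 2$ refinement, I would argue $h = 0$ by a degree count: the term $h \blambda$ in $\fm_1^{G,\dagger}(\glambda)$ would record contributions whose underlying holomorphic discs have Maslov index zero, and these are excluded by the hypothesis. Consequently the very first correction $b_1 = b - \sum_\lambda \chi_\lambda^{(0)} \glambda$ already eliminates the $\blambda$-terms exactly; a parallel degree/energy argument, again ruling out Maslov-$0$ discs, shows that the higher $\fm_k^{G,\dagger}$ produce no new $\blambda$-contributions. Thus $b^\dagger = b_1$, yielding $W^{\wT}(b) = W(b)$ and $\phi_\lambda^{\wT}(b) = \phi_\lambda(b)$. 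The main technical obstacle is the convergence step: verifying that each iteration raises the $T$-adic valuation of $\chi_\lambda^{(n)}$ by a uniform positive gap. I would handle this via the standard gappedness property of Fukaya-type $A_\infty$-algebras over $\Lambda_0$, exactly as in the analogous non-equivariant step of \cite[Lemma 2.7]{KLZ19}.
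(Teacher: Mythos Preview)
The paper does not give its own proof of this lemma: it is quoted verbatim as \cite[Lemma~3.9]{KLZ19}, just as the non-equivariant Lemma~\ref{lem:unobstr} is quoted from \cite[Lemma~2.7]{KLZ19}. So there is nothing in the present paper to compare against.

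Your outline is the expected one and matches the standard homotopy-unit argument underlying those cited lemmas: iteratively correct $b$ by multiples of the $\glambda$'s, using $\fm_1^{G,\dagger}(\glambda)=\wlambda-(1-h)\blambda$ to trade $\blambda$-output for $\wlambda$-output, and appeal to gappedness for convergence. Two small points worth tightening. First, check the sign of your correction: with the convention stated, $\fm_1^{G,\dagger}\bigl(-\tfrac{\chi_\lambda^{(n)}}{1-h}\glambda\bigr)$ contributes $+\chi_\lambda^{(n)}\blambda$, not $-\chi_\lambda^{(n)}\blambda$, so the sign in $b_{n+1}$ should be flipped (or absorbed into your $A_\infty$ sign conventions). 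Second, the ``higher $\fm_k$ produce only higher-valuation terms'' step uses more than just gappedness of the disc energies: you also need that the correction terms themselves lie in $\Lambda_+$ and that $\fm_k^{G,\dagger}$ applied with $\glambda$-inputs behaves as in the homotopy-unit construction (this is part of what \cite{KLZ19} sets up, following \cite[Chapter~7]{FOOO}). Once those are in place, your convergence and the Maslov-$\geq 2$ refinement ($h=0$, single-step termination) go through as you describe.
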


\begin{corollary} [{\cite[Corollary~3.10]{KLZ19}}] \label{cor:unobstructed}
In the setting of Lemma \ref{lem:G_unobstr}, if $b\in CF^{1}(L,\Lambda_+)$ and $(L,b)$ is weakly unobstructed, then $(L,G,b)$ is weakly unobstructed. 
\end{corollary}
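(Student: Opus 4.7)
My plan is to apply Lemma \ref{lem:G_unobstr} to $b$ embedded into the equivariant Floer complex via the canonical inclusion $b \mapsto b \otimes 1 \in CF^1_G(L;\Lambda_+)^\dagger$ coming from the identification $CF^\bullet_G(L;\Lambda_0)^\dagger = CF^\bullet(L;\Lambda_0) \otimes H^\bullet_G(\pt;\Lambda_0)$. The bulk of the proof is to verify that the equivariant obstruction $\fm^{G,\dagger,b}_0(1)$ has the form required by the hypothesis of Lemma \ref{lem:G_unobstr}; once this is done, the lemma supplies a lift $b^\dagger \in CF^{odd}_G(L;\Lambda_+)^\dagger$ satisfying $\fm^{G,\dagger,b^\dagger}_0(1) \in H^\bullet_G(\pt;\Lambda_0) \cdot \wunit$, i.e., an equivariant weak Maurer-Cartan element, proving that $(L,G,b)$ is weakly unobstructed.

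To verify the hypothesis, I would decompose $\fm^{G,\dagger,b}_0(1)$ according to degree in $H^\bullet_G(\pt;\Lambda_0)$. By the construction of the equivariant Morse model via admissible Morse-Smale pairs of the form $f_N = \pi_N^*\varphi_N + \phi_N$ from Section \ref{sec:partial_units}, the degree-zero (non-equivariant) component of $\fm^{G,\dagger,b}_0(1)$ coincides with the non-equivariant obstruction $\fm^b_0(1)$, since the pearly trees contributing to this piece are precisely the fiberwise ones (constant in the $BG$-direction) and recover the non-equivariant $A_{\infty}$-operations. By the weak unobstructedness of $(L,b)$, this component equals $W(b)\wunit$, and using the homotopy unit relation $\fm_1(\gunit)=\wunit-(1-h)\bunit$ we can rewrite it as $W(b)\bunit$ plus a boundary term, the latter being absorbed by a $\gunit$-modification of $b$. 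Since $b$ has degree one, the positive equivariant-degree contributions must lie in $CF^0(L;\Lambda_0) \otimes H^{\geq 2}_G(\pt;\Lambda_0)$, and an iterative absorption using the partial units $\glambda$ reduces these to the $\Lambda_0 \cdot \blambda$ form demanded by the hypothesis of Lemma \ref{lem:G_unobstr}.

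The main obstacle is the careful bookkeeping required to place the equivariant obstruction in precisely the form $W(b)\bunit + \sum_{\deg\lambda=2}\phi_\lambda(b)\blambda$ of Lemma \ref{lem:G_unobstr}, and in particular to ensure that the successive $\gunit$- and $\glambda$-modifications do not disturb components of the obstruction that have already been matched; this uses the structural results of Theorem \ref{thm:pull-out-lambda} together with the admissibility conditions of Definition \ref{def:morse-smale}, and is essentially the equivariant analogue of the non-equivariant homotopy unit procedure of Lemma \ref{lem:unobstr}.
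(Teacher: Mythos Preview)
Your overall strategy --- embed $b$ as $b\otimes 1$, verify the hypothesis of Lemma~\ref{lem:G_unobstr}, then apply that lemma --- is the right one, and is essentially what the cited argument does. However, your execution contains a confusion and several unnecessary steps.

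The confusion is between $\wunit$ and $\bunit$. Since $b\in CF^{1}(L;\Lambda_+)=C^{1}(f;\Lambda_+)$ and the restriction of $\fm$ to $C^{\bullet}(f;\Lambda_0)$ is $\fm^{\blT}$ (which takes values in $C^{\bullet}(f;\Lambda_0)$), the non-equivariant obstruction $\fm^{b}_0(1)$ lies in $C^{\bullet}(f;\Lambda_0)$, never in $\Lambda_0\cdot\wunit$. Thus weak unobstructedness of $(L,b)$ forces $\fm^{b}_0(1)=0$ (so $W(b)=0$), or under the looser reading it gives $\fm^{b}_0(1)=W(b)\,\bunit$. Either way there is nothing to ``rewrite as $W(b)\bunit$ plus a boundary,'' and no $\gunit$-modification of $b$ is needed at this stage.

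Likewise, the $\glambda$-absorption you describe is misplaced. For $b\in C^{1}(f;\Lambda_+)\subset CF^{1}_G(L;\Lambda_+)$ one has $\fm^{G,\dagger,b}_0(1)=\fm^{G,b}_0(1)\in C^{\bullet}(f;\Lambda_0)\otimes H^{\bullet}_G(\pt;\Lambda_0)$, again because the inputs lie in the Morse part. Using that the minimal Maslov index is nonnegative (part of ``the setting of Lemma~\ref{lem:G_unobstr}''), the output has even total degree at most $2$. Since $C^{k}(f)=0$ for $k<0$ and $C^{0}(f)=\Lambda_0\cdot\bunit$, the only possible components are $\Lambda_0\cdot\bunit$, $C^{2}(f)\otimes 1$, and $\bigoplus_{\deg\lambda=2}\Lambda_0\cdot\blambda$. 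The $C^{\bullet}(f)\otimes 1$ component equals the non-equivariant $\fm^{b}_0(1)$ (this is where admissibility is used), so by hypothesis its $C^{2}(f)$-part vanishes. Hence $\fm^{G,\dagger,b}_0(1)$ is \emph{already} of the form $W(b)\,\bunit+\sum_{\deg\lambda=2}\phi_\lambda(b)\,\blambda$, and Lemma~\ref{lem:G_unobstr} applies directly. The $\gunit$- and $\glambda$-absorptions you propose are precisely what Lemma~\ref{lem:G_unobstr} carries out; performing them beforehand is redundant and, as you phrased it, runs in the wrong direction (one uses $\fm_1(\glambda)=\wlambda-(1-h)\blambda$ to pass from $\blambda$ to $\wlambda$, not the reverse).
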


We note that even if the minimal Maslov index of $L$ is $2$ and $(L,b)$ is (strictly) unobstructed, one can only expect $(L,G,b)$ to be weakly unobstructed in general. This is due to the possibility of the constant disc class   contributing to degree $2$ equivariant parameters.

	
\section{Equivariant disc potentials of immersed SYZ fibers} \label{sec:immersed_SYZ}

In this section, we compute the equivariant disc potentials of immersed SYZ fibers in a toric Calabi-Yau manifold, which are homeomorphic to the product of an immersed 2-sphere and a complementary dimensional torus. We begin by deriving a gluing formula between Maurer-Cartan spaces of smooth fibers and immersed fibers, making use of (quasi-)isomorphisms in Lagrangian Floer theory. The disc potential of the immersed fiber can then be obtained by applying the gluing formula to that of a smooth fiber, which is much easier to compute.

\subsection{Gluing formulas}
 

Let $X=X_{\Sigma}$ be the toric Calabi-Yau manifold defined by a fan $\Sigma$ as in Section~\ref{sec:toric_CY}. Let $a_1 \in \textbf{M}_\R/\R\cdot\unu$ be an interior point of a codimension $2$ face $F$ of the polytope dual to $\Sigma$. 
For each $i=0,1,2$, we have the Lagrangian submanifold $L_i\subset X$ given by the inverse image 
of the circle $\ell_i:=w(L_i)$ in the $w$-plane $X\sslash_{a_1} T^{n-1}\cong \C$. Here $w$ is the toric holomorphic function on $X$ corresponding to $\unu$. The circles $\ell_0,\ell_1$ and $\ell_2$ are depicted in Figure~\ref{fig:posvertex1}. 
By construction, $L_1$ and $L_2$ are Lagrangian tori, while $L_0$ is an immersed Lagrangian homeomorphic to $\cS^2\times T^{n-2}$, where $\cS^2$ denotes the immersed two-sphere with exactly one nodal self-intersection point. 

Let $X_{\sigma}\cong \C^n$ be the toric affine chart of $X$ with coordinates $(y_1,\ldots,y_n)$ dual to the basis $\{v_1,\ldots,v_n\}$. We have $w |_{X_{\sigma}} =y_1\ldots y_n$. Note that the Lagrangians $L_i$ for $i=0,1,2$ are contained inside $X_{\sigma}$.
Each base circle $\ell_i$ encloses the point $\epsilon (\ne 0)$ and has winding number $1$ around $\epsilon$ in the $w$-plane. Thus, $L_0$, $L_1$ and $L_2$ are all graded with respect to the holomorphic volume form
\[
\Omega=\frac{dy_1\wedge\ldots\wedge dy_n}{w-\epsilon}
\]
on $X^\circ = X - \{w=\epsilon\}$. Throughout, we study Floer theory of these Lagrangians, regarding them as Lagrangian submanifolds of $X^{\circ}$. 

\subsubsection{Choice of flat $\Lambda_{\mathrm{U}}$-connections}
We will decorate the Lagrangians $L_0$, $L_1$, and $L_2$ with trivial line bundles with flat $\Lambda_\mathrm{U}$-connections. In what follows, we fix parameters and gauge cycles for the flat $\Lambda_\mathrm{U}$-connections, which enable us to compute the gluing formulas and the disc potentials explicitly later on. 

We may assume that the face $F$ of codimension $2$ containing $a_1$ is dual to the $2$-cone $\R_{\geq 0}\cdot\{v_1,v_2\}$ without loss of generality. Then the vector $v_1':=v_2-v_1$ is perpendicular to $F$. We extend $v_1'$ to a basis $\{v_1',v_2',\ldots,v_{n-1}'\}$ of $\unu^\perp \subset \textbf{N}$. For instance, one can take $v_i' = v_{i+1}-v_1$ for $i=1,\ldots,n-1$. 

The restriction of $w: X^{\circ} \to \C \setminus \{w=\epsilon\}$ to $\{y_2 \dots y_n \ne 0 \}$ is a trivial $(\C^{\times})^{n-1}$-fibration with the base coordinate $w$ and  the fiber coordinates $y_2,\ldots,y_n$. The map 
\[
(y_1,\ldots,y_n)\mapsto \left( \cfrac{w-\epsilon}{|w-\epsilon|},\cfrac{y_2}{|y_2|},\ldots,\cfrac{y_{n}}{|y_n|} \right)
\]
trivializes the SYZ torus fibration in the chamber dual to $v_1$ (see \cite{CLL}). 
Also, the trivialization fixes identifications of $L_1$ and $L_2$ with the standard torus $T^n$ whose $\bS^{1}$-factors are in the directions of $v_1,v'_1,\ldots,v'_{n-1}$. We then have
\begin{equation}\label{eqn:trivpi1}
\begin{split}
&\pi_1(L_i) \cong \pi_1(T^n) \cong \textbf{N} = \Z\cdot \{v_1,v'_1,\ldots,v'_{n-1}\} \quad (i=1,2), \\
&\pi_1(L_0) \cong \Z \cdot \{v_1,v'_2,\ldots,v'_{n-1}\}.
\end{split}
\end{equation}

We parametrize the space $\hom(\pi_1(L_1),\Lambda_\mathrm{U})$ of flat connections (up to gauge) by $(z_1,\ldots,z_n)$, which is aligned in parallel with the order of basis elements in \eqref{eqn:trivpi1}. Namely, $z_j \in \Lambda_\mathrm{U}$ is the holonomy of the connection along the $j$-th loop in \eqref{eqn:trivpi1}. 
The flat connection associated to $(z_1,\ldots,z_n)$ will be denoted by $\nabla^{(z_1,\ldots,z_n)}$.
Similarly, one can associate the flat connection $\nabla^{(z'_1,\ldots,z'_n)}$ for $L_2$ to $(z'_1,\ldots,z'_n) \in (\Lambda_\mathrm{U})^n$. 
The holonomies in the monodromy invariant directions of $L_0$ are parametrized by  $(z^{(0)}_2,\ldots,z^{(0)}_{n-1})$ and denoted by $\nabla^{(z^{(0)}_2,\ldots,z^{(0)}_{n-1})}$. 

We then fix the gauge cycles for the flat connections, which are codimension-$1$ submanifolds of a Lagrangian dual to loops in \eqref{eqn:trivpi1}. This enables us to compute parallel transports for the above connections conveniently (see \cite{CHL-toric} for more details).
 For the Lagrangian tori $L_1$ and $L_2$, it suffices to fix a union of cycles of codimension one in $T^n$. Let us fix a perfect Morse function $f^{\bS^1}$ on $\bS^1$. We choose a perfect Morse function $f^{L_i}$ on $L_i$, $i=1,2$ in such a way that under the identification $L_i\cong T^n$, $f^{L_i}$ is the sum of perfect Morse function $f^{\bS^1}$ on the $\bS^1$-factors in the directions of $v'_1,\ldots,v'_{n-1}$. We also fix a perfect Morse function on the $\bS^1$-factor of $L_1$ and $L_2$ in the $v_1$-direction with positions of the critical points depicted in Figure \ref{fig:posvertex1} (the minimum points of $\ell_1$ and $\ell_2$ are denoted by $z_3$ and $z'_3$, respectively and the maximum points are denoted by $\one^{\blT}$ in respective colors.). The unstable chains of the degree one critical points of $f^{L_i}$ are hypertori dual to $v_1,v_1',\ldots,v_{n-1}'\in \pi_1(T^n)$, co-oriented by the $\bS^1$-orbits in the respective directions.  We then choose these hypertori dual to $v_1',\ldots,v_{n-1}'$ to be gauge cycles. That is, the flat connections $\nabla^{(z_1,\ldots,z_n)}$ (resp. $\nabla^{(z'_1,\ldots,z'_n)}$) are trivial away from the gauge hypertori, and have holonomy $z_j$ (resp. $z'_j$) along a path positively crossing the gauge hypertorus dual to $v'_j$ in $L_1$ (resp. $L_2$) once, for $j=1,\ldots,n-1$, and have holonomy $z_n$ (resp. $z'_n$) along a path positively crossing the gauge hypertorus dual to $v_1$.

\begin{figure}[h]
\begin{center}
\includegraphics[scale=0.45]{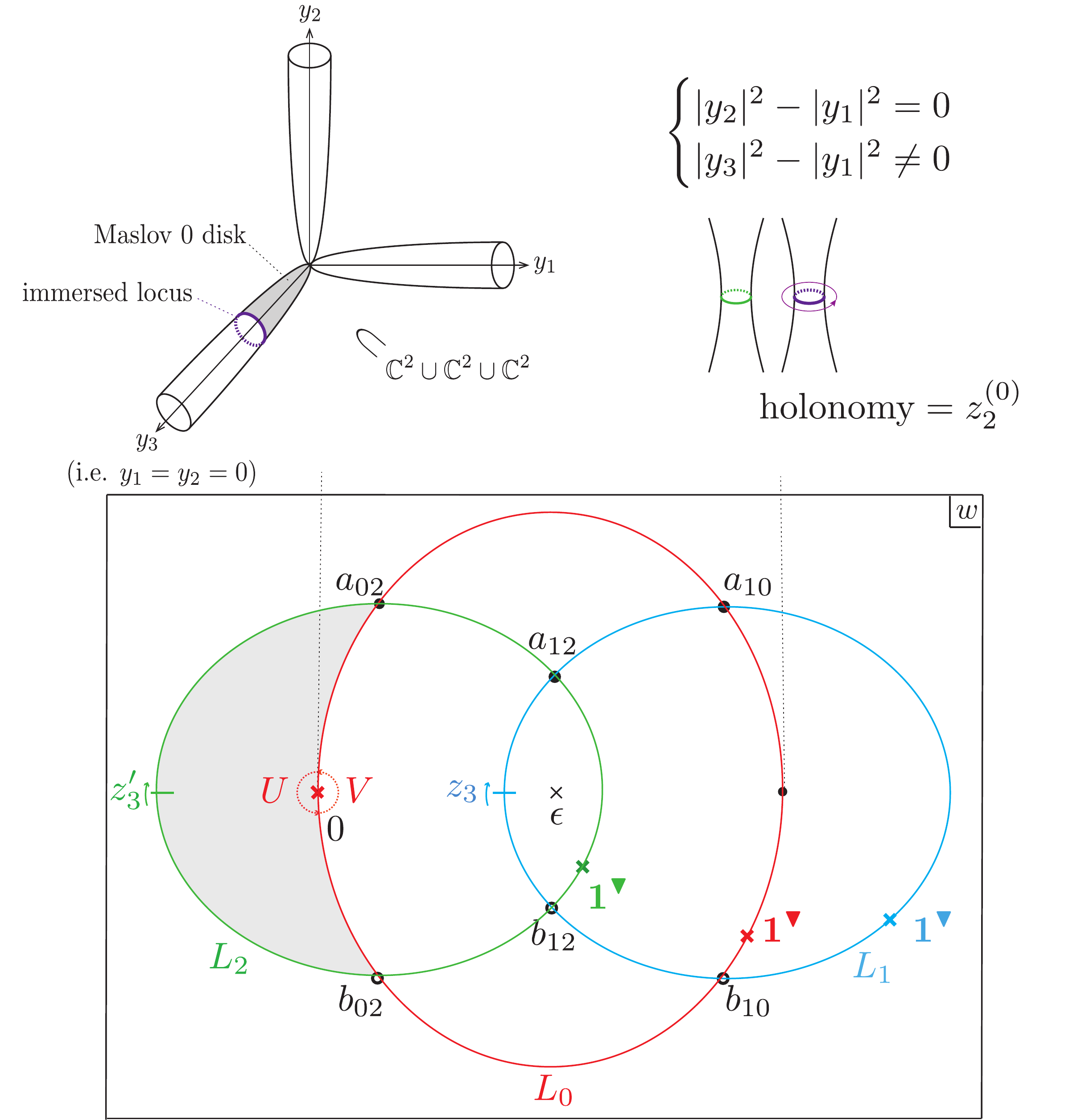}
\caption{$L_0$ in dimension 3. Note that it bounds a nontrivial holomorphic disc whose boundary class is associated with the holonomy $z_2^{(0)}$}\label{fig:posvertex1}
\end{center}
\end{figure}

For the immersed Lagrangian $L_0$, 
we consider a splitting $L_0 \cong \cS^2 \times T^{n-2}$, where the $T^{n-2}$-factor is in the directions of $v'_2,\ldots,v'_{n-1}$. Let $\iota:\widetilde{L}_0=\bS^2 \times T^{n-2}\to X$ be the Lagrangian immersion such that $\iota(\widetilde{L}_0) = L_0$. We denote 
\begin{itemize}
\item by $\{r\}\times T^{n-2}$ the clean self-intersection loci in $L_0$
\item by $\{p\}\times T^{n-2}$ and $\{q\} \times T^{n-2}$ the two disjoint connected components of the inverse image of $\{r\}\times T^{n-2}$ in $\widetilde{L}_0$
\end{itemize}
Then, the fiber product $\bL_0=\widetilde{L}_0\times_{L_0} \widetilde{L}_0$ consists of three components$\colon$ the diagonal component $R_0\cong \widetilde{L}_0$ and the two non-diagonal components $R_{-1}=\{(p,q)\}\times T^{n-2}$ and $R_1=\{(q,p)\}\times T^{n-2}$.

Let $f^{\bS^2}:\bS^2\to \R$ be a perfect Morse function such that the critical points of $f^{\bS^2}$ are away from $p$ and $q$, and the two flow lines connecting $p$ and $q$ to the minimum point are disjoint. 
Using the splitting above, we choose a perfect Morse function $f^{L_0}$ on $\bL_0$ of the form 
\begin{equation}\label{equ_choiceofmorse}
f^{L_0}|_{R_0}=f^{\bS^2}+f^{T^{n-2}} \mbox{ and } f^{L_0}|_{R_{\pm 1}}=f^{T^{n-2}}
\end{equation}
where $f^{T^{n-2}}$ is the sum of $f^{\bS^1}$ on the $\bS^1$-factors of $T^{n-2}$. 
We choose the gauge cycles for $\nabla^{(z^{(0)}_2,\ldots,z^{(0)}_{n-1})}$ to be the product of $\cS^2$ with hypertori in $T^{n-2}$ dual to $v'_2,\ldots,v'_{n-1}\in \pi_1(T^{n-2})$, 

\subsubsection{Unobstructedness of the Lagrangians (in $X^{\circ}$)}
	Let $(CF^{\bullet}(L_i),\fm^{L_i})$ be a unital 
	Morse model associated to the Morse function $f^{L_i}$ as defined in Section \ref{sec:pearl_complex}. We choose the following perturbations for the moduli spaces $\cM_{k+1}(\alpha,\beta,L_0)$ in order to simplify computations. 
	
	The Hamiltonian $T^{n-2}$-action corresponding to the sublattice generated by $\{v_2^\prime, \dots, v_{n-1}^\prime\}$ acts freely on the Lagrangian $L_i$ by rotating its $T^{n-2}$-factor for all $i = 0, 1, 2$. Moreover, the subtorus $T^{n-2}$ preserves the complex structure.
This induces a free $T^{n-2}$-action on $\cM_{k+1}(\alpha,\beta,L_0)$. We can therefore choose $T^{n-2}$-equivariant Kuranishi structures and perturbations for $\cM_{k+1}(\alpha,\beta,L_0)$ as in \cite{FOOO-T} (see also \cite{fukaya-equiv}). 
	
	Let $\tau:X\to X$ be the anti-symplectic involution characterized by that it reverses the regular fibers of the toric moment map (of the $T^n$-action). We note that the complex structure of $X$ is $\tau$-anti-invariant. 
	We will orient the moduli spaces with a $\tau$-invariant relative spin structure. 
	By Theorem \ref{thm:tau_*}, the $T^{n-2}$-equivariant Kuranishi structures on $\cM_{k+1}(\alpha,\beta;L_0)$ and $\cM_{k+1}(\hat{\alpha},\hat{\beta};L_0)$ can be chosen to be isomorphic. The $T^{n-2}$-equivariant perturbations $\fs$ for $\cM_{k+1}(\alpha,\beta;L_0)$ and $\hat{\fs}$ for $\cM_{k+1}(\hat{\alpha},\hat{\beta};L_0)$ can then be chosen to satisfy $(\tau^{main}_*)^*\hat{\fs}=\fs$. 

$H_2^{\mathrm{eff}}(X^{\circ},L_0)$ is generated by the Maslov index $0$ disc classes. For any stable disc $u$ in a non-trivial class $\beta \in H_2^{\mathrm{eff}}(X^{\circ},L_0)$, we observe that $u (\partial D^2) \subset \{r\}\times T^{n-2}$. This observation leads to the following simple consequences for the moduli spaces:
If $\alpha:\{0,\ldots,k\}\to \{-1,0,1\}$ and $\beta\in H_2^{\mathrm{eff}}(X^{\circ},L_0)$, then
\begin{itemize}
\item If $\alpha(i)=0$ for all $i$, and $\beta\ne 0$, then $\cM_{k+1}(\alpha,\beta;L_0)$ has two connected components corresponding to the lifts of the boundary of the holomorphic discs in class $\beta$ to $\{p\}\times T^{n-2}$ and $\{q\}\times T^{n-2}$.
	
\item Suppose $i\in\{0,\ldots,k\}$ and $\alpha(i)\ne 0$, i.e., $z_i$ is a corner. Let $z_j$ be a corner adjacent to $z_i$, namely, $\alpha(j)\ne 0$, and $\alpha(k)=0$ for all boundary marked points $z_k$ between $z_i$ and $z_j$. We then have $\cM_{k+1}(\alpha,\beta;L_0)=\emptyset$ unless $\alpha(i)$ and $\alpha(j)$ have opposite signs. In particular, when $k=0$, we have  $\cM_{1}(\alpha,\beta;L_0)=\emptyset$ unless $\alpha(0)= 0$, i.e., the output marked point lies in $R_0=\widetilde{L}_0$.
\end{itemize}



We denote by $\one_{T^{n-2}}$ the maximum point of $f^{T^{n-2}}$,  by $X_1,\ldots,X_{n-2}$ the degree $1$ critical points, and by $X_{ij}$, $1\le i<j\le n-2$ the degree $2$ critical points. We also denote the maximum and minimum points of $f^{\bS^2}$ by $\one_{\bS^2}$ and $a_{\bS^2}$, respectively. 

We will abuse notations and denote the critical points of $f^{L_0}$ in the form of tensor products. In particular, we use $U=(p,q)\otimes \one_{T^{n-2}}$ and $V=(q,p)\otimes \one_{T^{n-2}}$ to denote the maximum points on $R_{-1}$ and $R_1$ respectively. The holomorphic volume form $\Omega_X$ equips $CF^{\bullet}(L_0)$ with a $\Z$-grading, under which the two critical points $U$ and $V$ are of degree $1$, while the critical points $(p,q)\otimes X_i$ and $(q,p)\otimes X_i$ are of degree $2$.  Let $b=uU+vV$ for $u,v\in \Lambda^2_0$ with $\mathrm{val}(u\cdot v)>0$.

We define $\bbL_0$, $\bbL'_1$, $\bbL'_2$ to be the family of formal Lagrangian deformations (or the corresponding objects of the Fukaya category)
$$(L_0, \nabla^{(z^{(0)}_2,\ldots,z^{(0)}_{n-1})},b), \quad (L_1,\nabla^{(z_1,\ldots,z_n)}) \quad \mbox{and} \quad (L_2,\nabla^{(z'_1,\ldots,z'_n)}),$$ 
respectively. (The notations $\bbL_1$ and $\bbL_2$ is reserved for another Lagrangian brane which will appear later in the section.) We denote by  and $(CF^{\bullet}(\bbL'_i),\fm^{\bbL'_i})$ for $i=1,2$, and $(CF^{\bullet}(\bbL_0),\fm^{\bbL_0})$ the formally deformed $A_\infty$-algebras on the pearl complexes $CF^{\bullet}(L_i)$, $i=0,1,2$. More precisely, the $A_{\infty}$-operations $\fm^{\bbL'_i}_k$ are defined by
\[
\begin{split}
&\fm^{\bbL'_1}_k=\sum_{\Gamma\in\bm{\Gamma}_{k+1}} \bT^{\omega \left(\sum_v \beta_v \right)} \cdot \mathrm{Hol}_{\conn^{(z_1,\ldots,z_n)}}\left(\sum_v \partial\beta_v\right) \cdot \fm_{\Gamma}^{L_1}, \\
&\fm^{\bbL'_2}_k=\sum_{\Gamma\in\bm{\Gamma}_{k+1}} \bT^{\omega \left( \sum_v \beta_v \right)} \cdot \mathrm{Hol}_{\conn^{(z'_1,\ldots,z'_n)}}\left( \sum_v \partial\beta_v \right) \cdot \fm_{\Gamma}^{L_2}, \\
&\fm^{\bbL_0}_k =\sum_{\Gamma\in\bm{\Gamma}_{k+1}} \bT^{\omega(\sum_v \beta_v)} \cdot \mathrm{Hol}_{\conn^{\left(z^{(0)}_2,\ldots,z^{(0)}_{n-1}\right)}}\left( \sum_v \partial\beta_v \right) \cdot (\fm_{\Gamma}^{L_0})^b
\end{split}
\]
where the summation $\sum_v$ is over the interior vertices of the stable tree $\Gamma$, $\fm_{\Gamma}^{L_i}$ was defined in~\eqref{mgammaeq}, and $(\fm_{\Gamma}^{L_0})^b$ is the deformation of $\fm_{\Gamma}^{L_0}$ defined in~\eqref{eq:deformation}.  



\begin{figure}[h]
	\begin{center}
		\includegraphics[scale=0.7]{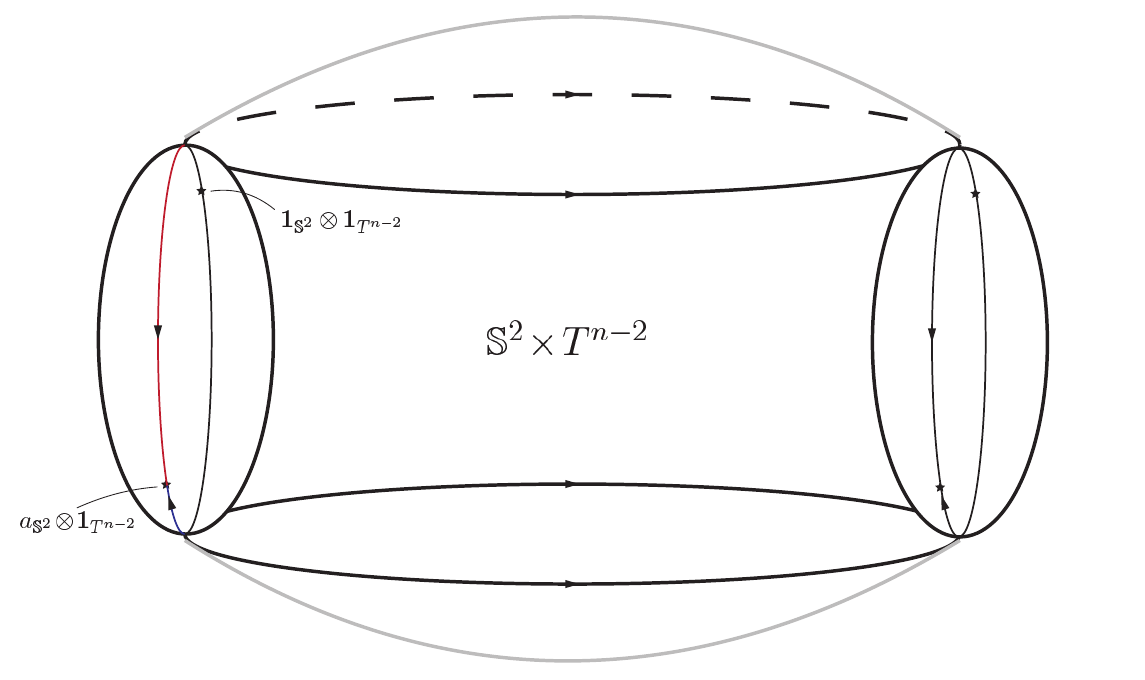}
		\caption{The domain $\bS^2 \times \bS^1$ of the Lagrangian immersion $L_0$.  The immersed loci are shown as top and bottom circles in the figure, which are also boundaries of the holomorphic discs of Maslov index zero.}\label{fig:imm-AV-discs}
	\end{center}
\end{figure}


\begin{lemma}  \label{lemma:L_0}
$\bbL_0$ is unobstructed.
\end{lemma}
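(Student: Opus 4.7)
The plan is to show $\fm^{\bbL_0}(1)=0$ by combining a classification of the relevant holomorphic discs with the $T^{n-2}$- and $\tau$-symmetries of the setup.

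First, I would argue that because $a_1$ lies in the interior of the codimension-two face $F$ of the moment polytope dual to $\R_{\ge 0}\{v_1,v_2\}$ and $F$ meets no other toric stratum, $L_0$ bounds no Maslov-index-two discs; only the (immersed) degenerations of the basic discs in the $v_1$- and $v_2$-directions occur, with boundaries passing through the node and lifting to either $\{p\}\times T^{n-2}$ or $\{q\}\times T^{n-2}$ in $\widetilde L_0$, all of Maslov index zero.

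Next, I would exploit the free Hamiltonian $T^{n-2}$-action on $L_0$ to make the Kuranishi structures and multisections on $\cM_{k+1}(\alpha,\beta;L_0)$ equivariant. Since $\deg\fm^{L_0}_k(b,\ldots,b)=2-\mu(\beta)=2$ (using $\deg b=1$ and $\mu(\beta)=0$), $T^{n-2}$-equivariance of $\ev_0$ restricts $\fm^{\bbL_0}(1)$ to the $T^{n-2}$-invariant degree-two subspace of $CF^\bullet(L_0)$. Under the product Morse function \eqref{equ_choiceofmorse}, this subspace is one-dimensional, spanned by $a_{\bS^2}\otimes\one_{T^{n-2}}\in R_0$; contributions landing in $R_{\pm 1}$ vanish because the $T^{n-2}$-invariant part of $CF^\bullet(R_{\pm 1})$ lies in cohomological degree $1$ rather than $2$.

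Finally, the remaining coefficient of $a_{\bS^2}\otimes\one_{T^{n-2}}$ will be shown to vanish using the anti-symplectic involution $\tau$. Choosing the relative spin structure and the multisections $\tau$-invariantly (Theorem~\ref{thm:tau_*}), $\tau^{main}_*$ becomes an isomorphism of the relevant Kuranishi moduli. Theorem~\ref{thm:inv} with $\mu(\beta)=0$ and inputs of degree one gives $\epsilon\equiv k+1\pmod 2$; the alternating-corner rule combined with the output-in-$R_0$ constraint forces $k$ to be even, so $\epsilon$ is odd and $\tau^{main}_*$ is orientation-reversing. For each even $k\ge 2$, the only two allowed alternating input sequences $UVUV\cdots$ and $VUVU\cdots$ carry the same coefficient $u^{k/2}v^{k/2}$ from $b^{\otimes k}$ and are exchanged by $\tau^{main}_*$, so their contributions cancel; at $k=0$ the same pairing cancels the $\beta_p$- and $\beta_q$-contributions, which Morse-project to the single cocycle $a_{\bS^2}\otimes\one_{T^{n-2}}$ with opposite orientation signs.

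The main obstacle I anticipate is verifying the compatibility of the $\tau$-pairing with the flat connection $\nabla^{(z^{(0)}_2,\ldots,z^{(0)}_{n-1})}$, since $\tau$ inverts the $T^{n-2}$-direction loops and hence sends each $z^{(0)}_j$ to its inverse. This is handled by checking that the boundaries of the primitive discs and polygons entering the cancellation lie purely in the $v_1$-direction (which carries no holonomy variable), so that the Novikov and holonomy weights of each $\tau$-pair agree and the cancellation goes through.
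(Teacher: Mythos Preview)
Your approach is essentially the one the paper takes---combine $T^{n-2}$-equivariance of the disc moduli with the $\tau$-involution to kill the remaining term---but there is a genuine gap in step~2 that the paper's proof works hard to close.

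The pearl model defines $\fm^{\bbL_0}_0(1)$ as a sum over \emph{decorated trees} $\Gamma\in\bm\Gamma_{k+1}$, each of which alternates holomorphic polygons with Morse flows and the propagator $G$ (see \eqref{mgammaeq}, \eqref{fgammaeq}). You argue that ``$T^{n-2}$-equivariance of $\ev_0$'' forces the output into a one-dimensional invariant subspace, but the Morse function $f^{T^{n-2}}$ is \emph{not} $T^{n-2}$-invariant, so neither $\Pi$ nor $G$ is $T^{n-2}$-equivariant, and $CF^\bullet(L_0)$ carries no $T^{n-2}$-action in which to speak of an invariant subspace. What is true is that the $\ev_0$-image of a single-vertex moduli space (with equivariant Kuranishi data and invariant inputs $U,V$) is a $T^{n-2}$-invariant \emph{chain}; one must then analyse $\Pi$ of such chains, and---crucially---track what happens when this chain is fed through $G$ into the next vertex of a multi-vertex tree. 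The paper does exactly this: it shows the output chain at an interior vertex is either zero (corner output, dimension $n-3$) or a finite union $\coprod\{p_i\}\times T^{n-2}$ with $p_i$ near $p$ or $q$ (smooth output, dimension $n-2$), and then uses the specific choice of $f^{\bS^2}$ (the two flow lines from $p$ and $q$ to $a_{\bS^2}$ are disjoint) to check that the forward flow of these orbits cannot reach the boundary $\{p,q\}\times T^{n-2}$ of the next polygon. Without this vertex-by-vertex analysis the multi-vertex trees are not ruled out, and your step~2 is incomplete.

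Two smaller points. First, you miss the degree-two generator $\one_{\bS^2}\otimes X_{ij}\in R_0$; it is easily excluded (no flow line from the nodal locus reaches the maximum $\one_{\bS^2}$), but it should be named. Second, your resolution of the holonomy issue is not quite right: the boundaries of the relevant polygons are contained in $\{r\}\times T^{n-2}$ and can wind nontrivially in the $T^{n-2}$-directions carrying $z^{(0)}_j$. The reason the $\tau$-cancellation still matches holonomies is that $\tau$ acts by $-1$ on $H_1(T^{n-2})$ while complex conjugation reverses the boundary orientation, so $\partial\hat\beta=\partial\beta$ in $H_1(\widetilde L_0)$ and the holonomy weights of the paired moduli agree.
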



\begin{proof}
Recall that $\fm_0^{\bbL_0}(1)$ is the summation over operations indexed by stable trees $\Gamma\in \bm{\Gamma}_{k+1}$, $k\ge 0$, with repeated inputs $b= uU+vV$. Since $U$ and $V$ are of degree $1$ and $\mu_{L_0}(\beta)=0$ for all $\beta \in H_2^{\mathrm{eff}}(X^{\circ},L_0)$, $\fm_0^{\bbL_0}(1)$ is a linear combination of degree $2$ critical points of $f^{L_0}$ of the form $(p,q)\otimes X_i$, $(q,p)\otimes X_i$, $a_{\bS^2}\otimes \bm{1}_{T^{n-2}}$, and $\one_{\bS^2}\otimes X_{ij}$. In the following, we show that the coefficients of these outputs all vanish.
\\

We begin by considering the coefficient of $(p,q) \otimes X_i$. 
Since the output of a stable tree with no input vertices must be in $R_0=\widetilde{L}_0$, it suffices to consider stable trees $\Gamma\in\bm{\Gamma}_{k+1}$ with $k\ge 1$.

We first exclude the contribution of  $\Gamma$ with at least one interior vertex. In this case, $\Gamma$ must have an interior vertex $v$ such that each input vertex of $v$ is an input vertex of $\Gamma$.
Note that this includes vertices $v$ with no input vertices (i.e. tadpoles). We denote the output vertex of $v$ by $w$. Let $(\alpha_v,\beta_v)$ be the stable polygon decorating $v$. Since the unstable chains of $U$ and $V$ are both $T^{n-2}$-invariant, by choosing a generic small $T^{n-2}$-equivariant perturbation for the fiber product at $v$, the output singular chain $\Delta_v$ is then $T^{n-2}$-invariant. 

If $\alpha_v(0)=\pm 1$, i.e., the output marked point is a corner, 
then the virtual dimension of $\Delta_v$ is $n-3$. Since $\Delta_v$ is $T^{n-2}$-invariant, it must be the zero chain. Now, suppose $\alpha_v(0)=0$, i.e., the output marked point is a smooth point. Then, $\Delta_v$ has virtual dimension $n-2$.
The evaluation image of $\Delta_v$ is a union of $T^{n-2}$-orbits $\coprod_{i=1,\ldots,\nu} \{p_i\}\times T^{n-2}\subset \widetilde{L}_0$, where $p_1,\ldots,p_\nu \in \bS^2\setminus \{p,q\}$ are points situated near either $p$ or $q$. (We remark that the evaluation image at smooth output point can be perturbed outside of $\{p\}\times T^{n-2}$ and $\{q\}\times T^{n-2}$ due to the fact that the Kuranishi structures are weakly submersive.) Note that if $w$ is the root vertex, then $\Gamma$ does not contribute to $p \otimes X_i$ since its stable submanifold is contained in $R_{-1}$. We may therefore assume $w$ is an interior vertex and denote its decoration by $(\alpha_{w},\beta_{w})$.

For $\beta_{w}\ne \beta _0$, the evaluation image of $\cM_{\ell(w)}(\alpha_{w},\beta_{w};L_0)$ at the corresponding input marked point is contained in $\{p,q\}\times T^{n-2}$, which does not intersect with the flow lines from $\Delta_v$ since $f^{\bS^2}$ was chosen so that the two flow lines connecting $p$ and $q$ to $a_{\bS^2}$ are disjoint. For $\beta_{w}=\beta_0$, we have the following three cases:
\begin{enumerate}
\item[(i)] $w$ has an input vertex $v'$  such that $v'$ is an input vertex of $\Gamma$.
\item[(ii)] $w$ has an input vertex $v'\ne v$ such that each input vertex of $v'$ is an input vertex of $\Gamma$.
\item[(iii)] There is a subtree $\Gamma'$ of $\Gamma$ ending at $w$ which does not contain $v$. $\Gamma'$ has an interior vertex $v^+$ such that each input vertex of $v^+$ is an input vertex of $\Gamma$ and its output vertex is not $w$. 
\end{enumerate} 
For the case (i), the fiber product at $w$ is empty since the flow lines from $U$ and $V$ are in the immersed sectors. For the case (ii), we again choose generic small $T^{n-2}$-equivariant perturbation for the fiber product at $v'$, and denote the output singular chain by $\Delta_{v'}$. The flow lines from $\Delta_v$ and $\Delta_{v'}$ do not intersect generically. Finally, for the case (iii), we repeat the arguments above for $\Gamma'$. It easy to see that such iterations terminate. This rules out the contribution of $\Gamma$ with at least one interior vertices. 

Consequently, the only possible contributions to $(p,q) \otimes X_i$ are from Morse flow lines, and we know that flow lines from $U$ to $(p,q) \otimes X_i$ cancel pairwise. Hence the coefficient of $(p,q) \otimes X_i$ vanishes. The vanishing of the coefficient of $(q,p) \otimes X_i$ follows from the exact same argument. \\

Next, we consider the coefficient of $a_{\bS^2} \otimes \one_{T^{n-2}}$.  
Since the Morse flow lines from $U$ and $V$ are contained in $R_{-1}$ and $R_1$, respectively,  $\Gamma^0$ does not contribute to $a_{\bS^2} \otimes \one_{T^{n-2}}$. 
It suffices to consider stable trees with at least one interior vertex. Moreover, by the same argument as in (1), the only possible contributions are from stable trees $\Gamma_{k+1}\in\bm{\Gamma}_{k+1}$, $k\ge 0$, with exactly one interior vertex $v$ (in which case its output vertex $w$ is the root vertex). If $v$ has an odd number of inputs, then its output marked point must be a corner, but $a_{\bS^2} \otimes \one_{T^{n-2}}\in \widetilde{L}_0$. Thus, we are left with the case of the stable trees $\Gamma_{2k+1}\in\bm{\Gamma}_{2k+1}$. We note that since the boundary of a stable polygon bounded by $L_0$ is contained in the the immersed loci, the input corners $U$ and $V$ must appear in pairs. 

For $k>1$, since $U$ and $V$ are of degree $1$, the map $\tau_*^{main}:  \cM_{2k+1}(\alpha_v,\beta_v;U,V,\ldots,U,V)\to \cM_{2k+1}(\hat{\alpha}_v,\hat{\beta}_v;V,U,\ldots,V,U)$ is an orientation reversing isomorphism of Kuranishi structures by Theorem \ref{thm:inv}. Let $\hat{\Gamma}_{2k+1}\in\bm{\Gamma}_{2k+1}$ be the tree obtained from $\Gamma_{2k+1}$ by changing the decoration at $v$ to $(\hat{\alpha}_v,\hat{\beta}_v)$. Then, by choosing $\tau_*^{main}$-invariant perturbations for $\cM_{2k+1}(\alpha_v,\beta_v;U,V,\ldots,U,V)$ and  $\cM_{2k+1}(\hat{\alpha}_v,\hat{\beta}_v;V,U,\ldots,V,U)$, the contribution of $\Gamma_{2k+1}$ and $\hat{\Gamma}_{2k+1}$ to $a_{\bS^2} \otimes \one_{T^{n-2}}$ cancel with each other. 

For $k=0$, we note that $\alpha_v:\{0\}\to \{-1,0,1\}$, $\alpha_v(0)=0$ and $\beta_v\ne 0$. Then $\cM_{1}(\alpha_v,\beta_v;L_0)$ has two connected components corresponding to the two possible lifts of the disc boundary to $\{p\}\times T^{n-2}$ and $\{q\}\times T^{n-2}$. By Theorem \ref{thm:inv},  $\tau_*^{main}:\cM_{1}(\alpha_v,\beta_v;L_0)\to \cM_{1}(\alpha_v,\beta_v;L_0)$ is an orientation reversing isomorphism swapping the two components. By choosing $\tau_*^{main}$-invariant perturbation for $\cM_{1}(\alpha_v,\beta_v;L_0)$, the contribution of $\Gamma_{2k+1}$ to $a_{\bS^2} \otimes \one_{T^{n-2}}$ vanishes.\\

Finally, the coefficient of $\one_{\bS^2}\otimes X_{ij}$ is zero for an obvious reason: there are no flow lines from $p$ or $q$ to $\one_{\bS^2}$
\end{proof}

The cancellation of contributions to $a_{\bS^2} \otimes \one_{T^{n-2}}$ can be intuitively visualized as in Figure~\ref{fig:afterpert} if we make a transverse perturbation of the self-intersection loci.

\begin{figure}[h]
	\begin{center}
		\includegraphics[scale=0.45]{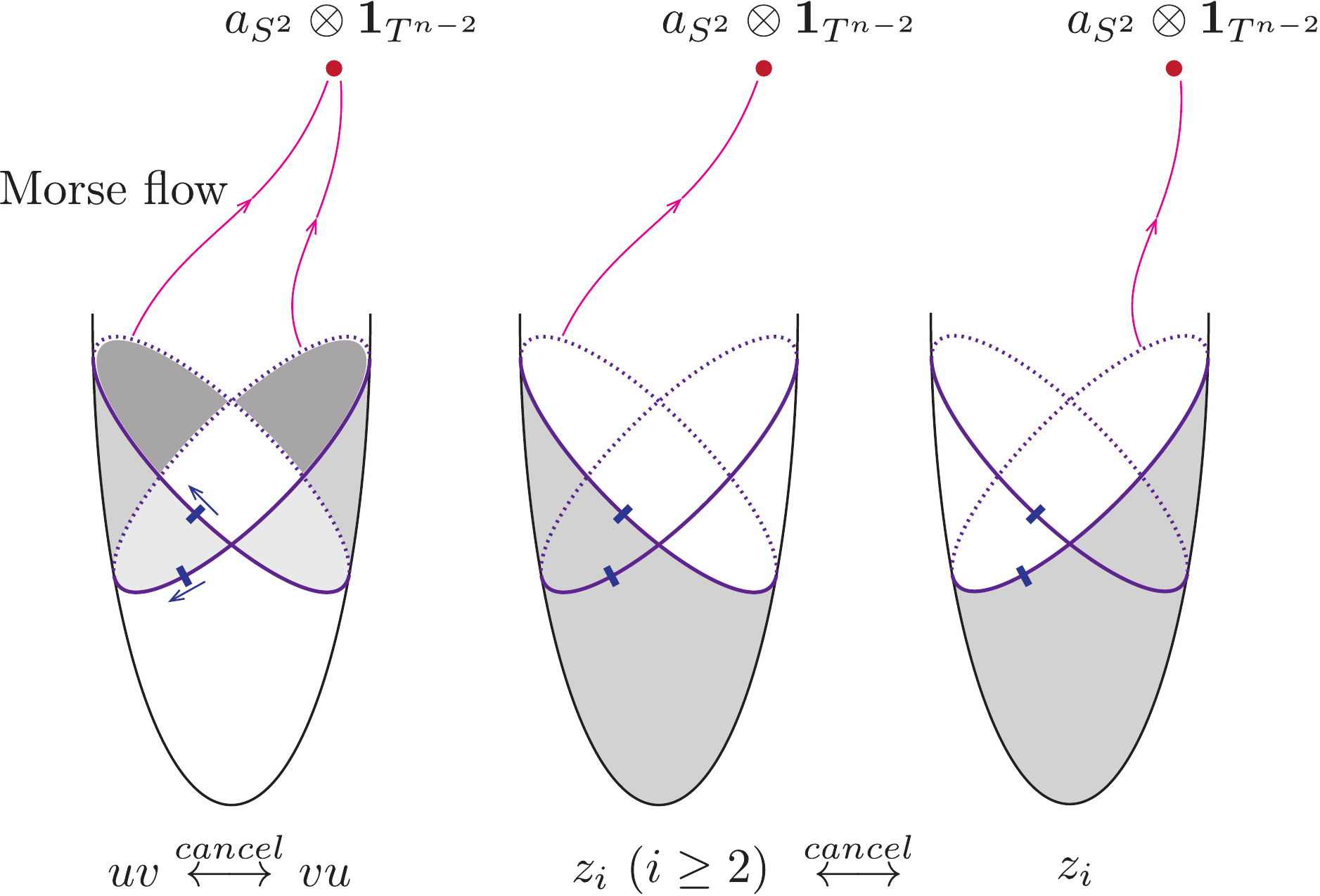}
		\caption{After perturbing the clean intersection loci, some of canceling pairs for $m_0^b$ are intuitively visible.}\label{fig:afterpert}
	\end{center}
\end{figure}

$\bbL'_1$  and $\bbL'_2$ are unobstructed as Lagrangian submanifolds of $X^{\circ}$, since the underlying Lagrangians $L_1$ and $L_2$ do not bound any non-constant holomorphic discs in $X^{\circ}$. Thus we have:

\begin{lemma}
$\bbL'_i$ is unobstructed for $i=1,2$.
\end{lemma}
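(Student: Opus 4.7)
The plan is to deduce the unobstructedness of $\bbL'_i$ for $i=1,2$ directly from the absence of non-constant holomorphic discs bounded by $L_i$ in $X^\circ$, as indicated in the paragraph immediately preceding the statement. First, I would note that $L_1$ and $L_2$ are \emph{embedded} Lagrangian tori, so no immersed-sector contributions are possible: the pearl complex reduces to the standard Morse model of a torus, and the connection-twisted operations $\fm_k^{\bbL'_i}$ differ from $\fm_k^{L_i}$ only through the holonomy weights $\mathrm{Hol}_{\nabla}(\partial\beta)$ on each disc class $\beta$. In particular, any contribution to $\fm_0^{\bbL'_i}(1)$ must come from a stable $J$-holomorphic disc bounded by $L_i$ (with underlying class $\beta \in H^{\mathrm{eff}}_2(X^\circ,L_i)$), since the constant disc class yields zero by the convention $\tilde{\fm}_{0,\beta_0}(1)=0$.

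The key step is therefore to verify that $L_i$ bounds no non-constant holomorphic discs inside $X^\circ$. For this I would invoke the global holomorphic function $w:X\to\C$: for any holomorphic $u:(D^2,\partial D^2)\to(X,L_i)$, the composition $w\circ u$ is a holomorphic function on $D^2$ whose boundary values lie on the embedded circle $\ell_i=w(L_i)\subset\C\setminus\{\epsilon\}$. By the open mapping theorem / argument principle, the image $(w\circ u)(D^2)$ contains the bounded region enclosed by $\ell_i$. Since $\ell_i$ winds once around $\epsilon$ (see Figure~\ref{fig:posvertex1}), this image contains $\epsilon$, contradicting $u(D^2)\subset X^\circ=X\setminus\{w=\epsilon\}$.

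The same reasoning extends to stable discs with sphere-bubble components, because any non-constant holomorphic sphere $v:\bP^1\to X$ has $w\circ v$ constant by Liouville's theorem, so sphere bubbling cannot rescue a non-constant disc component: a non-constant stable disc must have a non-constant disc component, whose composition with $w$ yields the same contradiction. Consequently $\cM_1(\beta;L_i)=\emptyset$ for all $\beta\neq\beta_0$ in $X^\circ$, and summing the (empty) disc contributions weighted by $\bT^{\omega(\beta)}\mathrm{Hol}_{\nabla}(\partial\beta)$ gives $\fm_0^{\bbL'_i}(1)=0$.

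Hence the trivial bounding cochain satisfies the weak Maurer--Cartan equation (in fact, the strict one), and $\bbL'_i$ is unobstructed. The argument is essentially self-contained once the $w$-obstruction is invoked; the only subtlety worth verifying is that the argument principle rules out \emph{all} stable disc configurations, but as noted above, sphere bubbling is killed by Liouville on fibers of $w$, so no extra work is needed beyond the one-variable complex-analytic fact used for the disc component itself.
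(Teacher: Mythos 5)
Your approach is the same as the paper's: the lemma is justified there by the single observation that $L_1$ and $L_2$ bound no non-constant holomorphic discs in $X^{\circ}$, and you are supplying the complex-analytic reason via the fibration $w$. The open-mapping argument for the disc component and the Liouville argument for sphere components are both fine as far as they go.

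There is, however, one case your argument as written does not cover: a non-constant holomorphic map $u$ with $w\circ u$ \emph{constant}. The open mapping theorem gives you nothing then, yet $u$ could a priori be a non-constant disc contained in a single fiber $w^{-1}(c)$ with $c\in\ell_i$. You need to add that for $c\neq 0$ this fiber is $(\C^\times)^{n-1}$ and $L_i\cap w^{-1}(c)$ is a totally real torus isotopic to the product of standard circles, which bounds no non-constant holomorphic discs by the maximum principle applied coordinatewise; this is exactly the argument the paper uses later for the immersed torus $\cL$. The same observation is what actually justifies your assertion that ``a non-constant stable disc must have a non-constant disc component'': that statement is false for general stable maps (a constant disc with a non-constant sphere tree attached is a legitimate stable configuration in a nonzero class), and here it is ruled out only because such a sphere tree would have to pass through a point of $L_i$, where $w\in\ell_i\subset\C^\times$, forcing the spheres into a fiber $(\C^\times)^{n-1}$ which contains no non-constant rational curves. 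With these two uses of the fiber argument added, the proof is complete.
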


\subsubsection{Computation of the gluing formula}
For $i<j\in \{0,1,2\}$, we denote the clean intersections of $L_i$ with $L_j$ by $\{a^{\ell_i,\ell_j},b^{\ell_i,\ell_j}\}\times T^{n-1}$, where $a^{\ell_i,\ell_j},b^{\ell_i,\ell_j}$ are the two intersection points of the base circles $\ell_i$ and $\ell_j$ on the $w$-plane as depicted in Figure \ref{fig:posvertex1}, and the $T^{n-1}$-factor lies along the directions of $v_1',\ldots,v_{n-1}'$. 
We choose a perfect Morse function $f^{L_i, L_j}$ on $\{a^{\ell_i,\ell_j},b^{\ell_i,\ell_j}\}\times T^{n-1}$ such that its restriction to each connected component is the sum of $f^{\bS^1}$ on the $\bS^1$-factors.

In \cite{CLL}, it was shown that the disc potentials of $L_1$ and $L_2$ are equal under the change of variables $z_i' = z_i$ for $i = 1,\ldots,n-1$, and
$z_n' = z_n \cdot f(z_1,\ldots,z_{n-1})$ where $f$ is the generating function given in \eqref{eqn:fslabftn}. In the following, we will deduce the same gluing formula by finding a (quasi-)isomorphism between $\bbL'_1$ and $\bbL'_2$ regarded as objects the Fukaya category.

Let $(CF^{\bullet}(\bbL'_1,\bbL'_2),\fm_1^{\bbL'_1,\bbL'_2})$ and $(CF^{\bullet}(\bbL'_2,\bbL'_1),\fm_1^{\bbL'_2,\bbL'_1})$ be the pearl complexes associated to the perfect Morse function $f^{L_1, L_2}$, which are further deformed by the flat connections $\nabla^{(z_1,\ldots,z_n)}$ and $\nabla^{(z'_1,\ldots,z'_n)}$. We denote by 
\begin{align*}
a_{12} \otimes \one_{T^{n-1}}\in CF^{0}(\bbL'_1,\bbL'_2), \quad a_{21} \otimes \one_{T^{n-1}}\in CF^{1}(\bbL'_2,\bbL'_1)\\
b_{12} \otimes \one_{T^{n-1}}\in CF^{1}(\bbL'_1,\bbL'_2),\quad b_{21} \otimes \one_{T^{n-1}}\in CF^{0}(\bbL'_2,\bbL'_1),
\end{align*}
the generators corresponding to the the maximum points on $\{a^{\ell_1,\ell_2}\}\times T^{n-1}$ and $\{b^{\ell_1,\ell_2}\}\times T^{n-1}$, respectively. We will prove that $a_{12}\otimes \one_{T^{n-1}}$ and $b_{21}\otimes \one_{T^{n-1}}$ provide the desired isomorphisms, namely 

\begin{equation} \label{eq:iso}
\begin{aligned}
\fm_{1}^{\bbL'_1,\bbL'_2}(a_{12} \otimes \one_{T^{n-1}})&= 0,\\ 
\fm_{1}^{\bbL'_2,\bbL'_1}(b_{21} \otimes \one_{T^{n-1}})&= 0,\\ 
\fm^{\bbL'_1, \bbL'_2, \bbL'_1}_2(b_{21} \otimes \one_{T^{n-1}},a_{12} \otimes \one_{T^{n-1}})&= \one_{L_1}^{\wT} + \fm^{\bbL'_1}_1 (\gamma_1),\\ 
\fm^{\bbL'_2, \bbL'_1, \bbL'_2}_2(a_{12} \otimes\one_{T^{n-1}},b_{21} \otimes \one_{T^{n-1}})&= \one_{L_2}^{\wT} + \fm^{\bbL'_2}_1 (\gamma_2),	
\end{aligned}
\end{equation}
for some 
$\gamma_1\in CF^{\bullet}(\bbL'_1)$, and $\gamma_2\in CF^{\bullet}(\bbL'_2)$. Notice that $\one_{L_1}^{\wT}$ and $\one_{L_2}^{\wT}$ in the above equations are the strict units of $CF^{\bullet}(\bbL'_1)$ and $CF^{\bullet}(\bbL'_2)$, respectively. 

\begin{remark}
The relation~\eqref{eq:iso} between Clifford type tori and Chekanov type tori was studied by Seidel in his lecture notes, see \cite{Seinote}. 
\end{remark}

We may assume $L_1$ and $L_2$ are chosen such that the holomorphic strip classes $\beta_0, \beta_1\in \pi_2(X^{\circ},L_1\cup L_2)$ (see Corollary \ref{cor:hol-strip-class}) have the same symplectic area. We then have the following.

\begin{theorem} \label{thm:iso12}
	$a_{12}\otimes \one_{T^{n-1}}$ is a quasi-isomorphism between $\bbL'_1$ and $\bbL'_2$ with an inverse $b_{21}\otimes \one_{T^{n-1}}$ if and only if
	\begin{equation} \label{eq:gluing}
	\begin{array}{l}
	z_i' = z_i, \quad  i = 1,\ldots,n-1\\
	z_n' = z_n \cdot f(z_1,\ldots,z_{n-1}),
	\end{array}
	\end{equation}
	where $f$ is given in \eqref{eqn:fslabftn}.
\end{theorem}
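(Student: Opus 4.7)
The plan is to derive \eqref{eq:gluing} by a direct computation of the Floer differential $\fm_1^{\bbL'_1,\bbL'_2}(a_{12}\otimes \one_{T^{n-1}})$, and then use that the $\fm_2$-identities in \eqref{eq:iso} are essentially automatic once closedness is established.  First I would classify the Maslov-$1$ holomorphic strips between the intersection orbits.  Exploiting the free Hamiltonian $T^{n-1}$-action in the directions $v'_1,\ldots,v'_{n-1}$ and choosing $T^{n-1}$-equivariant Kuranishi data as in Section \ref{sec:Morse model}, any rigid strip projects equivariantly to a rigid holomorphic bigon in the $w$-plane bounded by arcs of $\ell_1\cup\ell_2$.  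There are exactly two such bigons, giving the two basic classes $\beta_0,\beta_1\in \pi_2(X^{\circ},L_1\cup L_2)$ of the statement, of equal $\bT$-weight by assumption.  Every other contributing class decomposes as $\beta_0$ or $\beta_1$ together with an effective curve class $\alpha\in H_2^{\mathrm{eff}}(X)$ and/or Maslov-$2$ disc bubbles in classes $\beta^{L_1}_i$ attached to the $L_1$-boundary; crucially, only \emph{one} of the two basic strips has its $L_1$-boundary lying in the chamber where such disc bubbles exist, while the symmetric $L_2$-disc bubbles are ruled out by the same chamber consideration (they would instead appear in the symmetric computation with $b_{21}\otimes\one_{T^{n-1}}$).

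Next I would extract the coefficient of $b_{12}\otimes \one_{T^{n-1}}$ by computing the holonomies of $\conn^{(z_1,\ldots,z_n)}$ and $\conn^{(z'_1,\ldots,z'_n)}$ against the chosen gauge hypertori.  Because the boundary loops of $\beta_0$ and $\beta_1$ in the $v'_j$-direction ($j<n$) are identical after concatenation with short paths across each intersection sector, the two strips share the same $\prod z_j^{?}$ factor, forcing $z_i'=z_i$ for $i=1,\ldots,n-1$ in order for any cancellation between them to occur.  In the $v_1$-direction the two boundary loops differ by a single generator, contributing $z_n$ from $\beta_0$ and $z_n'$ from $\beta_1$ with opposite signs.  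Summing the disc bubble contributions along $\beta_1$'s $L_1$-boundary over all classes $\beta^{L_1}_i+\alpha$, weighted by $\bT^{\omega(\beta^L_i-\beta^L_1)}(1+\delta_i(\bT))\vec{z}^{\,v_i'}$, reconstructs exactly the slab function $f(z_1,\ldots,z_{n-1})$ of \eqref{eqn:fslabftn}.  Hence up to a common invertible scalar
\[
\fm_1^{\bbL'_1,\bbL'_2}(a_{12}\otimes\one_{T^{n-1}}) \;=\; \bigl(z_n\cdot f(z_1,\ldots,z_{n-1}) - z_n'\bigr)\,b_{12}\otimes\one_{T^{n-1}},
\]
whose vanishing is \eqref{eq:gluing}.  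The computation of $\fm_1^{\bbL'_2,\bbL'_1}(b_{21}\otimes \one_{T^{n-1}})$ is symmetric, with the roles of $\beta_0,\beta_1$ swapped, yielding the same relations.

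For the two $\fm_2$-identities, once both morphisms are $\fm_1$-closed, $\fm_2^{\bbL'_1,\bbL'_2,\bbL'_1}(b_{21}\otimes\one_{T^{n-1}},a_{12}\otimes\one_{T^{n-1}})$ represents a degree-$0$ class in $HF^{\bullet}(\bbL'_1,\bbL'_1)\cong H^{\bullet}(T^n;\Lambda_0)$, and by the standard computation for toric Lagrangian fibers with $\Lambda_{\mathrm{U}}$-connections the degree-$0$ component equals $\Lambda_0\cdot \one^{\wT}_{L_1}$.  It therefore equals $c\cdot \one^{\wT}_{L_1}+\fm_1^{\bbL'_1}(\gamma_1)$ for some $c\in \Lambda_0$, and $c=1$ is pinned down by the moduli of constant holomorphic triangles (a point) under the equal-area normalization of $\beta_0,\beta_1$.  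The other $\fm_2$-identity follows by the mirror-symmetric argument.

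\textbf{Main obstacle.}  The delicate step is the disc-bubble calculation: matching the contribution of attached Maslov-$2$ discs to $f$ on the nose, including the reference shift $\bT^{\omega(\beta^L_i-\beta^L_1)}$, and verifying the chamber/wall-crossing asymmetry that restricts bubbling to just one of $\beta_0,\beta_1$.  Signs must be tracked through the relative spin structure and the reordering identity \eqref{eq:reordering} (via Theorem \ref{thm:inv}), which is where the invariance of $\fm_0^{\bbL_0}$ in Lemma \ref{lemma:L_0} sets a useful template but does not directly apply, since here the discs attach along a torus rather than an immersed sphere factor.
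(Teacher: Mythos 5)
Your computation of the coefficient of $b_{12}\otimes\one_{T^{n-1}}$ lands on the right generating function $f$, but the formula you write for $\fm_1^{\bbL'_1,\bbL'_2}(a_{12}\otimes\one_{T^{n-1}})$ is missing output terms, and this breaks the derivation of half of the gluing formula. The output of $\fm_1$ is a linear combination of $b_{12}\otimes\one_{T^{n-1}}$ \emph{and} of $a_{12}\otimes X_i$, where $X_1,\ldots,X_{n-1}$ are the degree-one critical points of the Morse function on the clean intersection $\{a^{\ell_1,\ell_2}\}\times T^{n-1}$. The coefficient of $a_{12}\otimes X_i$ is $z_i-z_i'$, produced by the two Morse flow lines inside that torus which cross the gauge hypertori of $L_1$ and $L_2$ respectively; this is the \emph{only} source of the relations $z_i'=z_i$ for $i=1,\ldots,n-1$. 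They do not come from the strip count: the strips contribute only to $b_{12}\otimes\one_{T^{n-1}}$, and your claim that equality of the $v_j'$-holonomy factors of the strips ``forces'' $z_i'=z_i$ is not a constraint at all --- if those factors agree identically, closedness imposes nothing in those directions. With the $a_{12}\otimes X_i$ terms dropped, the vanishing of your $\fm_1$ yields only the single relation $z_n'=z_n f$.

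Your description of the strip moduli is also geometrically off. There are not two basic Maslov-one strip classes but $m+1$: the class $\beta_0$ covering $u_R$, and $\beta_1,\ldots,\beta_m$ covering $u_L$, where $\beta_i$ meets the toric divisor $D_i$ exactly once and no other (Corollary \ref{cor:hol-strip-class}); these are genuinely distinct relative classes with distinct boundary holonomies $z_n\vec{z}^{\,v_i'}$, in bijection with the basic disc classes of a toric fiber. There are no ``Maslov-$2$ disc bubbles attached to the $L_1$-boundary'': $L_1$ and $L_2$ bound no non-constant holomorphic discs in $X^{\circ}$, so the stable strip classes are $\beta_i+\alpha$ with $\alpha$ an effective \emph{curve} (sphere) class only (Lemma \ref{stable-strip}). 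The function $f$ arises because the count of strips through $b_{12}\otimes\one_{T^{n-1}}$ in class $\beta_i+\alpha$ equals the one-pointed open Gromov--Witten invariant $n_1(\beta_i^L+\alpha)$ of a toric fiber (Proposition \ref{prop:strip=disc}), not from a wall-crossing asymmetry in disc bubbling. Your treatment of the $\fm_2$-identities is acceptable in spirit, though the paper verifies them directly: only $\beta_0$-triangles survive because $\one_{L_1}$ and $\one_{L_2}$ sit on the boundary of $u_R$, giving $\bT^{\omega(\beta_0)}\one_{L_i}$ rather than $\one_{L_i}$ on the nose, which is harmless for the gluing formula.
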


The proof requires a few preliminary steps, analyzing relevant moduli spaces of holomorphic strips.

Let us first focus on $\fm_{1}^{\bbL'_1,\bbL'_2}(a_{12} \otimes \one_{T^{n-1}})$. A priori, the output of $\fm_{1}^{\bbL'_1,\bbL'_2}(a_{12} \otimes \one_{T^{n-1}})$ is a linear combination of $b_{12}\otimes\one_{T^{n-1}}$ and $a_{12}\otimes X_i$, where $X_1,\ldots, X_{n-1}$ are the degree $1$ critical points of $f^{T^{n-1}}$.  Since there are no non-constant holomorphic polygon with both input and output corners in $\{a^{\ell_1,\ell_2}\} \times T^{n-1}$, the coefficient of $a_{12}\otimes X_{i}$ is given by the two Morse flow lines from $a_{12} \otimes \one_{T^{n-1}}$ in $\{a^{\ell_1,\ell_2}\} \times T^{n-1}$. 
These flow lines contribute $z_i-z_i'$ to the coefficient of $a_{12}\otimes X_{i}$. 

Let $\cM_2^{L_1,L_2}(\beta; \{a^{\ell_1,\ell_2}\} \times T^{n-1}, \{b^{\ell_1,\ell_2}\} \times T^{n-1})$ be the moduli space of stable holomorphic strips in class $\beta\in \pi_2(X^{\circ},L_1\cup L_2)$ with input corner in $\{a^{\ell_1,\ell_2}\}\times T^{n-1}$ and output corner in $\{b^{\ell_1,\ell_2}\}\times T^{n-1}$. The coefficient of $b_{12}\otimes\one_{T^{n-1}}$ is given by the fiber products 
\begin{equation}\label{eq:stable_strip}
\cM_2^{L_1,L_2}(\beta;\{a^{\ell_1,\ell_2}\}\times T^{n-1},\{b^{\ell_1,\ell_2}\}\times T^{n-1}) \times_{L_1\cap L_2} \{b_{12}\otimes\one_{T^{n-1}}\},
\end{equation}
where $\beta$ has Maslov index $1$. 

In terminology of Section \ref{sec:Morse model}, \eqref{eq:stable_strip} corresponds to the moduli spaces of Maslov index $2$ stable polygons with one input critical point $a_{12} \otimes \one_{T^{n-1}}$ (whose unstable chain is $\{a^{\ell_1,\ell_2}\}\times T^{n-1}$) and the output corner in $\{b^{\ell_1,\ell_2}\}\times T^{n-1}$. By degree reason, the output is a multiple of $b_{12}\otimes\one_{T^{n-1}}$. Since $b_{12}\otimes\one_{T^{n-1}}$ is the maximum point on $\{b^{\ell_1,\ell_2}\}\times T^{n-1}$, any contributing stable polygon must intersect it at the output corner.


To make this more explicit, we first consider the example $X=\C^n$.  Let $y_1,\ldots,y_n$ be the standard complex coordinates on $\C^n$.  Then, $w=y_1\ldots y_n. $
The moment map of the $T^{n-1}$ action is given by 
\[
\mu(y_1,\ldots,y_n)=(|y_2|^2-|y_1|^2, \ldots, |y_n|^2-|y_1|^2).
\]
For the moment, let us take $a_1=(0,\ldots,0)$ so that the Lagrangians $L_i$ for $i=1,2$ satisfy $|y_1|=\ldots=|y_n|$.  Moreover, we take $\ell_2$ to be $|w|=1$ and $\ell_1$ to be the image of $\R\cup \{\infty\}$ under the transformation $\frac{w-\bar{\alpha}}{1-
	\alpha w}$ for some $\alpha \in \mathbf{i}\cdot (-1,0)$ in the $w$-plane, where $\mathbf{i}:=\sqrt{-1}$.

\begin{lemma}
	Let $X=\C^n$ and $L_1,L_2$ be given as above and let $\beta\in \pi_2(X^{\circ},L_1\cup L_2)$ be an holomorphic strip class of Maslov index $1$. The moduli space $\cM_2^{L_1,L_2}(\beta;\{a^{\ell_1,\ell_2}\}\times T^{n-1},\{b^{\ell_1,\ell_2}\}\times T^{n-1})$ is regular and is isomorphic to $T^{n-1}$. 
\end{lemma}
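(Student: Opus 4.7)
The plan is to exploit the holomorphic fibration $w:\C^n\to\C$, $w(y_1,\ldots,y_n)=y_1\cdots y_n$, to reduce the strip problem to a $1$-dimensional base problem on $\C$ together with a $T^{n-1}$-family of fiber lifts.

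First I would analyze the base projection. Given any $u\in \cM_2^{L_1,L_2}(\beta;\{a^{\ell_1,\ell_2}\}\times T^{n-1},\{b^{\ell_1,\ell_2}\}\times T^{n-1})$, the composition $v:=w\circ u:D^2\to\C$ is holomorphic with boundary on $\ell_1\cup\ell_2$ and corners sent to $a^{\ell_1,\ell_2}$ and $b^{\ell_1,\ell_2}$. The pair $(\ell_1,\ell_2)$ bounds exactly two topological bigons in $\C$; by the Riemann mapping theorem each of them is biholomorphic to $D^2$ with two marked boundary points, and I would check that the Maslov index / homotopy class $\beta$ selects precisely one of the bigons. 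Consequently $v$ is determined up to the (trivial) automorphism group of $D^2$ fixing two boundary punctures. By choosing the configuration so that $\epsilon$ lies outside this bigon, the chosen region is simply connected and avoids $w=0$.

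Second I would parametrize the lifts of $v$ to $\C^n$. Using simple connectedness and the avoidance of $0$, fix a holomorphic branch of the $n$-th root on the chosen bigon and define a reference lift $y_j(z):=v(z)^{1/n}$ for $j=1,\ldots,n$. Then $\prod_j y_j=v$ and $|y_1|=\cdots=|y_n|$ on all of $D^2$, so the boundary condition $u(\partial D^2)\subset L_1\cup L_2$ (which is the $T^{n-1}$-moment map level set over $\ell_1\cup\ell_2$) is automatic. Given any other lift $u'$ of the same $v$, writing $y'_j=h_j y_j$ gives holomorphic functions $h_j:D^2\to\C^\times$ with $\prod_j h_j\equiv 1$ and with $|h_j|\equiv 1$ on $\partial D^2$. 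The maximum modulus principle forces each $h_j$ to be a constant in $U(1)$, so the space of lifts of $v$ is
\[
\left\{(\xi_1,\ldots,\xi_n)\in U(1)^n \;\Big|\; \prod_j \xi_j=1\right\}\cong T^{n-1},
\]
acting freely and transitively by componentwise multiplication. This yields a set-theoretic bijection $\cM_2^{L_1,L_2}(\beta;\{a^{\ell_1,\ell_2}\}\times T^{n-1},\{b^{\ell_1,\ell_2}\}\times T^{n-1})\cong T^{n-1}$.

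Third I would verify regularity and match dimensions. The expected dimension of this moduli space is $\mu_L(\beta)+\dim R_a -1=1+(n-1)-1=n-1$, which agrees with $\dim T^{n-1}$. Regularity I would obtain by decomposing the linearized Cauchy--Riemann operator according to the $T^{n-1}$-symmetry: the base component is transverse since $v$ is a biholomorphism onto its image with corners fixed, while each of the $n-1$ fiber directions reduces to the linearized problem for a holomorphic disk in $\C^\times$ with $S^1$-boundary, which is a standard Maslov index $0$ automatic transversality situation. Alternatively, the $T^{n-1}$-action directly exhibits an $(n-1)$-dimensional subspace of the kernel that saturates the expected dimension. Sphere bubbling is ruled out because $\C^n$ carries no nonconstant holomorphic spheres, and disk bubbling from $L_1$ or $L_2$ is ruled out because neither torus bounds any nonconstant holomorphic disk in $X^\circ$.

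The main obstacle is the bookkeeping surrounding the choice of bigon and the avoidance of $w=0$: one must confirm that the Maslov index $1$ class $\beta$ corresponds to a bigon that misses $0$ in its interior (so that the $n$-th root branch exists), and handle the alternative case via a logarithmic substitution if $0$ were contained in the chosen bigon. Once this topological issue is settled, the base-fiber decomposition yields both the $T^{n-1}$ identification and regularity in one stroke.
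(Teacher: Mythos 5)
There is a genuine gap, and it sits exactly where the real work of the lemma lies. Your construction of the reference lift $y_j(z)=v(z)^{1/n}$ requires the projected bigon to avoid $w=0$, but this holds only for the single class $\beta_0$. The two bigons cut out by $\ell_1$ and $\ell_2$ are the two lune-shaped regions between the circles, and one of them ($u_L$ in the paper's notation) necessarily contains the origin, since $\ell_2=\{|w|=1\}$ encloses $0$; you cannot arrange this away by a choice of configuration (you appear to be conflating the puncture $\epsilon$ with the origin --- $\epsilon$ lies in the lens-shaped intersection region and is automatically avoided, whereas $w=0$ lies in one of the two bigons). All the remaining Maslov index $1$ classes $\beta_1,\ldots,\beta_n$ project \emph{onto} $u_L$ with degree one, so for these $v=w\circ u$ vanishes at an interior point. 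Then no branch of $v^{1/n}$ exists on the domain, and a ``logarithmic substitution'' cannot repair this, because the obstruction is a genuine interior zero of $v$ rather than multivaluedness. Moreover, the actual lifts in these classes are \emph{not} of the form $\xi_j v^{1/n}$: exactly one coordinate $y_i$ vanishes in the interior while the other $n-1$ coordinates are zero-free, so the components cannot all be unit-modulus constants times a common $n$-th root, and your lift-comparison step has no valid base point to compare against.

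The missing ingredient, which is the content of the paper's proof, is the following. Assume WLOG the strip meets only $\{y_1=0\}$ and work in the chart $\C\times(\C^\times)^{n-1}$ with coordinates $(w,y_2,\ldots,y_n)$. With $w(\zeta)=\frac{\zeta-\bar{\alpha}}{1-\alpha\zeta}$ on the upper half-disc, introduce the auxiliary Blaschke factor $\tilde{w}(\zeta)=\frac{\zeta-\alpha}{1-\bar{\alpha}\zeta}$, which is nowhere zero on the domain yet satisfies $|\tilde{w}|=|w|$ on the entire boundary. The Lagrangian boundary condition $|y_1|=\cdots=|y_n|$ gives $|y_j|^n=|w|=|\tilde{w}|$ on the boundary for $j\geq 2$, so $\tilde{w}/y_j^n$ is holomorphic, zero-free, and of unit modulus on the boundary, hence a constant in $U(1)$ by the maximum principle applied to it and its reciprocal. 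This determines $y_2,\ldots,y_n$ up to $n-1$ independent phases, after which $y_1=w/(y_2\cdots y_n)$ is forced, and one recovers $\cM_2^{L_1,L_2}(\beta;\cdot)\cong T^{n-1}$. Your comparison-of-lifts argument and your regularity discussion are fine once a valid zero-free reference for each $y_j$, $j\geq 2$, is in hand; without the $\tilde{w}$ device (or an equivalent), the proof covers only $\beta_0$ and misses precisely the classes that feed into the gluing formula.
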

\begin{proof}
In the punctured $w$-plane $\C\setminus \{\epsilon\}$, $\epsilon\in \mathbf{i} \cdot (-1,\frac{\alpha}{\mathbf{i}})$, there are two holomorphic strips $u_L$ (which contains $w=0$) and $u_R$ bounded by $\ell_1$ and $\ell_2$. The projection of a holomorphic strip $u$ in class $\beta$ to the $w$-plane must cover either $u_L$ or $u_R$. We denote the class of holomorphic strips covering $u_R$ by $\beta_0$. It is easy to see that the Lemma holds for $\beta_0$ by trivializing the $(\C^{\times})^{n-1}$-fibration $w:X^{\circ}\to \C\setminus\{\epsilon\}$ away from $\{w=0\}$. 
The strip classes covering $u_L$ can be identified with the Maslov index $1$ disc classes bounded by a regular toric fiber of $\C^n$, and are hence regular (see \cite{CO}). Moreover, the holomorphic strips $u$ must intersects a toric prime divisor $\{y_i=0\}\subset X^{\circ}$ once for exactly one $i\in\{1,\ldots,n\}$. We will denote the class of holomorphic strips intersecting $\{y_i=0\}$ by $\beta_i$.  

Without loss of generality, we may assume $i=1$. The image of $u$ is contained in the complement of $\{y_j=0\}$ for $j=2,\ldots,n$, which can be identified with $\C \times (\C^\times)^{n-1}$, equipped with coordinates $y_2,\ldots,y_n \in \C^\times$ and $w=y_1\ldots y_n \in \C$.  Then, $u$ can be written as a holomorphic map
\[
u(\zeta)=(w(\zeta),y_2(\zeta),\ldots,y_n(\zeta))
\]
from the upper-half disc $\{\zeta\in \C \mid |\zeta|\leq 1, \mathrm{Im} \zeta \geq 0\}$. We have $w(\zeta) = \frac{\zeta-\bar{\alpha}}{1-\alpha \zeta}$, up to a reparametrization of the domain, which satisfies the boundary conditions $|w|=1$ on the upper arc $|\zeta|=1$ and $\{\frac{w-\bar{\alpha}}{1-\alpha w} \mid w \in \R\cup\{\infty\}\}$ on the lower arc $[-1,1]$.
	
For the $y_j(\zeta)$-components of $u(\zeta)$, the boundary condition $|y_1|=\ldots=|y_n|$ implies $|w|=|y_j|^n$ for $j=2,\ldots,n$. We consider the holomorphic map $\tilde{w}(\zeta)= \frac{\zeta-\alpha}{1-\bar{\alpha} \zeta}$, which satisfies $|\tilde{w}|=|w|=1$ on the upper arc $|\zeta|=1$ and $|\tilde{w}|=|w|$ on the lower arc $\zeta\in [-1,1]$.  Thus, it also satisfies $|\tilde{w}|=|y_j|^n$.  Moreover, $\tilde{w}(\zeta)\ne 0$ on the upper half disc $\{\zeta\in \C \mid |\zeta|\leq 1, \mathrm{Im } \zeta \geq 0\}$.  By the maximum principle, $\tilde{w}=\rho y_j^n$ for some constant $\rho \in U(1)$.  This determines $y_j = \tilde{w}^{1/n}$ up to a rotation by $\rho$.  Thus, the moduli space is isomorphic to $T^{n-1}$ (on which $T^{n-1}$ acts freely).  
\end{proof}

In the proof above, we have chosen base loops $\ell_1$ and $\ell_2$, and a level $a_1$ for the simplicity of argument. It is easy to see that the lemma holds for other choices of a level $a_1$ and base loops $\ell_1$ and $\ell_2$ enclosing $\epsilon$ and intersecting transversely at two points. 



Now, let $X$ be any toric Calabi-Yau manifold.  The $T^n$-moment map is of the form 
\[
\left(\rho_1(|y_1|^2,\ldots,|y_n|^2),\ldots,\rho_n(|y_1|^2,\ldots,|y_n|^2) \right)
\]
for a toric chart $(y_1,\ldots,y_n)$, where $\rho$ is a Legendre transform determined by the toric K\"ahler metric \cite{Gui94}.  

\begin{lemma} \label{lem:hol-strip-X}
	For a toric Calabi-Yau manifold $X$ and $L_1, L_2$ given in the beginning of this section, let $\beta\in \pi_2(X^{\circ}, L_1\cup L_2)$ be a holomorphic strip class of Maslov index $1$. The moduli space $\cM_2^{L_1,L_2}(\beta;\{a^{\ell_1,\ell_2}\}\times T^{n-1},\{b^{\ell_1,\ell_2}\}\times T^{n-1})$ is regular and isomorphic to $T^{n-1}$.
\end{lemma}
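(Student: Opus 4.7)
My plan is to generalize the explicit analysis from the preceding lemma (the $\C^n$ case) by exploiting the toric structure of $X$, the $T^{n-1}$-symmetry of $L_1, L_2$, and the toric holomorphic function $w: X^\circ \to \C \setminus \{\epsilon\}$. The first step is to project any holomorphic strip $u$ representing $\beta$ via $w$: the composition $w \circ u$ is itself a holomorphic strip in the punctured plane bounded by $\ell_1 \cup \ell_2$, and so it must cover either $u_R$ (not enclosing $w=0$) or $u_L$ (enclosing $w=0$). These two cases are treated separately.

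For strips covering $u_R$: the image of $u$ avoids the toric divisor locus $\{w=0\}=\bigcup_i D_i$ entirely, so $u$ lies in the open toric orbit. Over this orbit, $w$ is a trivial $(\C^\times)^{n-1}$-fibration compatible with the Hamiltonian $T^{n-1}$-action and the integrable complex structure. The trivialization/maximum-principle argument from the $\C^n$ lemma then applies verbatim, giving a regular $T^{n-1}$-family of strips, parametrized by the choice of lift of the boundary to a $T^{n-1}$-orbit on $L_1$ (or equivalently $L_2$).

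For strips covering $u_L$: the image of $u$ intersects some toric prime divisor $D_i$, and by positivity of intersection together with the Maslov index $1$ constraint, it must meet exactly one such $D_i$ transversely at a single point. The next step is to choose a toric affine chart $X_{\sigma'}\cong \C^n$ whose defining maximal cone $\sigma'$ has $v_i$ as one of its primitive generators, along with vectors compatible with the $T^{n-1}$-directions $v_1',\ldots,v_{n-1}'$ of $L_1, L_2$ (such $\sigma'$ exists because $v_i\in \Delta$ and the $v_j'$ span $\unu^\perp$). Inside $X_{\sigma'}\cong \C^n$ one then writes $w=y_1'\cdots y_n'$ in the chart coordinates and repeats the Blaschke-factor computation from the $\C^n$ lemma: the $y_i$-component of $u$ is forced (up to a rotation in $U(1)$) by the combination of the boundary condition $|y_1'|=\ldots=|y_n'|$ and the value of $w\circ u$, and the remaining fiber components are then similarly determined up to the $T^{n-1}$-action. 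Regularity in both cases will follow from the standard Cho-Oh regularity result for toric $T^{n-1}$-invariant Maslov-index-$2$ discs and its straightforward adaptation to strips between two toric fibers sitting in a common chart.

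The main obstacle will be verifying that the image of a strip covering $u_L$ actually lies inside such a single chart $X_{\sigma'}$, since a priori $u$ could wander through higher-codimension toric strata and span several charts. The resolution is that the Maslov index $1$ constraint bounds the total intersection with $\sum_i D_i$ to be minimal, so $u$ only meets the chosen $D_i$ and hence remains inside the open set $X^\circ\setminus \bigcup_{j\ne i} D_j$, which is contained in any $X_{\sigma'}$ with $v_i\in \sigma'$. Once the strip is localized to a single chart, the $\C^n$ analysis closes the argument, and the free $T^{n-1}$-action yields the identification $\cM_2^{L_1,L_2}(\beta;\{a^{\ell_1,\ell_2}\}\times T^{n-1},\{b^{\ell_1,\ell_2}\}\times T^{n-1})\cong T^{n-1}$.
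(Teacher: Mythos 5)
Your overall skeleton --- project to the $w$-plane, split into the $u_R$ and $u_L$ cases, use Maslov index $1$ to localize a strip covering $u_L$ inside a single toric chart $X_{\sigma'}\cong\C^n$ with $v_i\in\sigma'$ --- is exactly the paper's first step, and your resolution of the "single chart" worry is correct. The gap is in the final step, where you claim the Blaschke-factor computation from the $\C^n$ lemma can be "repeated" in the chart using the boundary condition $|y_1'|=\cdots=|y_n'|$. That condition is \emph{not} what cuts out $L_1,L_2$ in the chart coordinates of a general toric Calabi--Yau $X$: the Lagrangians live in the level set $\rho^{-1}(a_1)$ of the moment map of the given K\"ahler form $\omega$, which in chart coordinates is a Legendre transform $\left(\rho_1(|y_1|^2,\ldots,|y_n|^2),\ldots\right)$ rather than the flat moment map. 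The identity $|w|=|y_j|^n$ on the boundary, which is what drives the maximum-principle/Blaschke argument in the $\C^n$ lemma, therefore fails, and the explicit classification of strips does not transfer verbatim. (Relatedly, Cho--Oh regularity is stated for moment-map fibers of the standard structure and does not directly cover strips between these reduced Lagrangians for a general toric metric.)

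The paper closes this gap with an idea absent from your proposal: a $T^n$-equivariant Moser isotopy (Fukaya's trick) $\rho_t$ interpolating between $\omega|_{X_{\sigma'}}$ and the standard form $\omega_{\mathrm{std}}$ on $\C^n$, realized by shrinking the moment polytope chart into $\R_{\ge 0}^n$. One first establishes compactness of the moduli space (maximum principle plus Gromov compactness), and then observes that, because $\beta$ has \emph{minimal} Maslov index and all its representatives stay in the chart, no disc or sphere bubbling can occur along the isotopy; the parametrized moduli space is then a cobordism with no extra boundary components between your moduli space and the one computed explicitly in the $\C^n$ lemma, from which regularity and the identification with $T^{n-1}$ are inherited. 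You should either supply this deformation argument or give an independent proof of the classification of strips for a general toric K\"ahler metric; as written, the $u_L$ case of your argument only proves the lemma when $\omega$ restricted to the chart happens to be the standard flat form.
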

\begin{proof}
	Under the projection to the punctured $w$-plane, a holomorphic strip $u$ in class $\beta$ must cover one of the two strips $u_L$ (which contains $w=0$) and $u_R$ bounded by the two base circles $\ell_1$ and $\ell_2$. Since it has Maslov index $1$, it intersects at most one of the toric prime divisors in $\{w=0\}\subset X^{\circ}$. If follows that $\Image{u} $ is contained in a certain toric $\C^n$-chart. Note that holomorphic strips in a class $\beta$ are contained in the same toric chart, since they have the same the intersection numbers with the toric prime divisors. Moreover, by the maximum principle they are contained in a compact subset of $\C^n$, and thus the moduli space is compact by Gromov's compactness theorem.
	
	Both the standard symplectic form $\omega_{\mathrm{std}}$ on $\C^n$ and the restriction of the symplectic form $\omega$ on $X$ to the toric $\C^n$ chart are $T^n$-invariant.
	By a $T^{n}$-equivariant Moser argument, we have a one-parameter family of $T^n$-equivariant symplectomorphic embeddings $\rho_t:(\C^n,\omega) \to (\C^n,\omega_{\mathrm{std}})$ for $t\in[0,1]$ such that $\rho_0=\mathrm{Id}$ and $\rho_1^*\,\omega_{\mathrm{std}}=\omega$.  The isotopy $\rho_t$ can be realized as follows: take a one-parameter family of toric K\"ahler forms $\omega_t$ on $X$ whose corresponding moment map polytopes $P_t\subset \R_{\geq 0}^n$ interpolate between that of $\omega$ and $\R_{\geq 0}^n$.  
	Using the symplectic toric coordinates, $\rho_t$ are simply given by the inclusions $P^o \to \R_{\geq 0}^n$ where $P^o$ is the part of the moment polytope that corresponds to the toric $\C^n$-chart.
	
	Let $L_{i,t}=\rho_t^{-1}(L_i)$ be the Lagrangians in $(\C^n,\rho_t^*\omega_{\mathrm{std}})$. We fix the standard complex structure on $\C^n$ which is compatible with $\rho_t^*\omega_{\mathrm{std}}$ for all $t$. 
	The moduli space of holomorphic discs bounded by $L_{i,t}$ for $t\in[0,1]$ gives a cobordism of moduli spaces. Indeed, this is the simplest case of Fukaya's trick \cite{Fukaya10}: since $\beta$ has the minimal Maslov index and all holomorphic discs in $\beta$ are supported in $\C^n$, disc and sphere bubbling do not occur, and therefore the cobordism has no extra boundary component.  
\end{proof}

The above proof gives a classification of the holomorphic strip classes:

\begin{corollary} \label{cor:hol-strip-class}
	The holomorphic strip classes in $\pi_2(X^{\circ}, L_1\cup L_2)$ of Maslov index $1$ with input corner in $\{a^{\ell_1,\ell_2}\}\times T^{n-1}$ and output corner in $ \{b^{\ell_1,\ell_2}\}\times T^{n-1}$ are given by $\beta_0$ which does not intersects $w=0$, and $\beta_i$, $i=1,\ldots,m$, which intersects the toric divisor $D_i$ exactly once but does not intersect $D_j$ for $j\not=i$.  This gives a one-to-one correspondence between $\beta_1,\ldots,\beta_m$ and the Maslov index $2$ basic disc classes $\beta_1^L,\ldots,\beta_m^L\in \pi_2(X,L)$ bounded by a regular toric fiber $L$ of $X$.
\end{corollary}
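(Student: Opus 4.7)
The corollary is essentially a bookkeeping consequence of the analysis already carried out in proving Lemma \ref{lem:hol-strip-X}, so my plan is to organize that analysis into a classification rather than to introduce new ideas.

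First, I would project any Maslov index $1$ representative strip $u$ to the $w$-plane. Since $\ell_1 \cup \ell_2$ separates $\C\setminus\{\epsilon\}$ into two bounded complementary strips, namely $u_L$ (the one containing $w=0$) and $u_R$ (the one not containing $w=0$), the holomorphic map $w\circ u$ must cover exactly one of them. This produces the basic dichotomy in the statement.

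Next, I would classify strips in each case by their intersection data with toric prime divisors. If $w\circ u$ covers $u_R$, then the image of $u$ lies inside $\{w\ne 0\} \subset X^\circ$, so $D_i\cdot \beta=0$ for all $i$; by the toric trivialization of $w : \{w \ne 0\} \to \C\setminus\{0,\epsilon\}$ as a $(\C^\times)^{n-1}$-bundle and the $T^{n-1}$-moduli computation in Lemma \ref{lem:hol-strip-X}, there is only one such homotopy class, which I call $\beta_0$. If instead $w\circ u$ covers $u_L$, then $u$ meets $\{w=0\} = \bigcup_i D_i$, and by positivity of intersection each intersection number $D_i\cdot \beta$ is a non-negative integer. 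I would then invoke the Maslov index formula together with the fact that each intersection with a toric divisor contributes $+2$ (the strips behave as in the disc case inside the toric $\C^n$-chart identified in Lemma \ref{lem:hol-strip-X}, with the additional fixed contribution $+1$ coming from the strip $\beta_0$ which plays the role of the ``trivial'' Maslov index $1$ class). The constraint $\mu(\beta)=1$ together with non-negativity then forces $D_i\cdot \beta = 1$ for exactly one index $i$ and $D_j\cdot\beta=0$ for all $j\ne i$, defining the class $\beta_i$.

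Finally, for the one-to-one correspondence with basic disc classes, I would argue as follows. By the Moser-type isotopy used inside the proof of Lemma \ref{lem:hol-strip-X}, the strip classes $\beta_i$ may be transported into a standard $\C^n$-model; under the further degeneration collapsing $\ell_1$ and $\ell_2$ to a single base circle bounding a regular toric fiber $L$, the class $\beta_i$ is sent to a disc class in $\pi_2(X,L)$ whose intersection pattern with toric divisors is still $D_j \cdot \cdot = \delta_{ij}$. Since basic disc classes $\beta^L_1,\ldots,\beta^L_m$ are precisely characterized by this intersection pattern, this yields the desired bijection. The only genuinely delicate point in the plan is the Maslov index bookkeeping in the second step, where one must carefully account for the $+1$ base contribution coming from wrapping the puncture $w=\epsilon$; the rest is direct extraction from the lemma's proof.
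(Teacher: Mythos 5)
Your route is the same as the paper's: the corollary is extracted from the proof of Lemma \ref{lem:hol-strip-X} via the dichotomy $u_L$/$u_R$ in the $w$-plane, positivity of intersection with the toric divisors, the index count forcing $\sum_i D_i\cdot\beta=1$ in the $u_L$ case, and a corner-smoothing/degeneration to identify $\beta_1,\ldots,\beta_m$ with the basic disc classes (the paper phrases the last step as the $T^{n-1}$-equivariant smoothing of the two corners, turning $L_1\cup L_2$ into a Clifford torus union a Chekanov torus, with $\beta_i$ going to Clifford disc classes and $\beta_0$ to the Chekanov basic class). The conclusion and structure are correct.

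One correction on the point you yourself flag as delicate: the $+1$ offset in the index formula $\mu(\beta)=1+2\sum_i D_i\cdot\beta$ does \emph{not} come from ``wrapping the puncture $w=\epsilon$,'' nor is it a contribution ``from the strip $\beta_0$.'' Both lunes $u_L$ and $u_R$ lie in the symmetric difference of the two discs bounded by $\ell_1$ and $\ell_2$, whereas $\epsilon$ lies in their intersection, so no Maslov-index-one strip wraps $\epsilon$ at all; and a $u_L$-strip is not ``$\beta_0$ plus divisor intersections,'' since $\beta_0$ covers the other lune. The odd offset comes from the two corners at $\{a^{\ell_1,\ell_2}\}\times T^{n-1}$ and $\{b^{\ell_1,\ell_2}\}\times T^{n-1}$, whose Floer degrees differ by one; equivalently, after smoothing the corners the strip classes become genuine disc classes bounded by a torus, for which the closed-boundary formula $\mu=2\sum_i D_i\cdot\beta$ holds and Maslov index $1$ for strips corresponds to Maslov index $2$ for discs. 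Taken literally, your accounting would give $\mu=2\sum_i D_i\cdot\beta$ for the $u_L$-strips (which cannot equal $1$), so the offset must be sourced correctly for the argument to close. With that repaired, the rest of your plan goes through as in the paper.
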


The correspondence above can be realized by smoothing the corners $\{a^{\ell_1,\ell_2}\}\times T^{n-1}$  and $ \{b^{\ell_1,\ell_2}\}\times T^{n-1}$.  The $T^{n-1}$-equivariant smoothing of $L_1\cup L_2$ at these two corners (by using the symplectic reduction $X\sslash T^{n-1}$) gives a union of Clifford torus  and a Chekanov torus. Under this smoothing, the strips classes $\beta_i$ for $i=1,\ldots,m$ correspond to discs classes bounded by the Clifford torus, while the strip class $\beta_0$ corresponds to the basic disc class bounded by the Chekanov torus.

Since $L_1$ and $L_2$ bound no non-constant holomorphic discs in $X^{\circ}$, a stable holomorphic strip class has no disc components. On the other hand, recall that $H_2(X;\Z)$ is generated by primitive effective curve classes $\{C_{n+1},\ldots,C_m\}$. We have rational curves in $X$ in classes $\alpha\in \Z_{\ge 0}\cdot \{C_{n+1},\ldots,C_{m}\}$, and $c_1(\alpha)=0$. Moreover, all rational curves are contained in the divisor $D=\sum_{i=1}^m D_i$. This means we could have stable holomorphic strip classes of the form $\beta_i+\alpha$, where $\beta_i$ is a holomorphic strip class in Corollary \ref{cor:hol-strip-class}. We now classify all stable holomorphic strip classes of Maslov index $1$.

\begin{lemma} \label{stable-strip}
	The holomorphic strip classes $\beta\in\pi_2(X^{\circ}, L_1\cup L_2)$ of Maslov index $1$ with input corner in $\{a^{\ell_1,\ell_2}\} \times T^{n-1}$ and output corner in $ \{b^{\ell_1,\ell_2}\}\times T^{n-1}$ are of the form $\beta=\beta_i + \alpha$ for $i=0,\ldots,m$, and $\alpha\in H_2(X;\Z)$ is an effective curve class. In particular $\alpha=0$ for $\beta_i=\beta_0$.
\end{lemma}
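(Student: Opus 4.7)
My plan is to proceed via Gromov compactness: analyze how a stable holomorphic strip in class $\beta$ degenerates into a main component plus bubbles, then use the geometric facts already recalled before the lemma to constrain which bubbles can appear. The argument will be essentially topological plus positivity of intersections, so no analytic surgery is needed.

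First, I will invoke the fact (already noted just before the lemma) that $L_1$ and $L_2$ bound no non-constant holomorphic discs in $X^{\circ}$. This forces every disc bubble in a stable configuration to be constant, so the only non-constant bubble components are spheres. Writing the main strip component's class as $\beta'$ and the sum of the sphere-bubble classes as $\alpha \in H_2(X;\Z)$, I obtain a decomposition $\beta = \beta' + \alpha$.

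Next, the Calabi-Yau condition gives $c_1(\alpha)=0$, hence $\mu_L(\beta') = \mu_L(\beta) - 2c_1(\alpha) = 1$. The main strip still has one input corner in $\{a^{\ell_1,\ell_2}\}\times T^{n-1}$ and one output corner in $\{b^{\ell_1,\ell_2}\}\times T^{n-1}$, so Corollary \ref{cor:hol-strip-class} identifies $\beta'$ with one of $\beta_0,\beta_1,\ldots,\beta_m$. Since the effective cone of $X$ is generated by $C_{n+1},\ldots,C_m$, the class $\alpha$ is automatically an effective curve class.

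The main obstacle, and the only non-formal step, is ruling out sphere bubbling in the distinguished case $\beta' = \beta_0$. Here I will use that $\beta_0$ has vanishing intersection number with every toric prime divisor $D_j$; since the main strip is not contained in $D = D_1\cup\cdots\cup D_m$ (its boundary lies on $L_1\cup L_2\subset X^{\circ}\setminus D$), positivity of intersections of holomorphic curves then forces the image of the main component to be disjoint from $D$. On the other hand, as recalled before the lemma, every non-constant rational curve in $X$ lies in $D$, so any sphere bubble has image entirely contained in $D$. A sphere bubble must attach to the main strip at an interior nodal point, which would then lie in both $D$ and its complement --- a contradiction. Hence $\alpha=0$ whenever $\beta'=\beta_0$, completing the classification.
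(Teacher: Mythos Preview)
Your argument is sound and reaches the same conclusion as the paper, but by a genuinely different route. The paper's one-line proof is purely topological: it invokes the short exact sequence \eqref{eq:toric_ses} together with the smoothing correspondence (discussed after Corollary~\ref{cor:hol-strip-class}) that identifies strip classes covering $u_L$ with disc classes of the Clifford torus and strip classes covering $u_R$ with disc classes of the Chekanov torus. The classification of stable disc classes for these tori is then standard, and in particular the Chekanov torus bounds only the single basic class. Your approach instead works directly with a stable representative: you peel off sphere bubbles via Gromov compactness, use $c_1=0$ to keep the Maslov index of the remaining piece equal to $1$, appeal to Corollary~\ref{cor:hol-strip-class} to identify it, and then use positivity of intersections together with the fact that all rational curves live in $D$ to rule out bubbling on $\beta_0$. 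This is more geometric and self-contained --- it avoids invoking the smoothing correspondence --- at the cost of being longer.

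One point deserves a sentence of justification. When you write ``the main strip component's class as $\beta'$'' you are tacitly assuming that the stable configuration has a single non-constant strip component, i.e.\ that no strip breaking occurs along the chain from the input to the output corner. Your appeal to ``$L_1$ and $L_2$ bound no non-constant discs'' only kills the off-path disc bubbles; it does not immediately exclude a chain of several strip pieces between intersection components. This is easy to rule out here: any non-constant strip component projects to one of the two bigons $u_L,u_R$ in the punctured $w$-plane, and with the fixed boundary conventions both bigons are oriented from $a^{\ell_1,\ell_2}$ to $b^{\ell_1,\ell_2}$, so there are no non-constant strips running in the reverse direction (nor from $a$ to $a$ or $b$ to $b$). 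Hence exactly one strip component is non-constant and your decomposition $\beta=\beta'+\alpha$ with $\beta'$ a genuine Maslov-index-$1$ strip class is justified. Once this is said, the rest of your argument goes through as written.
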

\begin{proof}
This follows from the S.E.S \eqref{eq:toric_ses} and the correspondence between holomorphic strip class covering $u_L$ (resp. $u_R$) and the disc classes bounded by the Clifford (resp. Chekanov) torus. 
\end{proof}

Since $\cM_2^{L_1,L_2}(\beta_i;\{a^{\ell_1,\ell_2}\}\times T^{n-1},\{b^{\ell_1,\ell_2}\}\times T^{n-1})\cong T^{n-1}$ and $ev_0:\cM_2^{L_1,L_2}(\beta_i;\{a^{\ell_1,\ell_2}\}\times T^{n-1},\{b^{\ell_1,\ell_2}\}\times T^{n-1})\to  \{b^{\ell_1,\ell_2}\}\times T^{n-1}$ is $T^{n-1}$-equivariant for $i=0,\ldots,m$, there is exactly one holomorphic strip in class $\beta_i$ passing through $\{b_{12}\}\otimes\one_{T^{n-1}}$ and hence $\fm_{1,\beta_i}^{L_1,L_2}(a_{12} \otimes \one_{T^{n-1}})=b_{12} \otimes \one_{T^{n-1}}$. To compute the remaining terms $\fm_{1,\beta_i+\alpha}^{\bbL'_1,\bbL'_2}(a_{12} \otimes \one_{T^{n-1}})$, we identify the moduli space of holomorphic strips with the input $a_{12} \otimes \one_{T^{n-1}}$ and the output $b_{12} \otimes \one_{T^{n-1}}$ with moduli space of holomorphic discs bounded by a Clifford torus passing through a generic point.  This allows us to use the result of \cite{CCLT13} and relate the counts of the strips with the inverse mirror map.

\begin{prop} \label{prop:strip=disc}
The moduli space 
	 \[
	 \cM_2^{L_1,L_2}(\beta_i+\alpha;\{a^{\ell_1,\ell_2}\}\times T^{n-1},\{b^{\ell_1,\ell_2}\}\times T^{n-1}) \times_{L_1\cap L_2} \{b_{12}\otimes\one_{T^{n-1}}\}
	 \]
 for $i=1,\ldots,m$, and $\alpha\in H_2(X;\Z)$ is an effective curve class, is isomorphic to $\cM_1(\beta_i^L+\alpha,L) \times_{L} \{p\}$ as Kuranishi structures, for a Lagrangian toric fiber $L$ and a generic point $p\in L$.
\end{prop}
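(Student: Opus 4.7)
The plan is to establish the isomorphism via a $T^{n-1}$-equivariant smoothing of the immersed Lagrangian $L_1\cup L_2$ into a smooth Lagrangian torus isotopic to the toric fiber $L$, combined with a Fukaya trick cobordism argument analogous to the one used in Lemma \ref{lem:hol-strip-X}.

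First, I would construct a one-parameter family of $T^{n-1}$-invariant (possibly immersed) Lagrangians $L_t \subset X^{\circ}$ for $t \in [0,1]$, where $L_0 = L_1 \cup L_2$ and $L_1$ is a toric fiber Hamiltonian isotopic to the Clifford torus $L$. The family is produced by smoothing the union of base circles $\ell_1 \cup \ell_2$ at its two corners $a^{\ell_1,\ell_2}$ and $b^{\ell_1,\ell_2}$ to a smooth embedded circle in the $w$-plane enclosing $\epsilon$, and pulling back via the symplectic reduction $w\colon \mu^{-1}(a_1) \to \C$. Since $L_0$ and $L_1$ both lie in the toric chart $X_\sigma \cong \C^n$, the entire family sits inside $\C^n$, and by a $T^{n-1}$-equivariant Moser argument analogous to the one in Lemma \ref{lem:hol-strip-X}, we may further isotope the ambient symplectic form without leaving the Calabi-Yau setup.

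Second, I would apply Fukaya's trick to obtain a cobordism of moduli spaces. For each $t \in [0,1]$ one has a moduli space of stable holomorphic curves bounded by $L_t$ (a strip for $t=0$, a disc for $t=1$) in class $\beta_i + \alpha$, with constrained evaluations at the relevant marked points. Corollary~\ref{cor:hol-strip-class} together with the exact sequence \eqref{eq:toric_ses} extends the correspondence $\beta_i \leftrightarrow \beta_i^L$ to $\beta_i + \alpha \leftrightarrow \beta_i^L + \alpha$ for all effective $\alpha \in H_2(X;\Z)$. Because $c_1(X)=0$, sphere bubbling only produces classes already accounted for by $\alpha$; because the minimum Maslov index of the Lagrangians $L_t$ is nonnegative and our class has Maslov index $2$, disc bubbling cannot produce extra cobordism boundary. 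Hence the moduli spaces over $t\in[0,1]$ glue to a compact cobordism of Kuranishi structures.

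Third, I would match the evaluation constraints. Since $\mathbf{1}_{T^{n-1}}$ has unstable chain equal to all of $\{a^{\ell_1,\ell_2}\}\times T^{n-1}$, the input corner is unconstrained, and becomes a free boundary marked point on the disc. The output condition of passing through the point $b_{12}\otimes \mathbf{1}_{T^{n-1}}$ becomes, after smoothing, the condition that the disc boundary pass through a single point, which by $T^{n-1}$-equivariance may be taken to be a generic $p \in L$. The resulting constrained moduli spaces are both zero-dimensional, and the cobordism supplies the desired identification.

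The main obstacle is upgrading the cobordism from matching virtual counts to an actual isomorphism of Kuranishi structures at $t=0$ and $t=1$, given the change in topology of $L_t$ (immersed vs. embedded). The resolution is to choose $T^{n-1}$-equivariant Kuranishi data and multi-sections uniformly in $t$, as is possible since the $T^{n-1}$-action on both moduli spaces is free (the point constraints break any stabilizer). Both constrained moduli spaces can then be presented as sections of the same $T^{n-1}$-bundle of evaluation data, and the $\tau$-invariant relative spin structure fixed in Section~\ref{sec:immersed_SYZ} ensures compatible orientations, yielding the claimed isomorphism.
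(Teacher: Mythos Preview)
Your approach via smoothing and cobordism is genuinely different from the paper's, and it has a real gap at exactly the point you identify as the ``main obstacle.''

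The paper does not use any Lagrangian isotopy or cobordism. Instead it gives a direct structural decomposition: any stable map in class $\beta_i+\alpha$ (with $\alpha\neq 0$) must consist of a strip component in class $\beta_i$ together with a tree of rational curves in class $\alpha$ attached at the unique intersection point $q$ of the strip with the toric divisor $D_i$. This yields
\[
\cM_2^{L_1,L_2}(\beta_i+\alpha;\ldots)\times_{L_1\cap L_2}\{b_{12}\otimes\one_{T^{n-1}}\}\;\cong\;\bigl(\cM_{2,1}(\beta_i;\ldots)\times_{L_1\cap L_2}\{b_{12}\otimes\one_{T^{n-1}}\}\bigr)\times_X \cM^{\mathrm{sph}}_1(\alpha),
\]
where the subscript $(-,1)$ records an interior marked point. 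One then chooses the toric fiber $L$ and the point $p\in L$ so that the basic disc in class $\beta_i^L$ through $p$ hits $D_i$ at the \emph{same} point $q$; then the disc side decomposes identically as $(\cM_{1,1}(\beta_i^L)\times_L\{p\})\times_X \cM^{\mathrm{sph}}_1(\alpha)$. In both cases the first factor is an unobstructed point, so the entire Kuranishi structure is that of $\cM^{\mathrm{sph}}_1(\alpha)$, and the isomorphism is immediate.

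Your cobordism argument, by contrast, would at best show that the two virtual counts coincide. A one-parameter family of moduli spaces (even with uniformly chosen equivariant Kuranishi data) gives a cobordism of virtual chains, not an isomorphism of the Kuranishi structures at the two endpoints. Your proposed fix---presenting both as sections of the same $T^{n-1}$-bundle---does not address this: the underlying spaces are moduli of maps bounded by topologically different Lagrangians, and there is no canonical identification of their obstruction theories short of the fiber-product decomposition above. Moreover, the Fukaya-trick step you invoke from Lemma~\ref{lem:hol-strip-X} was stated there precisely for the case $\alpha=0$, where the minimal Maslov index prevents bubbling; for $\alpha\neq 0$ the moduli are already singular (the domain is nodal), and the cobordism acquires boundary strata from sphere bubbling at intermediate $t$ that you have not controlled.
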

\begin{proof}
Let $L$ be a regular toric fiber and $p\in L$ a generic point. It is well known that  $\cM_1(\beta_i^L+\alpha,L)\cong T^n$ and $ev_0:\cM_1(\beta_i^L,L)\to L$ is $T^n$-invariant (see \cite{CLL}). It follows that 
\[
\cM_2^{L_1,L_2}(\beta_i;\{a^{\ell_1,\ell_2}\}\times T^{n-1},\{b^{\ell_1,\ell_2}\}\times T^{n-1}) \times_{L_1\cap L_2} \{b_{12}\otimes\one_{T^{n-1}}\}=\cM_1(\beta_i^L,L) \times_{L} \{p\}=\{\pt\}.
\]

The holomorphic strips in class $\beta_i$ intersect the toric prime divisor $D_i$ at a certain point $q$. For a rational curve in class $\alpha$ to contribute to $\cM_2^{L_1,L_2}(\beta_i+\alpha;\{a^{\ell_1,\ell_2}\}\times T^{n-1},\{b^{\ell_1,\ell_2}\}\times T^{n-1}) \times_{L_1\cap L_2} \{b_{12}\otimes\one_{T^{n-1}}\}$, it has to intersect $q$ in order to connect with the strip component. Let $L$ be a regular toric fiber such that the holomorphic disc in class $\beta^L_i$ passing through $p$ intersects $D_i$ exactly at $q$. For instance, we can take a toric chart $\C^n$ with coordinates $(z_1,\ldots,z_n)$ containing $q=(0,c_2\ldots,c_{n})$, $c_j\ne 0\in \C$, $j=2,\ldots,n$, and let $L = \{|z_1|=|c_1|,\ldots,|z_n|=|c_n|\} \subset \C^n$ for some $c_1\ne 0\in \C$. The choice of $p\in L$ is arbitrary. For instance, we can take $p= (c_1,\ldots,,c_n)$. Then, the holomorphic disc $u$ in $\cM_1(\beta_i^L,L)$ which passes through $p$ on the boundary and intersects $q$ is given by $u(\zeta)=(c_1\zeta,c_2,\ldots,c_n)$.

Let $\cM^{\textrm{sph}}_1(\alpha)$ denote the moduli space of $1$-pointed genus $0$ stable maps to $X$ in class $\alpha$. We have  
	\begin{align*} \label{eq:beta_alpha_1}
	&\cM_2^{L_1,L_2}(\beta_i+\alpha;\{a^{\ell_1,\ell_2}\}\times T^{n-1},\{b^{\ell_1,\ell_2}\}\times T^{n-1}) \times_{L_1\cap L_2} \{b_{12}\otimes\one_{T^{n-1}}\}\\
	\cong& \left(\cM_{2,1}(\beta_i;\{a^{\ell_1,\ell_2}\}\times T^{n-1},\{b^{\ell_1,\ell_2}\}\times T^{n-1}) \times_{L_1\cap L_2} \{b_{12}\otimes\one_{T^{n-1}}\}\right) \times_{X} \cM^{\textrm{sph}}_1(\alpha),
	\end{align*}
	and 
    \begin{equation*} \label{eq:beta_alpha_2}
    \cM_1(\beta_i^L+\alpha) \times_{L} \{p\} \cong (\cM_{1,1}(\beta_i^L) \times_{L} \{p\}) \times_{X} \cM^{\textrm{sph}}_1(\alpha),
    \end{equation*}
where the subscript $(-,1)$ means there is one interior marked point, and the fiber product over $X$ is taken using the interior evaluation maps. In both of the expression above, the first factor of the RHS is unobstructed (it is topologically a unit disc), and therefore the obstruction simply comes from $\cM^{\textrm{sph}}_1(\alpha)$.
\end{proof}


We are now ready to prove Theorem \ref{thm:iso12}.

\begin{proof}[Proof of Theorem \ref{thm:iso12}]
$\fm_{1}^{\bbL'_1,\bbL'_2}(a_{12} \otimes \one_{T^{n-1}})$ is a linear combination of $b_{12}\otimes\one_{T^{n-1}}$ and $a_{12}\otimes X_i$, $i=1,\ldots,n-1$. The coefficient of $a_{12}\otimes X_i$ is $z_i-z'_i$ as per previous discussion. On the other hand,  
the coefficient of $b_{12}\otimes \one_{T^{n-1}}$ is given by the fiber products $\cM_2^{L_1,L_2}(\beta_i+\alpha;\{a^{\ell_1,\ell_2}\}\times T^{n-1},\{b^{\ell_1,\ell_2}\}\times T^{n-1}) \times_{L_1\cap L_2} \{b_{12}\otimes\one_{T^{n-1}}\}$, 
where $\beta_i+\alpha$ is a stable holomorphic strip class in Lemma \ref{lem:hol-strip-X} and \cite{CCLT13}. For the strip class $\beta_0$, the fiber product is simply one point. Due to our choices of gauge cycles, $\partial \beta_0$ only intersects the gauge hypertorus dual to $v_1$ in $L_2$ (with intersection number $1$). Thus, $\beta_0$ contributes the term $-\bT^{\omega(\beta_0)}$. Since $L_1$ and $L_2$ are chosen such that $\beta_0$ and $\beta_1$ have the same symplectic area, this equals to $-\bT^{\omega(\beta_1)}$.

	
	For $\beta_i+\alpha$, $i=1,\ldots,m$, the number of holomorphic strips in the fiber product equals to the open Gromov-Witten invariant $n_1(\beta^L_i+\alpha)$ of a regular toric fiber $L$ of $X$ by Proposition~\ref{prop:strip=disc}. The boundary $\partial \beta^L_i$ corresponds to the primitive generator $v_i = (v_i - v_1) + v_1$ and has holonomy $z_n z^{\vec{v}_i'}$. The generating function of open Gromov-Witten invariants of $L$ is given by
	\[
	 \sum_{i=1}^m \bT^{\omega(\beta^L_i)} z_n \vec{z}^{v_i'} \sum_{\alpha} n_1(\beta^L_i+\alpha) \bT^{\omega{(\alpha})}. 
	\]
	Thus, the coefficient of $b_{12}\otimes \one_{T^{n-1}}$ equals to
	\[
	\begin{array}{l}
	 \bT^{\omega(\beta_1)} \left( -1 + (z_n')^{-1}z_n \sum_{i=1}^m \bT^{\omega(\beta_i-\beta_1)} \vec{z}^{v_i'} \sum_{\alpha} n_1(\beta^L_i+\alpha) \bT^{\omega{(\alpha)}}\right) \\
	  = \bT^{\omega(\beta_1)} \left( -1 + (z_n')^{-1} z_n f(z_1,\ldots,z_{n-1})\right). 
	  \end{array}
	\]
Therefore, the cocycle condition $\fm_{1}^{\bbL'_1,\bbL'_2}(a_{12} \otimes \one_{T^{n-1}})=0$ implies the gluing formula \eqref{eq:gluing}. Here we have implicitly chosen $L$ that $\omega(\beta_i)=\omega(\beta^L_i)$ for $i=1,\ldots,m$. Conversely, the gluing formula \eqref{eq:gluing} implies $\fm_{1}^{\bbL'_1,\bbL'_2}(a_{12} \otimes \one_{T^{n-1}})=0$, and similarly $\fm_{1}^{\bbL'_2,\bbL'_1}(b_{21} \otimes \one_{T^{n-1}})=0$. 

$\fm^{\bbL'_1, \bbL'_2, \bbL'_1}_2(b_{21} \otimes \one_{T^{n-1}},a_{12} \otimes \one_{T^{n-1}})$ and $\fm^{\bbL'_2, \bbL'_1, \bbL'_2}_2(a_{12} \otimes \one_{T^{n-1}},b_{21} \otimes \one_{T^{n-1}})$ are given by the moduli spaces
\[
\cM_3^{L_1,L_2,L_1}(\beta_i+\alpha;\{a^{\ell_1,\ell_2}\}\times T^{n-1},\{b^{\ell_1,\ell_2}\}\times T^{n-1}) \times_{L_1} \{\one_{L_1}\}, 
\]
and 
\[
\cM_3^{L_2,L_1,L_2}(\beta_i+\alpha;\{b^{\ell_1,\ell_2},\{a^{\ell_1,\ell_2}\}\times T^{n-1}\}\times T^{n-1}) \times_{L_2} \{\one_{L_2}\}, 
\]
where the target of the evaluation maps at the third marked points are $L_1$ and $L_2$, respectively. Since $f^{L_1}$ and $f^{L_2}$  were chosen such that the image of the maximum points $\one_{L_1}$ and $\one_{L_2}$ lie on the boundary of $u_R$, the moduli spaces are empty except for $\beta_0$. In which case we get
\[
\fm^{\bbL'_1, \bbL'_2, \bbL'_1}_2(b_{21} \otimes \one_{T^{n-1}},a_{12} \otimes \one_{T^{n-1}})= \bT^{\omega(\beta_0)} \one_{L_1},
\]
and
\[
\fm^{\bbL'_2, \bbL'_1, \bbL'_2}_2(a_{12} \otimes \one_{T^{n-1}},b_{21} \otimes \one_{T^{n-1}}) = \bT^{\omega(\beta_0)} \one_{L_2},
\]
where $\one_{L_1}$ and $\one_{L_2}$ are the maximum points on $L_1$ and $L_2$, respectively. This bears no effect on the gluing formula
\end{proof}

\begin{remark}
Recall from Section \ref{sec:pearl_complex} that the maximum points $\one_{L_1}=\one_{L_1}^{\blT}$ and $\one_{L_2}=\one_{L_2}^{\blT}$ are only homotopy units. Since $L_1$ and $L_2$ do not bound any non-constant holomorphic discs, we have
\[
\fm_1^{\bbL'_i}(\one_{L_i}^{\gT})=\one_{L_i}^{\wT}-\one_{L_i}^{\blT}, \quad i=1,2,
\]
where $\one_{L_i}^{\gT}$ is the degree $-1$ homotopy between $\one_{L_i}^{\wT}$ and $\one_{L_i}^{\blT}$. This means $\one_{L_1}^{\blT}$ and $\one_{L_2}^{\blT}$ are cohomologous to the strict units, and therefore the isomorphism equations \eqref{eq:iso} are indeed satisfied.
\end{remark}

We next derive the gluing formula between $\bbL'_i$, $i=1,2$, and the immersed Lagrangian brane $\bbL_0$.  
Let $(CF^{\bullet}(\bbL'_1,\bbL_0),\fm_1^{\bbL'_1,\bbL_0})$ and $(CF^{\bullet}(\bbL_0,\bbL'_2),\fm_1^{\bbL_0,\bbL'_2})$ be the pearl complexes associated to the perfect Morse functions $f^{L_0,L_1}$ and $f^{L_0,L_2}$, respectively, formally deformed by the flat connections $\nabla^{(z_1,\ldots,z_n)}$, $\nabla^{(z'_1,\ldots,z'_n)}$, and $\nabla^{(z^{(0)}_2,\ldots,z^{(0)}_{n-1})}$, and the weak bounding cochain $b=uU+vV$. We denote by
\begin{align*}
a_{10}\otimes \one_{T^{n-1}}\in CF^{0}(\bbL'_1,\bbL_0), \quad b_{10}\otimes \one_{T^{n-1}} \in CF^{1}(\bbL'_1,\bbL_0),\\
a_{02}\otimes \one_{T^{n-1}}\in CF^{0}(\bbL_0,\bbL'_2), \quad b_{02}\otimes \one_{T^{n-1}} \in CF^{1}(\bbL_0,\bbL'_2),
\end{align*}
 generators corresponding to the maximum points on $\{a^{\ell_0,\ell_1}\}\times T^{n-1}$, $\{b^{\ell_0,\ell_1}\}\times T^{n-1}$, $\{a^{\ell_0,\ell_2}\}\times T^{n-1}$, and $\{b^{\ell_0,\ell_2}\}\times T^{n-1}$, respectively.



There are two holomorphic strip classes $\beta_L^{L_0,L_i}, \beta_R^{L_0,L_i}\in \pi_2(X^{\circ}, L_0\cup L_i)$ of Maslov index $1$ with the input corner in $\{a^{\ell_0,\ell_i}\}\times T^{n-1}$ and output corner in $\{b^{\ell_0,\ell_i}\}\times T^{n-1}$. They are labeled such that the image in the $w$-plane of a holomorphic strip in class $\beta_L^{L_0,L_i}$ contains $0$. We assume the Lagrangians are chosen such that $\beta_L^{L_0,L_i}$ and $\beta_R^{L_0,L_i}$ have the same symplectic area $A$. There are exactly one holomorphic strip in each of these strip classes passing through $b_{10}\otimes \one_{T^{n-1}}$ at the output corner. The lift of boundaries of these holomorphic strips in $\widetilde{L}_0\cong \bS^2 \times T^{n-2}$ are curve segments in the $\bS^2$-factor and points in the $T^{n-2}$-factor. In particular, the lower arc of the holomorphic strip in class $\beta_L^{L_0,L_i}$ is a curve segment connecting $(p,\one_{T^{n-2}})$ and $(q,\one_{T^{n-2}})$. The perfect Morse function $f^{\bS^2}$ on $\bS^2$ is chosen such that the two flow lines connecting $p$ and $q$ to the minimum point $a_{\bS^2}$ do not intersect with these curve segments. (See \cite[Section 3.3]{HKL}.)  

The proof of the following gluing formula is similar to that of \cite[Theorem 3.7]{HKL}, except that here we need to take care of the contributions of non-constant holomorphic discs of Maslov index $0$ bounded by $L_0$ by using $T^{n-2}$-equivariant perturbations introduced in \cite{FOOO-T}.

\begin{theorem} \label{thm:iso-01}
	There exists a series $g(uv,z_2^{(0)},\ldots,z_{n-1}^{(0)})$ such that $a_{10}\otimes \one_{T^{n-1}}\in CF^{0}(\bbL'_1,\bbL_0)$, $a_{02}\otimes \one_{T^{n-1}} \in CF^{0}(\bbL_0,\bbL'_2)$, and $a_{12} \otimes \one_{T^{n-1}}\in CF^{0}(\bbL'_1,\bbL'_2)$ are isomorphisms if and only if
	\begin{equation}\label{eqn:L0L1pos}
	\begin{array}{l}
	z_1=z_1'= g(uv,z_2^{(0)},\ldots,z_{n-1}^{(0)})\\
	z_i = z_i' = z_i^{(0)} \textrm{ for } i=2,\ldots,n-1\\
	z_n= u^{-1}\\
	z_n'=v.
	\end{array}
	\end{equation}
	Moreover, $z=g(uv,z_2^{(0)},\ldots,z_{n-1}^{(0)})$ satisfies
	\[
	 uv = f(z,z_2^{(0)},\ldots,z_{n-1}^{(0)}),
	\]
with the same series $f$ as in \eqref{eqn:fslabftn} (also in Theorem \ref{thm:iso12}).
\end{theorem}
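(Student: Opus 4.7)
The plan is to adapt the strategy of Theorem \ref{thm:iso12}, now replacing one smooth torus by the immersed Lagrangian $L_0$ and carefully tracking the extra contributions from holomorphic polygons with corners at the immersed sectors $U,V$ (inserted via the weak bounding cochain $b = uU + vV$). First I would set up the pearl complexes $CF^\bullet(\bbL'_1, \bbL_0)$ and $CF^\bullet(\bbL_0, \bbL'_2)$ on the clean intersections $\{a^{\ell_0,\ell_i}, b^{\ell_0,\ell_i}\} \times T^{n-1}$ and compute the cocycles $\fm_1^{\bbL'_1, \bbL_0}(a_{10} \otimes \one_{T^{n-1}})$ and $\fm_1^{\bbL_0, \bbL'_2}(a_{02} \otimes \one_{T^{n-1}})$. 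Each output decomposes into a piece along $b_{10} \otimes \one_{T^{n-1}}$ (resp.\ $b_{02} \otimes \one_{T^{n-1}}$) from non-constant holomorphic polygons and pieces along the degree-one generators $a \otimes X_i$ from Morse-flow (and short pearly-tree) contributions.

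For the coefficients of the $a \otimes X_i$ generators, only short pearly trees with possible immersed corners contribute. Reading off parallel transport in each torus direction yields the relations $z_i = z_i^{(0)}$ for $i = 2,\ldots,n-1$, while the $v_1$-direction contribution forces the Morse flow to cross the immersed locus of $L_0$ once, producing (through the $b$-deformation) the condition $z_n = u^{-1}$ from $\fm_1^{\bbL'_1, \bbL_0}$ and, symmetrically, $z_n' = v$ from $\fm_1^{\bbL_0, \bbL'_2}$; the signs and orientations are determined via Theorem \ref{thm:inv}. For the coefficient of $b_{10} \otimes \one_{T^{n-1}}$, I would classify the contributing Maslov-index-one polygons by their projection to the $w$-plane as in Lemma \ref{lem:hol-strip-X}: those covering the right strip $u_R$ give a single term $-\bT^A$, while those covering the left strip $u_L$ (containing $w=0$) correspond, via Fukaya's trick and the analogs of Corollary \ref{cor:hol-strip-class} and Proposition \ref{prop:strip=disc}, to Maslov-index-two discs bounded by a regular toric fiber $L$, possibly augmented by $U,V$-pairs of immersed corners on the $L_0$-boundary. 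Summing over all such configurations reproduces the slab-function generating series $f(z_1, z_2^{(0)}, \ldots, z_{n-1}^{(0)})$ from \eqref{eqn:fslabftn}, with every $U,V$-pair contributing a factor of $uv$.

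The resulting cocycle equation is $uv = f(z_1, z_2^{(0)}, \ldots, z_{n-1}^{(0)})$, and since $f$ has a unit $z_1$-leading term coming from the basic disc class $\beta_1^L$, the implicit function theorem over $\Lambda_0$ uniquely produces a series $z_1 = g(uv, z_2^{(0)}, \ldots, z_{n-1}^{(0)})$. The symmetric analysis for $\fm_1^{\bbL_0, \bbL'_2}(a_{02} \otimes \one_{T^{n-1}})$ yields the same equation for $z_1'$, so $z_1 = z_1'$. Consistency with Theorem \ref{thm:iso12} is then obtained by computing $\fm_2(a_{02} \otimes \one_{T^{n-1}}, a_{10} \otimes \one_{T^{n-1}})$ and checking it is cohomologous to $a_{12} \otimes \one_{T^{n-1}}$; substituting $z_n = u^{-1}$ and $z_n' = v$ into \eqref{eq:gluing} gives precisely $z_n'/z_n = uv = f(z_1,\ldots,z_{n-1})$, reconciling the two gluing formulas. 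The main obstacle will be the moduli-theoretic bookkeeping of $U,V$ insertions on polygon boundaries: classifying all immersed-corner configurations, proving that their signed weighted count factors as $(uv)^k$ times the slab-function series, and ensuring that the $T^{n-2}$-equivariant perturbations and anti-symplectic involution cancellations of Lemma \ref{lemma:L_0} apply consistently to rule out spurious contributions.
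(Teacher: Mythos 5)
Your overall setup (pearl complexes on the two clean intersections, splitting the cocycle condition into the coefficient of $b_{10}\otimes \one_{T^{n-1}}$ and the coefficients of $a_{10}\otimes X_i$, $T^{n-2}$-equivariant perturbations, and a final comparison with Theorem \ref{thm:iso12}) matches the paper, but the central mechanism of your argument is not the one that actually works, and as stated it has a genuine gap. You claim that the coefficient of $b_{10}\otimes \one_{T^{n-1}}$ in $\fm_1^{\bbL'_1,\bbL_0}(a_{10}\otimes \one_{T^{n-1}})$ assembles into $uv - f(z_1,\ldots,z_{n-1})$ by summing strips over $u_L$ ``augmented by $U,V$-pairs,'' each pair contributing $uv$, and you then define $g$ by inverting $uv=f$ via the implicit function theorem. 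In the paper's analysis this coefficient is much simpler: the only surviving contributions are the single strip in $\beta_R^{L_0,L_1}$ (giving $-\bT^A$) and the single strip in $\beta_L^{L_0,L_1}$ with exactly one corner $U$ (giving $z_nu\,\bT^A$); all configurations with additional $U,V$-pairs or with Maslov-zero disc bubbles attached to the strip are killed by the $T^{n-2}$-equivariant perturbation (the relevant zero sets are $T^{n-2}$-invariant chains of too-low dimension). So this coefficient yields only $z_n=u^{-1}$ (and symmetrically $z_n'=v$ from the other pair) --- it never produces the slab function $f$. Conversely, you attribute $z_n=u^{-1}$ to the $a_{10}\otimes X_i$ coefficients, but those give $z_i-z_i^{(0)}$ for $i\ge 2$ and, for $i=1$, the term $z_1-g(uv,z_2^{(0)},\ldots,z_{n-1}^{(0)})$, where $g$ is \emph{defined} as the generating function of Maslov-index-zero stable polygons of $L_0$ attached to the second Morse flow line into $a_{10}\otimes X_1$; it is not obtained by solving any equation at this stage.

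The missing idea is therefore the last step of the paper's proof: the identity $uv=f(g,z_2^{(0)},\ldots,z_{n-1}^{(0)})$ is \emph{not} read off from any single cocycle condition for $a_{10}$ or $a_{02}$. It is deduced by computing $\fm_2^{\bbL'_1,\bbL_0,\bbL'_2}(a_{02}\otimes\one_{T^{n-1}},a_{10}\otimes\one_{T^{n-1}})=\bT^{\Delta}\,a_{12}\otimes\one_{T^{n-1}}$ (a single holomorphic triangle over the $w$-plane), so that the composite is again an isomorphism, and then invoking Theorem \ref{thm:iso12}: $z_1=z_1'$ forces $g=\tilde g$, and $z_n'=z_n f(z_1,\ldots,z_{n-1})$ combined with $z_n=u^{-1}$, $z_n'=v$ gives $uv=f(g,\ldots)$. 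This indirection is essential precisely because, as the paper notes after the theorem, the moduli spaces defining $g$ are highly obstructed and a direct enumeration of the kind you propose (matching $U,V$-corner configurations against the terms of the slab function) is not available; attempting it would require an unproven correspondence between polygons with corners at the node of $L_0$ and discs bounded by a torus fiber. You treat the $\fm_2$ computation as a mere consistency check, when it is in fact the step that produces the equation $uv=f$.
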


\begin{proof}
Let us first consider the pair $(\bbL'_1,\bbL_0)$. The output of $\fm_1^{\bbL'_1,\bbL_0}(a_{10}\otimes \one_{T^{n-1}})$ is a priori a linear combination of $b_{10}\otimes \one_{T^{n-1}}$ and $a_{10}\otimes X_i$ for $i=1,\ldots,n-1$, where $X_1,\ldots,X_{n-1}$ are the degree one critical points in $\{a^{\ell_0,\ell_1}\}\times T^{n-1}$. Let $\Gamma_{k+1}\in \bm{\Gamma}_{k+1}$ denote the stable trees with exactly $k$ input vertices and one interior vertex $v$. 
The only possible stable trees contributing to $b_{10}\otimes \one_{T^{n-1}}$ are the following:
\begin{enumerate}
\item $\Gamma_2$ with input $a_{10}\otimes \one_{T^{n-1}}$ and decoration $\beta_v=\beta^{L_0,L_1}_R$. 
\item $\Gamma_{2k+1}$, $k\ge 1$, with inputs of the form $a_{10}\otimes \one_{T^{n-1}},U,V,\ldots,U,V,U$ and decoration $\beta_v=\beta^{L_0,L_1}_L+\beta$, where $\beta$ is a stable disc class of Maslov index $0$ bounded by $L_0$.
\item Stable trees $\Gamma$ in $L_0$ with at least one interior vertex and inputs $U$ and $V$ attached to the interior vertex $\Gamma_2$ in (1) or $\Gamma_{k+1}$ in (2) via flow lines.
\end{enumerate}

For case (1), there is exactly one holomorphic strip in class $\beta^{L_0,L_1}_R$ with input corner in $\{a^{\ell_0,\ell_1}\}\times T^{n-1}$ and output corner intersecting $b_{10}\otimes \one_{T^{n-1}}$, similar to the proof of Theorem~\ref{thm:iso12}. 
This contributes $-\bT^{A}$ to $b_{10}\otimes \one_{T^{n-1}}$. 

For case (2), the moduli spaces in consideration are of the form
\begin{equation} \label{eq:strip+polygon}
\cM_{2k+1}(\beta_L^{L_0,L_1}+\beta;a_{10}\otimes \one_{T^{n-1}},U,V,\ldots,U,V,U). 
\end{equation}
Since all domain discs in $\cM_{2k+1}(\beta_L^{L_0,L_1}+\beta)$ are singular (for $\beta=0$, we require $k\ge 2$), the underlying Kuranishi structure of the moduli spaces in \eqref{eq:strip+polygon} is the fiber product
\begin{equation} \label{eq:fiber_product}
\cM_3(\beta_L^{L_0,L_1};a_{10}\otimes \one_{T^{n-1}},U)\times_{\bL_0} \cM_{2k}(\beta;U,V,\ldots,U,V,U).
\end{equation}
We choose $T^{n-2}$-equivariant Kuranishi structures for both factors of \eqref{eq:fiber_product} so that their fiber product is also $T^{n-2}$-equivariant. Let $E_{1}\oplus E_{2}$ denote the obstruction bundles on \eqref{eq:fiber_product}. We choose $T^{n-2}$-equivariant multi-sections  $\fs_1$, $\fs_2$, $(s_1,s_2)$, of $E_1$, $E_2$ and $E_{1}\oplus E_{2}$ (transversal to the zero section), respectively. By compatibility of perturbations, we have $s_1=\fs_1$ and $(s_2)|_{s_1^{-1}(0)}=\fs_2$. Since  $\fs_2^{-1}(0)$ is a $T^{n-2}$-invariant chain with expected dimension $n-3$, it must be the zero chain. This means $(s_1,s_2)^{-1}(0)=\emptyset$, and the moduli spaces in \eqref{eq:strip+polygon} do not contribute unless $\beta=0$ and $k=1$. There is exactly one holomorphic strip in $\cM_{3}(\beta_L^{L_0,L_1};a_{10}\otimes \one_{T^{n-1}},U)$ with output corner passing through $b_{10}\otimes \one_{T^{n-1}}$. This contributes $z_nu\bT^{A}$ to $b_{10}\otimes \one_{T^{n-1}}$


Finally, for case (3), if a stable tree $\Gamma$ in $L_0$ has at least two interior vertices and inputs $U$ and $V$, then by choosing $T^{n-2}$-equivariant perturbations for the fiber products at its interior vertices whose inputs are $U$ and $V$, the moduli space of such configurations is empty by the same argument as in Lemma \ref{lemma:L_0}. If a stable tree $\Gamma$ has exactly one interior vertex, then by choosing $T^{n-2}$-equivariant perturbations for the fiber product at the interior vertex, the output are free $T^{n-2}$-orbits situated near either $p\times T^{n-2}$ or $q\times T^{n-2}$, which do not flow to the boundary of holomorphic strip in class $\beta^{L_0,L_1}_R$ or $\beta^{L_0,L_1}_L$ intersecting $b_{10}\otimes \one_{T^{n-1}}$ due to our choice of the Morse function $f^{\bS^2}$. Therefore, the coefficient of $b_{10}\otimes \one_{T^{n-1}}$ is $(u-z_n^{-1})\bT^A$.


Let us now consider the coefficient of $a_{10}\otimes X_i$, $i=1,\ldots,n-1$. We have a pair of Morse flow lines from $a_{10}\otimes\one_{T^{n-1}}$ to $a_{10}\otimes X_i$. For $i=2,\ldots,n-1$, this gives $(z_i-z_i^{(0)})\cdot a_{10}\otimes X_i$. For $i=1$, one of the flow lines intersects the gauge hypertorus dual to $v'_1$ in $L_1$ and contributes $z_1$ to $a_{10}\otimes X_1$. While the other flow line does not intersect with any gauge cycles, it can however be attached by Maslov index $0$ polygons bounded by $L_0$ via an isolated Morse flow line from $\{p\}\times T^{n-2}$ or $\{q\}\times T^{n-2}$ to $\{a^{\ell_1,\ell_2}\}\times T^{n-1}$. Recall that the corners $U$ and $V$ must appear in pairs for a stable polygon bounded by $L_0$, since its boundary is contained in the the immersed loci. Thus, this contributes a series $-g(uv,z_2^{(0)},\ldots,z_{n-1}^{(0)})$ to $a_{10}\otimes X_1$. In conclusion, we have
	\begin{equation}\label{eq:m_10}
\begin{aligned}
\fm_1^{\bbL'_1,\bbL_0}(a_{10}\otimes \one_{T^{n-1}})= (z_nu-1)\bT^{A}\cdot b_{10}\otimes \one_{T^{n-1}}  &+ (z_1-g(uv,z_2^{(0)},\ldots,z_{n-1}^{(0)})) \cdot a_{10}\otimes X_1 \\ &+ \sum_{i=2}^{n-1} (z_i-z_i^{(0)}) \cdot a_{10}\otimes X_i.
\end{aligned}
	\end{equation}
	
Similarly, for the pair $(\bbL_0,\bbL'_2)$, we have 
\begin{equation} \label{eq:m_02}
\begin{aligned}
\fm_1^{\bbL_0,\bbL'_2} (a_{02}\otimes \one_{T^{n-1}})=((z'_n)^{-1}v-1)\bT^{A}\cdot b_{02}\otimes \one_{T^{n-1}}  &+ (z_1'-\tilde{g}(uv,z_2^{(0)},\ldots,z_{n-1}^{(0)})) \cdot a_{02}\otimes X'_1\\ &+ \sum_{i=2}^{n-1} (z_i'-z_i^{(0)}) \cdot a_{02}\otimes X'_i
\end{aligned}
\end{equation}
for some series $\tilde{g}$, where $X'_1,\ldots,X'_{n-1}$ are the degree one critical points in $\{a^{\ell_0,\ell_2}\}\times T^{n-1}$. $a_{10}\otimes \one_{T^{n-1}}$ and $a_{02}\otimes \one_{T^{n-1}}$ are isomorphisms if and only if the coefficients in \eqref{eq:m_10} and \eqref{eq:m_02} are zero.
	
	We have $\fm_2^{\bbL'_1,\bbL_0,\bbL'_2}(a_{02}\otimes \one_{T^{n-1}},a_{10}\otimes \one_{T^{n-1}}) =\bT^{\Delta} \cdot a_{12}\otimes \one_{T^{n-1}}$ contributed by a holomorphic section of symplectic area $\Delta$ over the triangle in the $w$-plane with corners $a^{\ell_0,\ell_1}$, $a^{\ell_0,\ell_2}$ and $a^{\ell_1,\ell_2}$. Recall from Theorem \ref{thm:iso12} that $a_{12}\otimes \one_{T^{n-1}}$ is an isomorphism between $\bbL_1$ and $\bbL_2$ if only if $z_i' = z_i$ for $i = 1,\ldots,n-1$ and $z_n' = z_n \cdot f(z_1,\ldots,z_{n-1})$. Thus, we have $g=z_1=z_1'=\tilde{g}$, and 
	\[
	uv=z'_n z_n^{-1}=f(z_1,\ldots,z_{n-1})=f(g,z_2^{(0)},\ldots,z_{n-1}^{(0)}).
	\]
\end{proof}

The series $g$ is the generating function of Maslov index $0$ stable polygons bounded by $L_0$. 
The stable polygons can attach to an isolated Morse trajectory in $L_0$ flowing into $\{a^{\ell_0,\ell_1}\}\times T^{n-1}\subset L_1 \cap L_0$ and contribute to $a_{10}\otimes X_1$.  It is difficult to compute $g$ directly since the moduli spaces involved are highly obstructed. However, Theorem \ref{thm:iso-01} gives a way to compute $g$ by solving for $z_1$ from the equation $uv=f(z_1,\ldots,z_{n-1})$.

\begin{example}
	Let $X=\C^3$.  We have 
	\[
	uv = z_3' z_3^{-1} = 1+ z_1 + z_2,
	\]
	and hence, $g(uv,z_2^{(0)})=z_1=uv-z_2-1$.
\end{example}



\subsubsection{Deformation families $\bbL_1$ and $\bbL_2$}
Let $\bm{X}_1\in\Crit(f^{L_1})$ and $\bm{X}'_1\in \Crit(f^{L_2})$ be the degree $1$ critical points whose unstable chain is the hypertori dual to $v'_1$ in $L_1$ and $L_2$, respectively. Let $x_1, x'_1\in \Lambda_+$. We denote the family of formal Lagrangian deformations $(L_1, \nabla^{(1,z_2\ldots,z_n)},x_1\bm{X}_1)$ and $(L_2, \nabla^{(1,z'_2\ldots,z'_n)},x'_1\bm{X}'_1)$ by $\bbL_1$ and $\bbL_2$, respectively. The reason for introducing these new deformation families is that they will fit better into our $\bS^1$-equivariant theory.

\begin{lemma} \label{lemma:L_1_2}
$\bbL_1$ and $\bbL_2$ are unobstructed.
\end{lemma}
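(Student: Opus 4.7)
The plan is to reduce the Maurer--Cartan computation to pure Morse theory on $L_1\cong T^n$ and then exploit the product structure of the chosen Morse function to show strict vanishing at the cochain level.

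First, I would repeat the observation used for $\bbL'_1$: since $L_1$ is a smooth Lagrangian torus inside $X^{\circ}$ whose projection to the $w$-plane is the loop $\ell_1$ enclosing the puncture $\epsilon$, a winding-number argument in the $w$-plane forbids any non-constant holomorphic disc in $X^{\circ}$ bounded by $L_1$. Consequently every stable tree contributing to $\fm^{\bbL_1}$ has all its interior vertices decorated by the constant disc class, the boundary holonomies $\mathrm{Hol}_{\nabla^{(1,z_2,\ldots,z_n)}}(\partial\beta)$ equal $1$, and $\fm^{\bbL_1}$ reduces to the formal deformation by $b=x_1\bm{X}_1$ of the Morse pearl complex on $(L_1,f^{L_1})$. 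Since $\fm^{\mathrm{Morse}}_0(1)=0$, it remains to verify
\[
\fm^{\bbL_1}_0(1)\;=\;\sum_{k\ge 1} x_1^{k}\, \fm^{\mathrm{Morse}}_k(\bm{X}_1,\ldots,\bm{X}_1)\;=\;0.
\]

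Next, I would use the product structure $f^{L_1}=f^{\bS^1}_{v_1}+f^{\bS^1}_{v'_1}+\cdots+f^{\bS^1}_{v'_{n-1}}$ on $L_1=S^1_{v_1}\times S^1_{v'_1}\times\cdots\times S^1_{v'_{n-1}}$. Under this Morse function, $\bm{X}_1$ is the minimum of the $v'_1$-circle factor tensored with the maxima of the remaining factors. With perturbation data chosen compatibly with the product (as in the $T^{n-2}$-equivariant perturbations already invoked in the proof of Lemma~\ref{lemma:L_0}), the Morse $A_\infty$-operations admit a K\"unneth-type decomposition through the operations on each circle factor. The $v'_1$-factor of $\fm^{\mathrm{Morse}}_k(\bm{X}_1,\ldots,\bm{X}_1)$ is then $\fm^{\bS^1}_k(\min,\ldots,\min)$, whose output has cohomological degree $2$; but $C^{\bullet}(f^{\bS^1})$ is concentrated in degrees $0$ and $1$, so this factor is identically zero for every $k\ge 1$. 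Therefore each term in the Maurer--Cartan series vanishes and $\bbL_1$ is strictly unobstructed. The argument for $\bbL_2$ is the same after replacing $L_1$ by $L_2$ and $\bm{X}_1$ by $\bm{X}'_1$.

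A more conceptual alternative, closer in spirit to the gluing discussion of Section~\ref{sec:immersed_SYZ}, is to note that $\bm{X}_1$ is Poincar\'e dual to the gauge hypertorus for the holonomy $z_1$. Turning on the bounding cochain $x_1\bm{X}_1$ while keeping $z_1=1$ is then formally equivalent (as an object of the Fukaya category) to keeping the bounding cochain trivial and replacing the first holonomy by an invertible element $h(x_1)\in\Lambda_{\mathrm{U}}$; since $\bbL'_1$ with any holonomy is unobstructed, so is $\bbL_1$. In either approach the main technical obstacle is installing the perturbation data so that the vanishing $[\bm{X}_1]\cup[\bm{X}_1]=0$ (easy in cohomology of any torus) together with the vanishing of all higher $A_\infty$-operations on repeated inputs $\bm{X}_1$ hold strictly at the Morse-cochain level, rather than merely up to coboundary.
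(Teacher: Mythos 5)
Your proof is correct and its first half coincides with the paper's: since $L_1,L_2$ bound no non-constant holomorphic discs in $X^{\circ}$, the Maurer--Cartan series reduces to the purely Morse-theoretic operations with repeated input $\bm{X}_1$ (resp.\ $\bm{X}'_1$). Where you diverge is in how the vanishing of these terms is justified. The paper argues directly: $\fm_1(\bm{X}_1)=0$ because the two flow lines from $\bm{X}_1$ to each degree-$2$ critical point cancel, and for $k\ge 2$ the flow-tree moduli with repeated input $\bm{X}_1$ are empty after generic perturbation (disjoint parallel perturbed copies of the gauge hypertorus have empty intersection in the relevant fiber products). You instead invoke a K\"unneth-type factorization of the pearl operations through the circle factors and kill the $v'_1$-factor by a degree count. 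This reaches the same conclusion, but it is the heavier route: the Morse $A_\infty$ operations on a product do not literally tensor-factor (flow trees on the two factors must share the same metric tree), so your "perturbation data chosen compatibly with the product" is doing real work that you correctly flag but do not supply; the paper's emptiness-after-perturbation argument is the direct way to make exactly that point rigorous, and your factor-wise dimension count is best read as the reason why those perturbed moduli are empty. Your alternative argument via reinterpreting $x_1\bm{X}_1$ as the holonomy $z_1=e^{x_1}$ is also sound in spirit, but note that the paper only establishes this equivalence later (Theorem \ref{thm:iso12-x}), using a specific symmetrized perturbation scheme to enforce the divisor relation at chain level, so as written it would be circular to lean on it here.
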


\begin{proof}
Since $L_1$ and $L_2$ bound no non-constant holomorphic discs in $X^{\circ}$, the only possible contributions come from Morse theory. There are two flow lines from $\bm{X}_1$ (resp. $\bm{X}'_1$) to each of the degree $2$ critical points in $L_1$ (resp. $L_2$) which cancel with each other. The Morse flow trees with repeated inputs $\bm{X}_1$ (resp. $\bm{X}'_1$) are empty for generic perturbations.
\end{proof}

In \cite[Theorem 4.7]{KLZ19}, a particular type of perturbations was used for the computation of the toric superpotential so that the divisor axiom holds, and we have the change of variables $z=e^{x}$. Below, we choose analogous  perturbations for the gluing formulas between the Lagrangian branes $\bbL_0$, $\bbL_1$, and $\bbL_2$.
	
\begin{theorem} \label{thm:iso12-x}
	$a_{12}\otimes \one_{T^{n-1}}$ is an isomorphism between $\bbL_1$ and $\bbL_2$ with an inverse $b_{21}\otimes \one_{T^{n-1}}$ if and only if
	\begin{equation} \label{eq:gluing}
	\begin{array}{l}
	e^{x'_1}=e^{x_1},\\
	z_i' = z_i, \quad i = 2,\ldots,n-1,\\
	z_n' = z_n \cdot f(e^{x_1},\ldots,z_{n-1}),
	\end{array}
	\end{equation}
	where $f$ is given by \eqref{eqn:fslabftn}.
\end{theorem}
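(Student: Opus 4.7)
The plan is to reduce Theorem \ref{thm:iso12-x} to Theorem \ref{thm:iso12} by applying a divisor-type axiom for the bounding cochain insertions $x_1\bm{X}_1$ on the $L_1$-side and $x'_1\bm{X}'_1$ on the $L_2$-side, which has the effect of substituting the holonomy variables $z_1 \mapsto e^{x_1}$ and $z'_1 \mapsto e^{x'_1}$ throughout the previous calculation.

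The first step is to unfold $\fm_1^{\bbL_1,\bbL_2}(a_{12}\otimes\one_{T^{n-1}})$ as the formal sum over $k,\ell\ge 0$ of $\fm_{k+\ell+1}$ with inputs $(x_1\bm{X}_1,\ldots,x_1\bm{X}_1, a_{12}\otimes\one_{T^{n-1}}, x'_1\bm{X}'_1,\ldots,x'_1\bm{X}'_1)$, and repeat the dimension-and-compactness analysis of Theorem \ref{thm:iso12}. Since $\bm{X}_1$ and $\bm{X}'_1$ are of degree one, the degree reason restricting the output to a linear combination of $b_{12}\otimes\one_{T^{n-1}}$ and $a_{12}\otimes X_i$, $i = 1,\ldots,n-1$, is unchanged. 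The only non-constant strip contributions still come from the classes $\beta_i + \alpha$ of Lemma \ref{stable-strip}, whose output corners lie in $\{b^{\ell_1,\ell_2}\}\times T^{n-1}$, and thus contribute only to the $b_{12}\otimes\one_{T^{n-1}}$ coefficient.

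The main technical step is to choose $T^{n-1}$-equivariant Kuranishi perturbations, in the spirit of \cite[Theorem~4.7]{KLZ19}, so that the divisor axiom holds for the $\bm{X}_1$- and $\bm{X}'_1$-insertions. Concretely, inserting $k$ copies of $x_1\bm{X}_1$ on the $L_1$-boundary of a strip in class $\beta_i + \alpha$ with output corner passing through $b_{12}\otimes\one_{T^{n-1}}$ should produce a factor $(x_1 k_i)^k/k!$, where $k_i$ is the intersection number of the $L_1$-portion of $\partial\beta_i$ with the unstable chain of $\bm{X}_1$ (the hypertorus dual to $v'_1$). Summing over $k$ yields an exponential factor $e^{x_1 k_i}$, which exactly reproduces the factor $z_1^{k_i}$ in $\mathrm{Hol}_{\nabla^{(z_1,\ldots,z_n)}}(\partial\beta_i|_{L_1})$ from Theorem \ref{thm:iso12} under $z_1 = e^{x_1}$. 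The analogous statement for $\bm{X}'_1$-insertions on $L_2$ yields $z'_1 = e^{x'_1}$.

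Combining these ingredients, the coefficient of $b_{12}\otimes\one_{T^{n-1}}$ in $\fm_1^{\bbL_1,\bbL_2}(a_{12}\otimes\one_{T^{n-1}})$ takes the form
\[
\bT^{\omega(\beta_1)}\bigl(-1 + (z'_n)^{-1}z_n\cdot f(e^{x_1},z_2,\ldots,z_{n-1})\bigr),
\]
while the symmetric calculation of $\fm_1^{\bbL_2,\bbL_1}(b_{21}\otimes\one_{T^{n-1}})$ produces the companion relation in which $e^{x'_1}$ takes the role of $e^{x_1}$; consistency of the two versions then forces $e^{x'_1} = e^{x_1}$. The coefficients of $a_{12}\otimes X_i$ for $i = 2,\ldots,n-1$ remain $z_i - z'_i$ (purely Morse), giving $z_i = z'_i$, while the $i=1$ coefficient vanishes automatically since the $v'_1$-direction connections are trivial on both sides and no non-constant strip feeds this output. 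The inverse identities in \eqref{eq:iso} come from the same holomorphic triangle spanning the three base circles and are unaffected by the bounding cochains. The principal obstacle is establishing the divisor axiom on the nose in our open setting; this is the strip-analog of the argument in \cite[Theorem~4.7]{KLZ19} and should follow by the same $T^{n-1}$-equivariant perturbation construction applied to the moduli of Lemma \ref{lem:hol-strip-X}.
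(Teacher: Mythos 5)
Your overall strategy --- unfolding the deformed $\fm_1$ into insertions of $x_1\bm{X}_1$ and $x'_1\bm{X}'_1$, and using averaged perturbations of disjoint parallel copies of the hypertorus dual to $v'_1$ to establish a divisor-type axiom converting the $z_1$-holonomy of Theorem \ref{thm:iso12} into $e^{x_1}$ --- is exactly the paper's, and your computation of the coefficient of $b_{12}\otimes\one_{T^{n-1}}$ agrees with the paper's. The gap is in your treatment of the coefficient of $a_{12}\otimes X_1$. You assert that it ``vanishes automatically since the $v'_1$-direction connections are trivial on both sides and no non-constant strip feeds this output.'' This is false: the two Morse flow lines $\gamma_1,\gamma_2$ from $a_{12}\otimes\one_{T^{n-1}}$ to $a_{12}\otimes X_1$, which contributed $z_1$ and $-z'_1$ in Theorem \ref{thm:iso12}, can now be decorated by Morse flow trees whose inputs are copies of $\bm{X}_1$ (resp.\ $\bm{X}'_1$); with the same averaged perturbations, $k$ insertions contribute $x_1^k/k!$, so the coefficient is $e^{x_1}-e^{x'_1}$. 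This is precisely where the relation $e^{x_1}=e^{x'_1}$ in \eqref{eq:gluing} comes from, and it has to come from somewhere, since it is one of the three conditions in the ``if and only if.''

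Your fallback --- deriving $e^{x_1}=e^{x'_1}$ from consistency between the cocycle conditions for $a_{12}$ and for $b_{21}$ --- does not close the gap. At best that comparison yields $f(e^{x_1},z_2,\ldots,z_{n-1})=f(e^{x'_1},z_2,\ldots,z_{n-1})$, and $f$ is not injective in its first argument: already for $K_{\bP^2}$ one has $f=(1+\delta)+z_1+z_2+qz_1^{-1}z_2^{-1}$, so $f(\tilde z_1)=f(z_1)$ admits the second solution $\tilde z_1=q/(z_1z_2)$. Hence you cannot conclude $e^{x_1}=e^{x'_1}$ this way, and the ``only if'' direction of the theorem fails in your argument. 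A minor further omission: one should also rule out Morse flow trees with repeated $\bm{X}_1$-inputs attaching to the strip and flow-line configurations already counted; the paper disposes of these by genericity of the perturbations.
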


\begin{proof}
The proof parallels that of Theorem \ref{thm:iso12}, so we will simply highlight the parts that depend on such choices of perturbations.

$\fm_{1}^{\bbL'_1,\bbL_2}(a_{12} \otimes \one_{T^{n-1}})$ is a linear combination of $b_{12}\otimes\one_{T^{n-1}}$ and $a_{12}\otimes X_i$, $i=1,\ldots,n-1$. The coefficient of $a_{12}\otimes X_i$ for $i=2,\ldots,n-1$ is $z_i-z'_i$ as before. For $a_{12}\otimes X_1$, there are two Morse flow lines $\gamma_1$ and $\gamma_2$ from $a_{12} \otimes \one_{T^{n-1}}$. The flow line $\gamma_1$ which originally contributed $z_1$ in the setting of Theorem \ref{thm:iso12} can now intersect with flow lines from $k$ copies of $X_1$ in $L_1$ at a point and then follow to $a_{12}\otimes X_1$, forming Morse flow trees. 
The moduli spaces of such configurations are not transverse for $k>1$, since the unstable chain of $\bm{X}_1$ (which is the hypertorus dual in $L_1$ to $v'_1$) is not transverse with itself. Therefore, we have to choose transversal perturbations. For $k>1$, let $\bm{X}_1^{(1)},\ldots,\bm{X}_1^{(k)}$ be disjoint small perturbations of the hypertorus along the direction of $\gamma_1$ (and ordered along that direction). We choose the perturbation to be the average of all permutations of $\bm{X}_1^{(1)},\ldots,\bm{X}_1^{(k)}$, resulting in the factor $\frac{x_1^k}{k!}$. These configurations together contribute $e^{x_1}$ to $a_{12}\otimes X_1$. Similarly, the Morse flow trees formed by the flow line $\gamma_2$ and the flow lines from copies of $\bm{X}'_1$ contribute $e^{x'_1}$ to $\cdot a_{12}\otimes X_1$

For the coefficient of $b_{12}\otimes\one_{T^{n-1}}$, the strip class $\beta_0$ contributes $-\bT^{\omega(\beta_0)}$ as before. On the other hand, flow lines from $k$ copies of $X_1$ can now attach to the upper arc (in $L_1$) of stable strips in classes $\beta_i+\alpha$, $i=1,\ldots,m$, forming pearly trees. Again, these fiber products are not transversal. The transversal perturbations we choose is the following. For the strip class $\beta_i+\alpha$ and $k>1$, let $\bm{X}_1^{(i,1)},\ldots,\bm{X}_1^{(i,k)}$ be disjoint small perturbations of the hypertorus along the direction of $\partial \beta_i$ (and ordered along that direction). We choose the perturbation of the fiber product to be the average of all permutations of $\bm{X}_1^{(i,1)},\ldots,\bm{X}_1^{(i,k)}$. By choosing such perturbations, we get $\bT^{\omega(\beta_1)} \left( -1 + (z_n')^{-1}z_n f(e^{x_1},\ldots,z_{n-1})\right) \cdot b_{12}\otimes\one_{T^{n-1}}$. 

Finally, we can eliminate the possibility of Morse flow trees with repeated inputs $X_1$ attaching to the above configurations, since the moduli space of these flow trees are empty for generic perturbations.
\end{proof}

By choosing similar perturbations as in the proof above, we also have the gluing formulas between $\bbL_i$ and $\bbL_0$.

\begin{theorem} \label{thm:iso-01-x}
 $a_{10}\otimes \one_{T^{n-1}}\in CF^0(\bbL_1,\bbL_0)$, $a_{02}\otimes \one_{T^{n-1}}\in CF^0(\bbL_0,\bbL_2)$, and $a_{12} \otimes \one_{T^{n-1}}\in CF^{0}(\bbL'_1,\bbL'_2)$ are isomorphisms if and only if
	\begin{equation}\label{eqn:L0L1pos}
	\begin{array}{l}
	e^{x_1}=e^{x'_1}= g(uv,z_2^{(0)},\ldots,z_{n-1}^{(0)})\\
	z_i = z_i' = z_i^{(0)} \textrm{ for } i=2,\ldots,n-1\\
	z_n= u^{-1}\\
	z_n'=v.
	\end{array}
	\end{equation}
	Moreover, $z=g(uv,z_2^{(0)},\ldots,z_{n-1}^{(0)})$ satisfies
	\[
	uv = f(z,z_2^{(0)},\ldots,z_{n-1}^{(0)}),
	\]
	where the series $f$ is same as in \eqref{eqn:fslabftn} and Theorem \ref{thm:iso12}, and the series $g$ as in Theorem \ref{thm:iso-01}.
\end{theorem}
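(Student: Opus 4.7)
The plan is to parallel the argument of Theorem \ref{thm:iso-01} line by line, with the sole modification being the perturbation scheme introduced in the proof of Theorem \ref{thm:iso12-x} that replaces the holonomy variable $z_1$ (resp.\ $z_1'$) by $e^{x_1}$ (resp.\ $e^{x_1'}$) coming from the weak bounding cochain $x_1 \bm{X}_1$ (resp.\ $x_1' \bm{X}_1'$). Concretely, I would first compute $\fm_1^{\bbL_1,\bbL_0}(a_{10}\otimes \one_{T^{n-1}})$ and $\fm_1^{\bbL_0,\bbL_2}(a_{02}\otimes \one_{T^{n-1}})$, then combine the resulting cocycle conditions with the conclusion of Theorem \ref{thm:iso12-x}.

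Since the underlying Lagrangians $L_0,L_1,L_2$, their clean self-intersection data, and the connection components in the $v_1$-direction (namely $z_n$ and $z_n'$) are unchanged from the setting of Theorem \ref{thm:iso-01}, the classification of contributing stable trees carries over verbatim. Accordingly, the coefficient of $b_{10}\otimes \one_{T^{n-1}}$ remains $(z_n u -1)\bT^{A}$: the unique strip in class $\beta_R^{L_0,L_1}$ contributes $-\bT^A$, the unique strip in $\beta_L^{L_0,L_1}$ with one attached $U$-corner contributes $z_n u\bT^A$, and the $T^{n-2}$-equivariant perturbation kills all configurations involving a $(U,V)$-pair at an interior vertex, where the resulting invariant chain has expected dimension $n-3$. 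Likewise, the coefficient of $a_{10}\otimes X_i$ for $i=2,\ldots,n-1$ remains $z_i - z_i^{(0)}$, coming from the two Morse flow lines crossing the gauge hypertori dual to $v_i'$ on the respective sides.

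The only genuinely new computation is the coefficient of $a_{10}\otimes X_1$. The first of the two Morse flow lines out of $a_{10}\otimes\one_{T^{n-1}}$ previously crossed the gauge hypertorus dual to $v_1'$ in $L_1$ and contributed $z_1$; that gauge cycle now carries trivial holonomy, and one must instead count Morse flow trees in $L_1$ with $k\ge 0$ copies of $\bm{X}_1$ attached and terminating along this flow line. Since the unstable chain of $\bm{X}_1$ is a hypertorus not transverse to itself, I would use the symmetric perturbation scheme of Theorem \ref{thm:iso12-x}: replace the $k$ copies by disjoint small translates along the flow direction and average over all permutations, producing the factor $x_1^k/k!$ at level $k$, so that summing over $k$ yields $e^{x_1}$. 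The second flow line, as in Theorem \ref{thm:iso-01}, does not cross any gauge cycle of $L_1$ but collects the contribution $-g(uv,z_2^{(0)},\ldots,z_{n-1}^{(0)})$ from Maslov-zero stable polygons bounded by $L_0$ attached via isolated trajectories from $\{p,q\}\times T^{n-2}$; the series $g$ is the \emph{same} as in Theorem \ref{thm:iso-01}, since it depends only on disc moduli of $L_0$ and not on the deformation data of $L_1$. Combining gives coefficient $e^{x_1} - g(uv,z_2^{(0)},\ldots,z_{n-1}^{(0)})$, and the parallel analysis for $\fm_1^{\bbL_0,\bbL_2}(a_{02}\otimes \one_{T^{n-1}})$ produces the primed analogue with $e^{x_1'}$.

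Imposing that these two expressions vanish yields $z_n = u^{-1}$, $z_n' = v$, $z_i = z_i' = z_i^{(0)}$ for $i=2,\ldots,n-1$, and $e^{x_1} = e^{x_1'} = g(uv,z_2^{(0)},\ldots,z_{n-1}^{(0)})$. Finally, Theorem \ref{thm:iso12-x} applied to $\bbL_1$ and $\bbL_2$ supplies $z_n' = z_n\cdot f(e^{x_1}, z_2,\ldots,z_{n-1})$, which under these substitutions becomes $uv = f(g, z_2^{(0)},\ldots,z_{n-1}^{(0)})$, as required. The main obstacle is confirming that the three perturbation schemes — the symmetric perturbation of the $\bm{X}_1$ hypertori on $L_1$, the $T^{n-2}$-equivariant perturbations controlling the $(U,V)$-decorated polygons on $L_0$, and the standard perturbations for the underlying strip moduli between $L_0,L_1,L_2$ — can be chosen simultaneously and compatibly. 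Because they act on disjoint factors of the relevant fiber products (Morse-theoretic data on $L_1$, disc moduli on $L_0$, and strip moduli between the Lagrangians), this compatibility can be arranged by the inductive scheme of \cite{FOOO-can}.
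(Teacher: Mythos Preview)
Your proposal is correct and follows precisely the approach the paper indicates: the paper's own proof consists of a single sentence referring back to the perturbation scheme of Theorem \ref{thm:iso12-x}, and you have spelled out exactly how that scheme modifies the argument of Theorem \ref{thm:iso-01}. Your observation that the series $g$ is unchanged (since it depends only on the disc moduli of $L_0$) and your invocation of Theorem \ref{thm:iso12-x} for the final relation $uv = f(g,z_2^{(0)},\ldots,z_{n-1}^{(0)})$ are both on the mark; note also that the appearance of $\bbL'_1,\bbL'_2$ in the statement is a minor inconsistency in the paper and your use of Theorem \ref{thm:iso12-x} (rather than Theorem \ref{thm:iso12}) is the intended route.
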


\subsection{$\bS^1$-equivariant disc potential of $L_0$}
For the $T^n$-action on a compact toric semi-Fano manifold $X$ with $\dim_{\C}X=n$, the equivariant disc potential $W_{T^n}$ of regular toric fibers in $X$ was computed in \cite{KLZ19}. It recovers the $T^n$-equivariant toric Landau-Ginzburg mirror of Givental \cite{Givental}, namely,
\[
W_{T^n} = W(e^{x_1},\ldots,e^{x_n}) + \sum_{i=1}^n x_i \lambda_i.
\]
Here, $W$ is the superpotential of Givental and Hori-Vafa  \cite{Givental, HV,LLY3}, $x_1,\ldots,x_n$ are parameters for the boundary deformations by hypertori,
and $\lambda_1,\ldots,\lambda_n$ are equivariant parameters generating  $H^{\bullet}_{T}(\pt;\Lambda_0)=\Lambda_0 [\lambda_1,\ldots,\lambda_n]$. The $\bS^1$-equivariant disc potential for immersed $2$-sphere $\cS^2$ was also computed in \cite{KLZ19} in the unobstructed setting using gluing formulas. 

In this section, we study the $\bS^1$-equivariant disc potential of the immersed SYZ fiber $L_0\cong \cS^2\times T^{n-2}$ in a toric Calabi-Yau $n$-fold $X$.
The Lagrangians $L_0$, $L_1$ and $L_2$ and their pair-wise clean intersections are invariant under the $T^{n-1}$-action rotating along the directions of $v'_1,\ldots, v'_{n-1}$. 
The $\bS^1(\subset T^{n-1})$-action of our interest is the rotation along the $v'_1$-direction. It acts on $L_1$ and $L_2$ by rotating the second $\bS^1$-factor and acts on $L_0$ by rotating the $\bS^2$-factor fixing the nodal point.

For $i=0,1,2$, let $(CF_{\bS^1}^{\bullet}(L_i),\fm^{(L_i,\bS^1)})$ the $\bS^1$-equivariant Morse model such that 
\[
CF_{\bS^1}^{\bullet}(L_i)=CF^{\bullet}(L_i; H^{\bullet}_{\bS^1}(\pt;\Lambda_0)),
\]
where $CF^{\bullet}(L_i; H^{\bullet}_{\bS^1}(\pt;\Lambda_0))$ is generated by critical points of the Morse function $f^{L_i}$ as explained in \ref{sec:partial_units}. Here, we have suppressed the superscript $\dagger$ for simplicity of notations.
 
Let us make a decomposition $L_i=\bS^1 \times T^{n-1}$ for $i=1,2$ so that $\bS^1$ acts freely on the $\bS^1$-factor and trivially on the $T^{n-1}$-factor. Then
\[
(L_i)_{\bS^1}= (\bS^{1})_{\bS^1}\times T^{n-1},
\]
and
\[
(L_0)_{\bS^1} =(\cS^{2})_{\bS^1}\times T^{n-2},
\]
We have
\[
\pi_1((L_i)_{\bS^1})\cong \pi_1(T^{n-1}) =\Z\cdot \{v_1,v'_2,\ldots,v'_{n-1}\}, \quad i=0,1,2.
\] 
The flat $\Lambda_\mathrm{U}$-connections $\nabla^{(z^{(0)}_2,\ldots,z^{(0)}_{n-1})}$, $\nabla^{(1,z_2,\ldots,z_n)}$, and $\nabla^{(1,z'_2,\ldots,z'_n)}$ are $\bS^1$-equivariant, and thus induce flat connections on $(L_i)_{\bS^1}$ for $i=0,1,2$. 

Let $\bm{X}_1\in CF_{\bS^1}^{\bullet}(L_1)$, $\bm{X}'_1\in CF_{\bS^1}^{\bullet}(L_2)$, and $U, V\in  CF_{\bS^1}^{\bullet}(L_0)$. We denote by $(\bbL_0,\bS^1)$, $(\bbL_1,\bS^1)$ and $(\bbL_2,\bS^1)$ the families of formal deformations of $(L_0,\bS^1)$ by $\nabla^{(z^{(0)}_2,\ldots,z^{(0)}_{n-1})}$ and $uU+vV$ for $u,v\in \Lambda_0^2$ with $\mathrm{val}(u\cdot v)>0$, $(L_1,\bS^1)$ by $\nabla^{(1,z_2,\ldots,z_n)}$ and $x_1\bm{X}_1$, $x_1\in \Lambda_+$, and $(L_2,\bS^1)$ by $\nabla^{(1,z'_2,\ldots,z'_n)}$ and $x'_1\bm{X}'_1$, $x_1\in \Lambda_+$, respectively. By Corollary \ref{cor:unobstructed}, Lemma \ref{lemma:L_0} and Lemma \ref{lemma:L_1_2}, $(\bbL_i,\bS^1)$ is weakly unobstructed for $i=0,1,2$. This implies that
\begin{align*}
\fm^{(\bbL_0,\bS^1)}_0(1) &=W_{\bS^1}^{\bbL_0}(u,v,z^{(0)}_2,\ldots,z^{(0)})\cdot \one_{L_0},\\ 
\fm^{(\bbL_1,\bS^1)}_0(1) &= W_{\bS^1}^{\bbL_1}(x_1,z_2,\ldots,z_n)\cdot \one_{L_1},\\
\fm^{(\bbL_2,\bS^1)}_0(1) &= W_{\bS^1}^{\bbL_2}(x'_1,z'_2,\ldots,z'_n)\cdot \one_{L_2}.	
\end{align*}
for some series $W_{\bS^1}^{\bbL_i}$ ($i=0,1,2$).
We will refer to the functions $W_{\bS^1}^{\bbL_i}$ as the $\bS^1$-equivariant disc potential of $\bbL_i$. 

Since $L_1$ does not bound any non-constant holomorphic discs in $X^{\circ}$, the following is an immediate consequence of \cite[Lemma 4.4]{KLZ19}.
\begin{prop} 
	\label{prop:W_T}
	The $\bS^1$-equivariant disc potential  $W_{\bS^1}^{\bbL_1}$ of $\bbL_1$ is given by
	\[
	W_{\bS^1}^{\bbL_1}= \lambda_1 x_1,
	\]
where $\lambda_1$ is the equivariant parameter of the $\bS^{1}$-action, i.e., $H^{\bullet}_{\bS^1}(\pt;\Lambda_0)$=$\Lambda_0 [\lambda_1]$. 	
\end{prop}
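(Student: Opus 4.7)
The plan is to reduce to an equivariant Morse-theoretic computation on the Borel construction and apply \cite[Lemma 4.4]{KLZ19} directly. Since $L_1$ bounds no non-constant holomorphic discs in $X^{\circ}$, every pearly tree contributing to $\fm^{(\bbL_1,\bS^1),\dagger}$ must have all interior vertices decorated by the constant disc class, so all holonomy variables $z_2,\ldots,z_n$ drop out of the computation and only equivariant Morse theory on $(L_1)_{\bS^1}$ together with the boundary parameter $x_1$ remains.

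I would next exploit the product decomposition compatible with the $\bS^1$-action. Writing $L_1 = \bS^1_{v_1'} \times T^{n-1}$ with the $\bS^1$ rotating the first factor freely and fixing the second, the Borel space becomes $(L_1)_{\bS^1} \cong \CP^\infty \times T^{n-1}$. Using an admissible Morse--Smale collection of product type as in Section~\ref{sec:partial_units}, the critical points of $f^{L_1}$ factor through this product, and $\bm{X}_1$ corresponds to a horizontal section of the free $\bS^1$-bundle. The key local identity to establish is
\[
\fm_1^{(\bbL_1,\bS^1),\dagger}(\bm{X}_1) = \lambda_1 \cdot \one^{\blT}_{L_1},
\]
together with the vanishing $\fm_k^{(\bbL_1,\bS^1),\dagger}(\bm{X}_1,\ldots,\bm{X}_1) = 0$ for all $k \geq 2$. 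The first identity is the standard equivariant Morse-theoretic fact that the unstable chain of $\bm{X}_1$, being dual to the free $\bS^1$-orbit direction, represents the equivariant Thom class of the associated $\bS^1$-bundle; the flow-line count in the Borel model, using a perfect Morse function on $\CP^\infty$, produces exactly the generator $\lambda_1 \in H^{\bullet}_{\bS^1}(\mathrm{pt};\Lambda_0)$. The vanishing for $k \geq 2$ follows from generic transversality: the fiber product of $k$ parallel copies of the unstable chain of $\bm{X}_1$, with symmetrized perturbations in the spirit of the proof of Theorem~\ref{thm:iso12-x}, is empty in the absence of disc bubbles.

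Combining these, the $A_\infty$-sum for $\fm_0^{(\bbL_1,\bS^1),\dagger,x_1\bm{X}_1}(1)$ collapses to its linear-in-$x_1$ term,
\[
\fm_0^{(\bbL_1,\bS^1),\dagger,x_1\bm{X}_1}(1) = x_1 \cdot \fm_1^{(\bbL_1,\bS^1),\dagger}(\bm{X}_1) = \lambda_1 x_1 \cdot \one^{\blT}_{L_1}.
\]
Finally, Lemma~\ref{lem:G_unobstr} converts the homotopy unit $\one^{\blT}_{L_1}$ to the strict unit $\one^{\wT}_{L_1}$ without altering the coefficient (the minimal Maslov index of $L_1$ in $X^{\circ}$ is trivially non-negative since no non-constant discs exist), yielding $W^{\bbL_1}_{\bS^1} = \lambda_1 x_1$. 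The main subtlety is the equivariant Thom-class identity for $\fm_1^{(\bbL_1,\bS^1),\dagger}(\bm{X}_1)$, which is precisely the content of \cite[Lemma 4.4]{KLZ19}.
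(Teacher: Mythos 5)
Your proposal follows essentially the same route as the paper: the paper's entire proof of Proposition \ref{prop:W_T} is the observation that $L_1$ bounds no non-constant holomorphic discs in $X^{\circ}$ together with a direct citation of \cite[Lemma 4.4]{KLZ19}, which is exactly the lemma you invoke for the key identity $\fm_1^{(\bbL_1,\bS^1),\dagger}(\bm{X}_1)=\lambda_1\cdot\bunit_{L_1}$; the vanishing of the higher Morse flow trees with repeated inputs $\bm{X}_1$ is likewise how the paper handles unobstructedness in Lemma \ref{lemma:L_1_2}. One correction to your write-up: the Borel construction is not $\CP^\infty\times T^{n-1}$. Since $\bS^1$ acts freely on the $\bS^1$-factor of $L_1=\bS^1\times T^{n-1}$, one has $(L_1)_{\bS^1}\cong E\bS^1\times T^{n-1}$, and the finite-dimensional approximations are $L_1(N)\cong S^{2N+1}\times T^{n-1}$, fibering over $B\bS^1(N)=\CP^N$ via the Hopf map with fiber $L_1$. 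The nontriviality of this $L_1$-bundle over $\CP^N$ is precisely what makes $\bm{X}_1$ fail to be closed in the equivariant Morse complex and produces the transgression term $\lambda_1\bunit$; if the total space really were the product $\CP^\infty\times T^{n-1}$ with a product Morse function, then $\fm_1(\bm{X}_1)$ would vanish and the potential would be $0$ rather than $\lambda_1 x_1$. Because you ultimately defer the key identity to \cite[Lemma 4.4]{KLZ19} (as the paper does), this slip does not invalidate your argument, but the homotopy type you state is inconsistent with the mechanism you need.
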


Let $(CF^{\bullet}_{\bS^1}(\bbL_1,\bbL_0),\fm^{(\bbL_1,\bS^1),(\bbL_0,\bS^1)})$ and $(CF^{\bullet}_{\bS^1}(\bbL_0,\bbL_2),\fm^{(\bbL_0,\bS^1),(\bbL_2,\bS^1)})$ be the $\bS^{1}$-equivariant Morse models with
\[
CF^{\bullet}_{\bS^1}(\bbL_1,\bbL_0)=CF^{\bullet}(\bbL_1,\bbL_0;H^{\bullet}_{\bS^1}(\pt;\Lambda_0))
\]
and
\[
CF^{\bullet}_{\bS^1}(\bbL_0,\bbL_2)=CF^{\bullet}(\bbL_0,\bbL_2;H^{\bullet}_{\bS^1}(\pt;\Lambda_0)).
\]
The gluing formulas between $(\bbL_0,\bS^1)$, $(\bbL_1,\bS^1)$ and $(\bbL_2,\bS^1)$ remain the same as before.

\begin{prop} 	\label{prop:gluing-equiv}
$a_{10}\otimes \one_{T^{n-1}}\in CF^0_{\bS^1}(\bbL_1,\bbL_0)$ and $a_{02}\otimes \one_{T^{n-1}}\in CF^0_{\bS^1}(\bbL_0,\bbL_2)$ are isomorphisms if and only if
\begin{equation}\label{eqn:L0L1pos}
\begin{array}{l}
e^{x_1}=e^{x'_1}= g(uv,z_2^{(0)},\ldots,z_{n-1}^{(0)})\\
z_i = z_i' = z_i^{(0)} \textrm{ for } i=2,\ldots,n-1\\
z_n= u^{-1}\\
z_n'=v.
\end{array}
\end{equation}
Moreover, $z=g(uv,z_2^{(0)},\ldots,z_{n-1}^{(0)})$ satisfies
\[
uv = f(z,z_2^{(0)},\ldots,z_{n-1}^{(0)}),
\]
where the series $f$ is the same as in \eqref{eqn:fslabftn} (or in Theorem \ref{thm:iso12}), and hence, $g$ equals the one in Theorem \ref{thm:iso-01}.
\end{prop}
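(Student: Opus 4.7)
The strategy is to reduce the equivariant cocycle conditions on $a_{10}\otimes \one_{T^{n-1}}$ and $a_{02}\otimes \one_{T^{n-1}}$ to their non-equivariant counterparts established in Theorem \ref{thm:iso-01-x}. The key observation is that these generators, which carry the trivial equivariant coefficient $1 \in H^{\bullet}_{\bS^1}(\pt;\Lambda_0) = \Lambda_0[\lambda_1]$, should have their equivariant differentials agree with the non-equivariant ones, so that all the cocycle equations of Theorem \ref{thm:iso-01-x} are simply reproduced, and no new equations in positive powers of $\lambda_1$ appear.

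To make this precise, I would work with the admissible Morse functions described in Section \ref{sec:partial_units}, namely $f_N = \pi_N^*\varphi_N + \phi_N$ on $\bL_i(N)$, with $\varphi_N$ a perfect Morse function on $BG(N) = \bP^N$ whose unique minimum lies at the basepoint $\pt_0$, and $\phi_N$ fiberwise equal to the Morse functions $f^{L_0,L_1}$ and $f^{L_0,L_2}$. Under the identification $CF^{\bullet}_{\bS^1} = CF^{\bullet} \otimes \Lambda_0[\lambda_1]$, the generators $a_{10}\otimes\one_{T^{n-1}}$, $b_{10}\otimes\one_{T^{n-1}}$, $U$, $V$, $\bm{X}_1$, and their cousins on the $(\bbL_0,\bbL_2)$ side are critical points of $f_N$ sitting in the fiber $X$ over $\pt_0$ with equivariant coefficient $\lambda_1^0$. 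Because $\pt_0$ is the minimum of $\varphi_N$, its unstable manifold in $BG(N)$ is the single point $\{\pt_0\}$, so the unstable chain of any such generator in the Borel total space $\bL_i(N)$ is contained entirely in the fiber over $\pt_0$. Moreover, every $J$-holomorphic disc or polygon in $X(N) = X \times_{\bS^1} \mu_N^{-1}(0)$ is fiberwise, because $L(N)$ sits in the zero section direction of $T^*BG(N)$. Since the $\bS^1$-action on $L_0\cup L_1$ commutes with the $T^{n-2}$-action of Lemma \ref{lemma:L_0}, one can choose Kuranishi structures and multi-sections that are simultaneously $\bS^1$- and $T^{n-2}$-equivariant, so all the arguments of the proofs of Theorem \ref{thm:iso-01} and Theorem \ref{thm:iso-01-x} transplant to the equivariant setting without change.

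Consequently, any equivariant pearly tree contributing to $\fm^{(\bbL_1,\bS^1),(\bbL_0,\bS^1)}_1(a_{10}\otimes\one_{T^{n-1}})$ consists of fiberwise discs/polygons connected by Morse flow lines of $f_N$ whose projections to $BG(N)$ are constant at $\pt_0$ (since they originate from a chain sitting over $\pt_0$). The output chain therefore lies again in the fiber over $\pt_0$, intersecting only the stable manifolds of critical points over $\pt_0$, which are exactly the generators with equivariant coefficient $\lambda_1^0$. Hence
\[
\fm^{(\bbL_1,\bS^1),(\bbL_0,\bS^1)}_1(a_{10}\otimes\one_{T^{n-1}}) = \fm^{\bbL_1,\bbL_0}_1(a_{10}\otimes\one_{T^{n-1}}),
\]
and similarly for $\fm^{(\bbL_0,\bS^1),(\bbL_2,\bS^1)}_1(a_{02}\otimes\one_{T^{n-1}})$. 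Theorem \ref{thm:iso-01-x} then immediately yields the gluing formulas \eqref{eqn:L0L1pos} together with the relation $uv = f(z,z_2^{(0)},\ldots,z_{n-1}^{(0)})$ determining $g$. The main potential obstacle is a careful verification that transversality can be achieved with perturbations that are simultaneously $\bS^1$-, $T^{n-2}$-, and $\tau$-equivariant to preserve the structural vanishing arguments of Lemma \ref{lemma:L_0}; this is handled by the commuting group actions and by applying the $T^{n-2}$-equivariant perturbation scheme of \cite{FOOO-T} within each fiber.
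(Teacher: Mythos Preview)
Your argument is correct, but the paper takes a much shorter route. Rather than tracking unstable chains through the Borel construction, the paper simply observes that the equivariant parameter $\lambda_1$ has degree $2$, while the output of $\fm_1$ on the degree-$0$ generator $a_{10}\otimes\one_{T^{n-1}}$ lives in degree $1$; since the Floer complex $CF^{\bullet}(\bbL_1,\bbL_0)$ has no generators of negative degree (all critical points on $\{a^{\ell_0,\ell_1},b^{\ell_0,\ell_1}\}\times T^{n-1}$ have degree $\ge 0$), no term involving $\lambda_1^k$ with $k\ge 1$ can appear. The same degree count applies to the $\fm_2$ composition. This immediately gives
\[
\fm_1^{(\bbL_1,\bS^1),(\bbL_0,\bS^1)}(a_{10}\otimes\one_{T^{n-1}})=\fm_1^{\bbL_1,\bbL_0}(a_{10}\otimes\one_{T^{n-1}})
\]
and the analogous identities, reducing directly to Theorems \ref{thm:iso-01} and \ref{thm:iso-01-x}.

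Your geometric argument---that inputs over the minimum $\pt_0\in BG(N)$ have unstable chains confined to the fiber, that all discs are fiberwise, and that outputs can therefore only pair with stable manifolds of critical points over $\pt_0$---is a valid and more explicit way to see the same vanishing. It has the virtue of explaining \emph{why} the degree argument works at the level of moduli spaces. The cost is the extra care you flag about simultaneous $\bS^1$-, $T^{n-2}$-, and $\tau$-equivariant perturbations, which the degree argument sidesteps entirely: once one knows there is simply no place for $\lambda_1$ to land, the choice of compatible perturbations in the Borel total space becomes irrelevant to the conclusion.
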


\begin{proof}
Since the equivariant parameter $\lambda_1$ has degree $2$, it does not appear in the isomorphism equations \eqref{eq:iso}. Thus, we have 
\begin{align*}
\fm_1^{(\bbL_1,\bS^1),(\bbL_0,\bS^1)}(a_{10}\otimes \one_{T^{n-1}})  &=\fm^{\bbL_1,\bbL_0}(a_{10}\otimes \one_{T^{n-1}}),\\ \fm_1^{(\bbL_0,\bS^1),(\bbL_2,\bS^1)}(a_{02}\otimes \one_{T^{n-1}})  &=\fm_1^{\bbL_0,\bbL_2}(a_{02}\otimes \one_{T^{n-1}}),\\
\fm_2^{(\bbL_1,\bS^1),(\bbL_0,\bS^1),(\bbL_2,\bS^1)}(a_{02}\otimes \one_{T^{n-1}},a_{10}\otimes \one_{T^{n-1}})&=\fm_2^{\bbL'_1,\bbL_0,\bbL'_2}(a_{02}\otimes \one_{T^{n-1}},a_{10}\otimes \one_{T^{n-1}}).
\end{align*}	
The statement then follows from Theorem \ref{thm:iso-01} and Theorem \ref{thm:iso-01-x}.
\end{proof}
 
Since the disc potentials are compatible with the gluing formulas (see \cite[Theorem~4.7]{CHL-glue}), we can compute $W_{\bS^1}^{\bbL_0}$ in terms of $g$ making use of the propositions above.

\begin{theorem} \label{thm:equiv}
	The $\bS^1$-equivariant disc potential $W_{\bS^1}^{\bbL_0}$ of $\bbL_0$ is given by
	\begin{equation} \label{eq:equiv_potential}
	W_{\bS^1}^{\bbL_0} = \lambda_1 \log g(uv,z_2^{(0)},\ldots,z_{n-1}^{(0)}).
	\end{equation}
\end{theorem}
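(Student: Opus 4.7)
The plan is to transfer the known equivariant disc potential of the smooth torus $\bbL_1$ (from Proposition \ref{prop:W_T}) to the immersed Lagrangian $\bbL_0$ via the equivariant quasi-isomorphism of Proposition \ref{prop:gluing-equiv}, using the principle that the weak Maurer-Cartan value is an invariant of an object in the weakly unobstructed equivariant Fukaya category.

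The first step is the general invariance statement: for any pair of weakly unobstructed $\bS^1$-equivariant objects $\bbL_1$ and $\bbL_0$ with potentials $W^{\bbL_1}_{\bS^1}$ and $W^{\bbL_0}_{\bS^1}$, the two-sided deformed $\fm_1$ on $CF^{\bullet}_{\bS^1}(\bbL_1,\bbL_0)$ satisfies
\[
\bigl(\fm_1^{(\bbL_1,\bS^1),(\bbL_0,\bS^1)}\bigr)^2(\alpha) \;=\; \bigl(W^{\bbL_0}_{\bS^1} - W^{\bbL_1}_{\bS^1}\bigr)\cdot \alpha,
\]
as a consequence of the $A_\infty$-relations. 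Hence, whenever $\alpha$ is a cocycle that represents an isomorphism in cohomology, the $(\fm_1)^2(\alpha)=0$ identity forces $W^{\bbL_0}_{\bS^1} = W^{\bbL_1}_{\bS^1}$ under the parameter identification making $\alpha$ a cocycle. This is the equivariant analogue of \cite[Theorem 4.7]{CHL-glue}; the immersed sectors and the partial/homotopy unit structure from Section \ref{sec:G-Morse} are accommodated because the argument only needs the standard $A_\infty$-relation and the strict unit property of $\wunit$ and $\wlambda$, both ensured by Lemmas \ref{lem:unobstr} and \ref{lem:G_unobstr}.

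The second step is to apply this invariance to $\alpha = a_{10}\otimes \one_{T^{n-1}} \in CF^0_{\bS^1}(\bbL_1,\bbL_0)$. Proposition \ref{prop:gluing-equiv} identifies precisely when this element is an isomorphism, namely under
\[
e^{x_1} = g(uv,z_2^{(0)},\ldots,z_{n-1}^{(0)}), \quad z_i = z_i^{(0)} \text{ for } i=2,\ldots,n-1, \quad z_n=u^{-1}.
\]
Since $g$ has leading term $1$ with higher-order corrections in $\Lambda_+$, the series $\log g$ is well defined (with $\log 1 = 0$), so along this identification $x_1 = \log g(uv,z_2^{(0)},\ldots,z_{n-1}^{(0)})$. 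Combining with Proposition \ref{prop:W_T}, which asserts $W^{\bbL_1}_{\bS^1}(x_1,z_2,\ldots,z_n) = \lambda_1 x_1$, we conclude
\[
W^{\bbL_0}_{\bS^1}(u,v,z_2^{(0)},\ldots,z_{n-1}^{(0)}) \;=\; \lambda_1 \log g(uv,z_2^{(0)},\ldots,z_{n-1}^{(0)}),
\]
as claimed.

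The main obstacle is a careful formulation and verification of the invariance lemma in the equivariant immersed setting. One must track signs through the two-sided deformation, confirm that the identity $(\fm_1)^2 = (W^{\bbL_0}_{\bS^1} - W^{\bbL_1}_{\bS^1}) \cdot \mathrm{id}$ indeed emerges from the $A_\infty$-relations after deforming by weak bounding cochains on both sides, and check that the distinction between the homotopy units $\bunit$ and strict units $\wunit$ in Lemma \ref{lem:G_unobstr} does not spoil the conclusion when evaluated on $\alpha$. Once this lemma is in place, the argument above is purely algebraic, with the key content carried by the computation of $g$ via the gluing formula in Proposition \ref{prop:gluing-equiv}.
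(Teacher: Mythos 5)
Your proposal is correct and follows essentially the same route as the paper: the paper likewise deduces the theorem by combining Proposition \ref{prop:W_T} ($W_{\bS^1}^{\bbL_1}=\lambda_1 x_1$) with the identification $e^{x_1}=g(uv,z_2^{(0)},\ldots,z_{n-1}^{(0)})$ from Proposition \ref{prop:gluing-equiv}, invoking the compatibility of disc potentials with gluing formulas (\cite[Theorem~4.7]{CHL-glue}) — which is exactly the $(\fm_1)^2=(W^{\bbL_0}_{\bS^1}-W^{\bbL_1}_{\bS^1})\cdot\mathrm{id}$ invariance you spell out.
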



\begin{remark}
The expression $\log g(uv,z_2^{(0)},\ldots,z_{n-1}^{(0)})$ depends on choices of a framing $\{v'_1,\ldots,v'_{n-1}\}$.
\end{remark}

In dimension three, by setting $u=v=0$, we obtain the equivariant term $\log g(0, z_2^0)$.  Integration of this term is the disc potential of the Aganagic-Vafa brane \cite{AKV,GZ}.  Our formulation has the advantage that it works in all dimensions.

\subsection{Examples} \label{sec:eg}
Before we finish the section, we provide explicit numerical computations for a few interesting toric Calabi-Yau manifolds in various dimensions.

\subsubsection{Inner branes in $K_{\bP^2}$}
	For $X=K_{\bP^2}$, the primitive generators of the fan $\Sigma$ are 
	\[
	v_1 = (0,0,1), \quad v_2=(1,0,1), \quad v_3=(0,1,1),\quad v_4=(-1,-1,1).
	\]
	Let $L_0 \cong \cS^2 \times \bS^1$ be an immersed Lagrangian at the `inner branch' dual to the cone $\R_{\geq 0}\langle v_1,v_2\rangle$, which bounds a primitive holomorphic disc $\beta$ with area $A$, as depicted in Figure \ref{fig:KP2}.  The area of the curve class is denoted by $\tau$.  We fix the compact chamber dual to $v_1$ and the framing $\{v_i'=v_{i+1}-v_1\mid i=1,2\}$ to compute the $\bS^1$-equivariant disc potential of $L_0$. Note that different choices of a chamber and a framing give different Morse functions on $L_0$, and the disc potentials will change accordingly.  The choice of chamber matters for $(uv)$ but not the discs with no corners.
	
	The gluing formula between $L_0$ and a regular toric fiber $L_1$ reads
	\[
	uv = (1+\delta(\bT^\tau)) + \exp x_1 + \bT^A z_2 + \bT^{\tau - A} \exp (-x_1) \cdot z_2^{-1} 
	\]
	where 
	\[
	\delta(\bT^\tau)=-2q+5q^2-32q^3+286q^4-3038q^5+O(q^6), \quad q=\bT^\tau, 
	\]
	is the generating function of open Gromov-Witten invariants bounded by a regular toric fiber of $K_{\bP^2}$ computed in \cite{CLL}.
	
	However, notice that the left hand side lies in $\Lambda_+$.  The right hand side has leading term $2$ and hence does not lie in $\Lambda_+$! Hence the gluing formula has empty solution.  It means that the formally deformed Lagrangian $L_1$ is not isomorphic to the formally deformed immersed Lagrangian $L_0$.

	\begin{figure}[h]
		\begin{center}
			\includegraphics[scale=0.6]{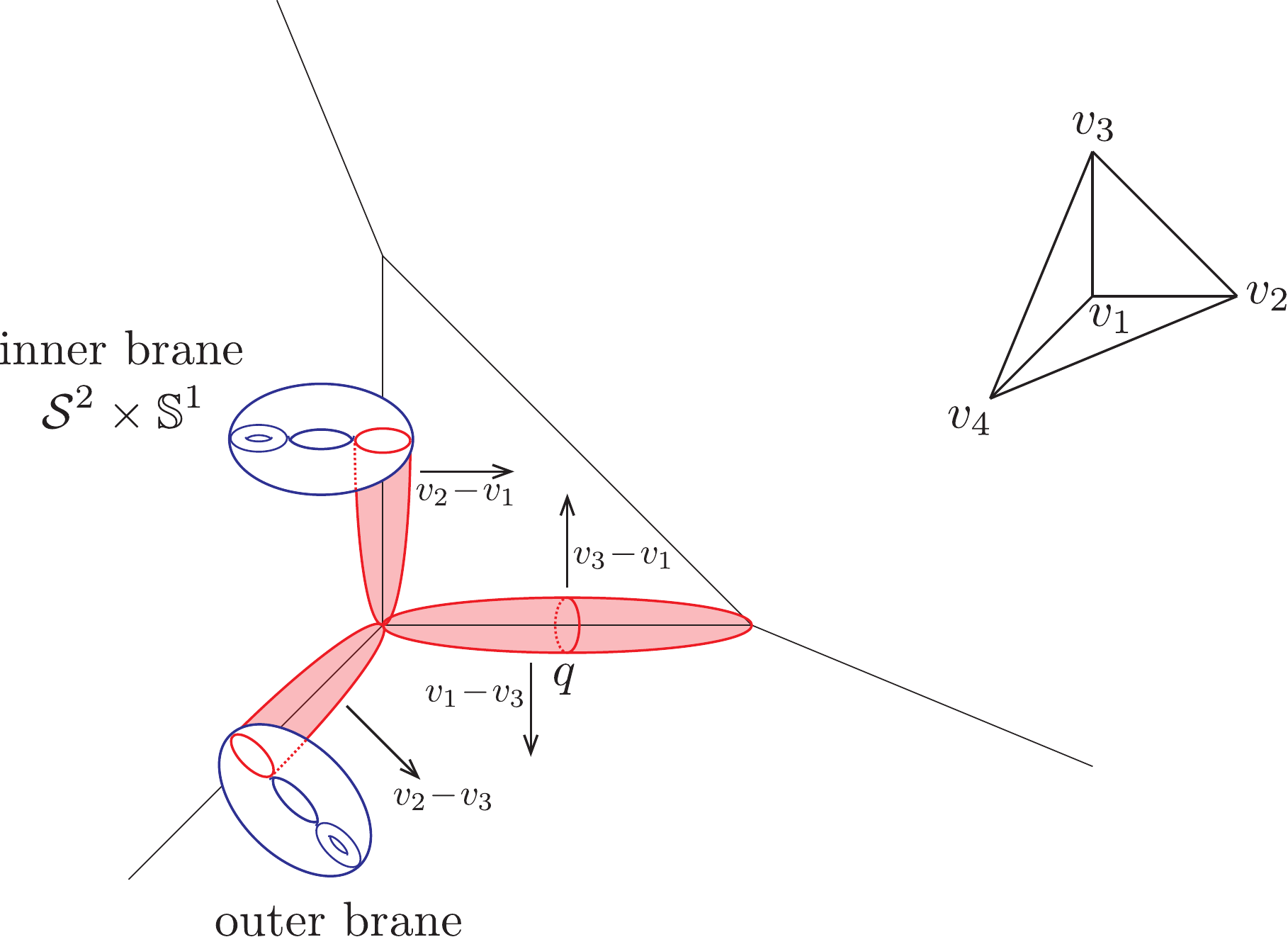}
			\caption{Immersed Lagrangian branes in $K_{\bP^2}$}\label{fig:KP2}
		\end{center}
	\end{figure}
	
	To remedy this, we take a \emph{non-trivial} spin structure of the tori $L_1$ and $L_2$ in the $v'_1$-direction.  This systematically introduces extra signs to the orientations of moduli spaces.  Formally it gives the change of coordinates $x_1 \mapsto x_1 + i\pi$.  Then the gluing formula becomes
	\[
	 uv = (1+\delta(\bT^\tau)) - \exp (x_1) + \bT^A z_2 - \bT^{\tau - A} \exp (-x_1) \cdot z_2^{-1}.
	\]
	Both sides are now in $\Lambda_+$.  We can then solve $\exp (x_1)$ in terms of $uv$ and $z=\bT^A z_2$, which gives the series $g$ in Theorem \ref{thm:equiv}.
	
	A direct calculation gives that the coefficient of $\lambda_1$ equals to $a_0 + a_1 uv + a_2 \cdot (uv)^2 +O((uv)^3)$ where
	\begin{align*}
	a_0 =&  \left(z - \frac{z^2}{2}+\frac{z^3}{3}+O(z^4)\right) + (-z^{-1} - z + 2 z^2 - 3z^3 + O(z^4)) q \\
	+& \left(-\frac{3}{2}z^{-2}+2 z^{-1} + 5z - \frac{27}{2} z^2 + 27 z^3 +O(z^4)\right)q^2 +O(q^3) \\
	a_1 =& (-1+z-z^2 + z^3 + O(z^4))+(-2 z^{-1} + 4 - 8z + 14 z^2 - 22 z^3 + O(z^4)) q \\
	+& (-6 z^{-2} + 18 z^{-1} - 41 + 92 z - 189 z^2 + 356 z^3 + O(z^4)) q^2 + O(q^3) \\
	a_2 =& \left(-\frac{1}{2} + z - \frac{3}{2} z^2 + 2 z^3 + O(z^4)\right) + \left(-3 z^{-1} + 10 - 24z + 48 z^2 - 85 z^3 + O(z^4) \right) q \\
	+& \left(-15 z^{-2} + 66 z^{-1} - 196 + 489 z - 1080 z^2 + 2170 z^3 + O(z^4) \right) q^2 + O(q^3). \\
	\end{align*}	
	This differs by $z \mapsto -z$ from physicists' convention, which results from different ways of parametrizing the flat connections.  As noted in \cite{AKV}, all the $z^j$-coefficients of the leading order term ($q^0$) in $a_0$ multiplied by $j$ are integers. This reflects that the automorphism group of a holomorphic disc in class $j\beta$ ($\beta$ is the primitive disc class) is of order $j$. On the other hand, we note that all the $z^j$-coefficients in $\ell \cdot a_\ell$ are \emph{integers} in the examples we have computed. 
	
	
	
	We write the series as $\sum -a_{jk\ell} (-z)^j (-q)^k (uv)^\ell$ and record the coefficients in the following table.
	
	{\small
		\noindent \begin{tabular}{|c|c|c|c|c|} 
			\hline
			\multicolumn{5}{|c|}{$\mathrm{ord}(uv)=0$} \\
			\hline
			& \multicolumn{4}{|c|}{$\mathrm{ord}(q)$} \\
			\hline
			$\mathrm{ord}(z)$ & $0$ & $1$ & $2$ & $3$ \\ 
			\hline
			$-3$ & $0$ & $0$ & $0$ & $10/3$ \\ 
			\hline
			$-2$ & $0$ & $0$  & $3/2$ & $8$ \\ 
			\hline
			$-1$ & $0$ & $1$ & $2$ & $12$ \\ 
			\hline
			$0$ & $0$ & $0$ & $0$ & $0$ \\ 
			\hline	
			$1$ & $1$ & $1$ & $5$ & $40$  \\ 
			\hline					
			$2$ & $1/2$ & $2$ & $27/2$ & $122$ \\ 
			\hline	
			$3$ & $1/3$ & $3$ & $27$ & $838/3$ \\ 
			\hline	
			$4$ & $1/4$ & $4$ & $47$ & $560$ \\ 
			\hline				
		\end{tabular}
		\begin{tabular}{|c|c|c|c|c|} 
			\hline
			\multicolumn{5}{|c|}{$\mathrm{ord}(uv)=1$} \\
			\hline
			& \multicolumn{4}{|c|}{$\mathrm{ord}(q)$} \\
			\hline
			$\mathrm{ord}(z)$ & $0$ & $1$ & $2$ & $3$ \\ 
			\hline
			$-3$ & $0$ & $0$ & $0$ & $20$ \\ 
			\hline
			$-2$ & $0$ & $0$  & $6$ & $80$ \\ 
			\hline
			$-1$ & $0$ & $2$ & $18$ & $218$ \\ 
			\hline
			$0$ & $1$ & $4$ & $41$ & $520$ \\ 
			\hline	
			$1$ & $1$ & $8$ & $92$ & $1224$  \\ 
			\hline					
			$2$ & $1$ & $14$ & $189$ & $2704$ \\ 
			\hline	
			$3$ & $1$ & $22$ & $356$ & $5582$ \\ 
			\hline	
			$4$ & $1$ & $32$ & $623$ & $10828$ \\ 
			\hline				
		\end{tabular}
		\begin{tabular}{|c|c|c|c|c|} 
			\hline
			\multicolumn{5}{|c|}{$\mathrm{ord}(uv)=2$} \\
			\hline
			& \multicolumn{4}{|c|}{$\mathrm{ord}(q)$} \\
			\hline
			$\mathrm{ord}(z)$ & $0$ & $1$ & $2$ & $3$ \\ 
			\hline
			$-3$ & $0$ & $0$ & $0$ & $70$  \\ 
			\hline
			$-2$ & $0$ & $0$ & $15$  & $380$ \\ 
			\hline
			$-1$ & $0$ & $3$ & $66$ & $1320$ \\ 
			\hline
			$0$ & $1/2$ & $10$ & $196$ & $3762$ \\ 
			\hline	
			$1$ & $1$ & $24$ & $489$ & $9544$  \\ 
			\hline					
			$2$ & $3/2$ & $48$ & $1080$ & $22128$ \\ 
			\hline	
			$3$ & $2$ & $85$ & $2170$ & $47600$ \\ 
			\hline	
			$4$ & $5/2$ & $138$ & $4041$ & $96050$ \\ 
			\hline				
		\end{tabular}
	}	
	
	For $u=v=0$, it is $z\partial_z$ of the physicists' superpotential for the Aganagic-Vafa brane \cite{AKV,GZ}, which was mathematically formulated and verified via localization in \cite{Katz-Liu,Fang-Liu,fang-liu-tseng}.

\subsubsection{Outer brane in $K_{\bP^2}$}
	We may also consider an outer brane in $K_{\bP^2}$, corresponding to the codimension-two strata dual the cone $\R_{\geq 0}\langle v_2, v_3 \rangle$.  
	We fix the chamber dual to the generator $v_3$ and fix the framing $\{v_2-v_3, v_1-v_3\}$ (corresponding to the holonomy variables $\tilde{z}_1$ and $\tilde{z}_2$ respectively).
	The gluing formula becomes
	\[
	 uv = (1+\delta) \bT^A \tilde{z}_2 - \exp (\tilde{x}_1) + 1 - \bT^{\tau+3A} \exp (-\tilde{x}_1) \tilde{z}_2^3,
	 \]
	where $A$ is the area of the primitive holomorphic disc bounded by the outer brane.
	
	Putting  $z=\bT^A \tilde{z}_2$ and writing the series as $\sum -a_{jk\ell} (-z)^j (-q)^k (uv)^\ell$, record the coefficients in the following table.

	{\large
			\noindent \begin{tabular}{|c|c|c|c|c|} 
			\hline
			\multicolumn{5}{|c|}{$\mathrm{ord}(uv)=0$} \\
			\hline
			& \multicolumn{4}{|c|}{$\mathrm{ord}(q)$} \\
			\hline
			$\mathrm{ord}(z)$ & $0$ & $1$ & $2$ & $3$ \\ 
			\hline
			$0$ & $0$ & $0$ & $0$ & $0$ \\ 
			\hline	
			$1$ & $1$ & $2$ & $5$ & $32$  \\ 
			\hline					
			$2$ & $1/2$ & $2$ & $7$ & $42$ \\ 
			\hline	
			$3$ & $1/3$ & $3$ & $9$ & $164/3$ \\ 
			\hline	
			$4$ & $1/4$ & $4$ & $15$ & $80$ \\ 
			\hline				
		\end{tabular}
		\begin{tabular}{|c|c|c|c|c|} 
			\hline
			\multicolumn{5}{|c|}{$\mathrm{ord}(uv)=1$} \\
			\hline
			& \multicolumn{4}{|c|}{$\mathrm{ord}(q)$} \\
			\hline
			$\mathrm{ord}(z)$ & $0$ & $1$ & $2$ & $3$ \\ 
			\hline
			$0$ & $1$ & $0$ & $0$ & $0$ \\ 
			\hline	
			$1$ & $1$ & $2$ & $5$ & $32$  \\ 
			\hline					
			$2$ & $1$ & $4$ & $14$ & $84$ \\ 
			\hline	
			$3$ & $1$ & $8$ & $27$ & $164$ \\ 
			\hline	
			$4$ & $1$ & $14$ & $56$ & $310$ \\ 
			\hline				
		\end{tabular}
		\begin{tabular}{|c|c|c|c|c|} 
			\hline
			\multicolumn{5}{|c|}{$\mathrm{ord}(uv)=2$} \\
			\hline
			& \multicolumn{4}{|c|}{$\mathrm{ord}(q)$} \\
			\hline
			$\mathrm{ord}(z)$ & $0$ & $1$ & $2$ & $3$ \\ 
			\hline
			$0$ & $1/2$ & $0$ & $0$ & $0$ \\ 
			\hline	
			$1$ & $1$ & $2$ & $5$ & $32$  \\ 
			\hline					
			$2$ & $3/2$ & $6$ & $21$ & $126$ \\ 
			\hline	
			$3$ & $2$ & $15$ & $54$ & $328$ \\ 
			\hline	
			$4$ & $5/2$ & $32$ & $134$ & $760$ \\ 
			\hline				
		\end{tabular}
	}

Note that the first column (for $\mathrm{ord}(q) = 0$) are the same for both branes.  This reflects the fact that the holomorphic polygons (without sphere bubbling) are the same.

\subsubsection{$X=K_{\bP^3}$}
	Our method also works for higher dimensions.  For instance, the toric Calabi-Yau 4-fold $X=K_{\bP^3}$ is associated with the fan $\Sigma$ which has generators 
	\[
	v_1=(0,0,0,1),v_2=(1,0,0,1),v_3=(0,1,0,1),v_4=(0,0,1,1),v_5=(-1,-1,-1,1).
	\]  
	Let $L_0=\cS^2\times T^2$ be an inner brane at the codimension-two toric stratum corresponding to the cone $\R_{\geq 0} \langle v_1,v_2\rangle$ (a toric divisor of $\bP^3$). We fix the chamber dual to $v_1$ and the basis $\{v_2-v_1,v_3-v_1,v_4-v_1\}$. 
	
	The SYZ mirror of $K_{\bP^3}$ is given by
	\[
	 uv = (1+\delta(\bT^\tau)) + z_1 + z_2 + z_3 + \bT^\tau z_1^{-1}z_2^{-1}z_3^{-1}
	\]
	where $1+\delta(\bT^\tau)$ is the generating function counting stable discs bounded by a Lagrangian toric fiber.  By \cite{CLT11,CCLT13}, it can be computed from the mirror map as follows.  The mirror map for $K_{\bP^3}$ is given by $q = Q e^{f(Q)}$, where $q=\bT^\tau$ is the K\"ahler parameter for the primitive curve class in $K_{\bP^3}$, $Q$ is the mirror complex parameter and $f(Q)$ is the hypergeometric series
	\[
	f(Q)=\sum_{k=1}^\infty \frac{(4k)!}{k(k!)^4} Q^k.
	\]
	Taking the inverse, we get the inverse mirror map
	\[
	 Q(q) = q - 24q^2 - 396 q^3 - 39104q^4 - 4356750 q^5 - O(q^6).
	\]
	Then the generating function of open Gromov-Witten invariants of a Lagrangian toric fiber is given by 
	\[
	 1 + \delta(q) = \exp (f(Q(q))/4) = 1+6q+189q^2+14366q^3+1518750q^4+O(q^5).
	\]
	(see also \cite[Theorem 1.1]{CLLT17}).
	
	Now we use the above method to deduce from this the $\bS^1$-equivariant disc potential for the immersed $L_0$.   Namely, the gluing formula between a smooth torus and $L_0$ is given by substituting $z_1 = -e^{x_1}$ (and replacing $z_2,z_3$ by $\bT^{A_2}z_2,\bT^{A_3}z_3$ respectively) in the above equation for the SYZ mirror.  Then we solve $x_1$ in terms of $z_2,z_3,u,v$, and substitute into $x_1 \lambda_1$ (which is the $\bS^1$-equivariant disc potential for $L_1$).  The following table shows the leading coefficients $a_{jk\ell \mu}$ of the generating function $\sum -a_{jk\ell \mu} z_1^j z_2^k q^\ell (uv)^\mu$ for $\mu=0$ (corresponding to stable discs with no corners).
	 \begin{center}
	 	{\small
	 	\noindent \begin{tabular}{|c|c|c|c|c|} 
	 		\hline
	 		\multicolumn{5}{|c|}{$\mathrm{ord}(q)=0$} \\
	 		\hline
	 		& \multicolumn{4}{|c|}{$\mathrm{ord}(z_2)$} \\
	 		\hline
	 		$\mathrm{ord}(z_1)$ & $0$ & $1$ & $2$ & $3$\\ 
	 		\hline
	 		$0$ & $0$ & $1$  & $1/2$ & $1/3$ \\ 
	 		\hline
	 		$1$ & $1$ & $1$ & $1$ & $1$ \\ 
	 		\hline
	 		$2$ & $1/2$ & $1$ & $3/2$ & $2$ \\ 
	 		\hline	
	 		$3$ & $1/3$ & $1$ & $2$ & $10/3$  \\ 
	 		\hline								
	 	\end{tabular}
		\begin{tabular}{|c|c|c|c|c|} 
		\hline
		\multicolumn{5}{|c|}{$\mathrm{ord}(q)=1$} \\
		\hline
		& \multicolumn{4}{|c|}{$\mathrm{ord}(z_2)$} \\
		\hline
		$\mathrm{ord}(z_1)$ & $-1$ & $0$ & $1$ & $2$ \\ 
		\hline
		$-1$ & $1$ & $2$  & $3$ & $4$ \\ 
		\hline
		$0$ & $2$ & $6$ & $12$ & $20$ \\ 
		\hline
		$1$ & $3$ & $12$ & $30$ & $60$ \\ 
		\hline	
		$2$ & $4$ & $20$ & $60$ & $140$  \\ 
		\hline								
		\end{tabular} \\
	 	\begin{tabular}{|c|c|c|c|c|c|} 
	 		\hline
	 		\multicolumn{6}{|c|}{$\mathrm{ord}(q)=2$} \\
	 		\hline
	 		& \multicolumn{5}{|c|}{$\mathrm{ord}(z_2)$} \\
	 		\hline
	 		$\mathrm{ord}(z_1)$ & $-2$ & $-1$ & $0$ & $1$ & $2$ \\ 
	 		\hline
	 		$-2$ & $3/2$ & $6$ & $15$ & $30$ & $105/2$  \\ 
	 		\hline
	 		$-1$ & $6$ & $36$ & $108$  & $246$ & $480$ \\ 
	 		\hline
	 		$0$ & $15$ & $108$ & $387$ & $1020$ & $2250$ \\ 
	 		\hline
	 		$1$ & $30$ & $246$ & $1020$ & $3060$ & $7560$ \\ 
	 		\hline	
	 		$2$ & $105/2$ & $480$ & $2250$ & $7560$ & $20685$  \\ 
	 		\hline								
	 	\end{tabular}
	 }	
	 \end{center}

\subsubsection{Local Calabi-Yau surfaces}	
We next consider the local Calabi-Yau surface $X_{(d)}$ of type $\tilde{A}_{d-1}$.
	The surface can be realized as the $\Z$-quotient of the following infinite-type toric Calabi-Yau surface.  Such a toric construction was found by Mumford \cite{Mumford,AMRT} and its mirror symmetry was studied by Gross-Siebert \cite{GS-Ab,ABC}.
	
	Let $\textbf{N}= \Z^2$.  For $i \in \Z$, define the cone
	\[
	\sigma_i=\R_{\ge 0}\cdot \langle (i,1), (i+1,1) \rangle \subset \textbf{N}_\R. 
	\]
	The fan $\Sigma_0=\bigcup_{i \in \Z} \sigma_i \subset \R^2$ is defined as the infinite collection of these cones (and their boundary cones). 
	The corresponding toric surface $X = X_{\Sigma_0}$ is Calabi--Yau since all the primitive generators $(i,1) \in \textbf{N}$ have second coordinates being $1$.

	The fan $\Sigma_0$ has an obvious symmetry of the infinite cyclic group $\Z$ given by $k \cdot (a,b) = (a+k,b)$ for $k\in \Z$ and $(a,b) \in N$.  We can take an open neighborhood $X^o$ of the toric divisors which is invariant under the $\Z$-action, and take the quotient $X_{(d)} =X^o / (d\cdot\Z)$.  
	
	The SYZ mirror of $X_{(d)} =X^o / (d\cdot\Z)$ was constructed in \cite{KL16}.  For simplicity, we examine the case $d=1$, only.  The SYZ mirror is given by
	\[
	 uv = \prod_{i=1}^\infty(1+q^iz^{-1})\prod_{j=0}^\infty(1+q^jz).
	\]
	The right hand side can be rewritten as
	\[
      \prod_{k=1}^\infty\frac{1}{1-q^k}\cdot\sum_{\ell=-\infty}^\infty q^{\frac{\ell(\ell-1)}{2}}z^\ell = \frac{e^{\frac{\pi \mathbf{i} \tau}{12}}}{\eta(\tau)}\cdot \vartheta \left( \zeta - \frac{\tau}{2}; \tau \right)
    \]
	where $q:=e^{2\pi \mathbf{i} \tau}$, $z:=e^{2\pi \mathbf{i} \zeta}$, $\eta$ is the Dedekind eta function, and $\vartheta$ is the Jacobi theta function. 
	
	Now, we consider  the equivariant disc potential of the immersed $2$-sphere $\cS^2$.  We have fixed the chamber dual to $(0,1)$ and the framing $v_1' =(1,0)$ for the equivariant Morse model of $\cS^2$.  The above mirror equation can be understood as the relation between the immersed variables $u,v$ of the immersed sphere $\cS^2$ and the formal deformations $x$ of the torus $T^2$, where $z = - e^x$.  The $\bS^1$-equivariant disc potential of $T^2$ is given by $x \lambda$, and that of $\cS^2$ can be found by solving $x$ in the above equation.
	
	We solve the equation order-by-order in $q = \bT^t$ where $t$ is the area of the primitive holomorphic sphere class.  Namely, the equation is rewritten as
	\[
	 (1-uv-e^{x}) +(uv - e^{-x} + e^{2x})q + O(q^2) = 0.
	\]
	Then we put $x = \sum_{k\geq 0} h_k(uv) q^k$ and solve $h_k$ order by order.  For instance, the leading term is $h_0 = \log (1-uv) = -\sum_{i>0} \cfrac{(uv)^i}{i}$.  Write the generating function as $-\sum a_{k\ell} (uv)^\ell q^k$ and the coefficients are shown in the following table.
	
	\begin{center}	
	\noindent \begin{tabular}{|c|c|c|c|c|c|c|} 
	\hline
	& \multicolumn{6}{|c|}{$\mathrm{ord}(q)$} \\
	\hline
	$\mathrm{ord}(uv)$ & $0$ & $1$ & $2$ & $3$ & $4$ & $5$ \\ 
	\hline
	$0$ & $0$ & $0$ & $0$ & $0$ &$0$ & $0$ \\ 
	\hline	
	$1$ & $1$ & $2$ & $5$ & $10$ & $20$ & $36$ \\ 
	\hline					
	$2$ & $1/2$ & $2$ & $7$ & $20$ & $105/2$ & $126$ \\ 
	\hline	
	$3$ & $1/3$ & $3$ & $18$ & $245/3$ & $315$ & $1071$ \\ 
	\hline	
	$4$ & $1/4$ & $4$ & $33$ & $192$ & $1815/2$ & $3696$ \\ 
	\hline	
	$5$ & $1/5$ & $5$ & $55$ & $410$ & $2415$ & $60252/5$ \\
	\hline			
	\end{tabular}	
	\end{center}
	
	The first column records the counts of constant polygons and it is the same as the result for $X=\C^2$ in \cite[Theorem 4.6]{KLZ19} (namely they are coefficients of the series $-\log (1-uv)$).  The constant polygons are merely local and are not affected by the presence of the holomorphic $A_1$-fiber.  We also note that the coefficients multiplied with the corresponding orders of $(uv)$ are integers.
	

\subsubsection{The total space of a family of Abelian surfaces}
	This is the three-dimensional version of the last example.  Consider the fan $\Sigma$ consisting of the maximal cones
	\[
	\langle (i,j,1), (i+1,j,1),(i,j+1,1),(i+1,j+1,1)\rangle, \ \ \ i,j\in\Z.
	\]
	It admits an action by $\Z^2$: $(k,\ell) \cdot (a,b,c) = (a+k,b+\ell,c)$ on $N$. 
	A crepant resolution is obtained by refining each maximal cone (which is a cone over a square) into $2$ simplices, and the refinement is taken to be $\Z^2$-invariant.  Then we take a toric neighborhood $X^o_\Sigma$ of the toric divisors where $\Z^2$ acts freely.   This is a toric Calabi-Yau manifold, which is also the total space of a family of Abelian surfaces.
	
	The SYZ mirror was constructed in \cite[Theorem 5.6]{KL16}.  It is given by 
	\[
	uv = \Delta(\Omega) \cdot \sum_{(j,k)\in\Z^2}  q_\sigma^{\frac{(j+k)(j+k-1)}{2}}q_2^{\frac{j(j-1)}{2}}q_1^{\frac{k(k-1)}{2}} z_1^j z_2^k = \Delta(\Omega) \cdot 
	\Theta_2
	\begin{bmatrix}
	0 \\
	(- \frac{\tau}{2},- \frac{\rho}{2})
	\end{bmatrix}
	\left(\zeta_1, \zeta_2; \Omega\right). 
	\]
	Here $\Omega:=\begin{bmatrix}
	\tau   &  \sigma  \\
	\sigma& \rho  \\
	\end{bmatrix}$.  There are three K\"ahler parameters $q_\tau = e^{2\pi \mathbf{i} \tau}=q_1q_\sigma$, $q_\rho= e^{2\pi \mathbf{i} \rho}=q_2q_\sigma$, $q_\sigma= e^{2\pi \mathbf{i} \sigma}$.
	$z_i = e^{2\pi \mathbf{i} \zeta_i}$, $\Theta_2$ is the genus $2$ Riemann theta function, and
	\begin{equation} \label{eq:Delta}
	\Delta(\Omega)
	= \exp \left(\sum_{j \geq 2} \frac{(-1)^j }{j } 
	\sum_{\substack{({\bf l}_i=(\ell_i^1, \ell_i^2)\in \Z^{2} \setminus 0)_{i=1}^j  \\ s.t. \sum_{i=1}^j  {\bf l}_i = 0}}
	\exp\left(\sum_{k=1}^j \pi \mathbf{i} {\bf l}_k\cdot \Omega \cdot {\bf l}^{\tr}_{k}\right) \right).
	\end{equation}

	As before, we choose a Lagrangian immersion $L_0\cong \cS^2 \times \bS^1$ which bounds a primitive holomorphic disc with area $A$ whose boundary has holonomy parametrized by $z_2$.  We have fixed the chamber dual to $(0,0,1)$ and the framing $v_1'=(1,0,0), v_2'=(0,1,0)$ for the $\bS^1$-equivariant Morse model of $L_0$. Then the above mirror equation can be understood as the gluing formula between $L_0$ and a embedded Lagrangian torus, with $z_2$ replaced by $\bT^A  z_2$, and $z_1 = -e^{x_1}$.  The equivariant disc potential is given by $x_1 \lambda_1$, where $x_1$ is solved from the above equation.  
	
	Setting $u=v=0$, we obtain the generating function of stable discs with no corners.  This gives an enumerative meaning of the zero locus of the Riemann theta function (on $(z_1,z_2) \in (\C^\times)^2$) as a section over the $z_2$-axis.  The leading-order terms are given as follows (where $w=\bT^A z_2$):
	{\tiny
	\begin{align*}
	&\left(\left(\frac{3 q_1^2 q_2^2}{2}-2 q_1^2 q_2+\frac{q_1^2}{2}\right)+4 q_1^2 q_2 q_\sigma +  \left(\frac{39 q_1^2 q_2^2}{2}-\frac{q_1^2}{2}\right)q_\sigma^2\right) w^{-2}\\
	&+\left((-q1 + q1 q2 - 2 q1^2 q2 + 
	3 q1^2 q2^2) + (q1 - q1^2 - 3 q1 q2 + 11 q1^2 q2 + 
	3 q1 q2^2) qs + (q1^2 + 3 q1 q2 - 23 q1^2 q2 - 9 q1 q2^2 + 
	126 q1^2 q2^2) qs^2\right)w^{-1}\\
	&+\left(\left(6 q_1^2 q_2^2-q_1^2 q_2-3 q_1 q_2^2+2 q_1 q_2-q_2+1\right)+\left(15 q_1^2 q_2+33 q_1 q_2^2-11 q_1 q_2+q_1-3 q_2^2+3 q_2-1\right)q_\sigma \right.\\
	&\left.+ \left(600 q_1^2 q_2^2-62 q_1^2 q_2+q_1^2-126 q_1 q_2^2+23 q_1 q_2-q_1+9 q_2^2-3 q_2\right) q_\sigma^2\right)w\\
	&+\left(\left(-21 q_1^2 q_2^2+2 q_1^2 q_2+12 q_1 q_2^2-4 q_1 q_2-\frac{3 q_2^2}{2}+2 q_2-\frac{1}{2}\right)+ \left(-24 q_1^2 q_2-96 q_1 q_2^2+16 q_1 q_2+12 q_2^2-4 q_2\right)q_\sigma\right.\\
	&\left.+ \left(-\frac{2577 q_1^2 q_2^2}{2}+78 q_1^2 q_2-\frac{q_1^2}{2}+264 q_1 q_2^2-20 q_1 q_2-\frac{39 q_2^2}{2}+\frac{1}{2}\right)q_\sigma^2\right)w^2.
	\end{align*}
	}

\section{Equivariant disc potentials of immersed Lagrangian tori}\label{sec:immtoriequiv}
So far, we have mainly focused on the immersed Lagrangian $\cS^2\times T^{n-2}$, whose Maurer-Cartan deformation space fills the codimension-two toric strata of a toric Calabi-Yau manifold.  In this section, we consider an immersed Lagrangian torus that copes with lower dimensional toric strata.  The construction of a Landau-Ginzburg mirror associated to such an immersed torus, using the method in \cite{CHL}, has been introduced in the survey article \cite{Lau18}. The immersed torus and its application to HMS have also been addressed in recent talks by Abouzaid.

The immersed torus $\cL$ is constructed by symplectic reduction.  Recall from Section \ref{sec:review} that we have the reduction $X \sslash_{a_1} T^{n-1} := \rho^{-1}\{a_1\} / T^{n-1} \stackrel{w}{\cong} \C$ where $\rho$ denotes the $T^{n-1}$-moment map.  Previously, we chose $a_1$ to be in the image of a strictly codimension-two toric stratum so that a path passing through $0 \in \C$ gives rise to a Lagrangian immersion in $X$.  In this section, we remove this restriction and allow $a_1$ to lie in the image of any toric stratum.  

Let us take the immersed curve $C$ in the $w$-plane shown in Figure \ref{fig:C}.  This immersed circle was first introduced by Seidel \cite{Seidel-g2} for proving homological mirror symmetry of genus-two curves.  Here the punctured plane $\C - \{0,1\}$ is identified with $\bP^1 - \{0,1,\infty\}$.

\begin{figure}[htb!]
	\centering
	\begin{subfigure}[b]{0.3\textwidth}
		\centering
		\includegraphics[width=\textwidth]{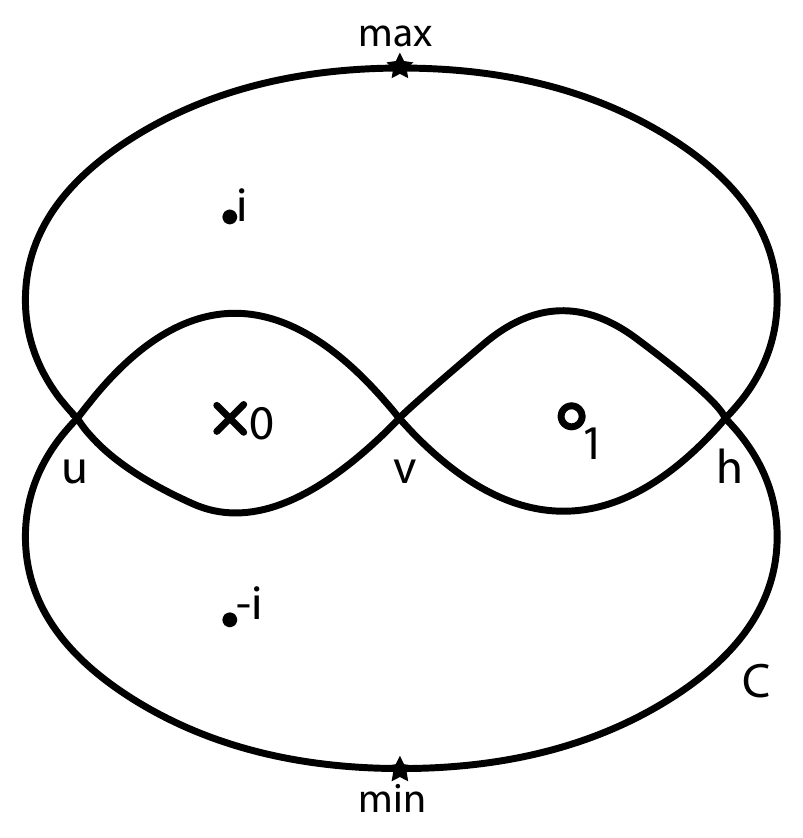}
		\caption{The curve $C$.}
		\label{fig:C}
	\end{subfigure}
	\hspace{10pt}
	\begin{subfigure}[b]{0.3\textwidth}
		\centering
		\includegraphics[width=\textwidth]{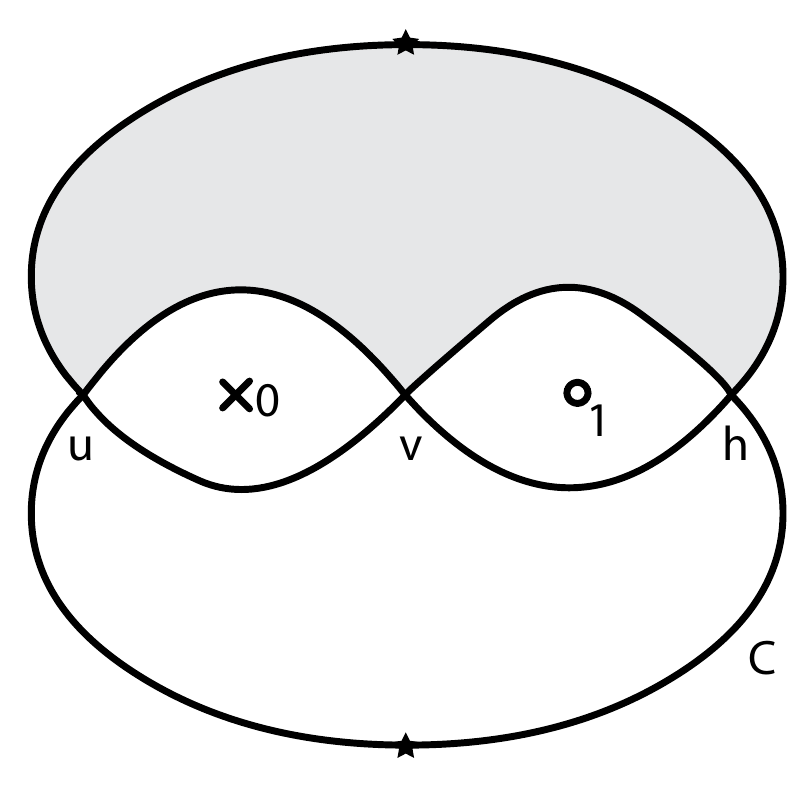}
		\caption{The $uvh$-polygon.}
		\label{fig:disc-uvh}
	\end{subfigure}
	\hspace{10pt}
	\begin{subfigure}[b]{0.3\textwidth}
		\centering
		\includegraphics[width=\textwidth]{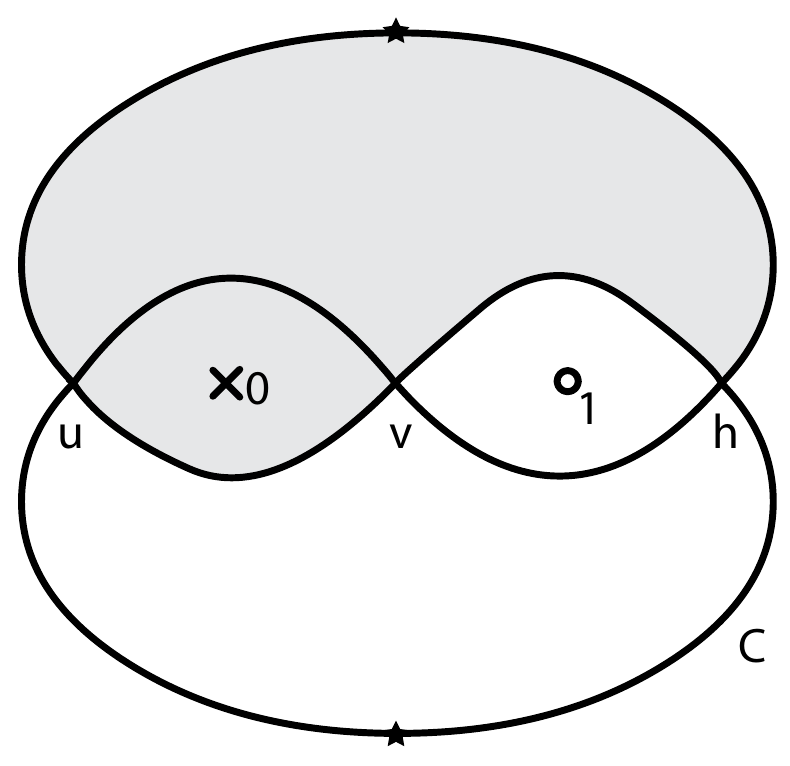}
		\caption{The polygons in $h f(z)$.}
		\label{fig:disc-h}
	\end{subfigure}
	\caption{The curve $C$ and polygons bounded by $C$.}
	\label{fig:Cdisc}
\end{figure}

The curve $C$ has three self-intersection points $U,V,H$, each of which corresponds to two immersed generators (in the Floer complex of $C$).  By abuse of notations, they are denoted by $U,V,H$ (with odd degree) and $\bar{U},\bar{V},\bar{H}$ (with even degree) respectively.

Since $C$ does not pass through $0 \in \C$ (that would create additional  singularity), its preimage $\cL$ in $\rho^{-1}\{a_1\} \subset X$ is a Lagrangian immersion in $X$ from an $n$-dimensional torus, where the fiber over each point of $C$ is a torus $T^{n-1}$.  Thus $\cL$ is an immersed torus which has three clean self-intersections isomorphic to $T^{n-1}$ that lie over the three self-intersection points $U,V,H$ of $C$.  Let us denote the immersion by $\iota: \widetilde{\cL} \to \cL$ where $\widetilde{\cL}=T^n$ is an $n$-dimensional torus.

In order to express $\cL$ as the product of the immersed curve $C$ with a fiber $T^{n-1}$ precisely (this splitting is not canonical), we take a trivialization of the holomorphic fibration $w: X \to \C$ as follows.  All the fibers of $w$ are $(\C^\times)^{n-1}$ except $w^{-1}(0)$, which is the union of toric prime divisors.  By relabeling $v_i$ if necessary, we may assume that the toric stratum that $a_1$ is located is adjacent to the $v_1$-facet.  We fix a basis $v_i'\in \unu^\perp \subset \textbf{N}$ for $i=1,\ldots,n-1$.  Then $\{v_1,v_1',\ldots,v_{n-1}'\}$ forms a basis of $\textbf{N}$, whose dual basis equals to $\{\unu,\nu_1,\ldots,\nu_{n-1}\}$ for some $\nu_1,\ldots,\nu_{n-1} \in \textbf{M}$.  This gives a set of coordinate functions (where $\unu$ corresponds to $w$), which gives a biholomorphism between the complement of all the toric divisors corresponding to $v_i$ for $i\not=1$ and $\C \times (\C^\times)^{n-1}$.  The projections to the factors $\C$ and $(\C^\times)^{n-1}$ give the splitting $\cL \cong C \times T^{n-1}$.  This also fixes a basis $\{\gamma_1,\ldots,\gamma_n\}$ of $\pi_1(\widetilde{\cL})$.

The Floer complex of $\cL$ admits the following description.
Observe that $\widetilde{\cL} \times_{\cL} \widetilde{\cL}$ consists of seven connected components, as the self intersection loci of $\cL$ has three components.  One of them is simply $\widetilde{\cL}$, which is responsible for non-immersed generators, which are represented by critical points of Morse functions on $\widetilde{\cL}$.  The remaining six components are copies of $T^{n-1}$, and they give rise to the generators of the form $G \otimes X_I$ for $G = U,V,H,\bar{U},\bar{V},\bar{H}$ and $I \subset \{1,\ldots,n-1\}$, which are represented by critical points in the corresponding $T^{n-1}$-components of $\widetilde{\cL} \times_{\cL} \widetilde{\cL}$. We will specify the choice of relevant Morse functions shortly. 
For simplicity, we will often write $G$ to denote $G \otimes \one_{T^{n-1}}$ when there is no danger of confusion.


We have the holomorphic volume form 
\[
\Omega = (dw / (w-1)) \wedge d \log z_1 \wedge \ldots d \log z_{n-1}
\] 
on the divisor complement $X^\circ = X-\{w=1\}$, where $z_1, \ldots, z_{n-1}$ are the local toric coordinates corresponding to $\{nu_1,\ldots,\nu_{n-1}\}$.   It induces the one-form $d \log (w-1)$ on the reduced space $X \sslash_{a_1} T^{n-1} \cong \C$.

\begin{lemma}
	$\cL$ is graded with respect to $\Omega$.  The generators $U,V,H $ have degrees $1,1,-1$ respectively, and the generators $\bar{U} ,\bar{V} ,\bar{H}$ have degrees $0,0,2$. Here $\one_{T^{n-1}}$ denotes the maximum point of the Morse function on $T^{n-1}$-component.
\end{lemma}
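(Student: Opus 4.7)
The plan is to reduce both the gradedness claim and the degree computation to a one-complex-dimensional problem by exploiting the product structure $\cL \cong C \times T^{n-1}$ afforded by the trivialization of $w : X^{\circ} \to \C \setminus \{1\}$ fixed above. Under this trivialization, the holomorphic volume form factorizes as
\[
\Omega = \pi_C^*\!\left(\tfrac{dw}{w-1}\right) \wedge \pi_T^*\!\left(d\log z_1 \wedge \cdots \wedge d\log z_{n-1}\right),
\]
where $\pi_C$ and $\pi_T$ denote the projections onto the two factors. The standard Lagrangian torus $T^{n-1} = \{|z_i|=r_i\}$ in $(\C^\times)^{n-1}$ has identically vanishing phase with respect to $d\log z_1 \wedge \cdots \wedge d\log z_{n-1}$, and therefore carries a canonical trivial grading. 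Consequently, $\cL$ is graded with respect to $\Omega$ if and only if $C$ is graded in the one-dimensional Calabi-Yau $(\C \setminus \{1\}, dw/(w-1))$; moreover, the degree of each generator $G \otimes \one_{T^{n-1}}$ in $CF^{\bullet}(\cL)$ coincides with the degree of $G$ in $CF^{\bullet}(C)$, since $\one_{T^{n-1}}$ is the unit of the Morse chain complex of $T^{n-1}$ and contributes degree zero.

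For gradedness of $C$, the phase function $\theta_C(w) := \arg(\dot w/(w-1)) \pmod{\pi}$ on $\widetilde{C}$ must lift to a map $\tilde\theta_C : \widetilde C \to \R$; the obstruction is its winding in $\R/\pi\Z$ around the loop $\widetilde C \cong S^1$. By the Whitney-Graustein theorem, the tangent direction of any smooth immersed circle in $\R^2$ winds exactly once, and this contribution is balanced against twice the winding of $C$ around the puncture $w=1$. From Figure \ref{fig:C}, the curve $C$ is null-homologous in $\C \setminus \{1\}$ (its winding around $w=1$ is zero), so the two contributions cancel and the global lift $\tilde\theta_C$ exists. Pulling this back through $\pi_C$ (and adding the trivial grading on $T^{n-1}$) produces the desired grading of $\cL$.

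For the degrees, at each self-intersection of $C$ with preimages $p_-, p_+ \in \widetilde C$, the Akaho-Joyce convention determines the degree of the odd/even pair of immersed generators from the lifted grading jump $\tilde\theta_C(p_+) - \tilde\theta_C(p_-)$, and the two degrees always sum to $1$. At $U$ and $V$, the two tangent lines of $C$ meet at an angle in $(0, \pi)$, so the Lagrangian Grassmannian path from one branch to the other traverses a single half-period, yielding the pair of degrees $(1,0)$ for the odd/even generators $(U, \bar U)$ and $(V, \bar V)$. At $H$, the arc of $C$ between the two preimages performs one extra full rotation in $T\C$ compared to the arcs at $U$ and $V$ (this is the defining geometric feature of Seidel's curve in \cite{Seidel-g2}), so the Lagrangian Grassmannian path picks up an extra full period, shifting the degrees of the pair down and up by one respectively and producing $(-1, 2)$ for $(H, \bar H)$.

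The main obstacle is the last shift: verifying that the arc of $C$ between the two preimages of $H$ does carry an extra full turn of the tangent direction compared to the corresponding arcs at $U$ and $V$. This amounts to tracing $\tilde\theta_C$ along $\widetilde C$ between the two preimages of $H$, and is the same combinatorial computation Seidel performed in the genus-two setting \cite{Seidel-g2}; its outcome is what forces $\bar H$ to be of degree $2$, a feature essential for its role as a second-order deformation parameter in the Landau-Ginzburg mirror construction.
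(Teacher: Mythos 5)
Your opening reduction to the curve $C$ in the reduced space, with the $T^{n-1}$-factor contributing trivially to the grading, is exactly the paper's first step. After that the argument goes wrong in two places. First, the gradedness argument: the Whitney--Graustein theorem \emph{classifies} immersed circles in $\R^2$ by their turning number; it does not say the turning number is $1$. In fact the turning number of $C$ is $0$: the paper observes that $C$ lifts to a closed figure-eight under the universal covering $w=e^{\mathbf{i}y}+1$ of $\C\setminus\{1\}$ (so the winding of $C$ around $w=1$ vanishes, as you correctly note), and since $\arg(\mathbf{i}e^{\mathbf{i}y})$ returns to itself along a closed lift, the turning number of $C$ equals that of the figure-eight, which is $0$. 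Your bookkeeping is then internally inconsistent: a contribution of $1$ cannot be ``balanced against'' twice a winding number that you assert is $0$. The correct statement is that both the turning number and the winding around the pole of $dw/(w-1)$ vanish, so the obstruction $\tau - n$ (or $2(\tau-n)$ for the squared phase) is $0=0$. You reach the right conclusion by an argument whose two inputs are, respectively, false and irrelevant as stated.

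Second, and more seriously for this particular lemma, the degrees of $H$ and $\bar H$ --- which are the only non-obvious content of the statement, since $(-1,2)$ rather than $(1,0)$ is what makes $h$ a degree-$2$ deformation parameter --- are never actually computed. You assert that the arc of $C$ between the two preimages of $H$ carries ``an extra full rotation,'' defer the verification to ``the same combinatorial computation Seidel performed,'' and then explicitly flag this as the main unresolved obstacle. The paper's proof does this computation: once $C$ is lifted to the figure-eight in the $y$-plane (where $dw/(w-1)=\mathbf{i}\,dy$), the phase function is single-valued and the degrees of all six generators are read off directly from it. To complete your proof you would need to carry out that trace of $\tilde\theta_C$ along $\widetilde C$ --- most easily done in the $y$-plane picture of Figure \ref{fig:Seidel-11-1} --- rather than cite it as an expected outcome.
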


To be more precise, the degree of $G \otimes X_I$ for $G = U,V,H,\bar{U},\bar{V},\bar{H}$ and $I \subset \{1,\ldots,n-1\}$ is obtained by adding the degree of $X_I$ to that of $G$ given in this lemma.

\begin{proof}
	By the reduction, it suffices to check that the curve $C$ is graded with respect to $-\mathbf{i} d \log (w-1)$.  The preimage of $C$ under $w = e^{\mathbf{i}y} + 1$ is a union of `figure-8' as shown in Figure \ref{fig:Seidel-11-1}.  There is a well-defined phase function for the one-form $dy$ on (the normalization of) all the figure-8 components, which is invariant under translation by $2\pi$.  Thus $C$ is graded.  The degrees of the immersed generators can be directly computed from the phase function.
\end{proof}

\begin{figure}[htb!]
	\centering
	\begin{subfigure}[b]{0.45\textwidth}
		\centering
		\includegraphics[width=\textwidth]{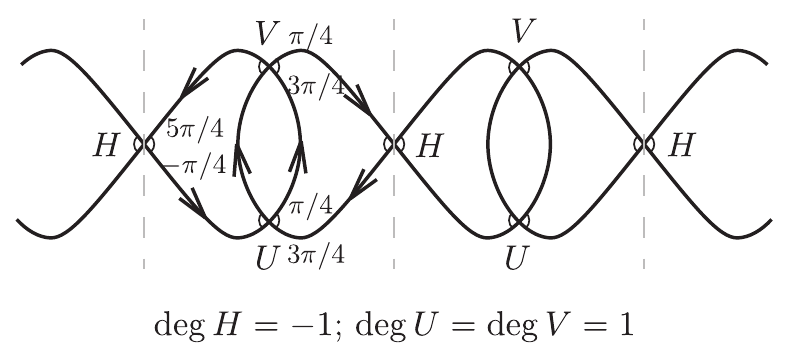}
		\caption{The lifting under $y = -i \log (w-1)$.}
		\label{fig:Seidel-11-1}
	\end{subfigure}
	\hspace{10pt}
	\begin{subfigure}[b]{0.51\textwidth}
		\centering
		\includegraphics[width=\textwidth]{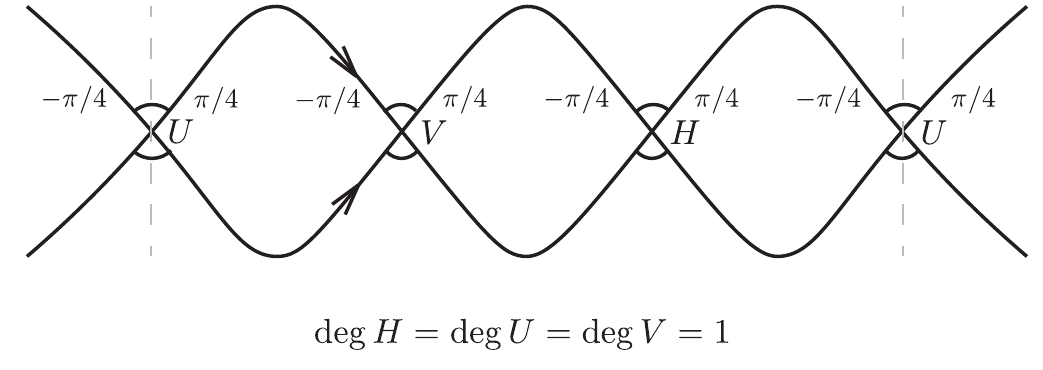}
		\caption{The lifting under $y' = \log \frac{w+i}{w-i}$.}
		\label{fig:Seidel-111}
	\end{subfigure}
	\caption{The gradings on $C$ by two different holomorphic volume forms.}
	\label{fig:Seidel-grading}
\end{figure}

For the purpose of computing Maslov indices, we also consider another grading for $\cL$ given as follows.  We take the meromorphic volume form 
\[
\tilde{\Omega} = \frac{-2\mathbf{i} \, dw \wedge d\log z_1 \wedge \ldots \wedge d \log z_{n-1}}{(w+\mathbf{i})(w-\mathbf{i})}
\]
on restricted on $X^\circ$, which corresponds to $d \log \frac{w+\mathbf{i}}{w-\mathbf{i}}$ in the reduced space.  It has the pole divisors $w=\pm \mathbf{i}$.  Similar to the above lemma, we can directly check the following.  See Figure \ref{fig:Seidel-111}.

\begin{lemma}
	$\cL$ is graded with respect to $\tilde{\Omega}$.  The generators $U ,V,H$ all have degree one, and the generators $\bar{U},\bar{V},\bar{H}$ all have degree zero.
\end{lemma}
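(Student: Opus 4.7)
The proof will run in close parallel to that of the preceding lemma, using symplectic reduction to reduce the grading check to a one-dimensional statement about the immersed curve $C$ in the $w$-plane. First, I would observe that $\cL$ fibers over $C$ with fibers $T^{n-1}$, and that $\tilde{\Omega}$ factors as
\[
\tilde{\Omega} = d\log\tfrac{w+\mathbf{i}}{w-\mathbf{i}} \wedge d\log z_1 \wedge \ldots \wedge d\log z_{n-1}.
\]
Since the torus directions $z_1,\ldots,z_{n-1}$ are tangent to $\cL$ along the fiber factor, their contribution to the phase is constant, and the question of $\cL$ being graded with respect to $\tilde{\Omega}$ reduces to $C$ being graded with respect to the one-form $-\mathbf{i}\, d\log\tfrac{w+\mathbf{i}}{w-\mathbf{i}}$ on the complement of $\{w=\pm\mathbf{i}\}$.

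Next, I would carry out the analog of the $y = -\mathbf{i}\log(w-1)$ device of the previous lemma, but now with the covering map $y' = \log\tfrac{w+\mathbf{i}}{w-\mathbf{i}}$, under which the pullback of the form becomes simply $dy'$. The deck group is generated by $y' \mapsto y' + 2\pi\mathbf{i}$, and Figure \ref{fig:Seidel-111} depicts the lifting of $C$. Admissibility of a (single-valued) phase function on $C$ then amounts to showing that the argument of the tangent vector to each lifted component is well-defined modulo $2\pi$ and invariant under the deck action, which is visible from the figure: the lifted strands live in a horizontal strip and none of them winds around $y' = \infty$, so $\arg(dy'/dt)$ descends to $C$.

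Given that $C$ is graded, the degree of each immersed generator is computed from the phase jump between the two local branches at the corresponding crossing, together with the usual Maslov-index correction determined by the path in the Lagrangian Grassmannian connecting them (chosen once and for all). I would read off the three crossings $U$, $V$, $H$ directly from Figure \ref{fig:Seidel-111}, checking that in each case the tangent rotations from the ``incoming'' to the ``outgoing'' branch are consistent and produce Maslov index $1$; the conjugate generators $\bar U, \bar V, \bar H$ then automatically have degree $n - 1 = 0$ (in the one-dimensional reduction, degree $0$) by the standard duality between an immersed generator and its conjugate on a graded Lagrangian.

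The main obstacle I anticipate is the bookkeeping of signs and branch orderings at the crossing $H$, whose degree has shifted from $-1$ (under $\Omega$) to $1$ (under $\tilde{\Omega}$): one must verify carefully that the additional pole divisor $\{w = -\mathbf{i}\}$ introduced by $\tilde{\Omega}$ shifts the phase along the $H$-branch of $C$ by exactly $+2$ relative to the $\Omega$-grading, while leaving the $U$- and $V$-phase shifts unchanged. Concretely, this reduces to comparing $\arg\bigl(\tilde{\Omega}/\Omega\bigr) = \arg\bigl(-2\mathbf{i}(w-1)/((w+\mathbf{i})(w-\mathbf{i}))\bigr)$ along the three branches of $C$, and all other steps are routine once this sign computation is pinned down.
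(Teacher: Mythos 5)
Your proposal is correct and follows essentially the same route as the paper: reduce via the $T^{n-1}$-fibration to the curve $C$ in the $w$-plane, lift under $y'=\log\frac{w+\mathbf{i}}{w-\mathbf{i}}$ so the relevant one-form becomes $dy'$, verify the phase function is well defined and deck-invariant from Figure \ref{fig:Seidel-111}, and read off the degrees of $U,V,H$ (with the conjugates determined by the $1-\deg$ duality on the one-dimensional reduction). The paper itself only states that the check is ``similar to the above lemma,'' so your extra care about the phase shift at $H$ coming from the ratio $\tilde{\Omega}/\Omega$ is a reasonable elaboration rather than a deviation.
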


We extend the Maslov index formula \cite{CO, Auroux07} to the immersed setting.

\begin{lemma}
	Let $L$ be an immersed Lagrangian graded by a meromorphic nowhere zero $n$-form $\Omega$.  For a holomorphic polygon in class $\beta$ bounded by $L$ with corners at degree one immersed generators of $L$, its Maslov index equals to $2 \beta \cdot D \geq 0$ where $D$ is the pole divisor of $\Omega$.
\end{lemma}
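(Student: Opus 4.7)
The plan is to reduce the claim to the smooth-boundary case of Cho--Oh \cite{CO} and Auroux \cite{Auroux07} by carefully controlling the corner contributions using the Akaho--Joyce grading convention fixed in Section~\ref{sec:Morse model}. I would proceed in three steps.

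First, I would recall the following local formula in the smooth (embedded) case: for a graded Lagrangian $L$ and a holomorphic disc $u$ with smooth boundary in $L$, the Maslov index equals $2(u \cdot D)$, where $D$ is the pole divisor of $\Omega$. One proves this by viewing $\mu(u)$ as twice the winding number of $\Omega^{\otimes 2}|_{\partial u}$ regarded as a nonvanishing section of $(\Lambda^n_{\C} u^*TX)^{\otimes 2}$ along $\partial u$; on the complement of $D$ this section is well-defined, the phase function $\alpha_L:\tilde L \to \R$ (with $\Omega|_{\tilde L}=e^{i\alpha_L}|\mathrm{vol}_L|$) is globally single-valued because $L$ is graded, and the only contribution to the winding comes from poles of $\Omega$ meeting $u$, each of which contributes $+2$ by holomorphicity (local positivity of intersections).

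Second, for a polygon $u:(\Sigma,\partial \Sigma) \to (X, L)$ with corners $z_1,\ldots,z_k$ at immersed generators, I would lift $\partial u$ to a piecewise path in $\tilde L$ broken at the corners, and choose a path $\ell_{t,i}$ in the Lagrangian Grassmannian of $T_{u(z_i)}X$ from $d\iota(T_{p_{-,i}}\tilde L)$ to $d\iota(T_{p_{+,i}}\tilde L)$ at each corner (such a path was fixed in Section~\ref{sec:Morse model} to define the degree of each immersed generator). Together, the lifted smooth pieces and the corner paths form a closed loop of Lagrangian subspaces over $\partial \Sigma$. Applying the smooth formula to this closed loop yields
\[
\mu(u) \;=\; 2\,u \cdot D \;+\; \sum_{i=1}^k c(z_i),
\]
where $c(z_i)$ is the corner contribution determined by $\ell_{t,i}$ together with the phase jump $\alpha_L(p_{+,i})-\alpha_L(p_{-,i})$.

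Third, I would verify that $c(z_i)=0$ whenever $z_i$ is a corner at a degree one immersed generator. This is precisely the content of the Akaho--Joyce grading convention: the degree of an immersed generator is defined in terms of $\ell_{t,i}$ and $\alpha_L(p_{+,i})-\alpha_L(p_{-,i})$, normalized so that the corresponding corner contribution equals $\deg - 1$. Substituting gives $\mu(\beta) = 2\beta \cdot D$, and the non-negativity follows because $u$ is holomorphic and $D$ is an effective divisor not containing $u(\Sigma)$ (else $\Omega$ would be identically zero on $u(\Sigma)$, contradicting the grading of $L$).

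The main obstacle is the local bookkeeping in step three, i.e.\ unpacking the Akaho--Joyce grading to confirm the precise identification of $c(z_i)$ with $\deg(z_i)-1$. This is a purely local computation at each self-intersection point of $L$ using the standard model of the Lagrangian Grassmannian, but it requires being meticulous about orientations and the sign conventions inherited from the connecting paths fixed earlier in the paper.
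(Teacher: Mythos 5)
Your proposal is correct and follows essentially the same route as the paper: the Maslov index is computed as the winding of the phase of $\Omega$ along the boundary loop (smooth arcs closed up by the chosen paths at the corners), the degree-one condition on the corners ensures the phase function is single-valued so the corners contribute nothing, and the winding is then produced entirely by the poles of $\Omega$ met by the polygon, each contributing $2$ via the Blaschke-factor/local-positivity argument. The only cosmetic difference is that you package the corner analysis as a correction term $c(z_i)=\deg(z_i)-1$ added to the smooth-boundary formula, whereas the paper verifies directly that the positive-definite corner paths admit a continuous real phase; these are the same local computation.
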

\begin{proof}
	We trivialize $TX$ pulled back over the domain polygon $\Delta$, and we have a Lagrangian sub-bundle $TL$ over the boundary edges of the polygon (which can be understood as a disc with boundary punctures).  It is extended to be a Lagrangian sub-bundle over $\partial \Delta$ by taking positive-definite paths at the corners.  Since the corners have degree one with respect to (the pull-back of) $\Omega$, we have a well-defined real-valued phase function for the Lagrangian sub-bundle.  Thus the phase change with respect to $\Omega$ equals to zero with respect to $\Omega$.
	
	The pull-back of $\Omega$ has poles at $a_i \in \Delta$.  Then $\left(\prod_i \frac{z-a_i}{1-\bar{a}_i z}\right) \Omega$ becomes a nowhere zero holomorphic section of $\bigwedge^n T^*X|_\Delta$.  Each factor $\frac{z-a_i}{1-\bar{a}_i z}$ results in adding $2 \pi$ for the phase change.  Thus the total phase change equals to $2 \pi k$ where $k$ is the number of poles.  Thus the Maslov index equals to  Maslov index equals to $2 \beta \cdot D$.
\end{proof}

For the Fukaya category of $X^\circ$, the objects are Lagrangians graded by $\Omega$.  The grading of $\cL$ by $\tilde{\Omega}$ is auxiliary.  We apply the above lemma to $\cL$ using the grading by $\tilde{\Omega}$.  Thus the Maslov index of a holomorphic polygon with corners at $U,V,H$ equals to two times the intersection number with the pole divisor $\{w=i\}\cup \{w=-i\}$.  In particular $L_T$ does not bound any holomorphic polygon with corners at $U,V,H$ which has negative Maslov index.

We take a perfect Morse function on each component of $\widetilde{\cL} \times_{\cL} \widetilde{\cL}$.
First we take a perfect Morse function on $\widetilde{\cL} \cong \bS^1 \times T^{n-1}$.  We take a perfect Morse function on the normalization $\bS^1$ of $C$, whose maximum and minimum points lie in the upper and lower parts of $C$ respectively, see Figure \ref{fig:Cdisc}.  Let us denote the maximum and minimal points by $p_{\mathrm{max}}$ and $p_{\mathrm{min}}$ respectively.
We take a perfect Morse function on the $T^{n-1}$-factor, whose unstable hypertori of the degree-one critical points are dual to the $\bS^1$-orbits of $v_i'$.  The sum of these two functions gives the desired perfect Morse function.  

We also take such a perfect Morse function on the $T^{n-1}$-components of $\widetilde{\cL} \times_{\cL} \widetilde{\cL}$.  They are identified with the clean intersections of $\cL$.  The perfect Morse function is taken such that the unstable hypertori of the degree-one critical points are dual to the $\bS^1$-orbits of $v_i'$ for $i=1,\ldots,n-1$.

As a result, the Morse complex of $\widetilde{\cL} \times_{\cL} \widetilde{\cL}$ has the following generators.  The component $\widetilde{\cL} \cong T^n$ has the generators $X_I$ for $I \subset \{0,\ldots,n-1\}$ (where $X_\emptyset = \one_{\widetilde{\cL}}$).  We also have the immersed generators $G \otimes X_I$ for $G = U,V,H,\bar{U},\bar{V},\bar{H}$ and $I \subset \{1,\ldots,n-1\}$.  They are critical points in the corresponding $T^{n-1}$-components of $\widetilde{\cL} \times_{\cL} \widetilde{\cL}$. (Recall that we have written $G$ for  $G \otimes \one_{T^{n-1}}$ by an abuse of notation so far.)


Let us equip $\cL$ with flat $\Lambda_{\mathrm{U}}$-connections.  We do it in a `minimal way'.  Namely, we only consider connections which are trivial along $\gamma_1$, because such a deformation direction is already covered by the formal deformations of the immersed generators $U,V,H$.  We denote by $z_i$ the holonomy variables associated to the loops $\gamma_i$.  
We consider the formally deformed immersed Lagrangian $(\cL,uU + vV + hH, \nabla^{(z_1,\ldots,z_{n-1})})$.  We will prove that these are weak bounding cochains. Notice that $\deg u = \deg v = 0$ and $\deg h = 2$ with respect to $\Omega$, whereas, with respect to $\tilde{\Omega}$, $\deg u = \deg v = \deg h = 0$.  (This ensures $b = uU + vV + hH$ always has degree one.)

The only non-constant holomorphic polygons bounded by $C$ of Maslov index two are the two triangles with corners $U,V,H$ (shown in Figure \ref{fig:disc-uvh}), or the two one-gons with the corner $H$ (shown in Figure \ref{fig:disc-h}).  Using this, we classify holomorphic polygons bounded by $\cL$ in what follows.

\begin{lemma}
	Any non-constant holomorphic polygon bounded by $\cL$ must project to a non-constant holomorphic polygon bounded by $C$ under $w$.
\end{lemma}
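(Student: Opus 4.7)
The plan is to reduce the statement to the maximum principle on a single fiber of $w$. First, I would observe that since $w : X \to \C$ is holomorphic and $\cL$ is by construction contained in $w^{-1}(C)$, any holomorphic polygon $u : \Delta \to X$ with boundary on $\cL$ yields a holomorphic map $w\circ u : \Delta \to \C$ whose boundary lies in $C$ (the corners at self-intersection generators of $\cL$ map to the self-intersection points $U,V,H$ of $C$, which is allowed for polygons bounded by $C$). Hence $w\circ u$ is itself a holomorphic polygon bounded by $C$, and the only remaining case to rule out is that $w\circ u$ is constant while $u$ is not.

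Next, suppose $w\circ u \equiv w_0$ for some $w_0 \in C$. Because $C$ was chosen to avoid the origin in the $w$-plane, $w_0 \neq 0$. Since the Calabi--Yau condition gives $\unu(v_i)=1>0$ for every primitive generator $v_i$, the toric function $w = z^{\unu}$ vanishes exactly on the union of toric prime divisors; therefore $w^{-1}(w_0)$ is entirely contained in the open torus $(\C^{\times})^n \subset X$. Completing $\unu$ to a basis $\{\unu,\nu_1,\ldots,\nu_{n-1}\}$ of $\textbf{M}$ (as already done in the setup of the section) gives holomorphic coordinates $z_1,\ldots,z_{n-1}$ on the fiber, identifying $w^{-1}(w_0) \cong (\C^{\times})^{n-1}$.

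The image of $u$ then lies in $w^{-1}(w_0) \cong (\C^{\times})^{n-1}$, and its boundary lies in $\cL\cap w^{-1}(w_0)$. By construction of $\cL$ via reduction at level $a_1$, this intersection is a single $T^{n-1}$-orbit of the Hamiltonian torus action on the fiber, which in the $z_i$-coordinates is the flat torus $\{(z_1,\ldots,z_{n-1}) : |z_i|=c_i\}$ for certain constants $c_i>0$ (this holds even when $w_0$ is one of the self-intersection points $U,V,H$, since both branches of $\cL$ over such a point sit inside the same orbit). Applying the maximum principle to the harmonic functions $\log|z_i| = \mathrm{Re}\log z_i$, which attain a constant value on $\partial\Delta$, forces $|z_i|$ to be constant on the entire image of $u$. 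Thus $u$ maps into a totally real $T^{n-1}$, and a holomorphic disc into a totally real submanifold must be constant.

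The argument is largely routine once the fiberwise picture is set up; the only point requiring a little care is the identification of $\cL\cap w^{-1}(w_0)$ with a single torus orbit and the verification that $w^{-1}(w_0)$ lies in a single coordinate chart, both of which follow cleanly from the Calabi--Yau positivity $\unu(v_i)=1$ and from $C\not\ni 0$.
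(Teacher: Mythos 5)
Your proof is correct and follows essentially the same route as the paper's: reduce to the case where $w\circ u$ is constant, note the polygon then lies in a fiber $w^{-1}(w_0)\cong(\C^\times)^{n-1}$ with boundary on the product torus $\cL\cap w^{-1}(w_0)$, and apply the maximum principle. You have merely filled in details the paper leaves implicit (the harmonic functions $\log|z_i|$, and the check that both branches over a self-intersection point of $C$ lie in the same $T^{n-1}$-orbit).
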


\begin{proof}
	If $w$ is constant, the holomorphic polygon is contained in the corresponding fiber of $w$, which is $(\C^\times)^{n-1}$.  $\cL$ intersects this fiber at $T^{n-1}$ which is isotopic to the standard torus (the product of unit circles) in $(\C^\times)^{n-1}$.  By the maximal principle, such a torus does not bound any non-constant holomorphic disc.
\end{proof}

Since $C$ does not bound any non-constant Maslov-zero holomorphic disc, by the above lemma, there is no non-constant Maslov-zero holomorphic disc bounded by $\cL$.  

The classification of holomorphic disc classes bounded by $\cL$ is similar to Lemma \ref{lem:hol-strip-X} and Corollary \ref{cor:hol-strip-class}.  Moreover Lemma \ref{stable-strip} and Proposition \ref{prop:strip=disc} also have their counterparts for $\cL$. The proofs are parallel to the corresponding ones for $L_0 \cong \cS^2\times T^{n-2}$, and hence omitted.

\begin{prop} 
\begin{enumerate}
\item[(i)]
	The only holomorphic polygon classes of Maslov index two bounded by $\cL$ are $\beta^\pm_i$ for $i=0,\ldots,m$, where $\beta^+_0$ (or $\beta^-_0$) is the class which never intersects $w=0$ and projects to the triangle passing through $p_{\mathrm{max}}$ in $C$ (or passing through $p_{\mathrm{min}}$ respectively); $\beta^+_i$ (or $\beta^-_0$) is the class which intersects the toric divisor $D_i$ once but not the other $D_j$ for $j\not=i$, and projects to the one-gon passing through $p_{\mathrm{max}}$ in $C$ (or passing through $p_{\mathrm{min}}$ respectively).
\item[(ii)]
	The stable polygon classes of Maslov index two bounded by $\cL$ are $\beta^\pm_i + \alpha$ for $i=0,\ldots,m$ and $\alpha$ is an effective curve class.  For $i=0$, $\alpha=0$.
\item[(iii)]
	The moduli space 
	\[
	\cM^{\cL}_2(H; \beta_i+\alpha) \times_{\ev_0} \{p_{\mathrm{max}}\}
	\] 
	for $i=1,\ldots,m$ is isomorphic to $\cM_1(\beta_i^L+\alpha) \times_{\ev} \{p\}$ for a certain Lagrangian toric fiber $L$ and a certain point $p\in L$.  
\item[(iv)]
	The moduli space
	\[
	\cM^{\cL}_4(H, V, U; \beta_0) \times_{\ev_0} \{p_{\mathrm{max}}\}
	\]
	is simply a point.
\end{enumerate}
\end{prop}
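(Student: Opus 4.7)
The plan is to mirror the arguments carried out earlier for the immersed SYZ fiber $L_0\cong\cS^2\times T^{n-2}$ (Lemma~\ref{lem:hol-strip-X}, Corollary~\ref{cor:hol-strip-class}, Lemma~\ref{stable-strip}, Proposition~\ref{prop:strip=disc}), adapting them to the immersed torus $\cL$ and its projection under $w$.

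For part (i), I would begin with the preceding lemma: every non-constant holomorphic polygon bounded by $\cL$ projects under $w$ to a non-constant holomorphic polygon bounded by the immersed circle $C$. Using the auxiliary grading by $\tilde\Omega$, the Maslov index of such a polygon equals twice the intersection number with $\{w=\pm\mathbf{i}\}$, so a Maslov index two polygon bounded by $\cL$ projects to a Maslov index two polygon bounded by $C$; these are exactly the two triangles with corners at $U,V,H$ and the two one-gons at $H$. Next, one argues with the original grading by $\Omega$ that the total intersection of such a polygon with $D=\{w=0\}$ plus the order of pole of $\Omega$ along the polygon gives Maslov index two, so the polygon either avoids $D$ (if it projects to a triangle) or intersects a single toric prime divisor $D_i$ once (if it projects to a one-gon, since a one-gon has winding $1$ around $0$). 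Trivializing the fibration $w\colon X^\circ\to\C\setminus\{1\}$ away from $\{w=0\}$ gives precisely the triangle classes $\beta_0^{\pm}$, while the classes $\beta_i^{\pm}$ with $i\ge 1$ are characterized by a single intersection with one $D_i$. The labels $\pm$ correspond to whether the upper or lower arc of $C$ is traversed, dictated by which corner of $C$ contains $p_{\mathrm{max}}$ on its boundary arc. As in the proof of Lemma~\ref{lem:hol-strip-X}, a Fukaya-trick isotopy to a toric model chart $\C^n$ identifies $\beta_i^{\pm}$ with basic disc classes bounded by a Clifford-type or Chekanov-type torus, establishing uniqueness.

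For part (ii), I would use the short exact sequence \eqref{eq:toric_ses}, which identifies any Maslov two stable class with $\beta_i^{\pm}+\alpha$ for some $\alpha\in H_2^{\mathrm{eff}}(X)$. Since all rational curves in $X$ lie in the divisor $D=\sum_i D_i=\{w=0\}$, while a representative of $\beta_0^{\pm}$ projects to a triangle contained in $C$ which avoids $w=0$, no non-trivial sphere bubble can attach to a disc in class $\beta_0^{\pm}$; therefore $\alpha=0$ in this case. For $i\ge 1$, the disc in class $\beta_i^{\pm}$ does meet $D_i$, so sphere bubbles may attach at that intersection point.

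For part (iii), I would copy the argument of Proposition~\ref{prop:strip=disc}: the moduli space decomposes as the fiber product
\[
\cM_{2,1}^{\cL}(H;\beta_i^\pm)\times_{\cL}\{p_{\mathrm{max}}\}\ \times_X\ \cM_1^{\mathrm{sph}}(\alpha),
\]
and the first factor is transverse and one-point once the corner $H$ is resolved by a $T^{n-1}$-equivariant smoothing. Choosing a regular toric fiber $L$ that passes through the intersection point of the disc in class $\beta_i^\pm$ with $D_i$ at the same point as the sphere component attaches, and picking $p\in L$ so that the unique holomorphic disc in class $\beta_i^L$ through $p$ intersects $D_i$ at the required point, identifies the two fiber products as Kuranishi spaces.

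For part (iv), the output marker at $p_{\mathrm{max}}$ pins down the projection of the triangle bounded by $C$: there is a unique holomorphic triangle bounded by $C$ passing through $p_{\mathrm{max}}$ on its upper arc (namely the one projecting to the upper region). Using the splitting $\cL\cong C\times T^{n-1}$ and the trivialization $X\supset\{y_2\cdots y_n\ne 0\}\cong\C\times(\C^\times)^{n-1}$, a holomorphic lift is determined up to a $T^{n-1}$-action by the projection, and the constraint that the three corners lie on the $T^{n-1}$-orbits over $U,V,H$ together with the output being at $p_{\mathrm{max}}\otimes\one_{T^{n-1}}$ rigidifies the lift to one point. The main obstacle is checking transversality at this pinned configuration; I expect this to follow from the regularity in Lemma~\ref{lem:hol-strip-X} (or equivalently from the fact that the triangle with $H,V,U$ corners is the Calabi--Yau analogue of the Clifford-type strip class $\beta_0$ whose moduli is already known to be regular and $T^{n-1}$-equivariant, making the fiber over a generic orbit point a single transverse point).
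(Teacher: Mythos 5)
Your proposal is correct and follows exactly the route the paper intends: the paper omits the proof, stating only that it is parallel to Lemma \ref{lem:hol-strip-X}, Corollary \ref{cor:hol-strip-class}, Lemma \ref{stable-strip}, and Proposition \ref{prop:strip=disc}, and your write-up faithfully carries out that parallel (projection to the $w$-plane, Maslov index via the pole divisor of $\tilde{\Omega}$, the exact sequence \eqref{eq:toric_ses} for stable classes, the fiber-product decomposition with $\cM_1^{\mathrm{sph}}(\alpha)$, and rigidification of the $uvh$-triangle by the trivial $(\C^\times)^{n-1}$-fibration over the triangle region together with the output constraint at $p_{\mathrm{max}}$).
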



We equip $\cL$ with a non-trivial spin structure, which is represented by the $T^{n-1}$-fiber of the point $p_{\mathrm{max}} \in C$. By abuse of notation, we denote the three generators which has base degree $1$ and fiber degree $0$ by the same letters $U,V,H$ corresponding to the three immersed points of $C$.  Take the formal deformations $\bb = uU + vV + hH$ for $u,v,h \in \C$.  We also have a flat $\C^\times$ connection in the fiber $T^{n-1}$-direction over $C$; its holonomy is given by $(z_1,\ldots,z_{n-1}) \in (\C^\times)^{n-1}$.

Using cancellation in pairs of holomorphic polygons due to symmetry along the dotted line shown in Figure \ref{fig:C}, we prove the following statement.
\begin{prop}
	$(\cL,\bb,\nabla^z)$ is weakly unobstructed.
\end{prop}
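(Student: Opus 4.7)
The plan is to compute $\fm_0^{\bb}(1)$ directly by enumerating all Maslov index two stable polygon classes bounded by $\cL$, and then to show that all non-unit output components cancel in pairs under the reflection symmetry of the Seidel curve $C$, with the non-trivial spin structure supplying the required relative signs.

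By the preceding classification, every Maslov two stable polygon class bounded by $\cL$ is either a triangle class $\beta_0^\pm$ with corners at $U, V, H$ whose projection to the $w$-plane passes through $p_{\mathrm{max}}$ or $p_{\mathrm{min}}$, or a one-gon-plus-bubble class $\beta_i^\pm + \alpha$ with a single corner at $H$, $i = 1, \ldots, m$, $\alpha$ an effective curve class, again projecting through $p_{\mathrm{max}}$ or $p_{\mathrm{min}}$. Expanding $\bb = uU + vV + hH$ in the $A_\infty$-operations, only these classes can contribute to $\fm_0^{\bb}(1)$; by degree counting the output lives in degree two and is a linear combination of the homotopy unit $\bunit$, the even-degree immersed generators $\bar{U}, \bar{V}, \bar{H}$, and generators of the form $G \otimes X_I$ with $|I|$ matching.

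The core step is a pairing of contributions under the horizontal reflection of $C$ about its symmetry axis (the dotted line in Figure~\ref{fig:C}), which exchanges $p_{\mathrm{max}}$ with $p_{\mathrm{min}}$ and preserves $U, V, H$. After choosing $T^{n-1}$-equivariant transversal perturbations, so that the $T^{n-1}$-fiber direction integrates out cleanly as in the proof of Lemma~\ref{lemma:L_0}, each member of a pair $(\beta_0^+, \beta_0^-)$ or $(\beta_i^+ + \alpha, \beta_i^- + \alpha)$ carries the same symplectic area and the same holonomy factor from $\nabla^z$: the two boundary loops are homotopic as loops in the $T^{n-1}$-direction and the bubble class $\alpha$ is insensitive to the reflection. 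The non-trivial spin structure, represented by the $T^{n-1}$-fiber over $p_{\mathrm{max}}$, supplies the relative sign, since a polygon boundary passing through $p_{\mathrm{max}}$ crosses the spin cycle once while its mirror through $p_{\mathrm{min}}$ does not. This forces the contributions to each non-unit generator $\bar{U}, \bar{V}, \bar{H}$ or $G \otimes X_I$ to cancel between the $+$ and $-$ classes, leaving only the component in the direction of $\bunit$, which assembles into a well-defined series $W(u, v, h, z_1, \ldots, z_{n-1})$. Thus $\fm_0^{\bb}(1) = W \cdot \bunit$, and Lemma~\ref{lem:unobstr}, applicable since the minimal Maslov index of $\cL$ is two, promotes this to a strict-unit statement, establishing $\bb \in \mathcal{MC}(\cL)$. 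The main technical obstacle is the uniform sign comparison across all classes --- in particular, confirming that the sphere bubble contribution in $\alpha$ factors identically on the two sides of the pairing, which requires tracking the orientations of the moduli under the reflection-induced involution in the spirit of Theorem~\ref{thm:inv}.
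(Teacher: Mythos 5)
Your proposal is correct and follows essentially the same route as the paper: both arguments pair the moduli spaces for the classes $\beta_i^+ + \alpha$ and $\beta_i^- + \alpha$ via the reflection (anti-symplectic involution) of the curve $C$ about its symmetry axis, and use the non-trivial spin structure supported on $\{p_{\mathrm{max}}\}\times T^{n-1}$ to produce the relative sign that cancels all outputs to $\bar{U},\bar{V},\bar{H}$, leaving only a multiple of the unit. The only cosmetic difference is that the paper records that the involution reverses the order of the corner insertions (sending $\cM_3(U,V,H;\beta_0^+ +\alpha)$ to $\cM_3(H,V,U;\beta_0^- +\alpha)$), a bookkeeping point your write-up glosses over but which does not affect the conclusion.
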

\begin{proof}
This is similar to the proof of weakly unobstructedness for the Seidel Lagrangian in \cite{CHL}. The anti-symplectic involution identifies the moduli spaces $\cM_3(U,V,H;\beta_0^+ + \alpha)$ with $\cM_3(H,V,U;\beta_0^- + \alpha)$, and $\cM_1(H;\beta_i^+ + \alpha)$ with $\cM_1(H;\beta_i^- + \alpha)$ for $i=1,\ldots,m$.  Since the holomorphic polygons in $\beta_i^+$ for $i=0,\ldots,m$ pass through the spin cycle $\{p_{\mathrm{max}}\} \times T^{n-1}$, while the holomorphic polygons in $\beta_i^-$ do not, the pairs of moduli spaces have opposite signs.  Thus their contributions to $\bar{U},\bar{V},\bar{H}$ cancel and equal to zero.
\end{proof}

In particular, the superpotential associated to $(\cL,\bb,\nabla^z)$ is well-defined. Moreover, we can compute it explicitly.

\begin{theorem}
	The superpotential of $(\cL,\bb,\nabla^z)$ is
	\[
	W = -uvh + h f(z)
	\]
	defined on $((u,v,h),z) \in \C^3 \times (\C^\times)^{n-1}$, where $f$ is given in \eqref{eqn:fslabftn}.  Its critical locus is
	\[
	\check{X} = \left\{((u,v),z) \in \C^2 \times (\C^\times)^{n-1}\mid uv = f(z) \right\}.
	\]
\end{theorem}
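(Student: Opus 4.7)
I would compute $W$ by unwinding the weak Maurer--Cartan equation $\fm_0^{(\bb,\nabla^z)}(1)=W\cdot\bunit$ through a direct enumeration of Maslov index two stable polygons bounded by $\cL$. The preceding proposition has already done the classification work: the only Maslov two stable classes are $\beta_0^{\pm}$ (triangles with corners $U,V,H$) and $\beta_i^{\pm}+\alpha$ for $i=1,\ldots,m$ and effective curve classes $\alpha$ (one-gons at $H$ with sphere bubbles). Weak unobstructedness (already established) cancels all putative contributions to $\bar U,\bar V,\bar H$, so the task reduces to extracting the coefficient of $\bunit=p_{\max}\otimes \one_{T^{n-1}}$; only polygons whose boundary meets $p_{\max}$ evaluate there, namely the $\beta_i^+$ families.

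\textbf{Term-by-term counts.} For the triangle class $\beta_0^+$, part (iv) of the classification gives a single stable configuration passing through $p_{\max}$ with corners cyclically at $U,V,H$. Inserting $\bb=uU+vV+hH$ three times produces the monomial $uvh$, and the non-trivial spin structure supported on $\{p_{\max}\}\times T^{n-1}$ flips orientation once, yielding the term $-uvh$. For each $\beta_i^++\alpha$ with $i\ge 1$, part (iii) identifies the constrained moduli space with $\cM_1(\beta_i^L+\alpha)\times_L \{p\}$, whose signed count is the toric open Gromov--Witten invariant $n_1(\beta_i^L+\alpha)$. Multiplying by $h$ for the single $H$-corner, by the K\"ahler weight $\bT^{\omega(\beta_i^L+\alpha)}$, and by the appropriate $\nabla^z$-holonomy monomial in $z_1,\ldots,z_{n-1}$, then summing over $\alpha$ reproduces $1+\delta_i(\bT)$, and summing over $i$ assembles these into the slab function $f(z)$ of \eqref{eqn:fslabftn}. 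Combined with the spin sign this contributes $+h\cdot f(z)$, so $W=-uvh+hf(z)$.

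\textbf{Critical locus and main difficulty.} Factoring $W=h\bigl(f(z)-uv\bigr)$, the partial derivatives
\begin{equation*}
\partial_u W=-hv,\quad \partial_v W=-hu,\quad \partial_h W=f(z)-uv,\quad \partial_{z_i} W=h\,\partial_{z_i}f(z)
\end{equation*}
vanish simultaneously exactly on $\{h=0,\ uv=f(z)\}$, together with a possible extra component inside $\{u=v=0,\ f=\partial f=0\}$ which is empty for generic K\"ahler data; this identifies $\mathrm{Crit}(W)$ with $\check X$. The principal obstacle I expect to face is sign bookkeeping: with the $\Omega$-grading making $U,V$ degree one and $H$ degree $-1$, the iterated $A_{\infty}$-insertions of $\bb$ carry non-trivial Koszul signs, and the non-trivial spin structure contributes $\pm 1$ per crossing of $\{p_{\max}\}\times T^{n-1}$. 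One must verify these align so that the triangle in $\beta_0^+$ and the one-gons in $\beta_i^+$ enter with opposite relative signs, ensuring the formula $W=-uvh+hf(z)$; the analogous computation for the Seidel Lagrangian in \cite{CHL} serves as a template.
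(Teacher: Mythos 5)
Your proposal is correct and follows essentially the same route as the paper: classify the Maslov-index-two polygon classes (the paper phrases this through the conic-fibration picture and the Riemann mapping theorem, you via parts (iii)--(iv) of the preceding proposition), count the unique $uvh$-triangle through $p_{\mathrm{max}}$ and the one-gon families matching $n_1(\beta_i^L+\alpha)$, and assemble $f(z)$. The paper's proof is in fact terser than yours --- it omits both the sign bookkeeping and the critical-locus computation --- so your extra care there (including the observation that $\mathrm{Crit}(W)=\{h=0,\ uv=f(z)\}$ requires excluding a component over singular points of $\{f=0\}$ with $u=v=0$) refines rather than diverges from the paper's argument.
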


\begin{proof}
	Since the smooth fibers are conics which topologically do not bound any non-constant discs, the image of a Maslov-two disc must be either one of the regions shown in Figure \ref{fig:Cdisc}.  For the region with corners $u,v,h$, there is no singular conic fiber and hence there is only one holomorphic polygon over it passing through a generic marked point (corresponding to the constant section).  This gives the term $-uvh$.  For the region with one corner $h$, by Riemann mapping theorem the polygons over it are one-to-one corresponding to those bounded by a toric fiber.  They contribute $h\cdot f(z)$ to $W$.
\end{proof}


We fix $a_1 \in \textbf{M}_\R/\unu_\R^\perp$, which is the level of the symplectic reduction $X \sslash_{a_1} T^{n-1} \cong \C$, by requiring that $a_1$ lies in image of strictly codimension-two toric strata of $X$ under the moment map $\rho$.  This guarantees that we have the immersed Lagrangian $\cS^2\times T^{n-2}$ on this level, and enables us to compare with $\cL$.  

To distinguish the variables between $\cL$ and $\cS^2\times T^{n-2}$, we denote the degree-one immersed generators of $\cL$ by $U_0,V_0,H$ and that of $\cS^2\times T^{n-2}$ by $U,V$.  $\cL$ and $\cS^2\times T^{n-2}$ intersect cleanly at two tori $T^{n-1}$.  We fix a basis of generators of $H^1(T^{n-2})$ (the factor contained in $\cS^2\times T^{n-2}$) and extend it to $H^1(T^{n-1})$ (the clean intersections).  Denote the corresponding holonomy variables for $\cL$ by $z^{(0)}_1,\ldots,z^{(0)}_{n-1}$, and those for $\cS^2\times T^{n-2}$ by $z^{(1)}_2,\ldots,z^{(1)}_{n-1}$.

\begin{figure}[htb!]
	\centering
	\begin{subfigure}[b]{0.45\textwidth}
		\centering
		\includegraphics[width=\textwidth]{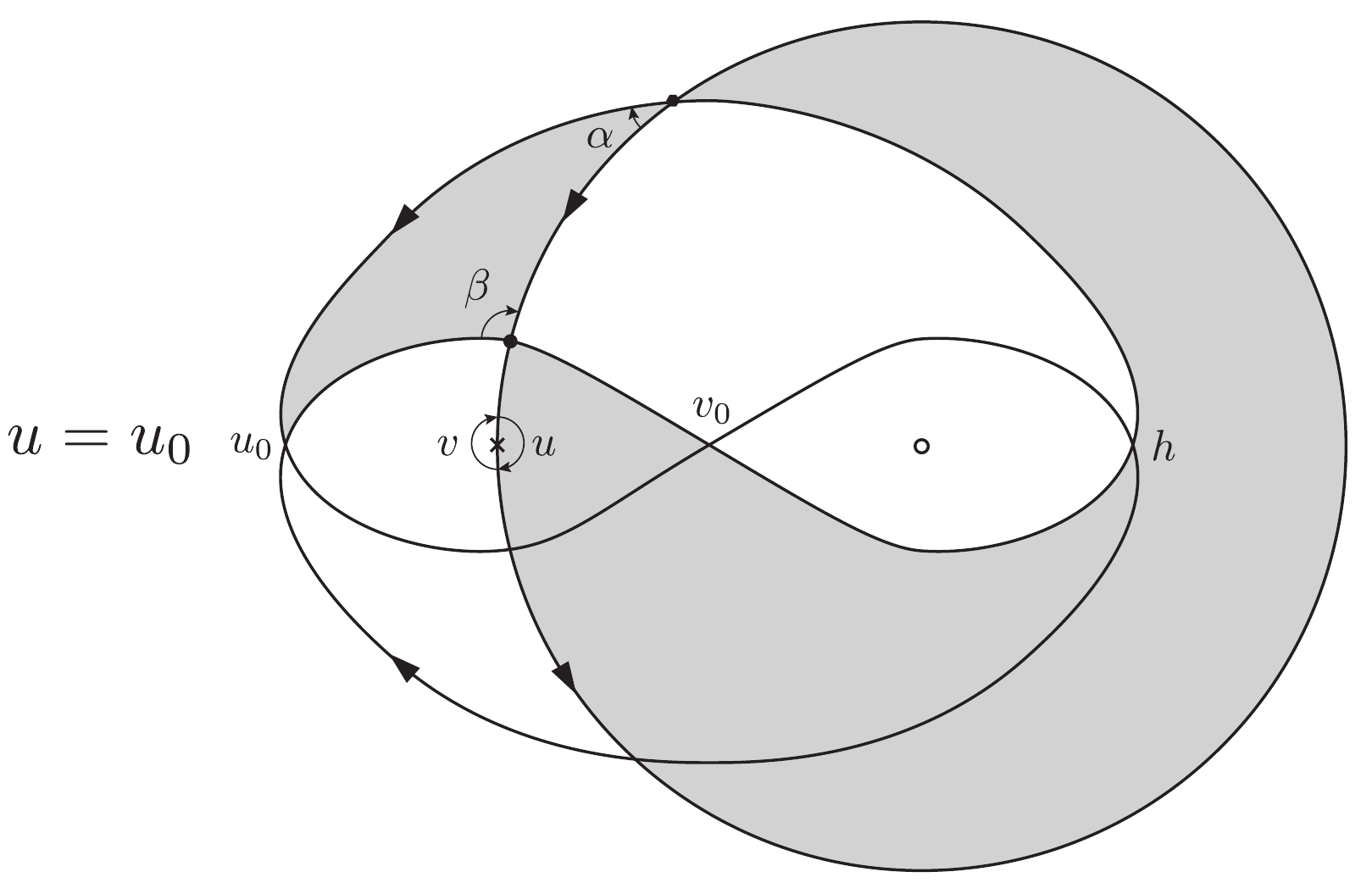}
	\end{subfigure}
	\hspace{10pt}
	\begin{subfigure}[b]{0.45\textwidth}
		\centering
		\includegraphics[width=\textwidth]{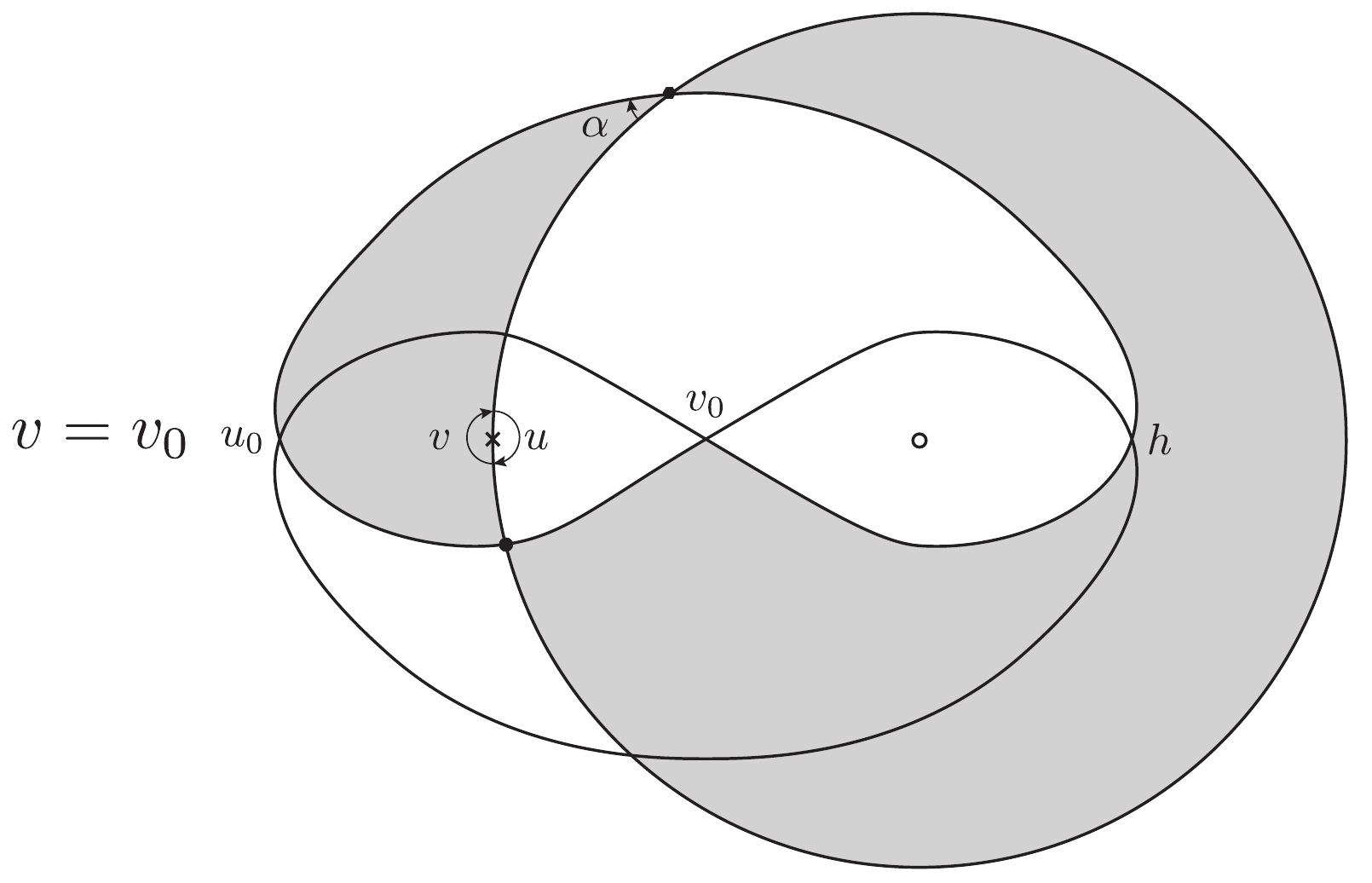}
	\end{subfigure}
	\caption{The strips in computing the embedding of $\cS^2\times T^{n-2}$ to $L_T^n$.}
	\label{fig:}
\end{figure}

\begin{theorem} \label{thm:immTS}
	There exists a non-trivial morphism $\alpha$ from $\bbL_0:=(\cS^2\times T^{n-2},uU+vV,\nabla^{\vec{z}^{(1)}})$ to $\bbL=(\cL,u_0U+v_0V+hH,\nabla^{\vec{z}^{(0)}})$ for $u_0=u, v_0=v,h=0, z^{(0)}_i=z^{(1)}_i$ for $i=2,\ldots,n-1$, and $z^{(0)}_1=g(uv,z^{(1)}_2,\ldots,z^{(1)}_{n-1})$, where $g$ is the same as that in Theorem \ref{thm:equiv}.  Moreover
	$\alpha$ has a one-sided inverse $\beta$, namely $\fm_2^{\bbL_0,\bbL,\bbL_0}(\beta,\alpha)=\one_{L_0}$.
\end{theorem}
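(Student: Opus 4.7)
The strategy is to imitate Theorems \ref{thm:iso-01} and \ref{thm:iso-01-x}: exhibit $\alpha$ as a top Morse generator on one connected component of the clean intersection $\bbL_0 \cap \bbL$, compute $\fm_1^{\bbL_0,\bbL}(\alpha)$ via the pearl complex on $X^\circ$, and impose the cocycle condition to read off the identifications. Position the base loop $\ell_0$ for $L_0 = \cS^2 \times T^{n-2}$ and Seidel's immersed curve $C$ for $\cL$ in the reduced plane $X\sslash_{a_1}T^{n-1}\cong\C$ so that they meet transversely at two points $a,b$; their preimages are two copies of $T^{n-1}$ constituting the clean intersection $\bbL_0 \cap \bbL$. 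Take $\alpha = a\otimes \one_{T^{n-1}}$ and the one-sided inverse candidate $\beta = b\otimes \one_{T^{n-1}}$.

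The output $\fm_1^{\bbL_0,\bbL}(\alpha)$ decomposes by degree. The coefficients of the degree-one generators $a\otimes X_i$ on the same clean intersection are given by the pearl argument that produced \eqref{eq:m_10}: a pair of Morse flow lines from $\alpha$ crosses gauge hypertori of the two Lagrangians, and one of these may be decorated by stable configurations of Maslov-$0$ polygons bounded by $\cS^2\times T^{n-2}$ (with $U,V$ corners appearing in pairs by the immersed boundary condition). This gives the coefficient $z^{(1)}_i - z^{(0)}_i$ for $i\geq 2$ and $z^{(0)}_1-g(uv,z^{(1)}_2,\ldots,z^{(1)}_{n-1})$ for $i=1$ with $g$ the series of Theorem \ref{thm:iso-01-x}. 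The coefficient of $b\otimes \one_{T^{n-1}}$ is the sum over stable holomorphic strips from $a$ to $b$; classifying these in analogy with Lemma \ref{stable-strip} and Corollary \ref{cor:hol-strip-class} applied to $(\ell_0,C)$ yields two basic strip classes, each of which may be decorated by paired $U,V$ corners of $\bbL_0$ and $U_0, V_0$ corners of $\bbL$. Vanishing of this coefficient forces $u_0=u$, $v_0=v$, together with $uv = f(z^{(0)}_1,\ldots,z^{(0)}_{n-1})$, which is automatic once $z^{(0)}_1 = g$.

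The condition $h=0$ enters the cocycle equation only through stable polygons carrying an $H$-corner on the $C$-boundary, obtained by attaching a $C$-one-gon (with corner $H$) or a $C$-triangle (with corners $U_0,V_0,H$) to the basic strip; these contribute terms proportional to $h\cdot f(z^{(0)})$ and $u_0 v_0 h$ to the coefficient of $b\otimes \one_{T^{n-1}}$, which vanish automatically at $h=0$. Thus the specified identifications do produce a cocycle. Equivalently, the sum $h(f(z^{(0)})-u_0 v_0)$ is the value of Seidel's superpotential $W^\bbL$ along the cocycle, which must match $W^{\bbL_0}=0$ from Lemma \ref{lemma:L_0}; setting $h=0$ is the canonical way to enforce this. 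I expect the main obstacle to be the systematic enumeration of stable polygon classes with arbitrary numbers of paired $U,V$ and $U_0,V_0$ immersed corners, together with the choice of $T^{n-2}$-equivariant perturbations (in the spirit of Lemma \ref{lemma:L_0}) under which all non-matching configurations vanish by dimension.

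For the one-sided inverse $\fm_2^{\bbL_0,\bbL,\bbL_0}(\beta,\alpha)=\one_{L_0}$, I compute the contribution of the unique holomorphic section over the bigonal region in $\C$ bounded by $\ell_0$ and $C$ with corners at $a$ and $b$. By the Morse setup from Theorem \ref{thm:iso12}, this section passes through the homotopy unit $\bunit_{L_0}$, contributing $\bT^{A}\bunit_{L_0}$ for some area $A$; since $\bunit_{L_0}$ is cohomologous to the strict unit $\wunit_{L_0}$ via $\gunit_{L_0}$ as recalled in Section \ref{sec:pearl_complex}, after renormalizing by $\bT^A$ one obtains $\one_{L_0}$ up to an $\fm_1^{\bbL_0}$-exact term, yielding the one-sided inverse and completing the construction.
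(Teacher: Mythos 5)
The paper states Theorem \ref{thm:immTS} without writing out a proof (only the figure of strips between $\ell_0$ and Seidel's curve $C$ indicates the intended argument), and your strategy---realizing $\alpha$ and $\beta$ as the top Morse generators on the two components of the clean intersection and computing $\fm_1^{\bbL_0,\bbL}(\alpha)$ by counting pearly trees and holomorphic strips in the reduced $w$-plane, in parallel with Theorems \ref{thm:iso-01} and \ref{thm:iso-01-x}---is exactly the right one. Your treatment of the coefficients of $a\otimes X_i$ (flow lines through gauge hypertori versus flow lines decorated by Maslov-zero polygons of $L_0$, yielding $z^{(1)}_i-z^{(0)}_i$ and $z^{(0)}_1-g$) and of $\fm_2^{\bbL_0,\bbL,\bbL_0}(\beta,\alpha)$ via the strip not containing $w=0$ with an output marked point at $\bunit_{L_0}$ is consistent with the proofs of Theorems \ref{thm:iso12} and \ref{thm:iso-01}.

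However, there is a genuine gap at the center of the argument, which you yourself flag as ``the main obstacle'': the enumeration of the stable strip classes from $a$ to $b$ carrying corners at $U,V$ (on the $\ell_0$ side) and at $U_0,V_0,H$ (on the $C$ side), together with the equivariant-perturbation argument killing the non-matching configurations, is never carried out. Without it the coefficient of $b\otimes\one_{T^{n-1}}$ is not computed, and this is precisely where the identifications $u_0=u$, $v_0=v$, $h=0$ must be used. Moreover, your logic there is not coherent as stated: the coefficient of $b\otimes\one_{T^{n-1}}$ is a single element of $\Lambda_0$, so its vanishing cannot ``force'' the three independent conditions $u_0=u$, $v_0=v$, $uv=f(z^{(0)})$; since the theorem is an existence statement, what you must actually show is that this coefficient \emph{decomposes} into terms each of which vanishes under the stated substitutions (e.g.\ terms proportional to $u-u_0$, $v-v_0$, $uv-f(z^{(0)})$, and $h$), and exhibiting that decomposition requires the missing enumeration. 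Your discussion of $h$ is also internally inconsistent: if the only $h$-dependent contributions were $h\cdot f(z^{(0)})$ and $-u_0v_0h$, their sum $h\,(f(z^{(0)})-u_0v_0)$ would already vanish once $u_0v_0=uv=f(z^{(0)})$, so the potential-matching argument does not single out $h=0$; you should either identify the $h$-linear terms that genuinely require $h=0$, or acknowledge that $h=0$ is simply a sufficient choice. Finally, for the one-sided inverse you compute one contribution but do not rule out contributions from the other strip region (decorated with immersed corners) or outputs to generators of $CF^{\bullet}(L_0)$ other than the unit; the analogue of the argument in Theorem \ref{thm:iso12} (positioning $\one_{L_0}$ on the boundary of the corner-free strip) should be spelled out, and the exactness of $\bunit_{L_0}-\wunit_{L_0}$ invoked as in the remark following that theorem. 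As a bonus, note that $\fm_2^{\bbL_0,\bbL,\bbL_0}(\beta,\alpha)=\one_{L_0}$ immediately gives the non-triviality of $\alpha$, which you never address explicitly.
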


We have different choices of the one-sided inverse $\beta$. Note that $m_2^{\bbL,\bbL_0,\bbL}(\alpha,\beta)$ has a non-zero output to $\bar{U}_0$.  Thus $\beta$ can not be a two-sided inverse. This is natural since the Maurer-Cartan deformation space of $\bbL$ is strictly bigger than that of $\bbL_0$.
On the other hand, one can check that if neither $u_0$ nor $v_0$ vanishes, $\bar{U}_0$ determines an idempotent, and we speculate that the corresponding object in the split-closed Fukaya category is actually isomorphic to $\bbL_0$ (under the coordinate change above).

\begin{remark}
	While the mirror chart of the immersed torus $\cL$ already covers the mirror chart of $\cS^2\times \bS^1$, the study of $\cS^2\times \bS^1$ is still interesting since it passes through the discriminant locus and has an analogous role of the Aganagic-Vafa brane \cite{AV} as they bound the same set of Maslov index zero holomorphic discs.
\end{remark}	

We can also consider $\bS^1$-equivariant theory of $\cL$.  It is similar to the previous sections and so we will be brief.

\begin{theorem}
	The equivariant disc potential for $\bbL=(\cL,u_0U+v_0V+hH,\nabla^{\vec{z}^{(0)}})$ equals to
	\[
	 W_{\bS^1}^{\bbL} = -uvh + h\cdot f(\exp x_1^{(0)},\ldots,\exp x_{n-1}^{(0)}) +  \sum_{i=1}^{n-1} x_i^{(0)} \lambda_i.
	\]
	It restricts to the equivariant disc potential of $\bbL_0$ via the embedding in Theorem \ref{thm:immTS}.
\end{theorem}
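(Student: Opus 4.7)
The plan is to extend the non-equivariant computation $W = -uvh + h f(z)$ (already established in the theorem immediately preceding) to the $T^{n-1}$-equivariant setting, and then to verify the restriction statement via the embedding of Theorem \ref{thm:immTS}. The equivariant formalism is that of Section~\ref{sec:G-Morse}: apply the $T^{n-1}$-equivariant Morse model to $\cL$, using that the subtorus $T^{n-1} \subset T^n$ in the directions $v_1', \ldots, v_{n-1}'$ acts by rotating the $T^{n-1}$-fiber of $\cL$ freely (hence cleanly on the self-intersection loci) and preserves the complex structure. This lets us choose $T^{n-1}$-equivariant Kuranishi structures, and the equivariant parameters $\lambda_1,\dots,\lambda_{n-1}$ generate $H^{\bullet}_{T^{n-1}}(\pt;\Lambda_0)$.

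Next, pass from the flat connection $\nabla^{\vec z^{(0)}}$ to deformations by the degree-one hypertorus classes $X_i$ Poincar\'e dual to $v_i'$, via the substitution $z_i^{(0)} = \exp x_i^{(0)}$. As in the proof of Theorem~\ref{thm:iso12-x} (and the perturbation scheme from \cite[Theorem~4.7]{KLZ19}), one chooses symmetrized perturbations along parallel copies of the hypertori so that $k$ insertions of $X_i$ meeting a Maslov-two disc $\beta$ contribute the factor $\frac{(v_i'\cdot\partial\beta)^k}{k!}\,(x_i^{(0)})^k$; summing over $k$ yields the divisor axiom and turns the holonomy series $f(z^{(0)})$ into $f(e^{x_1^{(0)}},\dots,e^{x_{n-1}^{(0)}})$. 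Now compute $\fm^{G,\dagger,b}_0(1)$ for $b = u_0 U + v_0 V + h H + \sum_i x_i^{(0)} X_i$. The $\bunit$-part receives only those contributions that survive in the non-equivariant computation, giving $-u_0 v_0 h + h\,f(e^{x_1^{(0)}},\dots,e^{x_{n-1}^{(0)}})$. The $\blambda_i$-part receives the Morse flow contribution from $X_i$ in the Borel complex, producing the term $x_i^{(0)}\lambda_i$, exactly as in \cite[Lemma~3.9 and Theorem~4.7]{KLZ19}.

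To exclude further equivariant corrections to the $\wlambda_i$ terms, invoke the $T^{n-1}$-equivariance of the perturbation: any stable polygon with inputs drawn from $\{U,V,H,X_i\}$ contributing to $\wlambda_i$ has output chain of virtual dimension $n-2$, and is averaged over a free $T^{n-1}$-orbit, forcing the contribution to vanish unless the underlying moduli space is zero-dimensional transverse to the orbit. A case analysis analogous to Lemma~\ref{lemma:L_0} (the only Maslov-two polygon classes bounded by $\cL$ are $\beta_i^{\pm}+\alpha$, and the immersed sectors $U,V,H$ must appear with specific pairings by the classification established in Section~\ref{sec:immtoriequiv}) shows that no such configuration with at least one immersed input can produce a $\wlambda_i$ output, leaving only the single flow line from $X_i$. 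Then, by Lemma~\ref{lem:G_unobstr}, passage between $\blambda_i$ and $\wlambda_i$ is harmless, yielding the claimed formula
\[
W^{\bbL}_{\bS^1} = -u_0 v_0 h + h\, f(e^{x_1^{(0)}},\dots,e^{x_{n-1}^{(0)}}) + \sum_{i=1}^{n-1} x_i^{(0)} \lambda_i.
\]

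Finally, the restriction statement follows by applying Theorem~\ref{thm:immTS}: setting $u_0 = u$, $v_0 = v$, $h = 0$, $e^{x_1^{(0)}} = g(uv,z_2^{(0)},\dots,z_{n-1}^{(0)})$, and $x_i^{(0)} = x_i^{(1)}$ for $i\ge 2$ (with $\lambda_i = 0$ for $i \ge 2$ when restricting to the $\bS^1$-action along $v_1'$), the first two terms vanish and the remaining equivariant part becomes $\lambda_1 \log g(uv,z_2^{(0)},\dots,z_{n-1}^{(0)})$, matching $W^{\bbL_0}_{\bS^1}$ from Theorem~\ref{thm:equiv}. The main obstacle is the rigidity argument in the preceding paragraph, ruling out mixed equivariant contributions combining immersed corners with $X_i$ insertions: one must verify that the $T^{n-1}$-equivariant perturbation scheme forces vanishing for all stable polygon configurations other than the single Morse flow, using both the clean-intersection structure of $\cL$ and the Maslov-index classification of its disc classes.
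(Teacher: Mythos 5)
Your proposal is correct and follows exactly the route the paper intends: the paper omits the proof of this theorem entirely (saying only that the $\bS^1$-equivariant theory of $\cL$ "is similar to the previous sections"), and your argument supplies precisely those details — the non-equivariant potential $-uvh+hf(z)$ from the preceding theorem, the divisor-axiom perturbation scheme of Theorem \ref{thm:iso12-x} converting $z_i^{(0)}$ to $e^{x_i^{(0)}}$, the linear $\sum_i x_i^{(0)}\lambda_i$ term from the Borel--Morse flow as in Proposition \ref{prop:W_T}, the $T^{n-1}$-equivariant vanishing argument in the style of Lemma \ref{lemma:L_0}, and the restriction via Theorem \ref{thm:immTS} recovering $\lambda_1\log g$ from Theorem \ref{thm:equiv}.
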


Thus we obtain the LG model $(\Lambda_+^{n+2} \times \mathrm{Spec}(\Lambda_+[\lambda]),W_{\bS^1}^{\bbL})$.  This LG model can be understood as an equivariant mirror for the toric Calabi-Yau manifold $X$.


\bibliographystyle{amsalpha}
\bibliography{geometry}	
\end{document}